\title{Computable Bounds for Strong Approximations\\ with Applications}
\author{Haoyu Ye}
\author{Morgane Austern}
\affil{Department of Statistics, Harvard University}
\date{}
\newtheorem{assumption}{Assumption}
\newtheorem{theorem}{Theorem}[section]
\newtheorem{corollary}{Corollary}[theorem]
\newtheorem{lemma}[theorem]{Lemma}
\newtheorem{proposition}[theorem]{Proposition}
\newtheorem{remark}[theorem]{Remark} 
\crefname{lemma}{Lemma}{Lemmas}
\definecolor{codegreen}{rgb}{0,0.6,0}
\definecolor{codegray}{rgb}{0.5,0.5,0.5}
\definecolor{codepurple}{rgb}{0.58,0,0.82}
\definecolor{backcolour}{rgb}{0.95,0.95,0.92}
\lstdefinestyle{mystyle}{
    backgroundcolor=\color{backcolour},   
    commentstyle=\color{codegreen},
    keywordstyle=\color{magenta},
    numberstyle=\tiny\color{codegray},
    stringstyle=\color{codepurple},
    basicstyle=\ttfamily\footnotesize,
    breakatwhitespace=false,         
    breaklines=true,                 
    captionpos=b,                    
    keepspaces=true,                 
    numbers=left,                    
    numbersep=5pt,                  
    showspaces=false,                
    showstringspaces=false,
    showtabs=false,                  
    tabsize=2
}
\newcommand{\sig}{\sigma}
\DeclarePairedDelimiterX{\Iintv}[1]{\llbracket}{\rrbracket}{\iintvargs{#1}}
\NewDocumentCommand{\iintvargs}{>{\SplitArgument{1}{,}}m}
{\iintvargsaux#1} %
\NewDocumentCommand{\iintvargsaux}{mm}
  {#1\IfValueT{#2}{, #2}}
\def\balign#1\ealign{\begin{align}#1\end{align}}
\def\baligns#1\ealigns{\begin{align*}#1\end{align*}}
\def\balignat#1\ealign{\begin{alignat}#1\end{alignat}}
\def\balignats#1\ealigns{\begin{alignat*}#1\end{alignat*}}
\def\bitemize#1\eitemize{\begin{itemize}#1\end{itemize}}
\def\benumerate#1\eenumerate{\begin{enumerate}#1\end{enumerate}}
\newenvironment{talign*}
 {\csname align*\endcsname}
 {\endalign}
\newenvironment{talign}
 {\csname align\endcsname}
 {\endalign}
\def\balignst#1\ealignst{\begin{talign*}#1\end{talign*}}
\def\balignt#1\ealignt{\begin{talign}#1\end{talign}}
\newcommand{\qtext}[1]{\quad\text{#1}\quad} 
\newcommand{\stext}[1]{\ \text{#1}\ }
\let\originalleft\left
\let\originalright\right
\renewcommand{\left}{\mathopen{}\mathclose\bgroup\originalleft}
\renewcommand{\right}{\aftergroup\egroup\originalright}
\def\tinycitep*#1{{\tiny\citep*{#1}}}
\def\tinycitealt*#1{{\tiny\citealt*{#1}}}
\def\tinycite*#1{{\tiny\cite*{#1}}}
\def\smallcitep*#1{{\scriptsize\citep*{#1}}}
\def\smallcitealt*#1{{\scriptsize\citealt*{#1}}}
\def\smallcite*#1{{\scriptsize\cite*{#1}}}
\def\<{\left\langle} 
\def\>{\right\rangle}
\def\defeq{\triangleq} 
\newcommand{\Ber}{\textnormal{Bernoulli}}
\newcommand{\Bin}{\textnormal{Binomial}}
\newcommand{\Unif}{\textnormal{Uniform}}
\newcommand{\eqdist}{\stackrel{d}{=}}
\newcommand{\iid}{\textrm{i.i.d.}\@\xspace}
\providecommand{\arccos}{\mathop\mathrm{arccos}}
 \newtheorem{definition}{Definition}
\newcommand{\wass}[1][p]{\mathcal{W}_{#1}}
\newcommand{\mnkappa}{M_{n,\kappa}}
\newcommand{\Rsig}[1][\sigma]{\hyperref[Rsig]{\color{black}{R_{#1}}}} 
\newcommand{\wassbd}[1][\sigma]{\hyperref[cabris]{\color{black}{\omega^R_p(#1)}}} 
\newcommand{\KRsig}[1][\sigma]{\hyperref[KRsig]{\color{black}{K_{R,\sig}}}} 
\newcommand{\Rsigsqd}[1][\sigma]{\hyperref[Rsig]{\color{black}{R_{#1}^2}}} 
\newcommand{\Rsigpowfour}[1][\sigma]{\hyperref[Rsig]{\color{black}{R_{#1}^4}}}
\newmdenv[topline=false,rightline=false,bottomline=false,nobreak=false]{proofaside}
\newtheorem*{assumption*}{Assumption}
\newcommand{\assumptionRS}{Assumption~($R,\sigma$)}
\newcounter{noteHctr} \setcounter{noteHctr}{1}
\newcounter{noteMctr} \setcounter{noteMctr}{1}
\begin{document}
\maketitle
\begin{abstract}
The Koml{\'o}s--Major--Tusn\'{a}dy (KMT) inequality for partial sums is one of the most celebrated results in probability theory. Yet its practical application has been hindered by a lack of practical constants. This paper addresses this limitation for bounded i.i.d. random variables.  
At the cost of an additional logarithmic factor, we propose a computable version of the KMT inequality that depends only on the variables' range and standard deviation. We also derive an empirical version of the inequality that achieves nominal coverage even when the standard deviation is unknown. We then demonstrate the practicality of our bounds through applications to online change point detection and first hitting time probabilities. As a byproduct of our analysis, we obtain a Cramér-type moderate deviation bound for normalized centered partial sums.
\end{abstract}


\section{Introduction}\label{sec:intro}
Strong approximations---couplings between the partial sums of independent and identically distributed (i.i.d.) random variables and a Brownian motion---are a cornerstone of modern probability and statistics. The celebrated Koml{\'o}s--Major--Tusn\'{a}dy (KMT) inequality \cite{komlos1975approximation,komlos1976approximation} provides the canonical result in this area, demonstrating that the coupling can be achieved with a maximal deviation that grows only logarithmically with the sample size.  Specifically, the KMT inequality states that if $(Y_i)$ is a sequence of  i.i.d. random variables satisfying $\inf_{a>0}\mathbb{E}[e^{a|Y_1|}]<\infty$, then there exist positive constants $K_1, K_2<
\infty$ and a sequence of i.i.d. Gaussian random variables $(G_i)\overset{i.i.d}{\sim}\mathcal{N}\big(\mathbb{E}[Y_1],\rm{Var}(Y_1)\big)$ such that for all $n\in \mathbb{N}$ and $\alpha\in (0,1)$:
$$\mathbb{P}\Big(\max_{i\le n}\big|\sum_{j\le i}Y_j-\sum_{j\le i}G_j\big|\ge K_1\log n -K_2\log \alpha \Big)\le \alpha.$$ 
This result has been substantially extended to handle more complex settings, including multivariate processes, dependent random variables, and weighted approximations (e.g., \cite{sakhanenko1989accuracy, zaitsev2003estimates, berkes2014komlos, mies2023sequential}). We refer the reader to \cite{mason2012quantile} for a comprehensive survey. It has become a powerful tool for establishing asymptotical theory in diverse fields such as equivalence of statistical experiments \cite{nussbaum1996asymptotic}, change-point detection \cite{frick2014multiscale,chen2022inference}, queuing theory \cite{nussbaum1996asymptotic},  first-passage times for Markov Chains (e.g., \cite{portnoy1978probability,lai1979first}), finance \cite{kifer2013dynkin,lamberton2000optimal,walsh2003rate} and sequential inference (e.g., \cite{waudby2023distribution}). 

However, a significant practical limitation of the KMT inequality is the lack of computable constants: the constants $K_1$ and $K_2$ are not explicitly known and depend on the distribution of $(Y_i)$ in a complex way. Consequently, while the KMT inequality is a powerful tool for studying the asymptotics of statistical methods, it does not provide finite-sample statistical guarantees. 
Obtaining such guarantees requires a \emph{computable} bound on the maximum deviation between $\sum_{j\le i}Y_j$ and $ \sum_{j\le i}G_j$. In other words, the constants $K_1$ and $K_2$ need to be known. Very recently, \cite{waudby2025nonasymptotic} established that these constants can be uniformly upper-bounded for all distributions sharing a uniform lower bound on their Sakhanenko parameter. Note, however, they remain uncomputable, as the bounds provided in \cite{waudby2025nonasymptotic} rely on some unknown universal constants.
In some specific cases, the constants can be inferred.  A notable result in this direction is \cite{chatterjee2012new} which proposed a Stein-based method for strong embeddings. Their result, however, relies on the existence of a bounded Stein kernel, which is a strong assumption. By adapting this method, Bhattacharjee et al.\ \cite{Bhattacharjee16} extended the approach to random variables that take values in a finite alphabet and have a null third moment. However, the bounds proposed in \cite{Bhattacharjee16} grow quadratically with the size of the alphabet, and the assumption that the support of $(Y_i)$ is finite is itself restrictive. 
Building on this line of work, we develop a new proof technique based on Stein's method that generalizes the discrete framework of \cite{Bhattacharjee16} to arbitrary bounded random variables. 

In this paper, we derive practical \emph{computable} thresholds $(\mathcal{D}_k(\alpha))_{k\le n}$  for the KMT inequality under the  assumption that the i.i.d. random variables $(Y_i)$ are bounded, i.e., $R:=\|Y_1\|_\infty < \infty$. For any confidence level $\alpha \in (0,1)$, these thresholds (see \Cref{ts122} and \Cref{thm:main2}) satisfy 
$$\mathbb{P}\Big(\exists i\le n~\textrm{s.t.}~\big|\sum_{j\le i}Y_j-\sum_{j\le i}G_j\big|>\mathcal{D}_i(\alpha)\Big)\le \alpha.$$
Crucially, these bounds can be computed using only the range $R$ and the standard deviation $\sigma:=\sqrt{\rm{Var}(Y_1)}$.
The price of this computability, however, is a slightly suboptimal rate of convergence: the thresholds grow as $\max_{k\le n}\mathcal{D}_k(\alpha)=O\big(\log n (\log n -\log \alpha )\big)$, which includes an additional logarithmic factor compared to the original KMT inequality. Similarly to us, \cite{castelle1998strong} derived a computable KMT inequality growing at the same suboptimal rate in $n$ as ours. However, their bound is unreasonably large for any moderate sample sizes (see \Cref{section:compare_clb_bg}) which significantly hinders its applicability. The computable bounds obtained in \cite{Bhattacharjee16} not only require the random variables to take value in a finite alphabet but are also significantly overconservative (see \Cref{section:compare_clb_bg}). Hence our bound constitutes the first practical computable KMT inequality for bounded random variables. 
Finally, since the standard deviation $\sigma$ is often unknown in practice, we also provide in \Cref{empi} an empirical version of our KMT inequality that does not require prior knowledge of $\sigma$. Our proof relies on two key components: a novel inductive construction and a computable conditional Wasserstein-$p$ bound. In addition, as a byproduct of the conditional Wasserstein-$p$ bound, we obtain a Cramér-type moderate deviation result.
See \Cref{wass_sec} for more details.

We demonstrate the utility of our bounds in two distinct settings. First, we use them to derive thresholds for the CUSUM statistic in online change point detection. The proposed thresholds guarantee control of the false alarm probability uniformly over time. Our method achieves a high detection rate at relatively small sample sizes compared to existing approaches. Second, we provide non-asymptotic bounds on first-hitting-time probabilities for random walks with small drift. We apply our result to the setting considered in \cite{busani2020bounds}, which developed a novel proof technique because the incomputable constants of the KMT inequality hindered a direct analysis (see page 2 of \cite{busani2020bounds}). Our method yields a similar convergence rate but with significantly smaller constants. 
In both applications, our bounds are non-asymptotic and fully computable.
\subsection{Outline of the paper }
The remainder of this paper is organized as follows. \Cref{main} presents our main results together with the algorithm and theoretical guarantees. \Cref{empi} provides our empirical bounds. \Cref{applications} demonstrates two applications. In \Cref{wass_sec}, we show our computable upper bound for the conditional Wasserstein-$p$ distance, followed by a resulting Cramér-type moderate deviation result. Finally, \Cref{discussion} discusses the proof technique and the limitations of our results. 


\section{Main results}\label{main}
\subsection{Assumptions and notations}We first state the assumptions under which we derive our results.
\begin{assumption*}[Assumption $(R,\sigma)$]\label{DA}
The random variables $(Y_i)_{i\ge1}$ are independent and identically distributed (i.i.d.), with $\mathrm{Var}(Y_1) = \sigma^2$ and $Y_1 \in [0,R]$ almost surely. Define $X_i:=Y_i-\mathbb{E}[Y_i]$ as the centered version of $Y_i$.
\end{assumption*}
Throughout the paper, we denote $\mathcal{U}(X_{1:n})$ the unordered multiset $\{X_1, X_2, \dots, X_n\}$ and write $$S_n:=\sum_{i\le n}X_i\qquad \textrm{and}\qquad W_k:=S_k-\frac{k}{n}S_n=S_k-\mathbb{E}[S_k|S_n].$$
We adopt the standard convention that $S_0 = 0$ and $W_0 = 0$. Let $\Sigma_n\in \mathcal{M}_{n}(\mathbb{R})$ denote the covariance matrix of $(S_k)_{k\le n}$, i.e., $$\Sigma_{n,i,j}=\sigma^2 \min\{i,j\},\qquad \forall i,j\le n.$$

Additionally, $(U_i)_{i\ge1}$ denotes a sequence of i.i.d. $\Unif[0,1]$ random variables independent of $(Y_i)_{i\ge 1}$. These are used to construct couplings between potentially discrete random variables and continuous ones.

We will use the Wasserstein-$p$ distance to measure the discrepancy between two probability distributions.

\begin{definition}[Wasserstein Distance]
For two probability distributions $\mu$ and $\nu$ on the real line, the Wasserstein-$p$ distance is defined as
\begin{equation*}
    \mathcal{W}_p(\mu, \nu) := \left( \inf_{\gamma \in \Gamma(\mu,\nu)} \int |x - y|^p \, d\gamma(x, y) \right)^{1/p},
\end{equation*}
where $\Gamma(\mu,\nu)$ is the set of all couplings with marginals $\mu$ and $\nu$.
\end{definition} By abuse of notation, we sometimes write $\mathcal{W}_p(Z,Z')$ to denote the distance between the distributions of $Z$ and $Z'$, or $\mathcal{W}_p(Z,\nu)$ to denote the distance between the distribution of $Z$ and $\nu$.

We will also use the following conditional Wasserstein-$p$ distance.
\begin{definition}[Conditional Wasserstein Distance]Let $Z,Z'$ be two random variables on the same probability space as $(X_i)$.
We define the conditional Wasserstein-$p$ distance as
\begin{equation*}
    \mathcal{W}_p(Z,Z'|\mathcal{U}(X_{1:n})) := \left( \inf_{\gamma \in \Gamma(Z, Z' \mid \mathcal{U}(X_{1:n}))} \int |x - y|^p \, d\gamma(x, y) \right)^{1/p},
\end{equation*}
where $\Gamma(Z, Z' \mid \mathcal{U}(X_{1:n}))$ is the set of all couplings whose marginals are the conditional distributions of $Z,Z'$ given $\mathcal{U}(X_{1:n})$.
\end{definition}
Similarly, by abuse of notation, we will sometimes write
$\mathcal{W}_p(Z,\nu|\mathcal{U}(X_{1:n}))$ to denote the distance between the conditional distribution of $Z$ knowing $\mathcal{U}(X_{1:n})$ and $\nu$.



\subsection{Inductive step and proof overview}
In this subsection, we outline an inductive approach to constructing a Gaussian process coupling for $(S_k)_{1\le k \le n}$. More details can be found in \Cref{additional,section:KMTproof,prelim}.

Our goal is to approximate the sum $(S_k)_{k\le n}$ with a carefully constructed process $(Z_k)_{k\le n}$, where $ Z_k:= \sum_{i \leq k} G_i$, with $(G_i)_{i\ge1} \overset{\text{i.i.d.}}{\sim} \mathcal{N}(0, \sigma^2)$. Specifically, given $\alpha > 0$, we aim to construct $(Z_k)_{k \le n} \sim \mathcal{N}(0, \Sigma_n)$ such that there exists a sequence $(\delta_k)_{k\le n}$ satisfying $\max_{k\le n}\delta_k=O((\log n)^2 )$ and  
\[
\mathbb{P}\big(\exists k \leq n \text{ s.t. } |S_k - Z_k| \geq \delta_k\big) \leq \alpha.
\]

To achieve this, we construct $(Z_i)_{1\le i \le n}$ inductively, starting from $Z_n$. This is done by exploiting \Cref{lemma:bound_S_and_Z}, which allows one to translate a  Wasserstein distance bound into a probability bound via Markov's inequality and optimization over $p$. Specifically, according to \Cref{nelson3}, we have $\mathcal{W}_p(S_n, \mathcal{N}(0, n \sigma^2)) \le s^R_p(n, \sigma)$, where $s^R_p(n, \sigma)$ is a computable quantity defined in \Cref{nelson3}. 
By \Cref{lemma:bound_S_and_Z}, it follows that for any $\alpha_1 \leq \alpha$, there exists $Z_n \sim \mathcal{N}(0, n \sigma^2)$ such that, defining  
\[
\delta^\star := \inf_{\substack{p \in \mathbb{N}, \\ p \geq 2}} \frac{s^R_p(n, \sigma)}{\alpha_1^{1/p}},
\]  
we have  
\[
\mathbb{P}\big(|Z_n - S_n| \geq \delta^\star\big)\leq \alpha_0.
\]  
 Interestingly, $Z_n$ can be chosen to be measurable with respect to $\sigma(S_n,U_n)$, where $U_n$ is an independent $\Unif[0,1]$ random variable (see \Cref{lemma:PITdiscrete}).
 
 Once we have chosen $Z_n$, it remains to build $(Z_k)_{k\le n}$ in a way that ensures it has the correct joint distribution and that the difference $|Z_k-S_k|$ is small. As $Z_n\in \sigma(U_n,S_n)$, if $(\tilde Z_k)$ are normally distributed conditionally on $S_n$ and $U_n$, meaning that $(\tilde Z_k)\big|S_n,U_n\sim \mathcal{N}(0,\tilde\Sigma_n)$ where $\tilde \Sigma_{n,i,j}=\sigma^2(\min\{i,j\}-\frac{ij}{n})$, then the process $\big(\tilde Z_j+\frac{j}{n}Z_n\big)_{j\le n}$ is a centered Gaussian vector with covariance $\Sigma_n$ and is hence a candidate for $(Z_k)_{k\le n}$. The next step therefore consists of carefully building $(\tilde Z_k)_{k\le n}$ to satisfy:

\begin{assumption}[Conditionally coupled on $\mathcal{I} \subset\mathbb{N}$]\label{coupled} Let $a, b \in \mathbb{N}$ with $a \le b$, and define $\mathcal{I} := \Iintv*{a, b}$.
 Let $(\tilde Z_k)_{k\in \mathcal{I}}$ be a sequence of random variables. We say that $(\tilde Z_k)_{k\in \mathcal{I}}$ is conditionally coupled with $(X_i)_{i\in \mathcal{I}}$ on $\mathcal{I}$ if it is measurable with respect to $\sigma((X_i)_{i\in \mathcal{I}},(U_i)_{i\in \mathcal{I}})$ and its conditional marginal distribution satisfies $$(\tilde Z_k)_{k\in \mathcal{I}}\Big|\sum_{i\in\mathcal{I}}X_i,U_{{b}}\sim \mathcal{N}(0,\mathrm{Var}(W_{\mathcal{I}})),$$ where $W_{\mathcal{I}}$ designates the vector with $k$-th coordinate $W_{\mathcal{I},k}=\sum_{\ell\le k,\ell\in \mathcal{I}}X_\ell-\frac{|\Iintv{k}\cap \mathcal{I}|}{|\mathcal{I}|}\sum_{\ell\in \mathcal{I}}X_\ell$, for $k\in \mathcal{I}$.
\end{assumption}

 In the next lemma, we show how $(\tilde Z_k)_{k\le n}$ and $(\delta_k)_{k\le n}$ can be built recursively.


\begin{lemma}\label{ausecours}
    Let $(Y_i)_{i\ge 1}$ be generated according to \assumptionRS. Let $(\delta_k)_{k\le n}$ be a sequence of positive reals satisfying $\delta_k=\delta_{n-k}$ for all $k>n/2$. Suppose that $n$ is even and that one can construct $(\tilde Z_k^1)$ satisfying \Cref{coupled} with $\mathcal{I}=\Iintv*{1,\frac{n}{2}}$. For all $k\le n/2$, denote $W_k^1:=S_k-\frac{2k}{n}S_{n/2}$. Then there exists $(\tilde Z_k)$ satisfying \Cref{coupled} with  $\mathcal{I}=\Iintv*{1,n}$ such that the following holds
    \begin{align*}&
   \mathbb{P}\big(\exists k\le n~\textrm{s.t}~|W_k-\tilde Z_k|\ge \delta_k\big)\\\le& 2   \mathbb{P}\big(\exists k\le n/2~\textrm{s.t}~|W^1_k-\tilde Z^1_k|\ge \delta_k-\frac{2k}{n}\delta_{n/2}\big) -\mathbb{P}\big(\exists k\le n/2~\textrm{s.t}~|W^1_k-\tilde Z^1_k|\ge \delta_k-\frac{2k}{n}\delta_{n/2}\big)^2
   \\&+\inf_{p\ge 2}\Big(\frac{ \omega_p^R(n, \sigma)}{\delta_{n/2}}\Big)^p,
     \end{align*} where $ \omega_p^R(n, \sigma)$ is defined in \Cref{thm:main1}.
\end{lemma}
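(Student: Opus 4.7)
My plan is to mirror a Brownian-bridge decomposition on the Gaussian side. For $k \le n/2$ one has $W_k = W_k^1 + \tfrac{2k}{n}W_{n/2}$; setting $W_k^2 := (S_k - S_{n/2}) - \tfrac{2(k-n/2)}{n}(S_n - S_{n/2})$, one has $W_k = W_k^2 + \tfrac{2(n-k)}{n}W_{n/2}$ for $k > n/2$. I will assemble $(\tilde Z_k)$ from three pieces: the given $(\tilde Z_k^1)$; an analogous second-half sequence $(\tilde Z_k^2)$; and a midpoint proxy $\tilde Z_{n/2}^{\mathrm{mid}}$ for $W_{n/2}$ produced via the computable Wasserstein bound of \cref{thm:main1}.

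\textbf{Construction.} I would first apply the hypothesis verbatim to the re-indexed second half $(X_{n/2+i})_{i \le n/2}$ (with $U_n$ playing the role of $U_{n/2}$) to obtain $(\tilde Z_k^2)_{n/2 < k \le n}$ satisfying \cref{coupled} on $\Iintv{n/2+1,n}$; being $\sigma((X_i, U_i)_{i > n/2})$-measurable, it is independent of $(\tilde Z_k^1)$. I would then use \cref{thm:main1} to bound the relevant conditional Wasserstein-$p$ distance between $W_{n/2}$ and $\mathcal{N}(0, n\sigma^2/4)$ by $\omega_p^R(n,\sigma)$, and invoke \cref{lemma:bound_S_and_Z} together with $U_n$ to construct a measurable function $\tilde Z_{n/2}^{\mathrm{mid}}$ of $(S_{n/2}, S_n, U_n)$ such that $\tilde Z_{n/2}^{\mathrm{mid}} \mid (S_n, U_n) \sim \mathcal{N}(0, n\sigma^2/4)$ and
$$\mathbb{P}\bigl(|W_{n/2} - \tilde Z_{n/2}^{\mathrm{mid}}| \ge \delta_{n/2}\bigr) \;\le\; \inf_{p \ge 2}\bigl(\omega_p^R(n,\sigma)/\delta_{n/2}\bigr)^p.$$
Finally, I set $\tilde Z_k := \tilde Z_k^1 + \tfrac{2k}{n}\tilde Z_{n/2}^{\mathrm{mid}}$ for $k \le n/2$ and $\tilde Z_k := \tilde Z_k^2 + \tfrac{2(n-k)}{n}\tilde Z_{n/2}^{\mathrm{mid}}$ for $k > n/2$.

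\textbf{Error bound.} Subtracting the bridge identity from the definition of $\tilde Z_k$ and applying the triangle inequality gives $|W_k - \tilde Z_k| \le |W_k^i - \tilde Z_k^i| + \tfrac{2\min(k, n-k)}{n}|W_{n/2} - \tilde Z_{n/2}^{\mathrm{mid}}|$ (with $i=1$ or $2$ depending on which half contains $k$). Hence on the event $\{|W_{n/2} - \tilde Z_{n/2}^{\mathrm{mid}}| < \delta_{n/2}\}$, the failure $|W_k - \tilde Z_k| \ge \delta_k$ forces $|W_k^i - \tilde Z_k^i| \ge \delta_k - \tfrac{2\min(k, n-k)}{n}\delta_{n/2}$; using $\delta_k = \delta_{n-k}$ for $k > n/2$, both halves reduce to the same threshold $\delta_j - \tfrac{2j}{n}\delta_{n/2}$ indexed by $j \le n/2$. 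The two half-failure events $A$ and $B$ are measurable with respect to disjoint sigma-algebras (first-half vs.\ second-half data), hence independent, and have the same marginal probability $p$ by the distributional symmetry of the halves, giving $\mathbb{P}(A \cup B) = 2p - p^2$. A final union bound with the midpoint-failure event yields the claimed inequality.

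\textbf{Main obstacle.} The hardest part will be verifying that the combined $(\tilde Z_k)$ satisfies \cref{coupled} on $\Iintv{1,n}$, i.e., that its conditional distribution given $(S_n, U_n)$ is $\mathcal{N}(0, \mathrm{Var}(W))$. A direct variance computation confirms the decomposition is the Gaussian shadow of the orthogonal bridge decomposition: for $j \le k \le n/2$, $\mathrm{Cov}(W_j^1, W_k^1) + \tfrac{4jk}{n^2}\mathrm{Var}(W_{n/2}) = j(1-k/n)\sigma^2 = \mathrm{Cov}(W_j, W_k)$, with analogous identities across and within the second half. The delicate step is to show that conditional on $(S_{n/2}, S_n, U_n)$ the three pieces $(\tilde Z_k^1)$, $(\tilde Z_k^2)$, and $\tilde Z_{n/2}^{\mathrm{mid}}$ are jointly Gaussian and independent (with $\tilde Z_{n/2}^{\mathrm{mid}}$ degenerating to a deterministic function), and that integrating out $S_{n/2}$ against the Gaussian law of $\tilde Z_{n/2}^{\mathrm{mid}} \mid (S_n, U_n)$ yields a Gaussian mixture whose marginal is again Gaussian with exactly the covariance matrix $\mathrm{Var}(W)$.
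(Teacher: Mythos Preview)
Your overall strategy---the bridge decomposition $W_k = W_k^i + \tfrac{2\min(k,n-k)}{n}W_{n/2}$, a midpoint coupling controlled by the Wasserstein bound, and independence plus distributional symmetry of the two halves yielding $\mathbb{P}(A\cup B)=2p-p^2$---is exactly the paper's, and your error-bound argument is correct.

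The gap is in the midpoint construction. You take $\tilde Z_{n/2}^{\mathrm{mid}}$ to be a measurable function of $(S_{n/2}, S_n, U_n)$ and assert $\tilde Z_{n/2}^{\mathrm{mid}} \mid (S_n, U_n) \sim \mathcal{N}(0, n\sigma^2/4)$. But once you condition on $(S_n, U_n)$, your $\tilde Z_{n/2}^{\mathrm{mid}}$ becomes a deterministic function of $S_{n/2}$ alone; for discrete $X_i$ the law of $S_{n/2}\mid S_n$ is discrete, so no such push-forward can be Gaussian. The uniform variable that randomizes the midpoint must be \emph{independent} of the conditioning pair $(S_n, U_n)$ demanded by \cref{coupled} on $\Iintv*{1,n}$; using $U_n$ itself discards exactly the randomization you need. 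The paper instead uses $U_{n/2}$ together with the conditional CDF of $W_{n/2}$ given the full multiset $\mathcal{U}(X_{1:n})$ (not merely the sums): setting $\tilde Z_{n/2}=\tfrac{\sqrt{n}\sigma}{2}\Phi^{-1}\bigl(\tilde F^{U_{n/2}}_{n/2}(W_{n/2})\bigr)$, \cref{lemma:PITdiscrete} gives $\tilde Z_{n/2}\mid\mathcal{U}(X_{1:n})\sim\mathcal{N}(0,n\sigma^2/4)$, and since $\tilde Z_{n/2}$ does not involve $U_n$ and $S_n\in\sigma(\mathcal{U}(X_{1:n}))$, the tower property yields $\tilde Z_{n/2}\mid(S_n,U_n)\sim\mathcal{N}(0,n\sigma^2/4)$. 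Note that this genuinely requires $\mathcal{U}(X_{1:n})$, not just $(S_{n/2},S_n)$; that is also what \cref{thm:main1} and \cref{boy_breaks} deliver, whereas \cref{lemma:bound_S_and_Z} as stated only produces an unconditional coupling. With this correction to the choice of uniform and conditioning, the remainder of your argument goes through exactly as in the paper.
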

\begin{proof}
Firstly, we denote $$\tilde{W}^1_k = S_{k}-\frac{2k}{n}S_{n/2}\quad\text{for }k=1,\dots, n/2,$$ and $$\tilde{W}^2_k = S_{k}-S_{n/2}-\frac{2(k-n/2)}{n}(S_{n}-S_{n/2})\quad\text{for }k=n/2,\dots, n.$$
Let $(\tilde Z^1_i)_{i\le n/2}$ be a Gaussian vector satisfying \Cref{coupled} with $\mathcal{I}=\Iintv*{1,\frac{n}{2}}$. Note that as $(X_i)_{i>\frac{n}{2}}$ have the same distribution as $(X_i)_{i\le \frac{n}{2}}$, we have that there also exists $(\tilde Z^2_i)_{i>\frac{n}{2}}$ that satisfy \Cref{coupled} with $\mathcal{I}=\Iintv*{n/2+1,n}$ and is such that 
\begin{align*}
  \Big((\tilde W_k^1)_{k\le n/2},(\tilde Z_k^1)_{k\le n/2} \Big)\overset{d}{=} -\Big((\tilde W_{n-k}^2)_{k\le n/2},(\tilde Z_{n-k}^2)_{k< n/2} \Big).
\end{align*}  
We remark that conditionally on $\mathcal{U}(X_{1:n})$, $W_{n/2}$ is a discrete random variable. Let $F_{n/2}(\cdot|\mathcal{U}(X_{1:n}))$ designates its conditional CDF and $p_{n/2}(\cdot|\mathcal{U}(X_{1:n}))$ its conditional p.m.f.  We define  \begin{equation}\label{uc}
\tilde F^{U_{n/2}}_{n/2}(x) =F_{n/2}(x^-|\mathcal{U}(X_{1:n})) + U_{n/2} \cdot p_{n/2}(x|\mathcal{U}(X_{1:n})).
\end{equation} and set $$\tilde Z_{n/2}:=\frac{\sqrt{n}\sigma}{2}\Phi^{-1}(\tilde F^{U_{n/2}}_{n/2}(W_{n/2})).$$Using \Cref{lemma:PITdiscrete}, we know that $$\tilde Z_{n/2}|U_n,S_n\sim \mathcal{N}(0,\frac{n\sigma^2}{4}).$$ We will build $(\tilde Z_k)$ by linear interpolation between $(\tilde Z_k^1),(\tilde Z_k^2)$, and $\tilde Z_{n/2}$. More precisely, we define
\begin{equation}\label{eqn:def_Z_k_ell}
        \tilde Z_k: = 
        \begin{cases}
            \tilde Z^{1}_k + \dfrac{2k}{n}\tilde Z_{n/2}\quad & \text{if } k< n/2, \\
            \tilde Z_{n/2} \quad & \text{if } k= n/2, \\
            \tilde{Z}^2_{k}+ \dfrac{2n-2k}{n}\tilde Z_{n/2} \quad & \text{if } k> n/2.
        \end{cases}
\end{equation} 
We first remark that, since $(\tilde Z_k^1)$ and $(\tilde Z_k^2)$ are chosen to satisfy \Cref{coupled} on $\mathcal{I}=\Iintv*{1,\frac{n}{2}}$ and $\Iintv*{n/2+1,n}$ respectively, we obtain that, conditionally on $U_n,U_{n/2},S_{n/2},\text{ and }S_n-S_{n/2}$, the random vector $(\tilde Z_k)$ is Gaussian with mean $\Big(\frac{2}{n}\tilde Z_{n/2},\dots,\tilde Z_{n/2},\dots, \frac{2}{n}\tilde Z_{n/2}\Big)^\top$ and covariance \[
\begin{bmatrix}
\mathrm{Var}(W_{1:\frac{n}{2}}) &\mathbf{0}\\
\mathbf{0} & \mathrm{Var}(W_{1:n/2})
\end{bmatrix}.
\]  Hence, using the fact that Gaussian distributions are conjugate priors, we obtain that $(\tilde Z_k)$ satisfies \Cref{coupled} for $\mathcal{I}=\Iintv*{1,n}$. Hence, to prove \Cref{ausecours}, it remains to bound $|\tilde Z_k-W_k|$ carefully. In this goal, we note, by a union bound argument, that
\begin{align*}
    &\mathbb{P}(\exists k\le n \textrm{ s.t. } |W_k-\tilde Z_k|\ge \delta_k)\\
    \le& \mathbb{P}\Big(|W_{n/2}-\tilde Z_{n/2}|\ge \delta_{n/2} \Big)+\mathbb{P}\Big( |W_{n/2}-\tilde Z_{n/2}|< \delta_{n/2} \textrm{ and }\exists k\le n~|W_k-\tilde Z_k|\ge \delta_k \Big)\\
    =: & \mathbb{P}\Big(|W_{n/2}-\tilde Z_{n/2}|\ge \delta_{n/2} \Big)+(D).
\end{align*} 
We bound each term successively. Firstly, using Markov's inequality, we obtain that for all $p\in \mathbb{N}$, we have 
\begin{align*}
    \mathbb{P}\Big(|W_{n/2}-\tilde Z_{n/2}|\ge \delta_{n/2}\Big)&\le \frac{\|W_{n/2}-\tilde Z_{n/2}\|_p^p}{\delta_{n/2}^p}.
\end{align*}Moreover, according to \Cref{boy_breaks}, we have that $$\|W_{n/2}-\tilde Z_{n/2}\|_p=\Big\|\mathcal{W}_p\big(W_{n/2},\mathcal{N}(0,\frac{\sigma^2n}{4})\big|\mathcal{U}(X_{1:n})\big)\big\|_p.$$ This directly implies that \begin{align*}
    \mathbb{P}\Big(|W_{n/2}-\tilde Z_{n/2}|\ge \delta_{n/2}\Big)&\le \inf_{p\ge 2}\frac{\Big\|\mathcal{W}_p\big(W_{n/2},\mathcal{N}(0,\frac{\sigma^2n}{4})\big|\mathcal{U}(X_{1:n})\big)\Big\|_p^p}{\delta_{n/2}^p}.
\end{align*} According to \Cref{thm:main1} we know that for all $p\ge 2$, 
\begin{align}\label{eqn:computable_upper_bound_intro}
   \Big\|\mathcal{W}_p\big(W_{n/2},\mathcal{N}(0,\frac{\sigma^2n}{4})\big|\mathcal{U}(X_{1:n})\big)\Big\|_p\le \omega_p^R(n,\sigma).
\end{align}
The definition of $\omega_p^R(n,\sigma)$ can be found in \Cref{thm:main1}. Therefore, we have \begin{align*}
    \mathbb{P}\Big(|W_{n/2}-\tilde Z_{n/2}|\ge \delta_{n/2}\Big)&\le \inf_{p\ge 2}\Big\{\frac{\omega_p^R(n,\sigma)}{\delta_{n/2}}\Big\}^p,
\end{align*}
Then to bound $(D)$, suppose that $|W_{n/2}-\tilde Z_{n/2}|<\delta_{n/2}$ but that there exists $ k\le n$ such that $|W_k-\tilde Z_k|\ge\delta_k.$ If $k<\frac{n}{2}$, then we have\begin{align*}
\delta_k\ge& \big|W_k-\tilde Z_k\big|\\
=&\Big| W_k^1-\tilde Z^1_k+\frac{k}{n/2}(W_{n/2}-\tilde Z_{n/2})\Big|
\\\le& \Big| W_k^1-\tilde Z^1_k\Big|+\frac{k}{n/2}\Big|W_{n/2}-\tilde Z_{n/2}\Big|
\\\le&  \Big| W_k^1-\tilde Z^1_k\Big|+\frac{k}{n/2}\delta_{n/2},
\end{align*}which implies that $$\big|W_k^1-\tilde Z^1_k\big|\ge \delta_k-\frac{k}{n/2}\delta_{n/2}.$$
Moreover, similarly if $k>n/2$, then since 
$$W_k-\tilde Z_k-\frac{n-k}{n/2}(W_{n/2}-\tilde Z_{n/2})=  W_{k-n/2}^2-\tilde  Z^{2}_{k-n/2},$$ we have $$\big|W_k^2-\tilde Z^2_k\big|\ge \delta_k-\frac{n-k}{n/2}\delta_{n/2}.$$ Hence this implies that 
\begin{align*}
(D)\le& \mathbb{P}\Big(\exists k< n/2,| W_k^1 -\tilde Z_k^1|\ge \delta_k -\frac{k}{n/2}\delta_{n/2}\textrm{ or }~ \exists k> n/2, |W_k^2 -\tilde Z_k^{2}|\ge \delta_k-\frac{n-k}{n/2}\delta_{n/2}\Big).
\end{align*}
We remark that $(\tilde Z_k^1)$ is, by assumption, measurable with respect to $X_{1:n/2}$ and $U_{1:n/2}$. Similarly, by assumption, we have $(\tilde Z_k^2)$ is measurable with respect to $X_{n/2+1:n}$ and $U_{n/2+1:n}$.
 Hence,  we obtain that \begin{align*}&
     \mathbb{P}\Big(\exists k<n/2~\textrm{s.t}~\big|W_k^1-\tilde Z^1_k\big|\ge \delta_k-\frac{k}{n/2}\delta_{n/2}~\textrm{or}~\exists k>n/2 \textrm{~s.t~}|W_k^2 -\tilde Z_k^{2}|\ge \delta_k-\frac{n-k}{n/2}\delta_{n/2}\Big)
     \\\overset{(a)}{=}&\mathbb{E}\Big[\mathbb{I}\Big(\exists k<n/2~\textrm{s.t,}~\big|W_k^1-\tilde Z^1_k\big|\ge \delta_k-\frac{k}{n/2}\delta_{n/2}\Big)\\&\qquad\mathbb{P}\Big(\exists k>n/2 \textrm{~s.t.~}|W_k^2 -\tilde Z_k^{2}|\ge \delta_k-\frac{n-k}{n/2}\delta_{n/2}\Big|X_{1:n/2},U_{1:n/2}\Big)\Big]
     \\\overset{(b)}{=}&\mathbb{E}\Big[\mathbb{I}\Big(\exists k<n/2~\textrm{s.t,}~\big|W_k^1-\tilde Z^1_k\big|\ge \delta_k-\frac{k}{n/2}\delta_{n/2}\Big)\\&\times\mathbb{P}\Big(\exists k>n/2 \textrm{~s.t.~}|W_k^2 -\tilde Z_k^{2}|\ge \delta_k-\frac{n-k}{n/2}\delta_{n/2}\Big)\Big]
     \\\overset{}{=}&\mathbb{P}\Big(\exists k<n/2~\textrm{s.t.}~\big|W_k^1-\tilde Z^1_k\big|\ge \delta_k-\frac{k}{n/2}\delta_{n/2}\Big)\\&\times\mathbb{P}\Big(\exists k>n/2 \textrm{~s.t.~}|W_k^2 -\tilde Z_k^{2}|\ge \delta_k-\frac{n-k}{n/2}\delta_{n/2}\Big),
\end{align*} where (a) comes from the tower rule, (b) from the fact that $(W_k^2,\tilde Z_k^2)$ are independent of $X_{1:n/2}$ and $U_{1:n/2}$. 
Hence, as for any events $A,B$, we have $\mathbb{P}(A\cup B)=\mathbb{P}(A)+\mathbb{P}(B)-\mathbb{P}(A\cap B)$, we obtain that 
\begin{align*}
    (D)&\le \mathbb{P}\Big(\exists k<n/2~\textrm{s.t}~\big|W_k^1-\tilde Z^1_k\big|\ge \delta_k-\frac{k}{n/2}\delta_{n/2}\Big)\\&+\mathbb{P}\Big(\exists k>n/2 \textrm{~s.t~}|W_k^2 -\tilde Z_k^{2}|\ge \delta_k-\frac{n-k}{n/2}\delta_{n/2}\Big)
    \\&-\mathbb{P}\Big(\exists k<n/2~\textrm{s.t}~\big|W_k^1-\tilde Z^1_k\big|\ge \delta_k-\frac{k}{n/2}\delta_{n/2}\Big)\\&\qquad\times\mathbb{P}\Big(\exists k>n/2 \textrm{~s.t~}|W_k^2 -\tilde Z_k^{2}|\ge \delta_k-\frac{n-k}{n/2}\delta_{n/2}\Big).
 \end{align*} 
Finally, we remark that
\begin{align*}
  \Big((\tilde W_k^1)_{k\le n/2},(\tilde Z_k^1)_{k\le n/2} \Big)\overset{d}{=} -\Big((\tilde W_{n-k}^2)_{k\le n/2},(\tilde Z_{n-k}^2)_{k< n/2} \Big).
\end{align*}  Hence, as $\delta_{n-k}=\delta_k$, we obtain that\begin{align*}
    &\mathbb{P}\Big(\exists k>n/2 \textrm{~s.t~}|W_k^2 -\tilde Z_k^{2}|\ge \delta_k-\frac{n-k}{n/2}\delta_{n/2}\Big)\\&=\mathbb{P}\Big(\exists k<n/2~\textrm{s.t}~\big|W_k^1-\tilde Z^1_k\big|\ge \delta_{k}-\frac{k}{n/2}\delta_{n/2}\Big).
\end{align*}
This directly implies that \begin{align*}
    (D)&\le2 \mathbb{P}\Big(\exists k<n/2~\textrm{s.t}~\big|W_k^1-\tilde Z^1_k\big|\ge \delta_k-\frac{k}{n/2}\delta_{n/2}\Big)
    \\&-\mathbb{P}\Big(\exists k<n/2~\textrm{s.t}~\big|W_k^1-\tilde Z^1_k\big|\ge \delta_k-\frac{k}{n/2}\delta_{n/2}\Big)^2.
\end{align*} 
\end{proof}

Lemma \ref{ausecours} tells us that if we choose  
\[
\delta_{n/2} \geq \inf_{p \geq 2} \alpha_0^{1/p} \omega_p^R(n,\sigma),
\]  
then, as long as we can construct $(\tilde{Z}_k^1)$ to satisfy \Cref{coupled} with $\mathcal{I} = \Iintv*{1, n/2}$, there exists $(\tilde{Z}_k)$ that satisfies \Cref{coupled} with $\mathcal{I} = \Iintv*{1, n}$ such that  
\begin{align*}
  \mathbb{P}\big(\exists k \leq n \textrm{ s.t. } |W_k - \tilde{Z}_k| \geq \delta_k\big) 
  &\leq 2 \mathbb{P}\big(\exists k \leq n/2 \textrm{ s.t. } |W^1_k - \tilde{Z}^1_k| \geq \delta_k - \frac{k}{n/2} \delta_{n/2}\big) \\
  &\quad - \mathbb{P}\big(\exists k \leq n/2 \textrm{ s.t. } |W^1_k - \tilde{Z}^1_k| \geq \delta_k - \frac{k}{n/2} \delta_{n/2}\big)^2 \\
  &\quad + \alpha_0.
\end{align*}

Hence, it remains to choose $\delta_k$ for all $k < n/2$. However, we note that this step involves only half of the data, namely $X_1, \dots, X_{n/2}$, suggesting that constructing $(\delta_k)$ can be done inductively.

\subsection{Algorithm and theoretical guarantees}\label{theoretical_guarantees}
In this subsection, we exploit \Cref{ausecours} to introduce our main algorithm, which allows us to couple both $(W_k)$ with Gaussian vectors, and hence to obtain a coupling for $(S_k)$. To this end, for all $\ell \in \mathbb{N}$, we denote  
\[
W_k^\ell = S_k - \frac{k}{2^{\ell}} S_{2^\ell}\qquad \forall k\le 2^\ell.
\]  
Moreover, we set $\delta_1^0:=0$. For all $\ell \geq 1$ and $\alpha>0$, define $\delta_{2^{\ell-1}}^\ell$ as:  
\[
\delta_{2^{\ell-1}}^\ell(\alpha) := \inf_{p \geq 2} \alpha^{1/p} \omega_p^R(n, \sigma),
\]  
where $\omega_p^R(n, \sigma)$ is the computable upper bound introduced in \Cref{eqn:computable_upper_bound_intro} and defined in \Cref{thm:main1}. For all other $k \leq 2^\ell$, define $\delta_k^\ell(\alpha)$ recursively as follows:  
\begin{align}\label{autre}
\delta_k^\ell(\alpha) :=
\begin{cases}
    \delta_k^{\ell-1}(\alpha) + \frac{k}{2^{\ell-1}} \delta_{2^{\ell-1}}^{\ell}(\alpha), & \text{if } k \leq 2^{\ell-1}, \\ 
    \delta_{2^\ell-k}^{\ell-1} (\alpha)+ \frac{2^\ell - k}{2^{\ell-1}} \delta_{2^{\ell-1}}^{\ell}(\alpha), & \text{if } k > 2^{\ell-1}.
\end{cases}
\end{align}  

With this definition, we can establish the following result.
\begin{theorem}\label{ts11}
    Suppose that $(Y_i)_{i\ge 1}$ satisfies \assumptionRS. Let $\alpha_0>0$ be a constant and $n\in \mathbb{N}^\star$. Let $(\delta_k)$ be defined as $\delta_k:=\delta_k^L(\alpha_0)$ where $L=\lceil\log_2 n \rceil$. Then there exists $(\tilde Z_k)$ satisfying \Cref{coupled} with $\mathcal{I}=\Iintv{1,2^L}$ such that the following holds $$\mathbb{P}\Big(\exists k\le n~\textrm{s.t.}~|W_k-\tilde Z_k|> \delta_k\Big)\le \beta_L,$$ where  $\beta_L$ is defined recursively by $\beta_0 := 0$ and $\beta_\ell := 2\beta_{\ell - 1} - \beta_{\ell - 1}^2 + \alpha_0$ for all $\ell \ge 1$.
\end{theorem}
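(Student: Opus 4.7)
The plan is to prove \cref{ts11} by induction on $\ell \in \{0, 1, \dots, L\}$, with inductive hypothesis: there exists a process $(\tilde Z_k^\ell)_{k \le 2^\ell}$ satisfying \cref{coupled} with $\mathcal{I} = \Iintv{1, 2^\ell}$ such that
\[
\mathbb{P}\Big(\exists k \le 2^\ell~\textrm{s.t.}~|W_k^\ell - \tilde Z_k^\ell| > \delta_k^\ell(\alpha_0)\Big) \le \beta_\ell.
\]
Applying this at $\ell = L$ then yields the theorem: since $n \le 2^L$, restricting the event to $k \le n$ only shrinks it, and the $W_k$ appearing in the theorem statement coincides with $W_k^L$ once one unpacks \cref{coupled} with $\mathcal{I} = \Iintv{1, 2^L}$. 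The base case $\ell = 0$ is immediate: $W_1^0 = S_1 - S_1 = 0$ and $\delta_1^0 = 0$, so setting $\tilde Z_1^0 := 0$ yields a degenerate Gaussian satisfying \cref{coupled} on $\{1\}$, with both sides of the displayed inequality equal to $0 = \beta_0$.

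For the inductive step I would first verify the symmetry $\delta_k^\ell(\alpha_0) = \delta_{2^\ell - k}^\ell(\alpha_0)$ required by \cref{ausecours}; this follows directly by inspecting the two cases of the recursion \eqref{autre}. I then apply \cref{ausecours} at scale $n = 2^\ell$, using as the lemma's process $(\tilde Z_k^1)_{k \le 2^{\ell-1}}$ the $(\tilde Z_k^{\ell-1})_{k \le 2^{\ell-1}}$ supplied by the inductive hypothesis. The key algebraic identity that closes the induction is the very definition \eqref{autre}: for $k \le 2^{\ell-1}$,
\[
\delta_k^\ell(\alpha_0) - \frac{k}{2^{\ell-1}}\, \delta_{2^{\ell-1}}^\ell(\alpha_0) = \delta_k^{\ell-1}(\alpha_0),
\]
so the ``inner'' event appearing in \cref{ausecours}'s conclusion is exactly the event bounded by $\beta_{\ell-1}$ via the inductive hypothesis. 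Moreover, the choice $\delta_{2^{\ell-1}}^\ell(\alpha_0) = \inf_{p \ge 2} \alpha_0^{1/p}\, \omega_p^R(2^\ell, \sigma)$ is tailored precisely so that the Wasserstein remainder term $\inf_{p\ge 2}\bigl(\omega_p^R(2^\ell,\sigma)/\delta_{2^{\ell-1}}^\ell(\alpha_0)\bigr)^p$ of \cref{ausecours} equals $\alpha_0$. Since $x \mapsto 2x - x^2 = 1 - (1-x)^2$ is non-decreasing on $[0,1]$, combining these observations yields
\[
\mathbb{P}\Big(\exists k \le 2^\ell~\textrm{s.t.}~|W_k^\ell - \tilde Z_k^\ell| > \delta_k^\ell(\alpha_0)\Big) \le 2\beta_{\ell-1} - \beta_{\ell-1}^2 + \alpha_0 = \beta_\ell,
\]
as desired.

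The main obstacle is purely bookkeeping rather than conceptual: one must confirm that the process $(\tilde Z_k^\ell)$ output by \cref{ausecours}---built by linear interpolation between the two half-interval couplings and the central PIT-based Gaussianization $\tilde Z_{2^{\ell-1}}$---is measurable with respect to $(X_i, U_i)_{i \le 2^\ell}$ and has the conditional Gaussian structure demanded by \cref{coupled} on $\Iintv{1, 2^\ell}$, so that it is eligible to play the role of $(\tilde Z_k^1)$ at the next level. This measurability and conditional distribution are already established inside the proof of \cref{ausecours}, so the argument for \cref{ts11} is essentially a direct iteration of that lemma, with the recursion \eqref{autre} serving as the exact algebraic counterpart of the probabilistic recursion $\beta_\ell = 2\beta_{\ell-1} - \beta_{\ell-1}^2 + \alpha_0$.
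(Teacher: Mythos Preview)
Your proposal is correct and follows essentially the same route as the paper: an induction on $\ell$ that invokes \cref{ausecours} at each step, with the recursion \eqref{autre} doing the algebraic work of matching the inner event to the inductive hypothesis and the definition of $\delta_{2^{\ell-1}}^\ell$ absorbing the Wasserstein remainder as $\alpha_0$. In fact your write-up is more explicit than the paper's own proof---you spell out the symmetry check $\delta_k^\ell=\delta_{2^\ell-k}^\ell$, the key identity $\delta_k^\ell-\tfrac{k}{2^{\ell-1}}\delta_{2^{\ell-1}}^\ell=\delta_k^{\ell-1}$, and the monotonicity of $x\mapsto 2x-x^2$ on $[0,1]$, all of which the paper leaves implicit.
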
The proof can be found in \Cref{triste}. 
 This can be further used to couple $(S_k)$ with a Gaussian vector and obtain the following result:
 
\begin{corollary}\label{all_too_well}
    Suppose that \Cref{ts11} holds with some $\alpha_0>0$ and  $n=2^L$ for $L \in \mathbb{N}$. Let $\alpha_1>0$ be a constant, and define $\delta^\star(\alpha_1):=\inf_{p\ge 2}\alpha_1^{1/p}s_p^R(n,\sigma)$, where $s_p^R(n,\sigma)$ is defined in \Cref{nelson3}. Then there exists a centered Gaussian vector $(Z_k)_{1\le k\le n}$ with the same covariance structure as $(S_k)_{1\le k\le n}$ such that
    \begin{align}
        \mathbb{P}\Big(\exists k\le n~\textrm{s.t.}~|S_k- Z_k|\ge \delta_k+\frac{k}{n}\delta^*(\alpha_1)\Big)\le \beta_L+\alpha_1,
    \end{align} where $\delta_k := \delta_k^L(\alpha_0)$ and $\beta_L$ is defined recursively by $\beta_0 := 0$ and $\beta_\ell := 2\beta_{\ell-1} - \beta_{\ell-1}^2 + \alpha_0$ for all $\ell \ge 1$.
\end{corollary}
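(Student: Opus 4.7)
My plan is to lift the Brownian-bridge coupling of Theorem \ref{ts11} to a full Gaussian coupling of $(S_k)_{k\le n}$ by adding a Gaussian approximation of the endpoint $S_n$. First, I would invoke Theorem \ref{ts11} with the prescribed $\alpha_0$ to produce a process $(\tilde Z_k)_{k\le n}$ satisfying Assumption \ref{coupled} on $\mathcal{I}=\Iintv{1,n}$ together with
$$\mathbb{P}\bigl(\exists k\le n~\mathrm{s.t.}~|W_k-\tilde Z_k|>\delta_k\bigr)\le \beta_L.$$
Next, following the discussion preceding \cref{autre}, I would use Proposition \ref{nelson3} to get $\mathcal{W}_p(S_n,\mathcal{N}(0,n\sigma^2))\le s_p^R(n,\sigma)$, then apply Lemma \ref{lemma:PITdiscrete} with the auxiliary uniform $U_n$ to construct a random variable $Z_n$ measurable with respect to $\sigma(S_n,U_n)$ with $Z_n\sim\mathcal{N}(0,n\sigma^2)$, and finally combine Lemma \ref{lemma:bound_S_and_Z} with Markov's inequality to obtain
$$\mathbb{P}\bigl(|S_n-Z_n|\ge \delta^\star(\alpha_1)\bigr)\le \alpha_1.$$
With these two ingredients in hand, I would define $Z_k:=\tilde Z_k+\tfrac{k}{n}Z_n$ and verify the two required properties.

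For the distributional claim, the key observation is that by Assumption \ref{coupled} the conditional law of $(\tilde Z_k)$ given $(S_n,U_n)$ is $\mathcal{N}(0,\mathrm{Var}(W_{[1,n]}))$, a fixed Gaussian whose parameters do not depend on the conditioning value; hence $(\tilde Z_k)$ is independent of $(S_n,U_n)$, and in particular of $Z_n$. Since both $(\tilde Z_k)$ and $Z_n$ are centered Gaussian, their sum $(Z_k)$ is jointly Gaussian, and a direct covariance computation gives
$$\mathrm{Cov}(Z_i,Z_j)=\sigma^2\Bigl(\min(i,j)-\frac{ij}{n}\Bigr)+\frac{ij}{n^2}\cdot n\sigma^2=\sigma^2\min(i,j),$$
so $(Z_k)\sim\mathcal{N}(0,\Sigma_n)$, as required.

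For the deviation bound, I would apply the triangle inequality to the identity $S_k-Z_k=(W_k-\tilde Z_k)+\tfrac{k}{n}(S_n-Z_n)$, yielding
$$|S_k-Z_k|\le |W_k-\tilde Z_k|+\frac{k}{n}|S_n-Z_n|.$$
Consequently, on the intersection of the two ``good'' events above one has $|S_k-Z_k|\le \delta_k+\tfrac{k}{n}\delta^\star(\alpha_1)$ simultaneously for every $k\le n$, and a union bound over the two ``bad'' events gives a failure probability of at most $\beta_L+\alpha_1$. The only delicate step is the independence of $(\tilde Z_k)$ from $Z_n$, which is exactly the reason Assumption \ref{coupled} is phrased as a conditional-law-independent-of-conditioning-value statement; once that is observed the rest is a routine assembly of Theorem \ref{ts11}, Proposition \ref{nelson3}, Lemma \ref{lemma:PITdiscrete}, and Lemma \ref{lemma:bound_S_and_Z}.
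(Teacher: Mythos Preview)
Your proposal is correct and follows essentially the same route as the paper: construct $Z_n\in\sigma(S_n,U_n)$ via Lemma~\ref{lemma:bound_S_and_Z} and Proposition~\ref{nelson3}, take $(\tilde Z_k)$ from Theorem~\ref{ts11}, set $Z_k:=\tilde Z_k+\tfrac{k}{n}Z_n$, and combine the triangle inequality with a union bound. The only cosmetic difference is that the paper phrases the joint-Gaussianity step as ``conjugacy of Gaussian priors with Gaussian likelihoods,'' whereas you argue it more directly via the independence of $(\tilde Z_k)$ from $(S_n,U_n)$ that Assumption~\ref{coupled} entails; both arguments are equivalent.
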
 The proof can be found in \Cref{triste}. We note that, for a fixed $\alpha_0$, $\beta_L$ increases toward $1$ as $L=\log_2(n)$ increases. Therefore, to guarantee a given coverage level, $\alpha_0$ must depend on the sample size $n$. For a given $n$, we choose $\alpha_0$ algorithmically to obtain the desired coverage level. This is implemented in \Cref{ts122} for a specified coverage $\alpha>0$.

\begin{algorithm}[h]
\caption{ Computing $(\Delta_k(\alpha))_{k\le n}$}\label{alg:main_delta_computation}\label{ts122}
\DontPrintSemicolon
\KwIn{An integer \(n >0\), $R>0$, $\sigma>0$, total probability budget $\alpha > 0$  }
\KwOut{Thresholds $(\Delta_k(\alpha))_{k\le n}$}

\textbf{Define:} \(L:=\lceil \log_2 n\rceil\)\;

\tcp{Step 1: Determine $\nu_0$ based on the chosen $\alpha$}
\textbf{Initialize for $\nu_0$ search:} Choose an initial guess for \( \nu_0 \) and set $\beta_0(\nu_0)=0$\;

\textbf{Compute $\beta_k$ recursively:}

\For{$k = 1$ \KwTo $L$}{
    $\beta_k(\nu_0) \gets 2\beta_{k-1}(\nu_0) - \beta_{k-1}^2(\nu_0) + \nu_0$\;
}
\textbf{Finalize $\nu_0$:} Find the largest \( \nu_0^* \) such that \( \beta_L(\nu_0^*) \le \alpha\)\;

\tcp{Step 2: Calculate intermediate $\delta^M_k$ values using $\nu_0^*$}
$\delta_1^0 \gets 0$\;
\For{$M = 1$ \KwTo $L$}{
    $\delta_{2^{M-1}}^M\gets  \min_{2 \le p }\frac{\omega_p^R(2^M,\sigma)}{(\nu_0^*)^{1/p}}$\;
    \For{$k = 1$ \KwTo $2^{M-1}$}{
        \(\delta_k^M \gets \delta_k^{M-1} + \frac{k}{2^{M-1}} \delta_{2^{M-1}}^M\)\;
    }
    \For{$k = 2^{M-1}+1$ \KwTo $2^M-1$}{
        \(\delta_k^M \gets \delta_{2^{M}-k}^{M-1} + \frac{2^{M}-k}{2^{M-1}} \delta_{2^{M-1}}^M\)\;
    }
}

Let $\delta^L_k$ denote the final values $\delta_k^L$ from this step.


\For{$k = 1$ \KwTo $n$}{
    $\Delta_k(\alpha) \gets \delta^L_k$\;
}
\Return{$\Delta_k(\alpha)$}
\end{algorithm}


\begin{algorithm}
\caption{ Computing $(\mathcal{D}_k(\alpha))_{k\le n}$}\label{ts12}
\DontPrintSemicolon
\KwIn{An integer \(n>0\), $R>0$, $\sigma>0$, total probability budget $\alpha > 0$}
\KwOut{Thresholds $(\mathcal{D}_k(\alpha))$ minimized by optimal choice of $\alpha_0$}

\textbf{Define:} \(L:=\lceil \log_2 n\rceil\)\;

\tcp{Step 1: For any candidate split $\alpha_0 \in (0, \alpha)$, compute thresholds}
Given $\alpha_0 \in (0, \alpha)$ and $\alpha_1 := \alpha - \alpha_0$: 

\Indp
    \tcp{Step 1a: Compute $\Delta_k$ using $\alpha_0$}
    \For{$k = 1$ \KwTo $n$}{
        $\Delta_k(\alpha_0) \gets \text{output of \cref{ts122} with input } n \text{ and } \alpha_0$\;
    }

    \tcp{Step 1b: Compute $\delta^\star$ using $\alpha_1$}
    $\delta^\star(\alpha_1) \gets \min_{2 \le p}\frac{s_p^R(n,\sigma)}{(\alpha_1)^{1/p}}$\;

    \tcp{Step 1c: Compute $\mathcal{D}_k(\alpha)$, values to be minimized}
    \For{$k = 1$ \KwTo $n$}{
        $\mathcal{D}_k(\alpha) \gets \Delta_k(\alpha_0) + \frac{k}{n}\delta^\star(\alpha_1)$\;
    }
\Indm
\tcp{Step 2: Optimize the split}
Find $\alpha_0^* \in (0, \alpha)$ that minimizes $\max_{k \le n} \mathcal{D}_k(\alpha)$. Let $\alpha_1^* := \alpha- \alpha_0^*$

\For{$k = 1$ \KwTo $n$}{
$\mathcal{D}_k(\alpha) \gets \Delta_k(\alpha^*_0) + \frac{k}{n}\delta^\star(\alpha_1^*)$\;
}
\tcp{Step 3: Return optimized thresholds}
\Return{$(\mathcal{D}_k(\alpha))_{k \le n}$}
\end{algorithm}






\begin{theorem}\label{thm:main3}Suppose that $(Y_i)_{i\ge 1}$ satisfy \assumptionRS. 
Let $\alpha>0$ and $n\in\mathbb{N}^\star$. Define $
(\Delta_k(\alpha))$ as the output of \Cref{ts122}.
Then there exists a centered Gaussian vector $\tilde Z:=(\tilde Z_k)_{0\le k\le n}$ with the same covariance structure as $(W_k)_{0\le k\le n}$ such that the following inequality holds:
\begin{equation*}
    \mathbb{P}(\exists k\le n\textrm{ s.t. }\big|W_k-\tilde {Z}_k\big|\ge \Delta_k(\alpha))\le\alpha.
\end{equation*}
\end{theorem}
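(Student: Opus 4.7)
The plan is to derive \cref{thm:main3} as a direct corollary of \cref{ts11} by a careful choice of $\alpha_0$ governed by \cref{ts122}. The key observation is that the thresholds $\Delta_k(\alpha)$ produced by \cref{ts122} coincide with $\delta_k^L(\nu_0^\star)$, where $\nu_0^\star$ is the largest value in $(0,1)$ satisfying $\beta_L(\nu_0^\star)\le \alpha$ for the recursion $\beta_0=0$, $\beta_\ell=2\beta_{\ell-1}-\beta_{\ell-1}^2+\nu_0$.

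First, I would verify that $\nu_0^\star$ is well-defined. Since $\beta_\ell$ is a composition of monotone increasing polynomials in $\nu_0$, $\beta_L$ is a continuous, strictly increasing function of $\nu_0\in [0,1]$, with $\beta_L(0)=0$ and $\beta_L(1)=1$. Consequently, the equation $\beta_L(\nu_0)=\alpha$ admits a unique solution $\nu_0^\star\in(0,\alpha]$ for each $\alpha\in(0,1)$, which coincides with the value computed in Step~1 of \cref{ts122}.

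Next, if $n$ is not a power of two, I extend $(Y_i)_{i\le n}$ to an i.i.d.\ sequence $(Y_i)_{i\le 2^L}$ drawn from the same distribution, so that $L=\lceil\log_2 n\rceil$ and all recursive constructions of \cref{ausecours} and \cref{ts11} apply on $\Iintv{1,2^L}$. Applying \cref{ts11} with this extended sequence and with probability budget $\alpha_0=\nu_0^\star$, I obtain a Gaussian vector $(\tilde Z_k)_{0\le k\le 2^L}$ satisfying \cref{coupled} on $\mathcal{I}=\Iintv{1,2^L}$, and such that
\begin{equation*}
\mathbb{P}\bigl(\exists k\le n~\text{s.t.}~|W_k-\tilde Z_k|>\delta_k^L(\nu_0^\star)\bigr)\le \beta_L(\nu_0^\star).
\end{equation*}
By definition of $\nu_0^\star$, the right-hand side is at most $\alpha$, and by construction $\delta_k^L(\nu_0^\star)=\Delta_k(\alpha)$ for all $k\le n$. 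Since $(\tilde Z_k)_{0\le k\le n}$ inherits the correct covariance structure of $(W_k)$ from \cref{coupled} (its conditional Gaussian marginals combined with the marginal law of the partial sums reproduce $\mathrm{Var}(W_{\Iintv{1,n}})$ on the restricted index set), the claimed coupling follows.

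The only delicate point in this outline is justifying that the restriction of $(\tilde Z_k)_{0\le k\le 2^L}$ to indices $\{0,\dots,n\}$ has the same covariance as $(W_k)_{0\le k\le n}$; this is where the padding argument needs care. I expect this to follow from the fact that \cref{coupled} specifies that $(\tilde Z_k)_{k\in\Iintv{1,2^L}}\,|\,\sum_{i\le 2^L}X_i,U_{2^L}$ is centered Gaussian with covariance matching $W_{\Iintv{1,2^L}}$, so taking expectation over the conditioning sigma-field yields the unconditional Gaussian law with covariance $\mathrm{Var}(W_{\Iintv{1,n}})$ restricted to the first $n$ coordinates, as needed.
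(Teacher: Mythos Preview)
Your approach is exactly the paper's: invoke \cref{ts11} with $\alpha_0=\nu_0^\star$ and use $\beta_L(\nu_0^\star)\le\alpha$ from Step~1 of \cref{ts122}. The paper's proof is two lines doing precisely this. One small slip: your claim $\beta_L(1)=1$ is false (already $\beta_2(1)=2$), but the intermediate-value argument survives since $\beta_L(0)=0$ and $\beta_L$ is continuous and increasing; noting $\beta_1(\alpha)=\alpha$ already gives $\nu_0^\star\le\alpha$ for $L\ge1$.

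Your padding argument for $n<2^L$, however, does not deliver what you claim. If $(\tilde Z_k)_{k\le 2^L}$ satisfies \cref{coupled} on $\Iintv{1,2^L}$, its unconditional law is $\mathcal N(0,\tilde\Sigma)$ with $\tilde\Sigma_{i,j}=\sigma^2\bigl(i\wedge j-\tfrac{ij}{2^L}\bigr)$, so restricting to $k\le n$ yields a Gaussian with entries $\sigma^2\bigl(i\wedge j-\tfrac{ij}{2^L}\bigr)$, \emph{not} $\sigma^2\bigl(i\wedge j-\tfrac{ij}{n}\bigr)$, which is $\mathrm{Cov}(W_i,W_j)$ for $W_k=S_k-\tfrac{k}{n}S_n$. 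Likewise the $W_k$ controlled in the inductive proof of \cref{ts11} is $S_k-\tfrac{k}{2^L}S_{2^L}$, not $S_k-\tfrac{k}{n}S_n$. The paper is equally casual about this mismatch; the honest reading, as in \cref{thm:main2}, is $n=2^L$, and you should state that restriction rather than attempt a padding that does not close the gap.
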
The proof can be found in \Cref{triste}. This leads to the following coupling lemma between $(S_k)$ and a Gaussian random vector. 
\begin{theorem}\label{thm:main2}Suppose that $(Y_i)_{i\ge 1}$ satisfy \assumptionRS. 
Let $\alpha>0$ and $L\in\mathbb{N}$. Set $n=2^L$. Define $
(\mathcal{D}_k(\alpha))$ as the output of \Cref{ts12}.
Then there exists a centered Gaussian vector $(Z_k)_{k\le n}$ with covariance $\Sigma_n$ such that the following inequality holds:
\begin{equation*}
    \mathbb{P}(\exists k\le n\textrm{ s.t. }\big|S_k-{Z}_k\big|\ge \mathcal{D}_k(\alpha))\le\alpha.
\end{equation*}
\end{theorem}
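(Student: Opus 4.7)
The plan is to mirror the construction in Corollary \ref{all_too_well}, now with the algorithmically optimized split of the probability budget produced by \cref{ts12}. The key observation is the decomposition $S_k = W_k + \frac{k}{n} S_n$, which reduces the coupling of $(S_k)$ to separately coupling the bridge process $(W_k)$ with a Brownian-bridge-type Gaussian vector and coupling the endpoint $S_n$ with a $\mathcal{N}(0, n\sigma^2)$ variate; the two parts can then be linearly recombined to produce a Gaussian vector with the correct covariance $\Sigma_n$.

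First I would fix any admissible split $\alpha_0, \alpha_1 > 0$ with $\alpha_0 + \alpha_1 = \alpha$ and invoke Theorem \ref{thm:main3} with budget $\alpha_0$ to obtain a vector $(\tilde Z_k)_{k \le n}$ satisfying \cref{coupled} on $\mathcal{I} = \Iintv*{1,n}$ and such that $\mathbb{P}(\exists k \le n : |W_k - \tilde Z_k| \ge \Delta_k(\alpha_0)) \le \alpha_0$. Independently, I would construct $Z_n^\star \sim \mathcal{N}(0, n\sigma^2)$ measurable with respect to $\sigma(S_n, U_n)$ via the randomized inverse-CDF device used for $\tilde Z_{n/2}$ in the proof of Lemma \ref{ausecours} (that is, Lemma \ref{lemma:PITdiscrete}); combining Lemma \ref{lemma:bound_S_and_Z} with the Wasserstein bound $\mathcal{W}_p(S_n, \mathcal{N}(0, n\sigma^2)) \le s_p^R(n,\sigma)$ from Theorem \ref{nelson3} then yields $\mathbb{P}(|S_n - Z_n^\star| \ge \delta^\star(\alpha_1)) \le \alpha_1$, where $\delta^\star(\alpha_1) = \inf_{p \ge 2} s_p^R(n,\sigma)/\alpha_1^{1/p}$.

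Next I would define $Z_k := \tilde Z_k + \frac{k}{n} Z_n^\star$ and verify that $(Z_k)_{k \le n} \sim \mathcal{N}(0, \Sigma_n)$. The crucial point is that \cref{coupled} forces the conditional law of $(\tilde Z_k)$ given $(S_n, U_n)$ to be $\mathcal{N}(0, \tilde \Sigma_n)$ with $\tilde \Sigma_{n,i,j} = \sigma^2(\min\{i,j\} - ij/n)$, and this conditional law does not depend on the conditioning variables. Hence $(\tilde Z_k)$ is independent of $(S_n, U_n)$, and in particular of $Z_n^\star$, so $(\tilde Z_k, Z_n^\star)$ is jointly Gaussian; a direct covariance computation then gives $\mathrm{Cov}(Z_i, Z_j) = \sigma^2(\min\{i,j\} - ij/n) + \frac{ij}{n^2} \cdot n\sigma^2 = \sigma^2 \min\{i,j\}$, as required. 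Applying the triangle inequality $|S_k - Z_k| \le |W_k - \tilde Z_k| + \frac{k}{n}|S_n - Z_n^\star|$ and union-bounding the two failure events yields $\mathbb{P}(\exists k \le n : |S_k - Z_k| \ge \Delta_k(\alpha_0) + (k/n)\delta^\star(\alpha_1)) \le \alpha_0 + \alpha_1 = \alpha$. Since this holds for every admissible split, the optimized pair $(\alpha_0^\star, \alpha_1^\star)$ returned by \cref{ts12} produces the claimed thresholds $\mathcal{D}_k(\alpha) = \Delta_k(\alpha_0^\star) + (k/n)\delta^\star(\alpha_1^\star)$.

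The main obstacle lies in the Gaussianity and covariance verification in the second step: it is essential that the $(\tilde Z_k)$ produced by Theorem \ref{thm:main3} is not only marginally Gaussian but is also truly independent of $(S_n, U_n)$, with a conditional covariance that matches the bridge structure $\tilde \Sigma_n$. This is precisely what the carefully stated \cref{coupled} guarantees, and it is the reason the inductive construction in Lemma \ref{ausecours} tracks the conditional law given $(S_n, U_n)$ throughout the recursion rather than only the marginal law.
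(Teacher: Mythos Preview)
Your proposal is correct and follows essentially the same approach as the paper: the paper's proof simply says the result is ``a direct consequence of \cref{thm:main3} combined with \cref{all_too_well},'' and the argument you spell out is precisely the content of that combination (decompose $S_k = W_k + \frac{k}{n}S_n$, couple the bridge via \cref{thm:main3}, couple the endpoint via \cref{lemma:bound_S_and_Z} and \cref{nelson3}, recombine, and union-bound). Your justification of the joint Gaussianity of $(\tilde Z_k, Z_n^\star)$ via the independence implied by \cref{coupled} is in fact more explicit than the paper's ``conjugacy of Gaussian priors'' phrasing; note only that \cref{thm:main3} as stated does not assert \cref{coupled} for the output, but its proof (via \cref{ts11}) does produce such a vector, so your appeal to that property is legitimate.
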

The proof can be found in \Cref{triste}. Note that the covariance matrix $\Sigma_n$ is the same as the covariance structure of $(Z_k)_{k \le n}$.
\begin{remark}\label{turing}
   In practice, we use a slightly modified version of \Cref{ts12} and \Cref{ts122} that, for very small $\ell$, allows the use of  alternative bounds other than the conditional Wasserstein-$p$ bound to control $\delta_{2^{\ell-1}}^\ell(\alpha)$. Moreover, we allow the confidence level to vary with $\ell$ instead of using a fixed $\alpha_0$. These changes do not affect the theoretical guarantees, but can offer slight improvements in practice. The corresponding algorithms details are presented in \Cref{turing_details}. In addition, for implementation, optimizations over $p$ are carried out over $p \in \mathbb{N}$ with $2\le p \le p_{\mathrm{max}}$ for some sufficiently large $p_{\mathrm{max}}$, which does not affect the theoretical guarantees.

\end{remark}

\subsection{Analysis of the rate}

The size of the thresholds $(\mathcal{D}_k(\alpha))_{k\le n}$ proposed in \Cref{thm:main2} depends on the sample size $n\in \mathbb{N}^\star$.  In this subsection, we show how fast they grow with $n$ and compare it to the rate given by the classical KMT inequality.

\begin{lemma}\label{lemma:Delta_order}
Let $\alpha>0$ and $n\in\mathbb{N}^\star$. Let $L:=\lceil \log_2 n\rceil$, and let$\big(\mathcal{D}_k(\alpha))_{k\le n}$ be defined as the output of \Cref{ts12}. Then there exists a constant $ \kappa_R^\sigma$, depending only on $R$ and $\sigma$, such that the following inequality holds:
$$\max_{\ell \le n}\mathcal{D}_\ell(\alpha)\le  L\kappa_R^\sigma(L-\log \alpha ).
$$
Moreover, for all $0<\sigma_1<\sigma_2\le \frac{R}{2}$, the following holds: $$\sup_{\sigma\in [\sigma_1,\sigma_2]}\kappa_R^\sigma<\infty.$$
\end{lemma}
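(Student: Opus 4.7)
The plan is to exploit the fact that \cref{ts12} selects the split $\alpha_0+\alpha_1=\alpha$ that minimizes the resulting threshold, so any particular choice of split yields a valid upper bound on $\mathcal{D}_k(\alpha)$. I would take the suboptimal choice $\alpha_0=\alpha_1=\alpha/2$ and, since $k/n\le 1$, reduce the target inequality to $\max_k\Delta_k(\alpha/2)+\delta^\star(\alpha/2)\le \kappa_R^\sigma L(L-\log\alpha)$ for some constant depending only on $R,\sigma$.

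The heart of the argument is the control of $\max_k\Delta_k(\alpha/2)$. Unrolling the recursion \cref{autre} shows that the increment at level $\ell$ is at most $\delta^\ell_{2^{\ell-1}}=\inf_{p\ge 2}(\nu_0^*)^{-1/p}\omega_p^R(2^\ell,\sigma)$, hence
\[
\max_{k\le 2^L}\delta^L_k\le \sum_{\ell=1}^L\delta^\ell_{2^{\ell-1}}.
\]
To control $\nu_0^*$, I would first observe that the recursion $\beta_\ell=2\beta_{\ell-1}-\beta_{\ell-1}^2+\nu_0$ with $\beta_0=0$ satisfies $\beta_\ell\le 2\beta_{\ell-1}+\nu_0$ as long as $\beta_{\ell-1}\le 1$, and so by induction $\beta_\ell\le(2^\ell-1)\nu_0\le n\nu_0$. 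Picking $\nu_0=\alpha/(2n)$ therefore guarantees $\beta_L\le \alpha/2$, which shows that the largest admissible value $\nu_0^*$ selected in \cref{ts122} satisfies $\nu_0^*\ge \alpha/(2n)$, and in particular $-\log\nu_0^*\le L\log 2 + \log(2/\alpha)=O(L-\log\alpha)$. Taking $p=\lceil -\log\nu_0^*\rceil$ in the infimum then gives $(\nu_0^*)^{-1/p}\le e$, so that $\delta^\ell_{2^{\ell-1}}\le e\,\omega_p^R(2^\ell,\sigma)$. The explicit formula for $\omega_p^R$ from \cref{thm:main1} shows it grows at most polynomially in $p$ and is controlled in $n$ by a constant depending only on $R,\sigma$, so uniformly in $\ell\le L$ one has $\omega_p^R(2^\ell,\sigma)\le C_R^\sigma\, f(L-\log\alpha)$ with $f$ at most linear. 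Summing over $\ell=1,\dots,L$ produces the additional factor $L$, yielding $\max_k\Delta_k(\alpha/2)\le \tilde\kappa_R^\sigma L(L-\log\alpha)$. An analogous optimization on $\delta^\star(\alpha/2)=\inf_{p\ge 2}(\alpha/2)^{-1/p}s_p^R(n,\sigma)$, with the choice $p=\lceil-\log(\alpha/2)\rceil$ and using the rate of $s_p^R$ from \cref{nelson3}, gives a bound of order $L-\log\alpha$ that is absorbed into the same $L(L-\log\alpha)$ form. For the uniformity clause, the constants in the explicit formulas for $\omega_p^R$ and $s_p^R$ are continuous functions of $\sigma$ and remain finite on any compact $[\sigma_1,\sigma_2]$ with $\sigma_1>0$ (ruling out the degenerate case) and $\sigma_2\le R/2$ (consistent with $Y_1\in[0,R]$), so a standard compactness argument gives $\sup_{\sigma\in[\sigma_1,\sigma_2]}\kappa_R^\sigma<\infty$.

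\textbf{Main obstacle.} The main technical difficulty is tracking the precise dependence of $\omega_p^R(n,\sigma)$ and $s_p^R(n,\sigma)$ on $p$ and (slowly) on $n$ carefully enough to see that each level's increment is $O(L-\log\alpha)$: a sharper per-level bound of order $\sqrt{L-\log\alpha}$ would still sum to something below the claimed $L(L-\log\alpha)$, but a sub-optimal choice of $p$, or a growth rate that is super-linear in $p$, would inflate the recursion. The other ingredients---telescoping \cref{autre}, the induction showing $\beta_\ell\le n\nu_0$, and the compactness argument for uniformity in $\sigma$---are comparatively routine.
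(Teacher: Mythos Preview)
Your proposal is correct and follows essentially the same route as the paper: the key technical input is the linear-in-$p$ bound $\omega_p^R(n,\sigma)\le \tilde K_{R,\sigma}\,p$ (Corollary~\ref{upper_bound}/Lemma~\ref{lemma:Wp_order}) together with the analogous bound $s_p^R(n,\sigma)\le K^*_{R,\sigma}\,p$ (Lemma~\ref{nelson3s}), and once these are in hand your telescoping of the recursion, the crude $\beta_\ell\le (2^\ell-1)\nu_0$ estimate, and the choice $p\asymp -\log\nu_0^*$ yield the claimed $L(L-\log\alpha)$ bound exactly as the paper intends. The uniformity in $\sigma$ is likewise handled by the uniformity clauses already built into those lemmas.
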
 The proof can be found in \Cref{lemma:Wp_order}.
The KMT inequality \cite{komlos1975approximation,komlos1976approximation} tells us that there exist constants $K_1,K_2<\infty$, depending on the distribution of $(Y_i)$, such that for all $n\in\mathbb{N}^\star$ and $\alpha>0$, there exists a Gaussian vector $(Z_k)$ satisfying $$\mathbb{P}\Big(\max_{k\le n}|S_k-Z_k|\ge K_1\log n-K_2\log\alpha\Big)\le \alpha.$$ We note that both $\max_{k\le n} \mathcal{D}_k(\alpha)$, where $(\mathcal{D}_k(\alpha))_{k\le n}$ is the output from \Cref{ts12}, and $K_1\log n-K_2\log \alpha$ grow linearly in $-\log \alpha $.  
However, $\max_{\ell\le n}\mathcal{D}_\ell(\alpha)$ grows quadratically, rather than linearly, in $\log n $, which is a suboptimal rate. On the other hand, our thresholds are fully computable, in contrast to almost all previous bounds, and depend only on $R$ and $\sigma$. Among the few existing computable bounds in the literature, one depending only on $R$ can be derived from results on uniform empirical processes in \cite{castelle1998strong}, but this bound also has quadratic growth in $\log n$. Moreover, while \cite{Bhattacharjee16} provides a computable bound, it requires that the random variables $(Y_i)$ take values from a finite alphabet and satisfy $\mathbb{E}[(Y_i-\mathbb{E}(Y_1))^3]=0$. In addition, their bound grows quadratically with the alphabet size. In \Cref{section:compare_clb_bg}, we compare our bounds to these existing computable bounds and show that, for various $R$ and even for small alphabet sizes, our bounds are significantly tighter for any reasonable sample size. See \Cref{discussion} for comments on how this rate could be improved if the random variables $(Y_i)$ admit a Stein kernel.

\subsection{Comparison to existing results}\label{section:compare_clb_bg}
We now compare the practical performance of our bound with the alternative results in \cite{Bhattacharjee16} and \cite{ castelle1998strong}, both discussed in the introduction and the previous subsection. For clarity, we state the bound we derive from \cite{Bhattacharjee16} and \cite{ castelle1998strong} below. 

\begin{theorem}[Corollary derived from Bhattacharjee and Goldstein (BG)\cite{Bhattacharjee16}]\label{thm:comparison1}Suppose that $(Y_i)_{i\ge 1}$ satisfy \assumptionRS ~and assumptions of \Cref{lemma:finte_alphabet_result}. 
Let $\alpha>0$ and $n\in\mathbb{N}$. 
Then there exists a centered Gaussian vector $(Z_k)_{k\le n}$ with covariance $\Sigma_n$ such that the following inequality holds:
\begin{equation}\label{eqn:comparison1}
\mathbb{P} \left( \max_{0 \le k \le n} |S_k - Z_k| \ge \lambda^{-1} \left( A \log n - \log \alpha + \log A \right) \right) \le \alpha,
\end{equation}
where $\lambda$ and $A$ are constants specified in \Cref{lemma:finte_alphabet_result}. 
\end{theorem}
The proof can be found in \Cref{section:compare}. Note that although this bound has a better asymptotic rate of $O(\log n)$ compared to our $O((\log n)^2)$ rate, it is substantially larger for any moderate values of $n$, as shown in \Cref{fig:comparison_clb}. 

\begin{theorem}[Corollary derived from Castelle and Laurent-Bonvalot (C\&LB)\cite{castelle1998strong}]\label{thm:comparison2}Suppose that $(Y_i)_{i\ge 1}$ satisfy \assumptionRS. 
Let $\alpha>0$ and $n\in\mathbb{N}$. 
Then there exists a centered Gaussian vector $(Z_k)_{k\le n}$ with covariance $\Sigma_n$ such that the following inequality holds:
\begin{equation}\label{eqn:comparison2}
\mathbb{P} \left( \max_{0 \le k \le n} |S_k - Z_k| \ge R\Big(60\log n+30\log(0.67/\alpha)\Big)\log n\right) \le \alpha.
\end{equation}
\end{theorem}

The proofs can be found in \Cref{section:compare}.

We compare our bound $
(\mathcal{D}_k(\alpha))$ in \Cref{thm:main2} with the two bounds above. 
The comparison in Figures~\ref{fig:comparison_clb} shows that our bound provides a considerably tighter constant than those in \cite{castelle1998strong} and \cite{Bhattacharjee16}, which do not aim at optimizing constants. Note that the $x$-axis corresponds to $\log_2 n$ where $n$ is the sample size, and the $y$-axis corresponds to $\log$ of the bounds.

\begin{figure}[h]
\centering
\includegraphics[width=\textwidth]{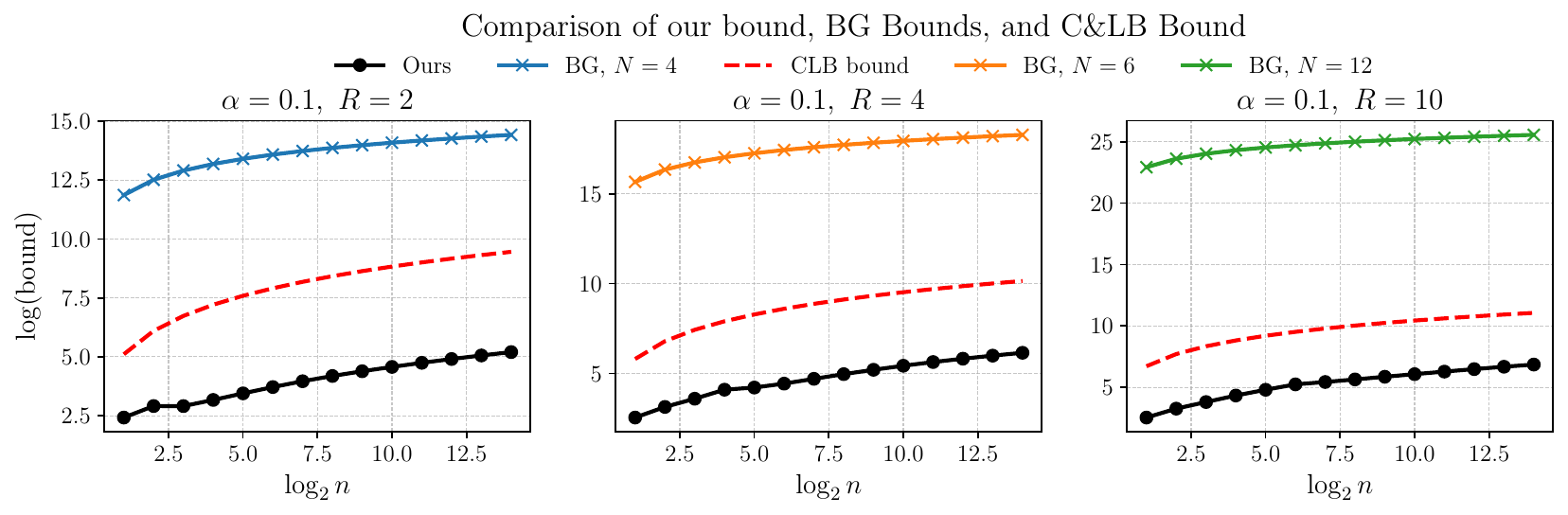}
\caption{Comparison between our bound and bounds in \cref{eqn:comparison1}, \cref{eqn:comparison2}.
Each panel shows $\alpha=0.1$ for sample sizes $n = 2^L$ with $L \in \{1, \dots, 14\}$ with different ranges $R=2,4,10$. 
The bound \cref{eqn:comparison1} (x-marked lines), derived from~\cite{Bhattacharjee16}, are for  random variables $X_i$'s sampled i.i.d. from the symmetric finite set $\mathcal{A}_N := \{-R/2, -R/2 + R/(N-1), \dots, R/2\}$ for some $N > R$.  We choose the distribution so that $X_i$ has mean zero, unit variance, and zero third moment. 
The dashed red line corresponds to the bound in \cref{eqn:comparison2}, derived from \cite{castelle1998strong}.
The solid black line shows our bound.
}
\label{fig:comparison_clb}
\end{figure}

\section{Empirical bound:  extension to unknown variance}\label{empi}
In practice, the variance $\sigma$ is often unknown and must be estimated from the data. This can in particular be done by using the empirical variance $$\hat\sigma^2_k:= \frac{1}{k}\sum_{i\le k}(X_i-\frac{S_k}{k}) ^2$$ as an estimator of $\sigma$. As our KMT inequality relies on knowledge of $\sigma$, it is tempting to use $\hat\sigma_k$ as a proxy for $\sigma$ and, for example, to use \Cref{ts12} with variance input $\hat\sigma_k$. However, $\hat\sigma_k$ is a random estimator of the variance. Hence, to obtain a correct inequality, the uncertainty in our estimate of $\sigma$ must be taken into account. This can, for example, be done using anytime-valid confidence sequences for $(\hat\sigma_k)_{k\ge 1}$. 

For clarity throughout this section, we make the dependence on $R$ and $\sigma$ explicit by writing $$(\Delta_k(\alpha,R,\sigma))_{k\le n}\quad \text{and} \quad(D_k(\alpha,R,\sigma))_{k\le n}$$ for the outputs of \Cref{ts12} and \Cref{ts122}, respectively.


We consider estimators $\hat\sigma_k$ that satisfy the following conditions.
\begin{assumption}[Empirical variance with confidence level $\delta$]\label{avril}
     Let $(\hat\sigma_k)_{k\ge1}$ be a sequence of estimators of the variance $\sigma$, where each $\hat\sigma_k\in \sigma(Y_1,\dots,Y_k)$ depends only on the first $k$ observations.  Let $([\hat\sigma_k^L,\hat\sigma_k^U])$ be a sequence of intervals such that $0\le \hat\sigma_k^L\le \hat\sigma_k^U\le \frac{R}{2}$, with $\hat\sigma_k^L, \hat\sigma_k^U\in \sigma(Y_1,\dots,Y_k)$ for all $k$. Assume this sequence satisfies $$\mathbb{P}\Big(\exists k~\textrm{s.t}~\sigma\not\in[\hat\sigma_k^L,\hat\sigma_k^U] \Big)\le \delta.$$ 
\end{assumption}
Note that if we chose $\hat\sigma_k^2:=\frac{c_1+\sum_{i\le k-1}(Y_i-\hat\mu_i)^2}{k}$ and $\hat\mu_i:=\frac{c_2+\sum_{i\le i-1}Y_i}{i}$, where $c_1,c_2\in [0,1]$ are constants, then \cite{martinez2025sharp} provides a way to build  intervals $([\hat\sigma_k^L,\hat\sigma_k^U])$ that  satisfy \Cref{avril}.
\begin{proposition}\label{empirical} Let $\alpha, \rho\in (0,1)$. 
  Let  $(\hat\sigma_k)_{k \ge 1}$ be a sequence of estimators, and let $([\hat\sigma_k^L,\hat\sigma_k^U])_{k \ge 1}$ be a sequence of intervals satisfying \Cref{avril} with confidence level $\delta=\rho\alpha$. Define $\tilde R_k:=\frac{R}{{\hat\sigma_k^L}}$. 
  Then there exist centered Gaussian vectors $(\tilde Z_k)$ and $(Z_k)$ with the same covariance structure as $(W_k)$ and $(S_k)$ respectively, such that 
  \begin{align}&
     \mathbb{P}\Big(\exists k\le n~\textrm{s.t}~|W_k-\tilde Z_k|\ge \hat\sigma_k^U~\Delta_k(\alpha(1-\rho),\tilde R_k,1)\Big)\le \alpha,
 \\&
     \mathbb{P}\Big(\exists k\le n~\textrm{s.t}~|S_k-Z_k|\ge \hat\sigma_k^U~\mathcal{D}_k(\alpha(1-\rho),\tilde R_k,1)\Big)\le \alpha.
  \end{align}
\end{proposition}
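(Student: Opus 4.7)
The plan rests on three ingredients: (i) a scaling identity showing $\Delta_k(\alpha,R,\sigma)=\sigma\,\Delta_k(\alpha,R/\sigma,1)$ and $\mathcal{D}_k(\alpha,R,\sigma)=\sigma\,\mathcal{D}_k(\alpha,R/\sigma,1)$; (ii) monotonicity of $R\mapsto \Delta_k(\alpha,R,1)$ and $R\mapsto \mathcal{D}_k(\alpha,R,1)$; and (iii) a union bound combining \cref{thm:main3}/\cref{thm:main2} applied at the tighter confidence level $\alpha(1-\rho)$ with the confidence-sequence event $\mathcal{G}:=\{\sigma\in[\hat\sigma_k^L,\hat\sigma_k^U] \text{ for all } k\le n\}$, which by \cref{avril} has probability at least $1-\rho\alpha$.

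First, I would verify the scaling identity. The bounds $\omega_p^R(n,\sigma)$ and $s_p^R(n,\sigma)$ are conditional/unconditional Wasserstein-$p$ bounds, and Wasserstein-$p$ distance is positively $1$-homogeneous ($\mathcal{W}_p(cX,cY)=|c|\mathcal{W}_p(X,Y)$). Applied to the rescaled centered variables $X_i/\sigma$, which have range $R/\sigma$ and unit variance, this gives $\omega_p^R(n,\sigma)=\sigma\,\omega_p^{R/\sigma}(n,1)$ and $s_p^R(n,\sigma)=\sigma\,s_p^{R/\sigma}(n,1)$. Propagating this through the recursion \eqref{autre} and through \cref{ts122}/\cref{ts12} (whose $\alpha_0^\star$-search is independent of $R,\sigma$) yields the claimed identities.

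Second, I would argue monotonicity. For fixed $n,\sigma,p$, $R\mapsto \omega_p^R(n,\sigma)$ and $R\mapsto s_p^R(n,\sigma)$ are nondecreasing, since enlarging $R$ only weakens the boundedness constraint and therefore does not decrease any quantile-type Wasserstein upper bound. Composing along the recursion preserves monotonicity, so $R\mapsto \Delta_k(\alpha,R,1)$ and $R\mapsto \mathcal{D}_k(\alpha,R,1)$ are nondecreasing.

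Third, I would apply \cref{thm:main3} at level $\alpha(1-\rho)$ to produce a Gaussian vector $(\tilde Z_k)$ with the correct covariance structure satisfying
\begin{equation*}
\mathbb{P}\bigl(\exists k\le n:\ |W_k-\tilde Z_k|\ge \Delta_k(\alpha(1-\rho),R,\sigma)\bigr)\le \alpha(1-\rho),
\end{equation*}
and analogously invoke \cref{thm:main2} to obtain $(Z_k)$ coupling $(S_k)$. These couplings depend on the (unknown) true $R,\sigma$, but they are fixed objects whose existence is all we need. On $\mathcal{G}$ we have $\sigma\le \hat\sigma_k^U$ and $R/\sigma\le R/\hat\sigma_k^L=\tilde R_k$; combining the scaling identity with monotonicity yields
\begin{equation*}
\Delta_k(\alpha(1-\rho),R,\sigma)=\sigma\,\Delta_k(\alpha(1-\rho),R/\sigma,1)\le \hat\sigma_k^U\,\Delta_k(\alpha(1-\rho),\tilde R_k,1),
\end{equation*}
so on $\mathcal{G}$ a failure of the empirical bound implies a failure of the oracle bound from \cref{thm:main3}. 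A union bound then gives $\mathbb{P}(\text{empirical failure})\le \mathbb{P}(\mathcal{G}^c)+\alpha(1-\rho)\le \rho\alpha+\alpha(1-\rho)=\alpha$, and the same argument with $\mathcal{D}_k$ in place of $\Delta_k$ and \cref{thm:main2} in place of \cref{thm:main3} yields the $(S_k)$ inequality. The main obstacle is the careful bookkeeping for the scaling and monotonicity of $\omega_p^R$ and $s_p^R$ through the recursive construction; once those are in hand, the rest is a short union-bound argument.
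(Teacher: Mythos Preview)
Your proposal is correct and follows essentially the same approach as the paper: apply \cref{thm:main3}/\cref{thm:main2} to the rescaled variables $X_i/\sigma$ (equivalently, use your scaling identity), invoke monotonicity in $R$ (the paper's \cref{increasing_in_R}), and finish with the union bound over the confidence-sequence event. The only cosmetic difference is that the paper states monotonicity by inspecting the explicit formulas for $\omega_p^R$ and $s_p^R$ rather than via your ``enlarging $R$ weakens the constraint'' heuristic.
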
 The proof can be found in \Cref{empirical_proof}.
\begin{remark}
    For any time step $k$, the thresholds $ \hat\sigma_k^U~\Delta_k(\alpha(1-\rho),\tilde R_k,1)$ and $ \hat\sigma_k^U~\mathcal{D}_k(\alpha(1-\rho),\tilde R_k,1)$ only depend on the first $k$ observations $Y_1,\dots,Y_k.$ This means they can be computed sequentially as new observations arrive.
\end{remark}

\section{Applications}\label{applications}
In this subsection, we present applications of our bounds to online change point detection and first hitting time probabilities.

\subsection{Online change point detection}\label{change_oint}
Change point detection, the problem of detecting whether there is a change in the distribution of a sequence of independent random variables $(Y_i)$, is a classical problem in statistics with many applications, notably in manufacturing \cite{montgomery2020introduction, hawkins2012cumulative, woodall2014some}, finance \cite{habibi2021bayesian,banerjee2020change}, and healthcare \cite{yang2006adaptive, malladi2013online, STAUDACHER2005582, bosc2003automatic}. 

We focus on detecting a shift in the mean in an online setting. Formally, we define a {$(T, \theta)$ change point} to have occurred at an unknown time $T \ge 0$ with magnitude $\theta \ne 0$ if the sequence $(W_i)$---where $W_i := Y_i$ for $i \le T$ and $W_i := Y_i - \theta$ for $i > T$---is identically distributed. As data arrives sequentially, our goal is to test the following hypotheses:
\begin{align*}
    H_0:~\text{No change point exists.} \qquad H_1: \text{A } (T,\theta) \text{ change point exists for some } T>1, \theta\ne 0.
\end{align*}
One of the most classical statistics for this problem is the CUSUM statistic \cite{page1954continuous}, defined as $$\mathcal{T}_{s,t}:=\Big| \frac{1}{s}\sum_{i\le s}Y_i-\frac{1}{t-s}\sum_{t\ge i>s}Y_i\Big|.$$ In an online setting, 
The null hypothesis is rejected at current
time $t$ if there exists some $s\le t$ such that $\mathcal{T}_{s,t}$ exceeds a certain threshold $(\mathcal{C}_{s,t})$:$$\text{Reject the null hypothesis if } \exists s\le t~\textrm{s.t.}~\mathcal{T}_{s,t}>\mathcal{C}_{s,t} .$$ However, the literature generally does not specify how to select these thresholds to control the Type I error, i.e., the probability of false detection.

One approach, proposed by \cite{maillard:tel-02162189}, uses double time-uniform concentration inequalities to construct valid thresholds. We instead use our computable KMT inequality. We assume only that the range of $(Y_i)$ is known, and do not require prior knowledge of the mean or standard deviation.
\begin{proposition}\label{detection}
    Suppose that $(Y_i)$ satisfies \assumptionRS. Let $\delta,\delta_1,\delta_2\in (0,1)$ be such that $\delta_1+\delta_2<\delta$. Let $\beta>1$, and let $(\hat\sigma_k)$ and $([\hat\sigma_k^L, \hat\sigma_k^U])$ be a sequence of estimators and confidence sets satisfying \Cref{avril} with confidence level $\delta_1.$ 
    Let $g_\beta(t,s,\delta_2)$ be the output of \Cref{alg:change_point}, and define $$\mathcal{C}_{s,t}:=\hat\sigma_t^U\sqrt{\frac{2(1+\frac{1}{t-s})}{t-s}\log(\frac{t (\log t)^2\sqrt{t+1-s}}{(\delta-\delta_1-\delta_2)\log 2})}+\frac{t}{s(t-s)}g_\beta(t,s,\delta_2).$$
    Then, under the null hypothesis, we have $$\mathbb{P}_{H_0}\Big(\exists 0< s<t~\textrm{s.t.}~|\mathcal{T}_{s,t}|\ge \mathcal{C}_{s,t}\Big)\le \delta.$$
\end{proposition}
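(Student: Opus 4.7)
The plan is to reduce the CUSUM statistic to the paper's bridge-type process $W^{(t)}$, couple the latter with a Gaussian vector via the empirical KMT inequality of \cref{empirical}, and then combine a Gaussian tail bound with the KMT coupling error through the triangle inequality.

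First, let $\mu:=\mathbb{E}[Y_1]$, $X_i:=Y_i-\mu$, and $S_k:=\sum_{i\le k}X_i$. A direct algebraic identity yields
$$\mathcal{T}_{s,t}=\Bigl|\tfrac{S_s}{s}-\tfrac{S_t-S_s}{t-s}\Bigr|=\tfrac{t}{s(t-s)}\Bigl|S_s-\tfrac{s}{t}S_t\Bigr|=\tfrac{t}{s(t-s)}\bigl|W_s^{(t)}\bigr|,$$
where $W_s^{(t)}$ denotes the paper's $W_s$ applied with $n=t$. Because $\mu$ cancels, the statistic requires no knowledge of the mean, and under $H_0$ the centered variables $(X_i)$ satisfy \assumptionRS{} so that the machinery of \cref{main,empi} applies verbatim.

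Second, I would invoke the empirical KMT coupling of \cref{empirical}, applied at each sample size $t$, to produce a centered Gaussian vector $(\tilde Z_s^{(t)})_{s\le t}$ with the same covariance as $(W_s^{(t)})_{s\le t}$. The role of the quantity $g_\beta(t,s,\delta_2)$ output by \cref{alg:change_point} is precisely to furnish an anytime-valid KMT threshold: by coupling on a geometric grid $t_k=\lceil\beta^k\rceil$ and allocating confidence summably across scales, one obtains
$$|W_s^{(t)}-\tilde Z_s^{(t)}|\le g_\beta(t,s,\delta_2)\qquad\text{for all }s<t\text{ and all }t,$$
on an event of probability at least $1-\delta_1-\delta_2$, where $\delta_1$ accounts for the variance confidence sequence of \cref{avril}. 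For the Gaussian side, condition on the event $\{\sigma\in[\hat\sigma_k^L,\hat\sigma_k^U]\ \forall k\}$, which fails with probability at most $\delta_1$; then $\tilde Z_s^{(t)}$ is centered Gaussian with variance at most $(\hat\sigma_t^U)^2 s(t-s)/t$, so a standard sub-Gaussian tail bound combined with a union bound over $s\in\{1,\dots,t-1\}$ and an appropriately chosen per-$t$ budget $\alpha_t$ summing to $\delta-\delta_1-\delta_2$ yields, simultaneously over $s$ and $t$,
$$\tfrac{t}{s(t-s)}|\tilde Z_s^{(t)}|\le \hat\sigma_t^U\sqrt{\tfrac{t}{s(t-s)}}\sqrt{(1+\tfrac{1}{t})\,2\log\tfrac{(t-1)\sqrt{t+1}}{\delta-\delta_1-\delta_2}}.$$
The $(1+1/t)$ factor inside the logarithm, together with the $(t-1)\sqrt{t+1}$ normalization, is calibrated so that the Gaussian exponent beats the $(t-1)$ union-bound factor over $s$ and leaves a residual that is summable in $t$.

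Finally, applying the triangle inequality $\mathcal{T}_{s,t}\le \tfrac{t}{s(t-s)}|\tilde Z_s^{(t)}|+\tfrac{t}{s(t-s)}|W_s^{(t)}-\tilde Z_s^{(t)}|$ on the intersection of the three favorable events (variance coverage, KMT coupling, Gaussian tail) bounds each summand by the matching piece of $\mathcal{C}_{s,t}$, and a union bound yields total failure probability at most $\delta_1+\delta_2+(\delta-\delta_1-\delta_2)=\delta$.

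The main obstacle is making the KMT coupling anytime-valid in $t$: the covariance of $\tilde Z^{(t)}$ depends on $t$, so couplings at different sample sizes cannot simply be glued together. This is the role of the geometric-grid device inside \cref{alg:change_point}, which couples only at times $t_k=\lceil\beta^k\rceil$ and absorbs the interpolation error for intermediate $t\in(t_k,t_{k+1}]$ into $g_\beta$, while keeping the total probability budget controlled at $\delta_2$. The Gaussian tail bound and the empirical-variance component then follow by comparatively standard arguments.
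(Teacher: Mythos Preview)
Your high-level decomposition matches the paper's: write $\mathcal{T}_{s,t}=\frac{t}{s(t-s)}|S_s-\frac{s}{t}S_t|$, couple to a Gaussian analogue $\mathcal{T}_{s,t}(Z)$, bound the Gaussian piece, and combine by the triangle inequality and a three-way union bound allocating $\delta_1$ to the variance confidence sequence, $\delta_2$ to the coupling, and $\delta_3=\delta-\delta_1-\delta_2$ to the Gaussian tail.

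The gap is in how the anytime-valid coupling is built. The parameter $\beta$ is \emph{not} a geometric time grid $t_k=\lceil\beta^k\rceil$ with interpolation for intermediate $t$; it only governs the confidence allocation $\delta_{2,i}=\delta_2\beta^{-i}/\sum_j\beta^{-j}$. The paper instead fixes an increasing sequence $(L_i)$, sets $N_i=\sum_{j\le i}2^{L_j}$, and on each block $(N_{i-1},N_i]$ applies the empirical coupling of \cref{ts13} with confidence $\delta_{2,i}$. This yields, block by block, a bridge coupling $\tilde Z_k^i$ for $\tilde W_k^i=(S_k-S_{N_{i-1}})-\tfrac{k-N_{i-1}}{N_i-N_{i-1}}(S_{N_i}-S_{N_{i-1}})$ and an increment coupling $Z_i^*$ for $S_{N_i}-S_{N_{i-1}}$. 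These pieces are assembled into a \emph{single} Gaussian process $(Z_k)_{k\ge1}$ with the same covariance as $(S_k)$, so that $\mathcal{T}_{s,t}(Z)=\frac{1}{s}Z_s-\frac{1}{t-s}(Z_t-Z_s)$ is defined consistently for all $s<t$. The quantity $g_\beta(t,s,\delta_2)$ then bounds $\frac{s(t-s)}{t}|\mathcal{T}_{s,t}-\mathcal{T}_{s,t}(Z)|$ by decomposing $S_s-\frac{s}{t}S_t$ linearly into block increments and block bridges. Your alternative---coupling $W^{(t)}$ separately at geometric times $t_k$---does not produce a consistent Gaussian process across $t$, and there is no evident way to pass from $W^{(t_k)}$ to $W^{(t)}$ for $t_k<t\le t_{k+1}$, since $W_s^{(t)}=S_s-\frac{s}{t}S_t$ depends on $S_t$ itself.

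For the Gaussian part, the paper does not derive the threshold from a per-$(s,t)$ sub-Gaussian tail plus union bounds; it invokes Theorem~5 of \cite{maillard_sequential_2019} as a black box, which directly gives the doubly-uniform bound with exactly the factor $\sqrt{(1+1/t)\,2\log\!\frac{(t-1)\sqrt{t+1}}{\delta_3}}$, and then replaces $\sigma$ by $\hat\sigma_t^U$ on the favorable variance-coverage event.
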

The proof can be found in \Cref{marre_proof}. \begin{remark} The thresholds $(\mathcal{C}_{s,t})_{s<t}$ can be chosen to guarantee that the probability of false detection is less than any significance level $\delta$. Moreover, computing these thresholds only requires knowledge of the range $R=\|Y_1\|_{\infty}$. \end{remark} 

Before presenting \Cref{alg:change_point}, we define the following notation: Given an increasing sequence $\mathbf{L}:=(L_i)$ of integers, we denote $\ell_{\mathbf{L}}(k):=\sup_i\{i~\textrm{s.t.}~k\ge \sum_{j\le i}2^{L_j}\}$, $u_{\mathbf{L}}(k):=\inf_i\{i~\textrm{s.t.}~k\le\sum_{j\le i}2^{L_j}\}$, and $N_i=\sum_{j\le i}2^{L_j}$.  We are now ready to introduce the algorithm. 

\begin{algorithm}[H]
\caption{ Computing $g_\beta(t, s, \delta_2)$ }\label{alg:change_point}
\DontPrintSemicolon
\KwIn{ Integers $t>s>0$, $\sigma>0$, $R>0$, $\delta_2 > 0$, $\beta>1$, an increasing sequence $\mathbf{L} = (L_i)$, a sequence of intervals $([\hat\sigma_k^L,\hat\sigma_k^U])$}
\KwOut{$g_\beta(t, s, \delta_2)$}

\For{$i\ge 1$}{
$\delta_{2,i}\gets\delta_2\frac{\beta^{-i}}{\sum_{j\ge 0}\beta^{-j}}$. 

$\Big((\tilde\Delta^{N_i}_{k}(\delta_{2,i}))_{k\in(N_{i-1}, N_i]},\tilde \delta_{N_i}^*(\delta_{2,i})\Big)\gets$ output of \Cref{ts13}
with input $(N_{i-1},N_i]$, $R$, $\sigma$, $\delta_{2,i}$, and $([\hat\sigma_k^L,\hat\sigma_k^U])$

Shorthand $\tilde \Delta^i_{k}:=\tilde \Delta^{N_i}_{k}(\delta_{2,i})$ and $\tilde \delta_{i}^*:=\tilde \delta_{N_i}^*(\delta_{2,i})$
}


\textbf{If $u_{\mathbf{L}}(s)=u_{\mathbf{L}}(t)$: } $g_\beta(t, s, \delta_2)\gets\sum_{k\le \ell_{\mathbf{L}}(s)}\tilde \delta_k^*\big(1-\frac{s}{t}\big)+\frac{s}{t}\tilde \delta^*_{u_{\mathbf{L}(s)}}+\tilde \Delta^{u_{\mathbf{L}}(s)}_{s}+\frac{s}{t}\tilde \Delta^{u_{\mathbf{L}}(s)}_{t}$

\textbf{If $u_{\mathbf{L}}(s)\ne u_{\mathbf{L}}(t)$: } 
\begin{align*}
    g_\beta(t, s, \delta_2)\gets&\sum_{k\le \ell_{\mathbf{L}}(s)}\tilde \delta_k^*\big(1-\frac{s}{t}\big)+\frac{s}{t}\sum_{u_{\mathbf{L}}(s)+1\le k\le \ell_{\mathbf{L}}(t)}\tilde \delta_k^*+\frac{s}{t}\frac{t-N_{\ell_{\mathbf{L}}(t)}}{N_{u_{\mathbf{L}}(t)}-N_{\ell_{\mathbf{L}}(t)}}\tilde \delta^*_{u_{\mathbf{L}}(t)}\\&+\Big(\frac{s-N_{\ell_{\mathbf{L}}(s)}}{N_{u_{\mathbf{L}}(s)}-N_{\ell_{\mathbf{L}}(s)}}-\frac{s}{t}\Big
  )\tilde \delta^*_{u_{\mathbf{L}(s)}}+\tilde \Delta^{u_{\mathbf{L}}(s)}_{s}+\frac{s}{t}\tilde \Delta^{u_{\mathbf{L}}(t)}_{t}
\end{align*}

\textbf{Return} $g_\beta(t, s, \delta_2)$
\end{algorithm}

Proposition \ref{detection} shows that the Type I error of our proposed test is well controlled. To assess its power, we compare its detection performance against the method of \cite{maillard:tel-02162189}. 
\Cref{fig:detection-rates} presents an empirical comparison of detection rates under varying mean shifts. The observations $(Y_i)$ are defined as the average of $\ell$ independent $\Unif[0,1]$ variables, with a change point at $T = 2000$ and a post-change mean increased by the specified shift.

\begin{figure}[h]
  \centering
  \includegraphics[width=\textwidth]{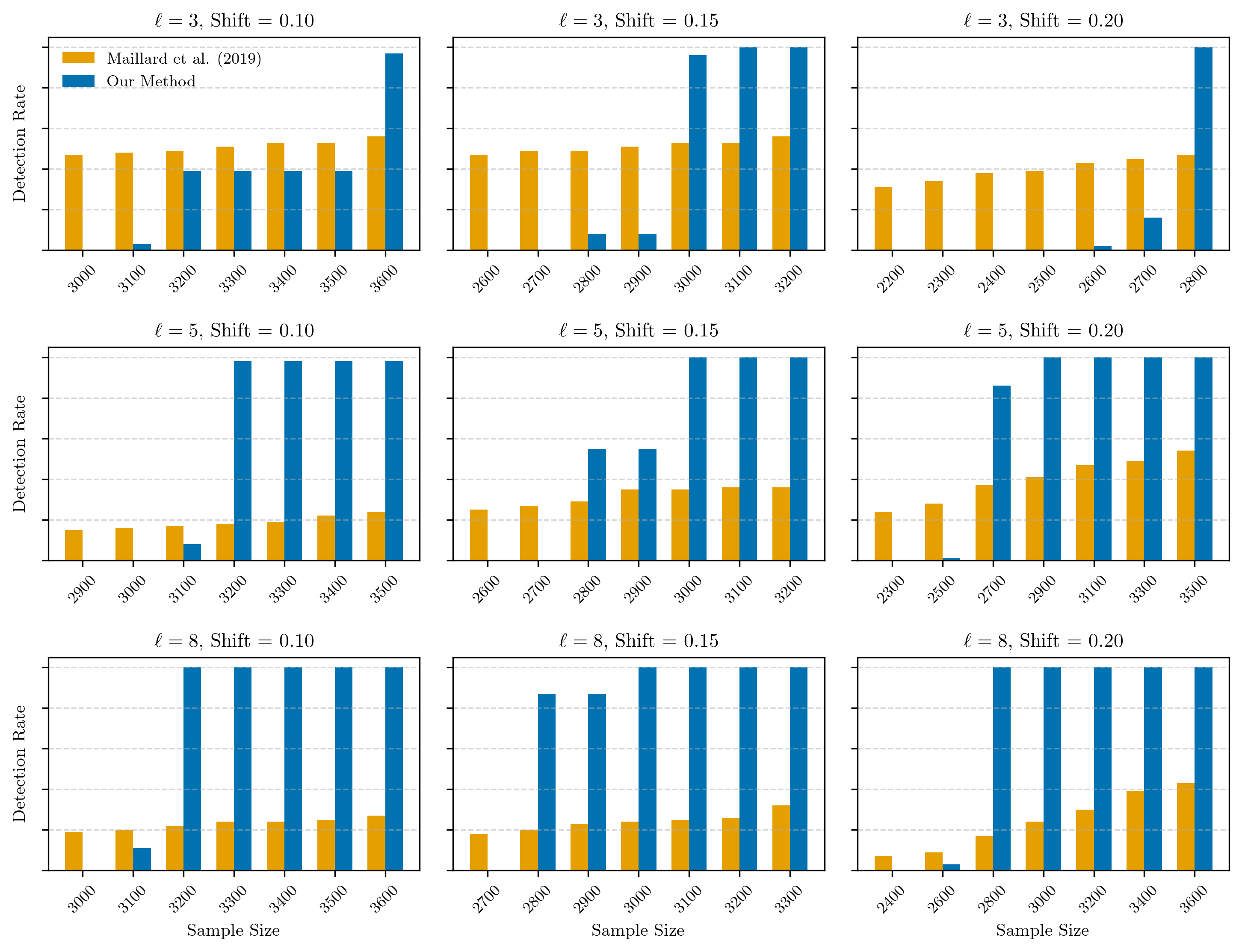}
  \caption{
Detection rate comparison across mean shifts, with each $Y_i$ an average of $\ell$ independent $\Unif[0,1]$ random variables. A change point occurs at $T = 2000$, with a post-change mean increased by the specified shift.
}
\label{fig:detection-rates}
\end{figure}
\begin{remark}
    While we focus on controlling the Type I error, another possible metric for algorithms in online change point detection is the average run length (ARL), defined as the average number of observations before a false alarm occurs \cite{page1954continuous}. Recent work \cite{shekhar2023reducing} provides methods that explicitly control the ARL by reducing the change detection problem to sequential estimation. The ARL of our procedure in Proposition \ref{detection} can be controlled by the following: let $\tau$ be the time of detection, then we have 
    \begin{equation}
        \mathbb{E}[\tau]                                                                                                                                                                                                                                                                                                                                                                      = \sum_{i \ge 1}\mathbb{P}(\tau \ge i)\\
        \ge \sum_{k\ge 1} (N_k-N_{k-1})(1-(\delta - \delta_2) - \sum_{i=1}^{k}\delta_2\frac{\beta^{-i}}{\sum_{j\ge 0}\beta^{-j}}).
    \end{equation}
\end{remark}

\subsection{First hitting time probabilities}
The first passage or hitting time $\tau$ of a Markov chain $(W_i)$ is the first time it crosses a certain boundary. Studying the distribution of $\tau$ 
plays an important role in many applications, such as finance \cite{shreve2004stochastic, kou2002jump, chicheportiche2013applicationsfirstpassageideasfinance}, neuroscience\cite{gerstein1964random,gerstner2014neuronal}, queuing theory \cite{asmussen2003applied}, and reliability analysis \cite{trivedi2001probability, howard2012dynamic, rausand2003system}. 

In the asymptotic regime, the KMT inequality has been a key tool, allowing one to reduce the study of $\tau$ to the first hitting time of a Brownian motion (e.g., \cite{portnoy1978probability,lai1979first}). 
 
 Beyond asymptotics, non-asymptotic results are also essential, particularly when dealing with triangular arrays. For instance, motivated by applications to directed polymers,  \cite{busani2020bounds} studied the running maximum of partial sums with a small negative drift that scales with the sample size. Another example arises in survival bandit problems  \cite{veroutis2024survivalmultiarmedbanditsbootstrapping}, where agents aim to maximize rewards while minimizing the probability of ruin, i.e., the exhaustion of a budget. 
The KMT inequality has not proven as powerful in that regime, notably because it depends on unknown constants, as noted in \cite{busani2020bounds}. In this subsection, we demonstrate that $
(\mathcal{D}_k(\alpha))$ in \cref{thm:main2}, which depends only on known constants, enables us to obtain computable non-asymptotic upper bounds for first hitting time probabilities. 

To be more precise, in this subsection, $(X_i^N)$ denotes a triangular array of i.i.d. random variables with mean $\mathbb{E}[X_1^N]=\mu_N$ and bounded support  $X_1^N\in [-\frac{R}{2},\frac{R}{2}]$. We define $W_i^N:=\sum_{j\le i}X_j^N$, and set 
\begin{equation}\label{eqn:def_triang_stopping_time}
    \tau_N:=\inf\{i~\textrm{s.t.}~W_i^N\ge g_i^N\},
\end{equation} where $(g_i^N)$ is a pre-specified boundary. For example, \cite{busani2020bounds} considers the case where $\mu_N=\mu\sqrt{N}^{-1/2}$ for some $\mu<0$ and $g_i^N=x$. Our goal is to bound $\mathbb{P}\big(\tau_N\ge N\big)$. 

\begin{proposition}\label{hitting_prob}
Assume that $(X_i^N)$ is a triangular array of i.i.d random variables satisfying $\mathbb{E}[X_1^N]=\mu_N$, and that for all $N$, we have $X_1^N\in [-\frac{R}{2},\frac{R}{2}]$. Let $\tau_N$ be the stopping time defined in \cref{eqn:def_triang_stopping_time}. Let $\alpha>0$, and let $(\mathcal{D}_k^N(\alpha))$ be the output of \cref{ts12} with $n=N$, $R=R$, $\sigma_N=\sqrt{\mathrm{Var}(X_1^N)}$, and confidence $\alpha$. Then we have 
\begin{align*}
    \mathbb{P}(\tau_N\ge N)\le\alpha+\mathbb{P}\Big(\forall i\le N,~ B_i\le \sigma_N^{-1}\left(g_i^N-i\mu_N+\mathcal{D}_i^N(\alpha)\right)\Big),
\end{align*} 
where $(B_i)$ is a Brownian motion. Moreover, suppose that $\sigma^*=\inf_N\sigma_N>0$, then there exists a constant $\kappa_R$ that only depends on $R$ such that for all $\alpha>0$, 
  \begin{align}\label{grr}
    \mathbb{P}(\tau_N\ge N)\le\alpha+\mathbb{P}\Big(\forall i\le N,~ B_i\le \sigma_N^{-1}\left(g_i^N-i\mu_N+\log N \kappa_R(\log N -\log \alpha )\right)\Big).
\end{align}
\end{proposition}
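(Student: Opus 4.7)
The plan is to apply the computable KMT coupling of Theorem~\ref{thm:main2} to the centered partial sums, translate the first-passage event for $(W_i^N)$ into a first-passage event for the coupling Gaussian process, and identify that process as $\sigma_N$ times a Brownian motion at integer times. For the uniform rate in the second assertion, I would combine Lemma~\ref{lemma:Delta_order} with Popoviciu's variance bound.

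First, I would reduce to Assumption~($R,\sigma$) by considering the deterministic shift $Y_i^N := X_i^N + R/2 \in [0,R]$. Since shifts preserve variance, $\mathrm{Var}(Y_1^N) = \sigma_N^2$, and the centered partial sums $S_k^N := \sum_{i\le k}(X_i^N - \mu_N) = \sum_{i\le k}(Y_i^N - \mathbb{E}[Y_i^N])$ are identical. Applying Theorem~\ref{thm:main2} to $(Y_i^N)$---if $N$ is not already a power of $2$, simply extend by i.i.d.\ copies to $2^{\lceil \log_2 N\rceil}$ samples and then restrict to $k\le N$---yields a centered Gaussian vector $(Z_k^N)_{k\le N}$ with $\mathrm{Cov}(Z_i^N, Z_j^N) = \sigma_N^2 \min(i,j)$, such that the event
\[
\mathcal{E} := \bigl\{|S_k^N - Z_k^N| < \mathcal{D}_k^N(\alpha)\text{ for all } k \le N\bigr\}
\]
satisfies $\mathbb{P}(\mathcal{E}) \ge 1 - \alpha$. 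Since $(Z_k^N)$ has the covariance of $\sigma_N$ times Brownian motion sampled at integer times, we may realize $Z_k^N = \sigma_N B_k$ on a suitable extension of the probability space.

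Next, I would translate $\{\tau_N \ge N\}$. By definition, $\{\tau_N \ge N\} = \{W_i^N < g_i^N,~\forall i\le N\}$, which via $W_i^N = S_i^N + i\mu_N$ equals $\{S_i^N < g_i^N - i\mu_N,~\forall i \le N\}$. On $\mathcal{E}$, the bound $|S_i^N - Z_i^N| < \mathcal{D}_i^N(\alpha)$ yields $Z_i^N \le S_i^N + \mathcal{D}_i^N(\alpha) < g_i^N - i\mu_N + \mathcal{D}_i^N(\alpha)$, equivalently $B_i \le \sigma_N^{-1}(g_i^N - i\mu_N + \mathcal{D}_i^N(\alpha))$. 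Splitting on $\mathcal{E}$ and its complement,
\[
\mathbb{P}(\tau_N \ge N) \le \mathbb{P}(\mathcal{E}^c) + \mathbb{P}\bigl(\{\tau_N \ge N\} \cap \mathcal{E}\bigr) \le \alpha + \mathbb{P}\bigl(B_i \le \sigma_N^{-1}(g_i^N - i\mu_N + \mathcal{D}_i^N(\alpha))~\forall i \le N\bigr),
\]
which is the first claim.

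For the second claim, I would invoke Lemma~\ref{lemma:Delta_order}. Popoviciu's inequality gives $\sigma_N \le R/2$, and by assumption $\sigma_N \ge \sigma^* > 0$, so $\sigma_N \in [\sigma^*, R/2]$. The lemma yields $\max_{k\le N} \mathcal{D}_k^N(\alpha) \le L\,\kappa_R^{\sigma_N}(L - \log \alpha)$ with $L = \lceil \log_2 N\rceil$, and its second conclusion ensures $\sup_{\sigma \in [\sigma^*, R/2]} \kappa_R^\sigma < \infty$. Taking $\kappa_R$ to be this supremum multiplied by $(\log 2)^{-2}$ to convert between $\log_2$ and $\log$, and substituting into the first inequality yields \eqref{grr}. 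The only mild obstacle is the bookkeeping around the shift $[-R/2, R/2] \to [0, R]$ and non-power-of-two $N$; both are handled transparently because centered partial sums are shift-invariant and the algorithmic thresholds $\mathcal{D}_k^N(\alpha)$ depend only on $R$, $\sigma_N$, $\alpha$, and $n$.
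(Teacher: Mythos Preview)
Your proof is correct and follows essentially the same approach as the paper: shift $X_i^N$ into $[0,R]$ to meet Assumption~($R,\sigma$), apply Theorem~\ref{thm:main2} to the centered sums, split on the coupling event, identify the Gaussian with $\sigma_N B_i$, and then invoke Lemma~\ref{lemma:Delta_order} for the second claim. Your additional care about extending to $2^{\lceil \log_2 N\rceil}$ samples and the $\log_2$-to-$\log$ conversion are refinements that the paper's proof leaves implicit.
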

The proof can be found in \cref{morgane_push_proof}.
We note that the proposed upper bound only depends on the distribution of $(X_i^N)$ through its range, drift, and variance. In particular, this means the exact form of the distribution of $(X_i^N)$ is not required to compute this upper bound. When the variance is unknown, the bound can be adapted using the empirical bounds derived in \cref{empi}, and when the drift is unknown, it can be handled using valid anytime confidence sets (e.g \cite{waudby2024estimating}). 
\begin{remark}
To further bound \cref{grr}, one needs to approximate the crossing probability of the Brownian motion $\mathbb{P}\Big(\forall i\le N,~ B_i\le \sigma_N^{-1}\left(g_i^N-i\mu_N+\mathcal{D}_i^N(\alpha)\right)\Big)$, which can either be done numerically or analytically \cite{burq2008simulation,di2005simulation,karatzas2012brownian}.
\end{remark}
As an example, we contrast our bound with the one obtained  by another method in  \cite{busani2020bounds}. In that work, they assume a negative drift satisfying $-D_\mu(\log N)^{-3} \le \mu_N \le 0$ for some finite constant $D_\mu > 0$,  and the boundary is a constant $g_i^N=x>0$. They provide a computable upper bound for $\mathbb{P}(\tau_N\ge N)$: Theorem 2.2 of  \cite{busani2020bounds} states that, under general conditions, for sufficiently large $N$ and $x\ge (\log N)^2$, $$\mathbb{P}\Big(\tau_N\ge N\Big)\le C x(\log N)(\left|\mu_N\right| \vee N^{-1 / 2}),$$ where $C$ is a \emph{computable} constant that depends on $R$ and $\sigma$. Notably, if $\sigma=1$, it is easy to show that $C>4e^{56}$, which means the upper bound is trivial for any reasonable sample size.  
If we choose $\alpha=1/N$, we can recover a similar rate to that of \cite{busani2020bounds}. However, the constants are significantly smaller. 

To demonstrate this, we derive the smallest $N$ for which the upper bound in \cref{grr} becomes non-trivial when $\mu_N=\mu \sqrt{N}^{-1/2}$ for $\mu=-0.1, -0.25, -0.5$ and $g_i^N=10, 20$.
 \begin{figure}[H]
    \centering
    \includegraphics[width=0.6\textwidth]{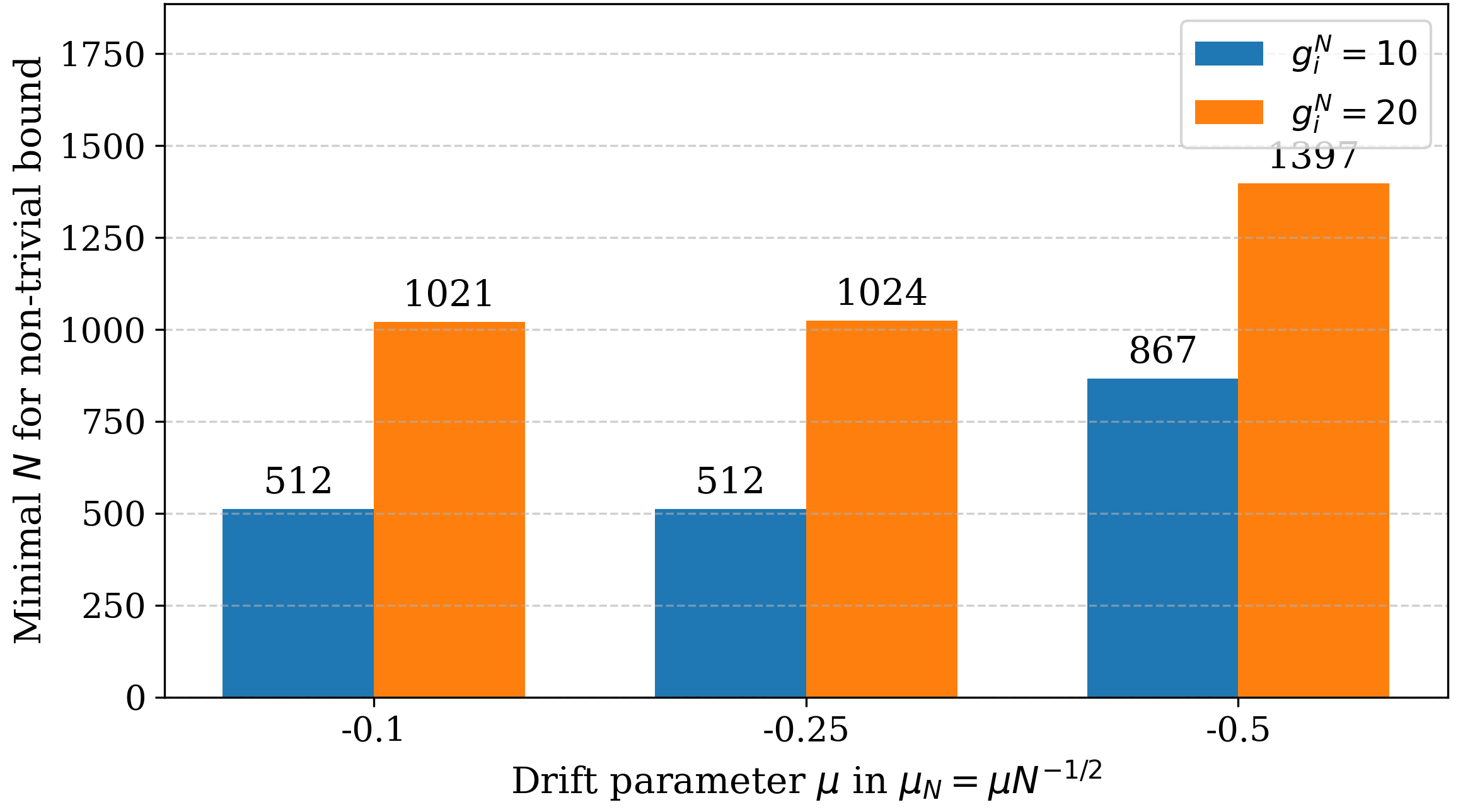}  
    \caption{Minimum value of $N$ for which the upper bound in \cref{grr} becomes non-trivial.}
    \label{fig:nontrivial-hitting-bound}
\end{figure}

\section{A Wasserstein-$p$ bound and moderate deviation bound for sequences sampled without replacement}\label{wass_sec}A main ingredient of \Cref{ts12} is our computable upper bound, $\omega_p^R(n,\sigma)$,  for the Wasserstein-$p$ distance between a centered partial sum and a normal distribution. 

To achieve the Wasserstein-$p$ bound, we use Theorem 5 of \cite{bonis2015rates}, together with an exchangeable pair method argument, to bound this distance between centered partial sums of random variables sampled \emph{without} replacement and a normal. 
More precisely, let $\mathcal{A}:=\{a_1,\dots,a_n\}$ be a finite alphabet, and let $(X^A_i)_{i\le k}$ designate a sequence of observations randomly drawn without replacement from $\mathcal{A}$. We define the centered average $W_k^A:=\sum_{i\le k}X^A_i-\frac{k}{n}\sum_{i\le n}a_i$. For a large sample size $k$ and alphabet size $n$, we can expect $W_k^A$ to be almost normally distributed, and this can be quantified using \Cref{lemma:Wp_bound_using_Bonis_basic2}. More precisely, define $(\tilde W_k^A , W_k^A)$ to be the exchangeable pairs such that 
$$\tilde W_k^A = \sum_{i=1}^{k}X^A_{(I,J)(i)}- \frac{k}{n}\sum_{i=1}^{n}X^A_{(I,J)(i)},$$
where $I\sim\Unif\{1,\dots,k\}$ and $J\sim\Unif\{k+1,\dots,n\}$. Using this we obtain the following upper-bound.
\begin{lemma}\label{lemma:Wp_bound_using_Bonis_basic2}
  Let $\tilde{\kappa}>0$ and $\eta^2>0$. Let $H_k(\cdot)$ denote the $k$-th Hermite polynomial. Let $\tilde Z\sim \mathcal{N}(0,1)$, and shorthand $H_k:=H_k(\tilde Z)$. Then 
 for all $p\ge1$, the following holds:
\begin{equation}
\begin{aligned}
   & \mathcal{W}_p(W^A_k,\mathcal{N}(0,\eta^2))\\&\le  \int_0^{\tilde\kappa}\Big\|e^{-t} W^A_k-\frac{e^{-2t}}{\sqrt{1-e^{-2t}}}\eta\tilde Z\Big\|_pdt
    +\int_{\tilde\kappa}^\infty
 e^{-t}\|S_p(t)\|_pdt,
\end{aligned} 
\end{equation} where \begin{align*}
    S_p(t):= &\mathbb{E}\big[\frac{W_k^A-\tilde W_k^A }{s}-
W_k^A|W_k^A\big]
\\&+\frac{e^{-2t}\|H_1\|_p}{\sqrt{1-e^{-2t}}}\Big(\frac{1}{2s}\eta^{-1}\mathbb{E}\big[(\tilde W_k^A -W^A_k)^2|W^A_k\big]-1\Big)
\\&+\sum_{\ell\ge 3}\frac{e^{-\ell t}\|H_{\ell-1}\|_p}{s\ell!(\sqrt{1-e^{-2t}})^{\ell-1}}\eta^{-\ell+1}\mathbb{E}\big[(\tilde W_k^A -W^A_k)^\ell|W_k^A\big].
\end{align*} 
\end{lemma}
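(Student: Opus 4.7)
The plan is to specialize Theorem 5 of \cite{bonis2015rates}, which gives a general Wasserstein-$p$ bound via Ornstein--Uhlenbeck (OU) interpolation for exchangeable pairs, to the without-replacement pair $(W_k^A, \tilde W_k^A)$ obtained by a single swap. The starting point is the Mehler representation: for $\tilde Z\sim\mathcal{N}(0,1)$ independent of $W_k^A$, the curve $X_t:=e^{-t}W_k^A+\sqrt{1-e^{-2t}}\,\eta\tilde Z$ interpolates between $W_k^A$ at $t=0$ and $\mathcal{N}(0,\eta^2)$ at $t=\infty$. Since the Mehler coupling gives $\mathcal{W}_p(W_k^A,\mathcal{N}(0,\eta^2))\le\int_0^\infty\|\partial_t X_t\|_p\,dt$ by Minkowski's inequality, and $\partial_t X_t=-e^{-t}W_k^A+\tfrac{e^{-2t}}{\sqrt{1-e^{-2t}}}\,\eta\tilde Z$, splitting the integral at $\tilde\kappa$ already produces the first summand of the claimed bound with no further work.

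The more delicate part is to replace the naive bound on the tail $t\ge\tilde\kappa$ by a Stein-type estimate exploiting the exchangeable pair. By Kantorovich duality, for any $1$-Lipschitz test $h$ one can rewrite $\partial_t\mathbb{E}[h(X_t)]$ as $\mathbb{E}[W_k^A f'_t(W_k^A)]$ for a function $f_t$ built from $h$ via the OU semigroup. Using $\mathbb{E}[f_t(\tilde W_k^A)-f_t(W_k^A)]=0$ (from exchangeability) together with a Taylor expansion in $\tilde W_k^A-W_k^A$ conditionally on $W_k^A$, one rewrites $\mathbb{E}[W_k^A f'_t(W_k^A)]$ as a Stein-factor remainder plus a sum of moment terms indexed by $\ell\ge 2$. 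Expanding $f_t$ in the Hermite basis and using the OU eigenrelation $P_t H_\ell=e^{-\ell t}H_\ell$ injects the factors $e^{-\ell t}/(1-e^{-2t})^{(\ell-1)/2}$ and the $L^p$-norms $\|H_{\ell-1}\|_p$ in the right places, producing exactly the three pieces of $S_p(t)$: the linear Stein remainder, the $\ell=2$ variance-matching correction, and the higher-order tail $\ell\ge 3$, while an overall $e^{-t}$ prefactor on the tail integrand comes from differentiating $X_t$ in $t$ before applying the expansion.

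The main obstacle is controlling the Hermite series in $L^p$ rather than in the Gaussian $L^2$ setting where this kind of argument is most transparent. The saving graces are: (i) $W_k^A$ is bounded almost surely because the alphabet $\mathcal{A}$ is finite, so every conditional moment $\mathbb{E}[(\tilde W_k^A-W_k^A)^\ell\mid W_k^A]$ is uniformly bounded by a constant times $R^\ell$; and (ii) the Hermite $L^p$-norms $\|H_{\ell-1}\|_p$ grow only polynomially in $\ell$ for fixed $p$. Together with the geometric decay $e^{-\ell t}$ these ensure that the Hermite series defining $S_p(t)$ converges provided $t$ is bounded away from $0$, which is exactly why the second integral is taken over $[\tilde\kappa,\infty)$. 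Once the convergence and interchange of sum and integral are justified, combining the two pieces yields the claimed Wasserstein-$p$ bound.
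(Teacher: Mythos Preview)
Your opening step is the gap. The inequality $\mathcal{W}_p(W_k^A,\mathcal{N}(0,\eta^2))\le\int_0^\infty\|\partial_t X_t\|_p\,dt$ via Minkowski on the Mehler path is correct, but it is a \emph{fixed} upper bound: once you have committed to it, you cannot ``replace'' the tail integrand $\|\partial_t X_t\|_p$ on $[\tilde\kappa,\infty)$ by something smaller unless you go back to a different starting inequality. Your attempt to switch to Kantorovich duality and Stein identities for the tail does not produce a bound on $\mathcal{W}_p$ for $p>1$, and it does not splice onto the Minkowski bound for the short-time piece.

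The paper does not start from Minkowski. It uses the Otto--Villani score bound $\mathcal{W}_p(\eta^{-1}W_k^A,\mathcal{N}(0,1))\le\int_0^\infty\|\xi_t\|_p\,dt$, where $\xi_t$ is the score of the density of $X_t$ relative to the Gaussian. The key input (Bonis, Lemma~5) is the identity $\xi_t=\mathbb{E}\bigl[e^{-t}\eta^{-1}W_k^A-\tfrac{e^{-2t}}{\sqrt{1-e^{-2t}}}\tilde Z\,\big|\,X_t\bigr]$, so $\xi_t$ is a \emph{conditional expectation} of your $-\partial_t X_t$. For $t<\tilde\kappa$ one applies Jensen to recover $\|\xi_t\|_p\le\|\partial_t X_t\|_p$, which is how the first integral arises. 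For $t\ge\tilde\kappa$ one introduces the Hermite series $\tau_t$ built from the exchangeable-pair moments and shows (Bonis, Lemma~8) that $\mathbb{E}[\tau_t\mid X_t]=0$; hence $\xi_t=\mathbb{E}[\tau_t+(\cdots)\mid X_t]$ \emph{without changing} $\xi_t$, and only then does one apply Jensen and the Hermite $L^p$ bound to obtain $\|\xi_t\|_p\le e^{-t}\|S_p(t)\|_p$. The entire Stein-type improvement rests on the conditional-expectation structure of $\xi_t$; your Minkowski bound has no such structure to exploit, which is why your plan to graft a Stein argument onto its tail cannot be carried out.
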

The proof can be found in \Cref{derek}.

Note that Lemma \ref{lemma:Wp_bound_using_Bonis_basic2} builds on  Theorem 5 of \cite{bonis2020stein} but uses a specific Stein exchangeable pair. The proposed upper-bound can be further simplified if $\eta^2\approx\mathrm{Var}(W_k^A)$. 

When $\mathcal{A}:=\{Y_1,\dots,Y_n\}$, $W_k^A$ has the same distribution as $W_k$ conditioned on $\mathcal{U}(X_{1:n})$; that is, $W_k^A\overset{d}{=}W_k|\mathcal{U}(X_{1:n})$. Hence Lemma \ref{lemma:Wp_bound_using_Bonis_basic2} can be used to obtain an upper bound $\omega_p^R(n,k,\sigma)$ for the conditional Wasserstein distance between $W_k$ and a normal random variable.  Moreover, we show that $\omega_p^R(n,k,\sigma)$ grows linearly in $p$ and is bounded as $n,k$ increase, meaning that $$\sup_{n,k}\omega_p^R(n,k,\sigma)<\infty.$$ This matches the results obtained for the Wasserstein distance in the i.i.d. setting \cite{bonis2020stein,austern2023efficient}, where no additional assumptions are made about the distribution of the random variables $(Y_i)$, and shows that the dependence introduced by sampling without replacement does not change the rate of convergence of $W_k$.

\begin{theorem}\label{thm:main1}
 Suppose that $(X_i)$ satisfy \assumptionRS.  For any positive integers $1\le k\le n$, the following inequality holds:
    \begin{equation}\label{eqn:main1}
        \Big\|\mathcal{W}_p(W_k,\mathcal{N}(0,\frac{k(n-k)}{n}\sigma^2)|\mathcal{U}(X_{1:n})\Big)\Big\|_p \le \omega^R_p(n,k,\sigma),
    \end{equation}
    where $ \omega^R_p(n,k,\sigma)$ is defined in \Cref{theorem:pfWp}, \cref{definition_omega}. If $n$ is even, we define $\omega^R_p(n,\sigma):=\omega^R_p(n,\frac{n}{2},\sigma)$ and obtain \begin{equation}\label{eqn:main122}
      \Big\|\mathcal{W}_p(W_{n/2},\mathcal{N}(0,\frac{n\sigma^2}{4})|\mathcal{U}(X_{1:n})\Big)\Big\|_p \le \omega^R_p(n,\sigma).
    \end{equation}

Moreover, there exists a constant $\tilde K_{R,\sigma}<\infty$ defined in \cref{definition_k} that only depends on $R$ and $\sigma$ such that for all $p\ge 2$,
 \begin{equation}\label{eqn:main122b}
 \omega_p^R(n,k,\sigma) \le {\tilde K_{R,\sigma}p}.
    \end{equation}
    In addition, for all $0<\sigma_1\le \sigma_2\le \frac{R}{2}$, we have $$\sup_{\sigma\in [\sigma_1,\sigma_2]}\tilde K_{R,\sigma}<\infty.$$
\end{theorem} 

The proof can be found in \Cref{tperr} and \Cref{dearreader}, and the exact expression of $\omega_p^R(n,k,\sigma)$ is given  in \Cref{definition_omega} within \Cref{theorem:pfWp}. 

Interestingly, we note that the Wasserstein-$p$ distance $\Big\|\mathcal{W}_p(W_{k},\mathcal{N}(0,\sigma_k^2)|\mathcal{U}(X_{1:n})\Big)\Big\|_p$ is uniformly upper bounded for all random variables $(Y_i)$ that are bounded by $R$ and have a variance $\sigma$ bounded away from $0.$ This fact is crucial in the proof of \Cref{lemma:Delta_order}.  

In addition, the conditional Wasserstein-$p$ bound in \Cref{thm:main1} allows us to 
obtain a Cramér-type moderate deviation result for the normalized centered partial sum $\frac{1}{\sigma_{n,k}}W_k$. 

\begin{corollary}
\label{lemma:main_moderate}
The following inequality holds for all $t \ge \frac{4e\tilde K_{R,\sigma}}{\sigma_{n,k}} + 2$:
    \begin{align}\label{eqn:moderate}
   {\mathbb{P}(\frac{1}{\sigma_{n,k}} W_k \ge t)}\le{\Phi^c(t)}\Big[1+\frac{e\tilde K_{R,\sigma}\varphi(t)}{\Phi^c(t)\sigma_{n,k}}\Big]\Big(\frac{e\tilde K_{R,\sigma}\varphi(t)}{\sigma_{n,k}}\Big)^{-\frac{e\tilde K_{R,\sigma}(t+1)}{e\tilde K_{R,\sigma}(t+1) + \sigma_{n,k}}},
\end{align}
where $\tilde K_{R,\sigma}$ is the constant in \Cref{thm:main1}.
\end{corollary}
The proof can be found in \Cref{sec:app_moderate_deviation}.

Cramér-type moderate deviation result have been well-studied, see e.g. in \cite{chen2013stein, cramer1994nouveau, feller2015retracted, petrov1975sums, fang2023wass, jing2003self}. In the classical i.i.d. setting, it is shown in \cite{petrov2012sums} that for the standardized sum
$S_n/\sqrt{n}$ the relative error is of order $1+O((1+t^3)/\sqrt{n})$ for
some $t = o(n^{1/6})$, and this is known to be optimal. Most Cramér-type relative error bounds has unidentified constants
\cite{cramer1994nouveau, petrov1975sums, chen2013stein, fang2023wass}, whereas \cite{feller2015retracted} and \cite{austern2023efficient} obtain explicit constants.
We remark that \cite{fang2023wass} used Wasserstein-$p$ bounds to derive Cramér-type moderate deviation for some dependent sum. 
Our result also follows from this principle and is of the same order as the ones obtained in \cite{fang2023wass}. 

\section{Discussion}\label{discussion}

This paper establishes a new computable version of the Koml\'{o}s-Major-Tusn\'{a}dy (KMT) inequality, providing explicit bounds that depend only on the range and standard deviation of the random variables. We also derive an empirical version for cases where the standard deviation is unknown. The derived computable bounds thus make the KMT inequality a practical tool for deriving explicit, finite-sample guarantees.

Our proof relies on two key components: a novel inductive construction and a computable conditional Wasserstein-$p$ bound. The inductive construction is inspired by a recent line of research that uses Stein's method to provide simpler alternatives to the notoriously complex original KMT proofs, particularly the work of \cite{chatterjee2012new, Bhattacharjee16}. However, a significant difference is that \cite{Bhattacharjee16,chatterjee2012new} rely on bounding the moment generating function of the approximation error $|Z_n-S_n|$, which is done by constructing a bounded Stein kernel for $S_n$. This technique has not been successfully generalized to arbitrary bounded random variables. Our approach, in contrast, relies on controlling the conditional Wasserstein-$p$ distance (for all $p\ge 2$) between the sum of variables sampled without replacement and a Gaussian random variable. We derive these bounds using an adaptation of the Stein's method of exchangeable pairs from \cite{bonis2015rates}. A byproduct of our conditional Wasserstein-$p$ distance bound is a Cramér-type moderate deviation result (\Cref{lemma:main_moderate}).

The main drawback of our bound is a suboptimal asymptotic rate. Although in practice tighter than other computable bounds for any reasonable sample size (see \cref{section:compare}), our bounds are asymptotically larger by a logarithmic factor. This suboptimality stems from our method for bounding the conditional Wasserstein-$p$ distance. Our bounds for the Wasserstein-$p$ distance grow linearly with $p$ (see \Cref{thm:main1}); the rate could be improved to $\log(n)^{3/2}$ if this growth were reduced to $\sqrt{p}$, as is the case when the partial sum admits a bounded Stein kernel \cite{chatterjee2012new}. A further improvement to the optimal $\log(n)$ rate is possible if an almost sure bound on the conditional Wasserstein-$p$ distance could be established, rather than the $L_p$ norm bound we currently obtain. We leave these potential refinements for future work. Despite this asymptotic suboptimality, our bounds offer better finite-sample performance than existing computable alternatives \cite{castelle1998strong} and \cite{Bhattacharjee16}, as shown in \Cref{section:compare_clb_bg}.

Although this work is limited to univariate bounded random variables, our framework is extensible to several other settings. An extension to the multivariate and sub-Gaussian settings would require developing a corresponding conditional Wasserstein-$p$ bound for multivariate and sub-gaussian random variables, which could build upon recent work \cite{bonis2023improved}. Furthermore, our framework could be adapted for variables that only have a finite number of moments. For variables with $q+2$ finite moments, where strong approximation rates of $O(n^{1/q})$ are known \cite{major1976approximation,komlos1975approximation,komlos1976approximation}, deriving computable bounds would require adapting our Wasserstein-$p$ bounds for $p \le q$. We leave these extensions for future investigation.

\section*{Acknowledgments}
The authors were supported by ONR N000142112664 and NSF CAREER DMS 2441652. HY was also partially supported by a research grant from the Citadel Securities PhD Summit.

\bibliographystyle{plain}
\bibliography{ref}

 \appendix

 \crefalias{section}{appendix} 

\section{Notations}\label{section:proof_notation}
We recall some notations used throughout the paper. Let $(Y_i)_{i\ge 1}$ be a sequence of i.i.d.\ random variables with $\mathbb{E}[Y_1] < \infty$, $\mathrm{Var}(Y_1) = \sigma^2$, and $0 \le Y_i \le R$ almost surely. 
Define $X_i=Y_i-\mathbb{E}[Y_i]$ for $i\ge 1$.
Then $(X_i)_{i\ge1}$ is a sequence of i.i.d. random variables with $\mathbb{E}[X_1] = 0$ and $\mathrm{Var}(X_1) = \sigma^2$. Let $(Z_i)_{i\ge1}$ be a sequence of i.i.d. standard normal random variables, independent of $(X_i)_{i\ge1}$. 

In addition, we use the following notation:
\begin{itemize}
    \item Denote $\mathcal{U}(X_{1:n})$ to be the unordered set of $\{X_1,\dots, X_n\}$.
\item Let $Z,Z'$ be two random variables on the same probability space as $(X_i)$.
We define the conditional Wasserstein-$p$ distance as
\begin{equation*}
    \mathcal{W}_p(Z,Z'|\mathcal{U}(X_{1:n})) := \left( \inf_{\gamma \in \Gamma(Z, Z' \mid \mathcal{U}(X_{1:n}))} \int |x - y|^p \, d\gamma(x, y) \right)^{1/p},
\end{equation*}
where $\Gamma(Z, Z' \mid \mathcal{U}(X_{1:n}))$ is the set of all couplings whose marginals are the conditional distributions of $Z,Z'$ given $\mathcal{U}(X_{1:n})$.
    \item For $n >0$ and $0<k<n$, define
    $$S_n := \sum_{i=1}^{n}X_i\qquad \textrm{and}\qquad W_k:=S_k-\frac{k}{n}S_n.$$
We adopt the standard convention that $S_0 = 0$ and $W_0 = 0$.
\item Let $(U_i)_{i\ge1}$ denote a sequence of i.i.d $\rm{unif}[0,1]$ random variables, independent of $(Y_i)_{i\ge1}$.
    \item For $\ell\in \mathbb{N}$, denote $H_\ell(\cdot)$ to be the one-dimensional $\ell$-th Hermite polynomial $H_\ell(x) := e^{x^2 / 2} \frac{\partial^\ell}{\partial x} e^{-x^2 / 2}$ and shorthand $H_{\ell}:=H_{\ell}(Z)$ where $Z\sim \mathcal{N}(0,1)$. 
    \item Define $R_s,\tilde R_s,\tilde R$ as $R_s:=\frac{1}{2}(R+\sqrt{R^2-4\sigma^2})$, $\tilde R_s:=R_s/\sigma$ and $\tilde R:=R/\sigma$.
    \item For $p>2$, define  $A_p:=2^{1 / p} \sqrt{p / 2+1} e^{\frac{1}{2}+\frac{1}{p}}$ and $A_{n,p}^{*}:= \left(\frac{p}{2}+1\right) n^{1 / p-1 / 2}$. 
    \item For $\sigma>0$, denote $\Phi_\sigma$ the CDF of $N(0,\sigma^2)$.
\end{itemize}

\section{Preliminary Lemmas}\label{prelim}
\begin{lemma}\label{lemma:R_s_bound}
Assume that $X_1$ satisfies \assumptionRS.    The following inequality holds 
    $$|X_1|\le R_s.$$
\end{lemma}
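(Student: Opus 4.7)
The plan is to reduce the claim to a quadratic inequality in the mean $\mu := \mathbb{E}[Y_1]$. Since $Y_1 \in [0,R]$ almost surely, we immediately have $X_1 = Y_1 - \mu \in [-\mu, R-\mu]$, so $|X_1| \le \max(\mu, R-\mu)$ almost surely. Thus it suffices to show that both $\mu$ and $R - \mu$ are bounded above by $R_s = \tfrac{1}{2}(R + \sqrt{R^2 - 4\sigma^2})$.

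The key observation is that the boundedness $0 \le Y_1 \le R$ together with the identity $\sigma^2 = \mathbb{E}[Y_1^2] - \mu^2$ forces a relation between $\mu$ and $\sigma^2$. Specifically, since $Y_1 \ge 0$ and $Y_1 \le R$, we have $Y_1^2 \le R\, Y_1$ pointwise, hence $\mathbb{E}[Y_1^2] \le R\mu$, and therefore
\begin{equation*}
\sigma^2 \;\le\; R\mu - \mu^2 \;=\; \mu(R-\mu).
\end{equation*}
Rewriting this as $\mu^2 - R\mu + \sigma^2 \le 0$ and solving the quadratic in $\mu$, one finds that $\mu$ must lie between the two roots:
\begin{equation*}
\tfrac{1}{2}\bigl(R - \sqrt{R^2 - 4\sigma^2}\bigr) \;\le\; \mu \;\le\; \tfrac{1}{2}\bigl(R + \sqrt{R^2 - 4\sigma^2}\bigr) = R_s.
\end{equation*}
(Note that the quadratic has real roots since $\sigma^2 \le R^2/4$, which itself follows from Popoviciu's inequality applied to a variable on $[0,R]$.)

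The upper bound gives $\mu \le R_s$ directly, while subtracting the lower bound from $R$ gives $R - \mu \le R_s$ as well, by the symmetry of the two roots about $R/2$. Combining these with the earlier observation yields $|X_1| \le \max(\mu, R-\mu) \le R_s$ almost surely, which is the desired conclusion. There is no serious obstacle here; the only subtlety is remembering that the discriminant $R^2 - 4\sigma^2$ is nonnegative under the assumption, which is itself implied by the same inequality $\sigma^2 \le \mu(R-\mu) \le R^2/4$.
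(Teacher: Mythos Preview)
Your proof is correct and follows essentially the same approach as the paper: both derive the quadratic inequality $\mu^2 - R\mu + \sigma^2 \le 0$ from the pointwise bound $Y_1^2 \le R\,Y_1$, and then use the resulting two-sided bound on $\mu$ to control $|X_1|$. Your version is in fact more explicit than the paper's, which only sketches the final step.
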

\begin{proof}
    We remark that the following holds
    \begin{align*}
        \mathbb{E}[Y_1^2]=\sigma^2+\mathbb{E}[Y_1]^2
    \end{align*}Moreover, we also note that as $Y_1$ is almost surely positive, so it follows that $\mathbb{E}[Y_1^2]\le R\mathbb{E}[Y_1].$ Hence we have that $$\sigma^2+\mathbb{E}[Y_1]^2\le R\mathbb{E}[Y_1].$$ 
 This directly implies a lower bound for $\mathbb{E}[Y_1]$ which used on the definition of $X_1$ directly gives us the desired result.
\end{proof}

\begin{lemma}\label{lemma:PITdiscrete}
Let $X$ be a random variable with finite support and CDF $F(\cdot)$, and probability mass function $p(\cdot)$. Let $F(x^-)$ denote the left-hand limit of $F(x)$. Let $U \sim \mathrm{Uniform}[0, 1]$ be independent of $X$. Define 
\begin{equation}\label{eqn:defrandCDF}
{\tilde{F}^{U}}(x) = F(x^-) + U \cdot p(x),
\end{equation}Let $\sigma>0$, 
then the following holds:
$$\sigma\Phi^{-1}({\tilde{F}^{U}}(X)) \sim \mathcal{N}(0,\sigma^2).$$ 
\end{lemma}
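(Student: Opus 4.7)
The plan is to establish this via the randomized probability integral transform. The key observation is that although $F(X)$ is not uniformly distributed when $X$ is discrete (its range consists of countably many points), the randomization by the independent uniform $U$ precisely ``fills in the gaps'' to recover a uniform distribution on $[0,1]$. Once this is done, the scaling by $\sigma$ and composition with $\Phi^{-1}$ is immediate.

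First I would show that $\tilde F^U(X) \sim \mathrm{Uniform}[0,1]$. Let the support of $X$ be $\{x_1, x_2, \dots, x_m\}$ ordered so that $x_1 < x_2 < \dots < x_m$. Then $F(x_i^-) = \sum_{j<i} p(x_j)$, and in particular $F(x_i^-) + p(x_i) = F(x_i)$. Conditional on the event $\{X = x_i\}$, which has probability $p(x_i)$, the variable $\tilde F^U(X) = F(x_i^-) + U \cdot p(x_i)$ is (by independence of $U$ and $X$) uniformly distributed on the interval $[F(x_i^-), F(x_i)]$, whose length is $p(x_i)$. Since these intervals $\{[F(x_i^-), F(x_i)]\}_{i=1}^m$ partition $[0,1]$ up to endpoints, for any $t \in [0,1]$ in the interior of the $i$-th interval,
\begin{equation*}
\P(\tilde F^U(X) \le t) = \sum_{j < i} p(x_j) + p(x_i) \cdot \frac{t - F(x_i^-)}{p(x_i)} = F(x_i^-) + (t - F(x_i^-)) = t,
\end{equation*}
which shows $\tilde F^U(X) \sim \mathrm{Uniform}[0,1]$.

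Next I would apply the standard inverse-CDF transform. Since $\Phi$ is the CDF of the standard normal and is strictly increasing, $\Phi^{-1}$ maps a $\mathrm{Uniform}[0,1]$ random variable to a $\mathcal{N}(0,1)$ random variable. Hence $\Phi^{-1}(\tilde F^U(X)) \sim \mathcal{N}(0,1)$, and multiplying by $\sigma > 0$ yields $\sigma \Phi^{-1}(\tilde F^U(X)) \sim \mathcal{N}(0,\sigma^2)$, as claimed.

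The main obstacle is essentially nonexistent here---the only mildly subtle points are (i) the correct handling of left-hand limits so that the intervals $[F(x_i^-), F(x_i)]$ truly partition $[0,1]$, and (ii) ensuring the independence of $U$ from $X$ is used to conclude that $U \mid \{X = x_i\}$ is still $\mathrm{Uniform}[0,1]$. Beyond these bookkeeping details, the result is a direct consequence of the randomized PIT.
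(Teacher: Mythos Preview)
Your proof is correct and follows essentially the same approach as the paper: both order the support, condition on $\{X=x_i\}$, and use the law of total probability to show $\tilde F^U(X)\sim\mathrm{Uniform}[0,1]$, after which the inverse-CDF step is immediate. If anything, you are slightly more explicit about the final $\Phi^{-1}$ and scaling step than the paper is.
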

\begin{proof}
Suppose that the finite support of $X$ is $\Omega=\{x_1, x_2, \dots, x_N\}$, where we take $x_1<\dots<x_N$. We denote $p_i:=\mathbb{P}(X = x_i)$ and remark that $\sum_{i=1}^N p_i = 1$. For any $t\in [0, 1]$, let $x_i$ be the largest element of $\Omega$ such that $F_X(x_i^-) \leq t$. Then, by law of total probability, 
\begin{align*}
    \mathbb{P}({\tilde{F}_{X}}(X)\le t) = &\mathbb{P}(X<x_i) + \mathbb{P}(F_X(X^-) + U \cdot p_X(X)\le t|X = x_{i})\mathbb{P}(X=x_{i})\\
    = &\sum_{k=1}^{i-1}p_k + \mathbb{P}(\sum_{i=1}^{i-1}p_k + U \cdot p_i\le t)\cdot p_i\\
    = & \sum_{k=1}^{i-1}p_k + \frac{t - \sum_{k=1}^{i-1}p_k}{p_i}\cdot p_i\\
    = &t
\end{align*}
Hence ${\tilde{F}_{X}}(X) \sim \text{Uniform}(0, 1)$.
\end{proof}

\begin{lemma}
    \label{lemma:bound_S_and_Z} Let $\mu$ and $\nu$ be probability measures on $\mathbb{R}$, and let $X\sim \mu$. Then there exists a random variable $Y\sim \nu$, measurable with respect to $X$ and an independent $U\sim \Unif[0,1]$, such that for all $\alpha>0$ and all $p\ge 2$,  $$\mathbb{P}\Big(|X-Y|\ge \frac{\mathcal{W}_p(\mu,\nu)}{\alpha^{1/p}}\Big)\le \alpha.$$
\end{lemma}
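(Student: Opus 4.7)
The plan is to realize the one-dimensional Wasserstein-$p$ optimal coupling between $\mu$ and $\nu$ explicitly as a function of $X$ and $U$, and then conclude by a single application of Markov's inequality. We may assume $\mathcal{W}_p(\mu,\nu) < \infty$, since otherwise any coupling (e.g.\ independent) trivially satisfies the claim. Write $F_\mu$ and $F_\nu$ for the CDFs of $\mu$ and $\nu$, and let $F_\mu^{-1}(u) := \inf\{x : F_\mu(x) \ge u\}$, $F_\nu^{-1}$ denote their generalized (left-continuous) inverses.

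The first step is to build from $X$ a $\Unif[0,1]$ random variable $V$, measurable with respect to $\sigma(X,U)$, that inverts the quantile map, i.e.\ $F_\mu^{-1}(V) = X$ almost surely. Extending the randomized probability integral transform from \cref{lemma:PITdiscrete} to general distributions, I would set
$$V := F_\mu(X^-) + U\bigl(F_\mu(X) - F_\mu(X^-)\bigr).$$
A direct calculation (identical in spirit to the proof of \cref{lemma:PITdiscrete}, but handling atoms and continuous pieces jointly: on the continuous part of $\mu$ the $U$-term vanishes $\mu$-a.s., and on each atom the uniform smoothing fills in the jump) gives $V \sim \Unif[0,1]$ and $F_\mu^{-1}(V) = X$ almost surely. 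I would then define
$$Y := F_\nu^{-1}(V),$$
which is a measurable function of $(X,U)$ and satisfies $Y \sim \nu$ since $F_\nu^{-1}$ pushes $\Unif[0,1]$ forward to $\nu$.

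The coupling $(X,Y) = \bigl(F_\mu^{-1}(V), F_\nu^{-1}(V)\bigr)$ is the classical comonotone coupling of $\mu$ and $\nu$, which attains the one-dimensional Wasserstein distance (Vallender/Dall'Aglio):
$$\mathbb{E}|X-Y|^p = \int_0^1 \bigl|F_\mu^{-1}(v) - F_\nu^{-1}(v)\bigr|^p\,dv = \mathcal{W}_p(\mu,\nu)^p.$$
Markov's inequality applied to $|X-Y|^p$ then yields, for all $p\ge 2$ and $\alpha>0$,
$$\mathbb{P}\!\left(|X-Y| \ge \frac{\mathcal{W}_p(\mu,\nu)}{\alpha^{1/p}}\right) \le \frac{\mathbb{E}|X-Y|^p}{\mathcal{W}_p(\mu,\nu)^p/\alpha} = \alpha,$$
which is the desired bound.

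The only genuinely delicate point is verifying that the randomized transform $V$ is uniform and satisfies $F_\mu^{-1}(V) = X$ a.s.\ when $\mu$ is an arbitrary (possibly mixed continuous-discrete) probability measure on $\mathbb{R}$; \cref{lemma:PITdiscrete} handles the purely discrete case, and the extension is standard but requires care with left-limits and the definition of the generalized inverse. Once this is in place, the rest of the proof is essentially a textbook combination of 1D optimal transport duality with Markov's inequality.
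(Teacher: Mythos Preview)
Your proof is correct and takes essentially the same approach as the paper: construct the comonotone coupling via the randomized probability integral transform, invoke the one-dimensional optimal transport identity (the paper cites Villani, Thm.~2.18), and apply Markov's inequality. If anything you are more careful than the paper, which writes $Y := \tilde F^U(X)$ as shorthand and cites \cref{lemma:PITdiscrete} directly without explicitly extending it beyond the finitely-supported case.
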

\begin{proof} 
     Define $Y:=\tilde F^U(X)$, where $\tilde F^U(\cdot)$ is defined in \cref{eqn:defrandCDF}. By \cref{lemma:PITdiscrete}, we have $Y\sim \nu$. Moreover, by Theorem 2.18 of \cite{villani2003topics}, we know that $$\|X-Y\|_p=\mathcal{W}_p(\mu,\nu).$$ Now by Markov's inequality, we obtain that  \begin{align*}
         \mathbb{P}\Big(|X-Y|\ge \frac{\mathcal{W}_p(\mu,\nu)}{\alpha^{1/p}}\Big)&\le \frac{\alpha\|X-Y\|_p^p}{\mathcal{W}_p(\mu,\nu)^p}=\alpha.
     \end{align*}
\end{proof}
\begin{lemma}\label{boy_breaks}
    Define $\tilde Z_{n/2}:=\frac{\sqrt{n}\sigma}{2}\Phi^{-1}\Big(\tilde F_{n/2}^{U_{n/2}}(W_{n/2}))\Big)$, where $\tilde F_{n/2}^{U_{n/2}}(\cdot)$ is defined in \cref{uc}. Then, for all $p \ge 1$, the following holds:
    \begin{align*}
        \Big\|W_{n/2}-\tilde Z_{n/2}\Big\|_p=\Big\|\mathcal{W}_p(W_{n/2}, \mathcal{N}(0,\sigma^2n/4)|\mathcal{U}(X_{1:n}))\Big\|_p.
    \end{align*}
\end{lemma}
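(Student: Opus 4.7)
The plan is to argue that, conditional on $\mathcal{U}(X_{1:n})$, the pair $(W_{n/2}, \tilde Z_{n/2})$ realizes the optimal (monotone) coupling between the conditional law of $W_{n/2}$ and $\mathcal{N}(0,\sigma^2 n/4)$. Then the conditional $L^p$ cost equals the conditional $\mathcal{W}_p$ raised to the $p$-th power, and an application of the tower property yields the identity.

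First, I would condition on $\mathcal{U}(X_{1:n})$. Conditionally, $W_{n/2}$ takes values in a finite set (the possible subset-sums of size $n/2$ drawn from the multiset, recentered), so its conditional distribution is discrete with CDF $F_{n/2}(\cdot\,|\,\mathcal{U}(X_{1:n}))$ and pmf $p_{n/2}(\cdot\,|\,\mathcal{U}(X_{1:n}))$, exactly as in the construction around \cref{uc}. Because $U_{n/2}$ is independent of $(Y_i)$ (and hence of $\mathcal{U}(X_{1:n})$ and $W_{n/2}$), \cref{lemma:PITdiscrete} applied conditionally on $\mathcal{U}(X_{1:n})$ gives that $\tilde F^{U_{n/2}}_{n/2}(W_{n/2})$ is $\Unif[0,1]$ conditionally, and therefore $\tilde Z_{n/2}\,|\,\mathcal{U}(X_{1:n})\sim \mathcal{N}(0,\sigma^2 n/4)$.

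Next, I would verify optimality of the coupling. Let $\eta:=\sigma\sqrt{n}/2$ and let $G_\eta$ be the CDF of $\mathcal{N}(0,\eta^2)$. Since $W_{n/2}$ is almost surely an atom of $F_{n/2}(\cdot\,|\,\mathcal{U}(X_{1:n}))$, the quantile-inversion identity
\begin{equation*}
    F_{n/2}^{-1}\bigl(\tilde F^{U_{n/2}}_{n/2}(W_{n/2})\,\big|\,\mathcal{U}(X_{1:n})\bigr) \;=\; W_{n/2}
\end{equation*}
holds a.s.\ (since any value of $\tilde F^{U_{n/2}}_{n/2}(W_{n/2})$ lies in the interval $[F_{n/2}(W_{n/2}^-\mid\cdot),F_{n/2}(W_{n/2}\mid\cdot)]$ whose quantile is $W_{n/2}$). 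Writing $\tilde U:=\tilde F^{U_{n/2}}_{n/2}(W_{n/2})$, the coupling has the form
\begin{equation*}
    (W_{n/2},\tilde Z_{n/2}) \;=\; \bigl(F_{n/2}^{-1}(\tilde U\,|\,\mathcal{U}(X_{1:n})),\,G_\eta^{-1}(\tilde U)\bigr),
\end{equation*}
which is the classical monotone (Hoeffding--Fr\'echet) coupling of the two conditional marginals. By the one-dimensional optimal transport result invoked in the proof of \cref{lemma:bound_S_and_Z} (Theorem 2.18 of \cite{villani2003topics}), this coupling attains the Wasserstein-$p$ distance for the cost $|x-y|^p$, so conditionally on $\mathcal{U}(X_{1:n})$ we have
\begin{equation*}
    \mathbb{E}\bigl[|W_{n/2}-\tilde Z_{n/2}|^p \,\big|\, \mathcal{U}(X_{1:n})\bigr] \;=\; \mathcal{W}_p^p\bigl(W_{n/2},\mathcal{N}(0,\sigma^2 n/4)\,\big|\,\mathcal{U}(X_{1:n})\bigr).
\end{equation*}
Taking total expectation, using the tower property, and taking $p$-th roots gives the claimed equality.

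The only substantive point is the conditional optimality step: one must know that the map $x\mapsto \Phi^{-1}(\tilde F^{U}(x))$ produces a monotone coupling of the two conditional laws, so that $L^p$-optimality on $\mathbb{R}$ applies. Everything else is routine conditioning and an application of Fubini.
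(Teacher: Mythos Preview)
Your proof is correct and follows essentially the same route as the paper's: apply \cref{lemma:PITdiscrete} conditionally on $\mathcal{U}(X_{1:n})$ to get the marginal of $\tilde Z_{n/2}$, invoke Theorem~2.18 of \cite{villani2003topics} to identify the conditional $L^p$ cost with the conditional $\mathcal{W}_p$, and finish with the tower property. You simply spell out in more detail why $(W_{n/2},\tilde Z_{n/2})$ is the monotone coupling, which the paper leaves implicit in its citation of Villani.
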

\begin{proof} By \cref{lemma:PITdiscrete}, conditionally on $\mathcal{U}(X_{1:n})$, we have that $\tilde Z_{n/2}\sim \mathcal{N}\Big(0, \frac{n\sigma^2}{4}\Big)$. Moreover according to Theorem 2.18 of \cite{villani2003topics} we obtain that $$\mathbb{E}\Big[\Big|W_{n/2}-\tilde Z_{n/2}\Big|^p\Big|\mathcal{U}(X_{1:n})\Big]^{1/p}=\mathcal{W}_p(W_{n/2}, \mathcal{N}(0,\sigma^2n/4)|\mathcal{U}(X_{1:n})). $$ Hence, by the tower law, we obtain that 
    \begin{align*}
   \Big\|W_{n/2}-\tilde Z_{n/2}\Big\|_p=\Big\|\mathcal{W}_p(W_{n/2}, \mathcal{N}(0,\sigma^2n/4)|\mathcal{U}(X_{1:n}))\Big\|_p.
\end{align*}
\end{proof}

\begin{lemma}[Rosenthal's inequality with explicit constants.]\label{lemma:rosenthal_explicit-const}
Let $(\tilde{X}_i)_{i \geq 1}$ be a sequence of centered i.i.d. observations. If $\|\tilde{X}_1\|_p<\infty$ for some $p \geq 2$, then
$$
\left\|\frac{1}{\sqrt{n}} \sum_{i \leq n} \tilde{X}_i\right\|_p \leq\left(\frac{p}{2}+1\right) n^{1 / p-1 / 2}\left\|\tilde{X}_1\right\|_p+2^{1 / p} \sqrt{p / 2+1} e^{\frac{1}{2}+\frac{1}{p}}\left\|\tilde{X}_1\right\|_2 .
$$
\end{lemma}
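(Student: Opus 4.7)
My plan is to prove this via a truncation argument combined with sharp moment bounds for sums of bounded independent random variables. Fix a truncation level $M>0$ to be chosen later, write $\sigma^2:=\|\tilde X_1\|_2^2$, and decompose $\tilde X_i = Y_i + Z_i$ where
$$Y_i := \tilde X_i \mathbb{1}_{|\tilde X_i|\le M} - \mathbb{E}[\tilde X_i \mathbb{1}_{|\tilde X_i|\le M}], \qquad Z_i := \tilde X_i - Y_i,$$
so that $(Y_i)$ and $(Z_i)$ are independent centered sequences. By the triangle inequality in $L_p$,
$$\Big\|\frac{1}{\sqrt n}\sum_{i\le n}\tilde X_i\Big\|_p \le \frac{1}{\sqrt n}\Big\|\sum_{i\le n} Y_i\Big\|_p + \frac{1}{\sqrt n}\Big\|\sum_{i\le n} Z_i\Big\|_p.$$

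For the bounded part, I would invoke a Bennett/Bernstein-type moment inequality (e.g.\ Pinelis' exponential moment bound for bounded martingales). Since $|Y_i|\le 2M$ and $\mathbb{E} Y_i^2 \le \sigma^2$, this yields a bound of the form
$$\Big\|\sum_{i\le n} Y_i\Big\|_p \le c_1(p)\sqrt{n}\,\sigma + c_2(p)\,M,$$
where $c_1(p)$ has the Gaussian-like $\sqrt{p}$ scaling and $c_2(p)$ is linear in $p$. The factor $c_1(p) = \sqrt{p/2+1}\,e^{1/2+1/p}$ traces back to evaluating $\|Z\|_p$ for a standard Gaussian $Z$ via Stirling-type bounds on the Gamma function. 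For the tail part, Minkowski in $L_p$ and the i.i.d.\ structure give
$$\Big\|\sum_{i\le n} Z_i\Big\|_p \le 2n^{1/p}\,\|\tilde X_1 \mathbb{1}_{|\tilde X_1|>M}\|_p \le 2n^{1/p}\,\|\tilde X_1\|_p.$$
Choosing $M$ of order $n^{1/p}\|\tilde X_1\|_p$ balances $c_2(p)M$ against this heavy-tail contribution, so that after dividing by $\sqrt n$ the linear-in-$M$ term is absorbed into a contribution of order $(p/2+1)\,n^{1/p-1/2}\|\tilde X_1\|_p$, while the Gaussian piece survives with constant $2^{1/p}\sqrt{p/2+1}\,e^{1/2+1/p}$.

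The main obstacle will be tracking the explicit numerical constants. Producing the precise factor $e^{1/2+1/p}$ on the $\|\tilde X_1\|_2$ term requires a careful application of Stirling's inequality when evaluating the sub-Gaussian $L_p$ norm, and producing the exact coefficient $(p/2+1)$ on the $n^{1/p-1/2}\|\tilde X_1\|_p$ term requires aligning the constant from the bounded piece with the one inherited from the tail. A cleaner alternative would avoid truncation entirely by starting from the scalar Burkholder inequality $\|S_n\|_p \le (p-1)\|(\sum_i \tilde X_i^2)^{1/2}\|_p$, splitting the bracket as $\sum_i \tilde X_i^2 = n\sigma^2 + \sum_i(\tilde X_i^2 - \sigma^2)$, and bounding the centered quadratic variation via Minkowski in $L_{p/2}$; this directly produces the two target terms but still requires careful bookkeeping of constants.
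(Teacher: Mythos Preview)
Your proposal is a reasonable sketch of a \emph{generic} route to Rosenthal-type inequalities, but it does not match the paper's argument and would not deliver the precise constants stated in the lemma, which are its entire content.

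The paper's proof (given in the duplicate statement labeled \texttt{dino\_veg2} in the appendix) is a one-line application of Nagaev's moment inequality \cite[Thm.~2]{nagaev1978some}, which already provides
\[
\Big\|\tfrac{1}{\sqrt n}\sum_{i\le n}\tilde X_i\Big\|_p^p \le \inf_{c>p/2}\Big\{ c^p n^{1-p/2}\|\tilde X_1\|_p^p + p\,c^{p/2}e^{c}\,B\big(\tfrac{p}{2},c-\tfrac{p}{2}\big)\|\tilde X_1\|_2^p\Big\},
\]
where $B(\cdot,\cdot)$ is the Beta function. Choosing $c=p/2+1$ gives $B(p/2,1)=2/p$, so the right-hand side becomes $(p/2+1)^p n^{1-p/2}\|\tilde X_1\|_p^p + 2(p/2+1)^{p/2}e^{p/2+1}\|\tilde X_1\|_2^p$, and subadditivity of the $p$-th root yields the claim. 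The specific constants $(p/2+1)$ and $2^{1/p}\sqrt{p/2+1}\,e^{1/2+1/p}$ are thus artifacts of this particular choice of $c$ in Nagaev's bound, not the output of any optimization or balancing.

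Your truncation-plus-Bennett approach, and your Burkholder-plus-quadratic-variation alternative, are both legitimate strategies for proving \emph{some} Rosenthal inequality with \emph{some} explicit constants. But you already identify the problem yourself: tracking constants. Neither route will reproduce these exact numerical factors---for instance, the $e^{1/2+1/p}$ is literally $e^{c/p}$ at $c=p/2+1$, and has nothing to do with Stirling or Gaussian moment evaluations. Your bound $\|\sum Z_i\|_p\le 2n^{1/p}\|\tilde X_1\|_p$ is also too crude (it ignores that the $Z_i$ are centered) and would pick up an extra factor of $2$ that is absent from the target. Since the lemma is stated precisely to supply these computable constants for downstream use, an argument that yields different ones does not prove the lemma as written.
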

\begin{proof}
    See Lemma 14 in \cite{austern2023efficient}.
\end{proof}

\section{Proofs of results in Section \ref{wass_sec}}\label{section:KMTproof}

\subsection{Proof of Theorem \ref{lemma:Wp_bound_using_Bonis_basic2}}\label{derek}
\subsubsection{Score bound for the conditional Wasserstein distance}

Recall that $\mathcal{A} := \{a_1, \dots, a_n\}$ is a finite alphabet, and $(X_i^A)_{i \le k}$ denotes a sequence drawn uniformly without replacement from $\mathcal{A}$. We define the centered sum $W_k^A := \sum_{i=1}^k X_i^A - \frac{k}{n} \sum_{i=1}^n a_i$, and its exchangeable counterpart $\tilde{W}_k^A := \sum_{i=1}^k X^A_{(I,J)(i)} - \frac{k}{n} \sum_{i=1}^n X^A_{(I,J)(i)}$, where $I \sim \Unif\{1,\dots,k\}$ and $J \sim \Unif\{k+1,\dots,n\}$.

In this subsection, we will derive an upper bound for $\big\|\mathcal{W}_p(W^A_k,\mathcal{N}(0,\eta^2))\big\|_p$ in terms of a score function. In this goal, we first remark that  \begin{align*}
    \mathcal{W}_p(W^A_k,\mathcal{N}(0,\eta^2))&\le \eta    \mathcal{W}_p(\eta^{-1}W^A_k,\mathcal{N}(0,1)).
\end{align*}Let $\tilde Z\sim \mathcal{N}(0,1)$ be a standard normal, independent of $W^A_k$. We define the process $(X_t)$ to interpolate between $\eta^{-1}W_k^A$ and $\tilde Z$ in the following way $$X_t:= e^{-t}\eta^{-1}W^A_k+\sqrt{1-e^{-2t}}\tilde Z.$$ 
Denote $f_t$ the density of $X_t$, meaning that for all $A\in \mathcal{B}(\mathbb{R})$, we have $\mathbb{P}(X_t\in A)=\int_A f_t(x)dx$. 
Define $
\xi_t := (\log h_t)'(F_t),
$
where $h_t(\cdot) = \frac{f_t(\cdot)}{\varphi(\cdot)}$ represents the conditional density of $F_t$ with respect to the standard Gaussian measure. Then we use Lemma 2 of \cite{otto2000generalization}.

\begin{lemma}[Lemma 2 of \cite{otto2000generalization}]\label{lemma:Wp_Lp_bound_by_int_ht}
For any $p>2$, under the above notations, the following inequality holds:
$$\mathcal{W}_p(\eta^{-1}W^A_k,\mathcal{N}(0,1))\le \int_0^\infty \|\xi_t\|_p dt.$$
\end{lemma}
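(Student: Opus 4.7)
The plan is to exploit the Ornstein--Uhlenbeck (OU) interpolation $X_t = e^{-t}\eta^{-1}W_k^A + \sqrt{1-e^{-2t}}\,\tilde Z$ introduced in the excerpt, which satisfies $X_0 = \eta^{-1}W_k^A$ and $X_t \Rightarrow \mathcal{N}(0,1)$ as $t\to\infty$, and then to invoke the dynamic (Benamou--Brenier style) formulation of the Wasserstein distance. A direct computation (or Itô's formula applied to the underlying OU SDE) shows that the density $f_t$ solves the Fokker--Planck equation
\begin{equation*}
\partial_t f_t \;=\; \partial_x\bigl(f_t\,\partial_x\log h_t\bigr),
\end{equation*}
where $h_t = f_t/\varphi$, so that $(f_t)_{t\geq 0}$ is a continuity equation driven by the velocity field $v_t := -\partial_x \log h_t$. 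In particular, $v_t(X_t)$ has the same law as $-\xi_t$.

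Next, I would construct the Lagrangian flow $\Phi_{0,t}$ of $v_t$ by solving the ODE $\partial_t \Phi_{0,t}(x) = v_t(\Phi_{0,t}(x))$ with $\Phi_{0,0} = \mathrm{id}$, and push $X_0$ forward by it. By uniqueness for the continuity equation, $\Phi_{0,t}(X_0)$ has law $f_t$, so $(X_0, \Phi_{0,T}(X_0))$ is a valid coupling of the laws of $X_0$ and $X_T$. The definition of $\mathcal{W}_p$ and Minkowski's integral inequality then give
\begin{equation*}
\mathcal{W}_p(X_0, X_T) \;\leq\; \bigl\|\Phi_{0,T}(X_0) - X_0\bigr\|_p \;\leq\; \Bigl\|\int_0^T v_t(\Phi_{0,t}(X_0))\,dt\Bigr\|_p \;\leq\; \int_0^T \|\xi_t\|_p \, dt.
\end{equation*}
Letting $T \to \infty$, and using that $X_T \to \tilde Z$ in $L^p$ since the OU semigroup is a contraction toward its invariant measure (and $W_k^A$ has all moments by boundedness), yields $\mathcal{W}_p(\eta^{-1}W_k^A, \mathcal{N}(0,1)) \leq \int_0^\infty \|\xi_t\|_p \, dt$, which is the claim.

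The main obstacle I anticipate is justifying the Lagrangian picture at $t = 0$, since $\eta^{-1}W_k^A$ is discrete so $f_0$ is a sum of Dirac masses and $v_0$ is not well defined. The saving grace is that for any $t > 0$ the convolution with $\sqrt{1-e^{-2t}}\,\tilde Z$ instantly regularizes $f_t$ to a smooth density with rapid decay, so $v_t$ is smooth on $(0,\infty)$; integrability of $\|\xi_t\|_p$ near $t = 0$ follows from standard short-time OU smoothing estimates applied to the bounded random variable $W_k^A$. Since the statement coincides with Lemma 2 of \cite{otto2000generalization}, once the OU interpolation is in place as in the excerpt one can simply invoke their result, so the only work specific to this paper is identifying $v_t$ with the score $-\xi_t$, which is built into the definitions above.
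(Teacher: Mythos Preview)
The paper does not give its own proof of this lemma at all; it simply states it as a direct citation of Lemma~2 of \cite{otto2000generalization} and moves on. Your sketch via the Fokker--Planck/continuity equation for the OU interpolation, the Lagrangian flow of the velocity field $-\partial_x\log h_t$, and Minkowski's integral inequality is exactly the standard argument underlying that cited result (and its descendants in \cite{bonis2020stein}), so your proposal is correct and, in fact, supplies more detail than the paper does. Your closing remark---that once the OU setup is in place one can simply invoke the reference---is precisely what the paper does.
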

Using Lemma 5 from \cite{bonis2020stein}, we directly obtain the following bound
\begin{lemma}[Theorem 5 from \cite{bonis2020stein}]\label{lemma:a_version_of_score}
Let $t>0$. The following equality holds:
$$\xi_t\overset{a.s}{=}\mathbb{E}\Big[e^{-t}\eta^{-1}W^A_k-\frac{e^{-2t}}{\sqrt{1-e^{-2t}}}\tilde Z\Big|X_t\Big].$$ 
\end{lemma}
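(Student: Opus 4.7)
The plan is to compute $\xi_t = (\log h_t)'(X_t)$ explicitly via a Gaussian-conditioning argument. First I would use the chain rule together with the fact that $-(\log\varphi)'(x) = x$ to write $\xi_t = X_t + \frac{f_t'(X_t)}{f_t(X_t)}$, which reduces the task to evaluating the score $(\log f_t)'$ of $X_t$.

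Next, since $\tilde Z \sim \mathcal{N}(0,1)$ is independent of $W_k^A$, the conditional law of $X_t$ given $W_k^A = w$ is $\mathcal{N}(e^{-t}\eta^{-1}w,\,1-e^{-2t})$. Denoting its density by $p_t(x\mid w)$, we have
\begin{equation*}
f_t(x) = \mathbb{E}\big[p_t(x\mid W_k^A)\big], \qquad \frac{\partial}{\partial x}\log p_t(x\mid w) = -\frac{x - e^{-t}\eta^{-1}w}{1-e^{-2t}}.
\end{equation*}
Since $W_k^A$ is bounded, differentiation under the expectation is trivially justified, giving
\begin{equation*}
\frac{f_t'(x)}{f_t(x)} = -\frac{1}{1-e^{-2t}}\,\mathbb{E}\!\left[(x - e^{-t}\eta^{-1}W_k^A)\,\frac{p_t(x\mid W_k^A)}{f_t(x)}\right].
\end{equation*}
The crucial observation is that $p_t(x\mid W_k^A)/f_t(x)$ is, by Bayes' rule, the Radon--Nikodym derivative converting the law of $W_k^A$ to its conditional law given $X_t = x$. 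Consequently the displayed expression equals $-\big(x - e^{-t}\eta^{-1}\mathbb{E}[W_k^A\mid X_t = x]\big)/(1-e^{-2t})$, and after combining with the $X_t$ term one obtains
\begin{equation*}
\xi_t = \frac{e^{-t}\eta^{-1}\mathbb{E}[W_k^A\mid X_t] - e^{-2t}X_t}{1-e^{-2t}}.
\end{equation*}

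The final step is purely algebraic: solve $X_t = e^{-t}\eta^{-1}W_k^A + \sqrt{1-e^{-2t}}\,\tilde Z$ for $\tilde Z$ and take a conditional expectation given $X_t$ to get $\mathbb{E}[\tilde Z\mid X_t] = (X_t - e^{-t}\eta^{-1}\mathbb{E}[W_k^A\mid X_t])/\sqrt{1-e^{-2t}}$. Substituting this into the right-hand side of the claim and simplifying recovers exactly the formula for $\xi_t$ derived above, completing the argument.

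The only minor obstacle I anticipate is verifying the Bayes' rule identification cleanly (technically it holds for Lebesgue-almost every $x$, which gives the a.s.\ statement once evaluated at $X_t$); there is no delicate analytic issue, since $W_k^A$ is bounded and all Gaussian densities involved are smooth with uniformly controlled tails.
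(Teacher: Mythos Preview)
Your proof is correct. The paper does not actually prove this lemma: it is stated as a direct citation of Bonis (2020), with no argument given. Your computation is the standard one underlying that cited result---expressing the relative score via Tweedie's formula for a Gaussian convolution (equivalently, the Ornstein--Uhlenbeck semigroup), then rewriting $\tilde Z$ in terms of $X_t$ and $W_k^A$ to match the conditional-expectation form. The boundedness of $W_k^A$ indeed makes the differentiation-under-the-integral and the Bayes identification unproblematic, so there is no gap.
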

Those combined directly imply that $$\mathcal{W}_p(W_k^A,\mathcal{N}(0,\eta^2))\le \eta \int_0^{\infty}\Big\|\mathbb{E}\Big[e^{-t}\eta^{-1}W^A_k-\frac{e^{-2t}}{\sqrt{1-e^{-2t}}}\tilde Z\Big|X_t\Big]\Big\|_pdt.$$
\subsubsection{Bounding the score bound using Stein's exchangeable pair}In this subsection we will provide a further upper-bound by using Stein's exchangeable pair method. Throughout this section we will write $$s:=\frac{n}{k(n-k)}.$$Moreover we define 
$\tilde W^A_k$ as
$$\tilde W^A_k = \sum_{i=1}^{k}X^A_{(I,J)(i)}- \frac{k}{n}\sum_{i=1}^{n}a_{(I,J)(i)},$$
where $I\sim\mathrm{Unif}\{1,\dots,k\}$ and $J\sim\mathrm{Unif}\{k+1,\dots,n\}$. Note that $(W^A_k,\tilde W^A_k)$ form an exchangeable pair as we have $$(W^A_k,\tilde W^A_k)\overset{d}{=}(\tilde W^A_k,W^A_k).$$ 
Recall that we denote by $H_\ell(\cdot)$ the one-dimensional $\ell$-th Hermite polynomial.

\begin{lemma}
Let $\tilde\kappa>0$. Assume that the conditions of \Cref{lemma:Wp_bound_using_Bonis_basic2} hold and shorthand $H_k:=H_k(\tilde Z)$. Then 
 for all $p\ge2$, we have:
\begin{equation}\label{eqn:W_p_bound_intermediate2}
\begin{aligned}
   &\eta^{-1} \mathcal{W}_p(W^A_k,\mathcal{N}(0,\eta^2))\\&\le \int_0^{\tilde\kappa} \mathbb{E}\Big[\Big|e^{-t} \eta^{-1}W^A_k-\frac{e^{-2t}}{\sqrt{1-e^{-2t}}}\tilde Z\Big|^p\Big]^{\frac{1}{p}}dt
    \\&+\eta^{-1}\int_{\tilde\kappa} ^{\infty}e^{-t}\Big\|\frac{1}{s}\mathbb{E}\big[W^A_k-\tilde W^A_k|W^A_k\big]-
W^A_k\Big\|_pdt
\\&+\int_{\tilde\kappa} ^{\infty}\frac{e^{-2t}\|H_1\|_p}{\sqrt{1-e^{-2t}}}\Big\|\frac{1}{2s}\eta^{-2}\mathbb{E}\big[(\tilde W^A_k-W^A_k)^2|W^A_k\big]-1\Big\|_pdt
\\&+\sum_{l\ge 3}\int_{\tilde\kappa} ^{\infty}\frac{e^{-lt}\|H_{l-1}\|_p}{sl!(\sqrt{1-e^{-2t}})^{l-1}}\eta^{-\ell}\Big\|\mathbb{E}\big[(\tilde W^A_k-W^A_k)^\ell|W^A_k\big]\Big\|_pdt.
\end{aligned} 
\end{equation}
\end{lemma}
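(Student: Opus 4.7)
The starting point is the representation already in hand from the preceding two lemmas: combining Lemma~\ref{lemma:Wp_Lp_bound_by_int_ht} and Lemma~\ref{lemma:a_version_of_score} together with the obvious scaling $\mathcal{W}_p(W_k^A,\mathcal{N}(0,\eta^2))\le \eta\,\mathcal{W}_p(\eta^{-1}W_k^A,\mathcal{N}(0,1))$ gives
\[
\mathcal{W}_p(W_k^A,\mathcal{N}(0,\eta^2))\le \eta\int_0^\infty\bigl\|\mathbb{E}\bigl[e^{-t}\eta^{-1}W_k^A-\tfrac{e^{-2t}}{\sqrt{1-e^{-2t}}}\tilde Z\bigm|X_t\bigr]\bigr\|_p\,dt,
\]
and the plan is to split this single integral at $\tilde\kappa$ and handle the two regimes by different devices, chosen to match the short-time vs.\ long-time behavior of the integrand.

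For $t\in[0,\tilde\kappa]$, I would simply apply conditional Jensen's inequality: the $L_p$ norm of a conditional expectation is controlled by the $L_p$ norm of the argument, so
\[
\bigl\|\mathbb{E}\bigl[e^{-t}\eta^{-1}W_k^A-\tfrac{e^{-2t}}{\sqrt{1-e^{-2t}}}\tilde Z\mid X_t\bigr]\bigr\|_p\le \bigl\|e^{-t}\eta^{-1}W_k^A-\tfrac{e^{-2t}}{\sqrt{1-e^{-2t}}}\tilde Z\bigr\|_p.
\]
Integrating and dividing by $\eta$ yields the first line of the claimed bound. This is essentially free; the exchangeable-pair machinery is deliberately \emph{not} used here because the long-time expansion in the next step develops $(1-e^{-2t})^{-(\ell-1)/2}$ singularities at $t=0$ that would blow up.

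For $t\in[\tilde\kappa,\infty)$, the plan is to use Stein's method of exchangeable pairs together with a Hermite/Mehler expansion tied to the OU interpolation. The idea is to reconstruct $\xi_t$ as a conditional expectation given $W_k^A$ rather than $X_t$: since $\tilde Z$ is independent of $W_k^A$, the OU kernel $X_t\mid W_k^A\sim\mathcal{N}(e^{-t}\eta^{-1}W_k^A,1-e^{-2t})$ makes $\mathbb{E}[\phi(X_t)\mid W_k^A]$ expand in Hermite polynomials with the normalizers $e^{-\ell t}/(\sqrt{1-e^{-2t}})^{\ell-1}$ that appear in the target. I would then use the exchangeable-pair identity $\mathbb{E}[(W_k^A-\tilde W_k^A) g(W_k^A)\mid W_k^A]=\tfrac{1}{2}\mathbb{E}[(W_k^A-\tilde W_k^A)(g(W_k^A)-g(\tilde W_k^A))\mid W_k^A]$ together with a Taylor expansion of $g(\tilde W_k^A)-g(W_k^A)$ in powers of $\tilde W_k^A-W_k^A$. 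Rewriting $\eta^{-1}W_k^A=\tfrac{1}{s}\eta^{-1}\mathbb{E}[W_k^A-\tilde W_k^A\mid W_k^A]-\bigl(\tfrac{1}{s}\eta^{-1}\mathbb{E}[W_k^A-\tilde W_k^A\mid W_k^A]-\eta^{-1}W_k^A\bigr)$ isolates the linearity defect (the second summand on the right), while pairing the Taylor term of order $\ell=1$ against $H_0$ produces the variance defect (the $H_1$ line, after one integration by parts), and orders $\ell\ge 2$ against $H_{\ell-1}$ produce the higher-order tail. A final application of Jensen on each term passes the outer $L_p$ norm inside the conditional expectation and produces exactly the claimed $\|\cdot\|_p$ quantities.

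The main obstacle is the bookkeeping in matching the Taylor expansion of $g(\tilde W_k^A)-g(W_k^A)$ against the Hermite expansion of the OU kernel: one needs to verify that the $\ell$-th Taylor coefficient, after integration-by-parts through the Gaussian density at time $t$, produces exactly the factor $e^{-\ell t}\|H_{\ell-1}\|_p/[s\,\ell!\,(1-e^{-2t})^{(\ell-1)/2}]$ claimed in the last sum. A secondary issue is convergence of the infinite series, but for $t\ge\tilde\kappa$ the geometric decay $e^{-\ell t}$ dominates and the moments $\mathbb{E}[(\tilde W_k^A-W_k^A)^\ell\mid W_k^A]$ are controlled since $X_1\in[0,R]$ makes increments bounded; this is where the restriction to $t\ge\tilde\kappa$ rather than $t\ge 0$ is essential.
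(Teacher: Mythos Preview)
Your strategy matches the paper's: split the score integral at $\tilde\kappa$, apply conditional Jensen for $t\le\tilde\kappa$, and use the exchangeable-pair Hermite expansion for $t\ge\tilde\kappa$. The paper's execution of the large-$t$ step is more direct than your Taylor-matching sketch: it simply defines
\[
\tau_t:=\sum_{\ell\ge1}\frac{e^{-\ell t}}{s\,\ell!\,(1-e^{-2t})^{(\ell-1)/2}}\,\eta^{-\ell}\,\mathbb{E}\bigl[(\tilde W_k^A-W_k^A)^\ell\bigm|W_k^A\bigr]\,H_{\ell-1}(\tilde Z),
\]
invokes (an adaptation of) Lemma~8 of Bonis to assert $\mathbb{E}[\tau_t\mid X_t]=0$, writes $\xi_t=\mathbb{E}\bigl[\tau_t+e^{-t}\eta^{-1}W_k^A-\tfrac{e^{-2t}}{\sqrt{1-e^{-2t}}}\tilde Z\bigm|X_t\bigr]$, and then applies conditional Jensen plus the triangle inequality termwise to produce the four displayed lines. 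Your Taylor-expansion route would amount to reproving Bonis's lemma from scratch, which is legitimate but more bookkeeping than the paper does.

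One step in your sketch is wrong as stated: the identity
\[
\mathbb{E}\bigl[(W_k^A-\tilde W_k^A)g(W_k^A)\bigm|W_k^A\bigr]=\tfrac{1}{2}\,\mathbb{E}\bigl[(W_k^A-\tilde W_k^A)(g(W_k^A)-g(\tilde W_k^A))\bigm|W_k^A\bigr]
\]
is false conditionally on $W_k^A$; exchangeability only yields the \emph{unconditional} version $\mathbb{E}[(W-\tilde W)g(W)]=\tfrac{1}{2}\mathbb{E}[(W-\tilde W)(g(W)-g(\tilde W))]$. This is not fatal to your plan---the zero-mean property $\mathbb{E}[\tau_t\mid X_t]=0$ that drives the argument does not rely on that conditional identity---but if you insist on deriving the expansion yourself you will need to replace it with the correct unconditional (or $X_t$-conditional) form when you carry out the Hermite-versus-Taylor matching.
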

\begin{proof}
    We introduce $\tau_t$ similarly as in \cite{bonis2020stein}:
    \begin{equation}\label{eqn:def_tau_t}
        \tau_t := \sum_{\ell>0} \frac{e^{-\ell t}}{s^\ell \ell! \sqrt{1-e^{-2t}}} \eta^{-\ell}\mathbb{E}[(\tilde W_k^A - W_k^A)^\ell |W_k^A] H_{\ell-1}(\tilde{Z}).
    \end{equation}
    Adapting the proof of Lemma 8 from \cite{bonis2020stein}, we have
    $$\mathbb{E}[\tau_t|X_t] = 0 ~ a.s.$$
    This directly implies that 
    \begin{align*}
        \xi_t=&\mathbb{E}\Big[e^{-t}\eta^{-1}W^A_k-\frac{e^{-2t}}{\sqrt{1-e^{-2t}}}\tilde Z\Big| X_t\Big]\\
        =&\mathbb{E}\Big[\tau_t + e^{-t}\eta^{-1}W^A_k-\frac{e^{-2t}}{\sqrt{1-e^{-2t}}}\tilde Z\Big| X_t\Big]. 
    \end{align*}
Using \Cref{lemma:Hermite_bound_Bonis}, we then obtain that
$$
\|\xi_t\|_p\le e^{-t}\mathbb{E}
[S_p(t)^{\frac{p}{2}}]^{1/p},$$
where
\begin{align*}
    S_p(t):= &\eta^{-1}\mathbb{E}\big[\frac{W^A_k-\tilde W_k^A}{s}-
W^A_k|W_k^A\big]
\\&+\frac{e^{-2t}\|H_1\|_p}{\sqrt{1-e^{-2t}}}\Big(\frac{1}{2s}\eta^{-2}\mathbb{E}\big[(\tilde W_k^A-W_k^A)^2|W_k^A\big]-1\Big)
\\&+\sum_{\ell\ge 3}\frac{e^{-\ell t}\|H_{\ell-1}\|_p}{s\ell!(\sqrt{1-e^{-2t}})^{\ell-1}}\eta^{-\ell}\mathbb{E}\big[(\tilde W_k^A-W_k^A)^\ell|W_k^A\big]
\end{align*}and where for ease of notations we shorthanded $H_k:=H_k(\tilde Z).$
    Finally, by combining this with  \Cref{lemma:Wp_Lp_bound_by_int_ht}, we directly obtain that  for all $\tilde{\kappa}>0$,
    \begin{align*}
&\mathcal{W}_p(\eta^{-1}W_k^A,\mathcal{N}(0,1))\\
\le&\int_0^{\tilde\kappa} \|\xi_t\|_p dt+\int_{\tilde\kappa} ^\infty \|\xi_t\|_p dt\\\le& \int_0^{\tilde\kappa} \mathbb{E}\Big[\Big|e^{-t} \eta^{-1}W^A_k-\frac{e^{-2t}}{\sqrt{1-e^{-2t}}}\tilde Z\Big|^p\Big]^{\frac{1}{p}}dt
    \\&+\eta^{-1}\int_{\tilde\kappa} ^{\infty}e^{-t}\Big\|\frac{1}{s}\mathbb{E}\big[W^A_k-\tilde W_k^A|W_k^A\big]-
W^A_k\Big\|_pdt
\\&+\int_{\tilde\kappa} ^{\infty}\frac{e^{-2t}\|H_1\|_p}{\sqrt{1-e^{-2t}}}\Big\|\frac{1}{2s}\eta^{-2}\mathbb{E}\big[(\tilde W_k^A-W_k^A)^2|W_k^A\big]-1\Big\|_pdt
\\&+\sum_{\ell\ge 3}\int_{\tilde\kappa} ^{\infty}\frac{e^{-\ell t}\|H_{\ell-1}\|_p}{s\ell!(\sqrt{1-e^{-2t}})^{\ell-1}}\eta^{-\ell}\Big\|\mathbb{E}\big[(\tilde W_k^A-W_k^A)^\ell|W_k^A\big]\Big\|_pdt.
    \end{align*}
    Therefore \cref{eqn:W_p_bound_intermediate} follows. 
\end{proof}
\subsection{Proof of Theorem \ref{thm:main1}}\label{tperr}In this subsection we apply \Cref{lemma:Wp_bound_using_Bonis}, to bound the conditional Wasserstein-$p$ distance between $W_k$ and $\mathcal{N}(0,\frac{k(n-k)}{n}\sigma^2)$.

\subsubsection{Bounding the conditional Wasserstein-$p$ distance}
Throughout this section we will write $$s:=\frac{n}{k(n-k)}.$$Moreover we define 
$W_k^\prime$ as
$$W_k^\prime = \sum_{i=1}^{k}X_{(I,J)(i)}- \frac{k}{n}\sum_{i=1}^{n}X_{(I,J)(i)}.$$
where $I\sim\mathrm{Unif}\{1,\dots,k\}$ and $J\sim\mathrm{Unif}\{k+1,\dots,n\}$. Note that conditionally on $\mathcal{U}(X_{1:n})$, $(W_k,W'_k)$ form an exchangeable pair, as we have $$(W_k,W_k',\mathcal{U}(X_{1:n}))\overset{d}{=}(W'_k,W_k,\mathcal{U}(X_{1:n})).$$ 
Recall that we denote by $H_\ell(\cdot)$ the one-dimensional $\ell$-th Hermite polynomial.

\begin{lemma}\label{lemma:Wp_bound_using_Bonis}
Let $\kappa>0$. Let $\tilde Z\sim\mathcal{N}(0,1)$ be an independent normal. Assume that the conditions of \Cref{thm:main1} hold and shorthand $H_k:=H_k(\tilde Z)$. Then if we define $\sigma_{n,k}^2:=\sigma^2\frac{k(n-k)}{n}$  
 for all $p\ge2$ we have:
\begin{equation}\label{eqn:W_p_bound_intermediate}
\begin{aligned}
   &\sigma_{n,k}^{-1} \Big\|\mathcal{W}_p(W_k,\mathcal{N}(0,\sigma_{n,k}^2)|\mathcal{U}(X_{1:n}))\Big\|_p\\
   \le &  \int_0^{-\frac{1}{2}\log(1-\frac{R^2}{\kappa\sigma_{n,k}^2})} \mathbb{E}\Big[\Big|e^{-t} \sigma_{n,k}^{-1}W_k-\frac{e^{-2t}}{\sqrt{1-e^{-2t}}}\tilde Z\Big|^p\Big]^{\frac{1}{p}}dt
    \\&+\sigma_{n,k}^{-1}\int_{-\frac{1}{2}\log(1-\frac{R^2}{\kappa \sigma_{n,k}^2})}^{\infty}e^{-t}\Big\|\frac{1}{s}\mathbb{E}\big[W_k-W_k'|W_k,\mathcal{U}(X_{1:n})\big]-
W_k\Big\|_pdt
\\&+\int_{-\frac{1}{2}\log(1-\frac{R^2}{\kappa \sigma_{n,k}^2})}^{\infty}\frac{e^{-2t}\|H_1\|_p}{\sqrt{1-e^{-2t}}}\Big\|\frac{1}{2s}\sigma_{n,k}^{-2}\mathbb{E}\big[(W'_k-W_k)^2|W_k,\mathcal{U}(X_{1:n})\big]-1\Big\|_pdt
\\&+\sum_{\ell\ge 3}\int_{-\frac{1}{2}\log(1-\frac{R^2}{\kappa \sigma_{n,k}^2})}^{\infty}\frac{e^{-\ell t}\|H_{\ell-1}\|_p}{s\ell!(\sqrt{1-e^{-2t}})^{\ell-1}}\sigma_{n,k}^{-\ell}\Big\|\mathbb{E}\big[(W_k'-W_k)^\ell|W_k,\mathcal{U}(X_{1:n})\big]\Big\|_pdt\\
=:& (a_0)+(a_1)+(a_2)+(a_3).
\end{aligned} 
\end{equation}
\end{lemma}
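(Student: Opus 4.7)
The plan is to apply \cref{lemma:Wp_bound_using_Bonis_basic2} conditionally on the unordered multiset $\mathcal{U}(X_{1:n})$ and then take an outer $L^p$ norm on both sides.

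First, I would identify the conditional distribution. Since $(X_1,\ldots,X_n)$ is i.i.d., conditionally on $\mathcal{U}(X_{1:n})$ the ordered tuple is a uniformly random permutation of the multiset, so $W_k$ has exactly the distribution of $W_k^A$ from \cref{lemma:Wp_bound_using_Bonis_basic2} with alphabet $\mathcal{A} = \mathcal{U}(X_{1:n})$. Moreover, the exchangeable perturbation $W_k'$ built by swapping indices $I \sim \Unif\{1,\ldots,k\}$ and $J \sim \Unif\{k+1,\ldots,n\}$ coincides conditionally with $\tilde W_k^A$, and the normalizing factor $s = n/(k(n-k))$ is identical. Treating $\mathcal{U}(X_{1:n})$ as frozen, \cref{lemma:Wp_bound_using_Bonis_basic2} then yields an almost sure bound on $\mathcal{W}_p(W_k,\mathcal{N}(0,\sigma_{n,k}^2)\mid \mathcal{U}(X_{1:n}))$ with $\eta = \sigma_{n,k}$ and free parameter $\tilde\kappa$, in which every $\|\cdot\|_p$ appearing inside the time integrals is interpreted as the conditional $L^p$ norm given $\mathcal{U}(X_{1:n})$.

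Next, I would specialize $\tilde\kappa := -\tfrac{1}{2}\log(1 - R^2/(\kappa\sigma_{n,k}^2))$, which is well defined for $\kappa > R^2/\sigma_{n,k}^2$, and take the outer $L^p$ norm of the resulting almost sure inequality. By Minkowski's integral inequality, the outer norm commutes with the time integrals over $[0,\tilde\kappa]$ and $(\tilde\kappa,\infty)$, and by the tower law an outer $L^p$ norm of a conditional $L^p$ norm collapses to the unconditional $L^p$ norm. This produces precisely the four integral terms $(a_0)$--$(a_3)$ in \cref{eqn:W_p_bound_intermediate}; in particular, for the short-time integral $(a_0)$ the tower identity lets us drop the conditioning and write an ordinary $p$-th moment, and in the three long-time integrals each conditional $L^p$ moment of the form $\mathbb{E}[(W_k'-W_k)^\ell\mid W_k,\mathcal{U}(X_{1:n})]$ becomes the corresponding unconditional $\|\cdot\|_p$ term.

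The only real subtlety lies in the first step, namely justifying that the conditional construction of $W_k\mid\mathcal{U}(X_{1:n})$ truly matches the sampling-without-replacement model underlying \cref{lemma:Wp_bound_using_Bonis_basic2}; this follows from the exchangeability of $(X_i)_{i\le n}$, but it is worth recording carefully since subsequent results chain several such conditional couplings. The specific split point $\tilde\kappa$ is chosen so that $\sqrt{1-e^{-2t}} \ge R/(\sqrt{\kappa}\,\sigma_{n,k})$ for all $t \ge \tilde\kappa$, which is what later allows the Hermite-weighted tail of the score series to be summed without divergence once the bounded-support bound $|X_i|\le R_s$ is plugged in.
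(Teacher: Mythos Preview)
Your proposal is correct and matches the paper's approach: apply \cref{lemma:Wp_bound_using_Bonis_basic2} conditionally on $\mathcal{U}(X_{1:n})$ with alphabet $\mathcal{A}=\{X_1,\ldots,X_n\}$ and $\tilde\kappa:=-\tfrac12\log\bigl(1-R^2/(\kappa\,\sigma_{n,k}^2)\bigr)$, then take the outer $L^p$ norm and collapse the conditional moments via Minkowski and the tower law (the paper compresses this last step into the phrase ``Jensen inequality''). Your explicit remark that the conditional law of $W_k$ given $\mathcal{U}(X_{1:n})$ coincides with the sampling-without-replacement model is a helpful elaboration of something the paper states in a single line.
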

\begin{proof}
    According to \Cref{lemma:Wp_bound_using_Bonis_basic2} applied with the alphabet $\mathcal{A}:=\{X_1,\dots,X_n\}$ and $\tilde\kappa:=-\frac{1}{2}\log(1-\frac{R^2}{\kappa \sigma_{n,k}^2})$, we obtain that \begin{equation}\label{eqn:W_p_bound_intermediate}
\begin{aligned}
   &\sigma_{n,k}^{-1} \mathcal{W}_p(W_k,\mathcal{N}(0,\sigma_{n,k}^2)\big|\mathcal{U}(X_{1:n}))\\&\le  \int_0^{-\frac{1}{2}\log(1-\frac{R^2}{\kappa \sigma_{n,k}^2})} \mathbb{E}\Big[\Big|e^{-t} \sigma_{n,k}^{-1}W_k-\frac{e^{-2t}}{\sqrt{1-e^{-2t}}}\tilde Z\Big|^p\Big|\mathcal{U}(X_{1:n})\Big]^{\frac{1}{p}}dt
    \\&+\sigma_{n,k}^{-1}\int_{-\frac{1}{2}\log(1-\frac{R^2}{\kappa \sigma_{n,k}^2})}^{\infty}e^{-t}\mathbb{E}\Big[\frac{1}{s^p}\Big|\mathbb{E}\big[W_k-W'_k|W_k\big]-
W_k\Big|^p\Big|\mathcal{U}(X_{1:n})\Big]^{1/p}dt
\\&+\int_{-\frac{1}{2}\log(1-\frac{R^2}{\kappa \sigma_{n,k}^2})}^{\infty}\frac{e^{-2t}\|H_1\|_p}{\sqrt{1-e^{-2t}}}\mathbb{E}\Big[\Big|\frac{1}{2s}\sigma_{n,k}^{-2}\mathbb{E}\big[( W'_k-W_k)^2|W_k\big]-1\Big|^p\Big|\mathcal{U}(X_{1:n})\Big]^{1/p}dt
\\&+\sum_{\ell\ge 3}\int_{-\frac{1}{2}\log(1-\frac{R^2}{\kappa \sigma_{n,k}^2})}^{\infty}\frac{e^{-\ell t}\|H_{\ell-1}\|_p}{s\ell!(\sqrt{1-e^{-2t}})^{\ell-1}}\sigma_{n,k}^{-\ell}\mathbb{E}\Big[\Big|\mathbb{E}\big[( W'_k-W_k)^\ell|W_k\big]\Big|^p\Big|\mathcal{U}(X_{1:n})\Big]^{1/p}dt.
\end{aligned} 
\end{equation}The desired result directly follows from Jensen inequality.
\end{proof}

In the following subsections, we present lemmas that establish bounds for each term in \Cref{lemma:Wp_bound_using_Bonis}.

\subsubsection{Bounding $(a_0)$}
\begin{lemma}\label{lemma:a0_bound}
Assume that the conditions of \Cref{lemma:Wp_bound_using_Bonis} hold.  Then the following bound holds for all $p\ge2$:
    \begin{align*}
        (a_0)&\le \mathcal{W}_{p}(\sigma_{n,k}^{-1}W_k,\mathcal{N}(0,1))(1-\sqrt{1-\frac{R^2}{\kappa\sigma_{n,k}^2}}) + \sqrt{2}\left(\frac{\Gamma((p+1) / 2)}{\Gamma(1 / 2)}\right)^{1 / p}\arccos{(\sqrt{1-\frac{R^2}{\kappa\sigma_{n,k}^2}})}
        \\&\le \sigma_{n,k}^{-1}\mathcal{S}^p_{\sigma,R}(n,k)\times(1-\sqrt{1-\frac{R^2}{\kappa\sigma_{n,k}^2}}) + \sqrt{2}\left(\frac{\Gamma((p+1) / 2)}{\Gamma(1 / 2)}\right)^{1 / p}\arccos{(\sqrt{1-\frac{R^2}{\kappa\sigma_{n,k}^2}})}.
    \end{align*} where $ \mathcal{S}^p_{\sigma,R}(n,k)$ is defined in \Cref{nelson2}.
\end{lemma}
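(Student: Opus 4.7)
The plan is to control the integrand pointwise in $t$ via an optimal coupling plus a triangle inequality, and then evaluate two explicit elementary integrals over $[0,T]$ where $T:=-\tfrac{1}{2}\log(1-R^2/(\kappa\sigma_{n,k}^2))$. Concretely, let $Z^{\star}\sim\mathcal{N}(0,1)$ be constructed jointly with $W_k$ so that $\|\sigma_{n,k}^{-1}W_k-Z^{\star}\|_p=\mathcal{W}_p(\sigma_{n,k}^{-1}W_k,\mathcal{N}(0,1))$, and chosen independent of $\tilde Z$. Writing
\[
e^{-t}\sigma_{n,k}^{-1}W_k-\frac{e^{-2t}}{\sqrt{1-e^{-2t}}}\tilde Z
= e^{-t}\bigl(\sigma_{n,k}^{-1}W_k-Z^{\star}\bigr)+\Bigl(e^{-t}Z^{\star}-\frac{e^{-2t}}{\sqrt{1-e^{-2t}}}\tilde Z\Bigr),
\]
the second bracket is a centered Gaussian of variance $e^{-2t}+e^{-4t}/(1-e^{-2t})=e^{-2t}/(1-e^{-2t})$. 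Hence by the triangle inequality in $L_p$,
\[
\Bigl\|e^{-t}\sigma_{n,k}^{-1}W_k-\tfrac{e^{-2t}}{\sqrt{1-e^{-2t}}}\tilde Z\Bigr\|_p
\le e^{-t}\mathcal{W}_p(\sigma_{n,k}^{-1}W_k,\mathcal{N}(0,1))+\tfrac{e^{-t}}{\sqrt{1-e^{-2t}}}\,\|Z\|_p,
\]
with $\|Z\|_p=\sqrt{2}\bigl(\Gamma((p+1)/2)/\Gamma(1/2)\bigr)^{1/p}$ for $Z\sim\mathcal{N}(0,1)$.

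Integrating from $0$ to $T$ then reduces the problem to two elementary integrals. Directly, $\int_0^T e^{-t}\,dt = 1-e^{-T}=1-\sqrt{1-R^2/(\kappa\sigma_{n,k}^2)}$. For the second, the substitution $u=e^{-t}$ yields
\[
\int_0^T \frac{e^{-t}}{\sqrt{1-e^{-2t}}}\,dt=\int_{e^{-T}}^1 \frac{du}{\sqrt{1-u^2}}=\arccos\!\bigl(e^{-T}\bigr)=\arccos\!\Bigl(\sqrt{1-\tfrac{R^2}{\kappa\sigma_{n,k}^2}}\Bigr).
\]
Combining these two evaluations with the pointwise bound above produces the first inequality in the lemma statement.

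For the second inequality, I would invoke the Wasserstein-$p$ bound established earlier in \cref{nelson2} (in the form $\mathcal{W}_p(W_k,\mathcal{N}(0,\sigma_{n,k}^2))\le \mathcal{S}^p_{\sigma,R}(n,k)$) and use that $\mathcal{W}_p$ is $1$-homogeneous under scaling, so $\mathcal{W}_p(\sigma_{n,k}^{-1}W_k,\mathcal{N}(0,1))\le \sigma_{n,k}^{-1}\mathcal{S}^p_{\sigma,R}(n,k)$. Substituting this into the first inequality gives the second. I do not anticipate a real obstacle: the only delicate points are (i) choosing the coupling of $(W_k,Z^{\star})$ so that $\tilde Z$ remains independent of $Z^{\star}$, which is fine because the integrand in \cref{lemma:Wp_bound_using_Bonis} only depends on the marginal distributions of $W_k$ and $\tilde Z$ together with $\tilde Z\perp W_k$, and (ii) bookkeeping the variance of the Gaussian piece.
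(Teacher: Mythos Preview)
Your proposal is correct and follows essentially the same route as the paper: introduce an (approximately) optimal coupling between $\sigma_{n,k}^{-1}W_k$ and a standard Gaussian, apply the triangle inequality in $L_p$, note that the Gaussian piece $e^{-t}Z^\star-\tfrac{e^{-2t}}{\sqrt{1-e^{-2t}}}\tilde Z$ has variance $e^{-2t}/(1-e^{-2t})$, and evaluate the two elementary integrals. The only cosmetic difference is that the paper works with an $\varepsilon$-optimal coupling and lets $\varepsilon\to 0$, whereas you invoke an exact optimal coupling directly; since the Wasserstein-$p$ infimum is attained for real-valued random variables (via the monotone rearrangement), your shortcut is legitimate.
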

We note that the Wasserstein distance in \Cref{lemma:a0_bound} is the traditional Wasserstein-p distance and that we derive a numerical upper bound $\mathcal{S}^p_{\sigma,R}(n,k)$ for it in \Cref{nelson2}. This is similar to the upper bound already derived in \cite{austern2023efficient}.
\begin{proof}
Note that by definition of the Wasserstein-$p$ distance, we know that for all $\varepsilon>0$, there exists a normal random variable $Z_{W_k}\sim\mathcal{N}(0,\sigma_{n,k}^2)$ such that
\begin{align}\label{true}\Big\|W_k-Z_{W_k}\Big\|_{p}\le\mathcal{W}_p(W_k,Z_{W_k})+\varepsilon.\end{align}
Therefore, we have
\begin{align*}
    &\Big\|e^{-t} \sigma_{n,k}^{-1}W_k-\frac{e^{-2t}}{\sqrt{1-e^{-2t}}}\tilde Z\Big\|_{p}\\
     = &\Big\|e^{-t} \sigma_{n,k}^{-1}W_k-e^{-t}\sigma_{n,k}^{-1}Z_{W_k}+e^{-t}\sigma_{n,k}^{-1}Z_{W_k}-\frac{e^{-2t}}{\sqrt{1-e^{-2t}}}\tilde Z\Big\|_{p}
     \\&\le  e^{-t} \sigma_{n,k}^{-1}\Big\|W_k-Z_{W_k}\Big\|_p+\Big\|e^{-t}Z_{W_k}\sigma_{n,k}^{-1}-\frac{e^{-2t}}{\sqrt{1-e^{-2t}}}\tilde Z\Big\|_{p}
     \\
    \le & e^{-t}\sigma_{n,k}^{-1}(W_{p}(\sigma_{n,k}^{-1}W_k,Z_{W_k})+\varepsilon)+\frac{e^{-t}}{\sqrt{1-e^{-2t}}}\|\tilde{Z}\|_{p}.
\end{align*}
where the last inequality follows from \cref{true} combined with  the fact that $Z_{W_k}$ and $\tilde{Z}$ are independent. As $\varepsilon>0$ is arbitrary, it follows that \begin{align*}
    &\Big\|e^{-t} \sigma_{n,k}^{-1}W_k-\frac{e^{-2t}}{\sqrt{1-e^{-2t}}}\tilde Z\Big\|_{p}\\
    \le & e^{-t}\sigma_{n,k}^{-1}W_{p}(W_k,Z_{W_k})+\frac{e^{-t}}{\sqrt{1-e^{-2t}}}\|\tilde{Z}\|_{p}.
\end{align*}
Therefore, we obtain that 
\begin{equation*}
\begin{aligned}
    (a_0)=&  \int_0^{-\frac{1}{2}\log(1-\frac{R^2}{\kappa \sigma_{n,k}^2})} \mathbb{E}\Big[\Big|e^{-t} \sigma_{n,k}^{-1}W_k-\frac{e^{-2t}}{\sqrt{1-e^{-2t}}}\tilde Z\Big|^p\Big]^{\frac{1}{p}}dt\\
    \le&\sigma_{n,k}^{-1}\mathcal{W}_p(W_k,Z_{W_k})\int_0^{-\frac{1}{2}\log(1-\frac{R^2}{\kappa \sigma_{n,k}^2})} e^{-t}dt +\|\tilde{Z}\|_{p}\int_0^{-\frac{1}{2}\log(1-\frac{R^2}{\kappa \sigma_{n,k}^2})} \frac{e^{-t}}{\sqrt{1-e^{-2t}}} dt\\
    =: & (a_{0,1}) + (a_{0,2}).
\end{aligned}
\end{equation*}
We have
\begin{equation*}
    (a_{0.1}) =\sigma_{n,k}^{-1}\mathcal{W}_p(W_k,Z_{W_k})(1-{\sqrt{1-\frac{R^2}{\kappa\sigma_{n,k}^2}}}),
\end{equation*}
and
\begin{equation*}
    (a_{0,2}) = \|\tilde{Z}\|_{p}\Big(\frac{\pi}{2} - \arcsin{(e^{-\kappa R})}\Big)= \sqrt{2}\left(\frac{\Gamma((p+1) / 2)}{\Gamma(1 / 2)}\right)^{1 / p}\arccos{(\sqrt{1-\frac{R^2}{\kappa\sigma_{n,k}^2}})}.
\end{equation*}
Hence the result follows.
\end{proof}

\subsubsection{Bounding $(a_1)$}
Recall the notation $R_s := \frac{1}{2}(R + \sqrt{R^2 - 4\sigma^2})$, and that in \Cref{lemma:R_s_bound}, we proved $|X_1|\overset{a.s}{\le} R_s$. We also define $\tilde R_s:=\sigma^{-1}R_s.$

\begin{lemma}\label{lemma:Wk_exchangeable_pair}
  Assume that the conditions of \Cref{lemma:Wp_bound_using_Bonis} hold then we have  $$\frac{1}{s}\mathbb{E}[W'_k-W_k|W_k,\mathcal{U}(X_{1:n})]= -W_k.$$
\end{lemma}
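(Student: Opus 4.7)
The plan is to exploit the fact that $(I,J)$ acts as a transposition, so the quantity $W'_k - W_k$ collapses to a very simple two-term expression, after which the identity reduces to an algebraic computation.

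First I would observe that, since the permutation $(I,J)$ only swaps positions $I$ and $J$, the total sum is preserved: $\sum_{i=1}^{n} X_{(I,J)(i)} = S_n$. Hence
\[
W'_k - W_k \;=\; \sum_{i=1}^{k} X_{(I,J)(i)} - \sum_{i=1}^{k} X_i \;=\; X_J - X_I,
\]
because the only indices in $\Iintv*{1,k}$ whose summand changes under the swap is $i=I$ (which now contributes $X_J$ rather than $X_I$).

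Next, since $I \sim \Unif\{1,\dots,k\}$ and $J \sim \Unif\{k+1,\dots,n\}$ are independent of $(X_i)_{i\le n}$, I would compute
\[
\mathbb{E}\bigl[X_J - X_I \,\big|\, X_1,\dots,X_n\bigr]
\;=\; \frac{1}{n-k}\sum_{i=k+1}^{n} X_i \;-\; \frac{1}{k}\sum_{i=1}^{k} X_i
\;=\; \frac{S_n - T_k}{n-k} - \frac{T_k}{k},
\]
where I have set $T_k := \sum_{i=1}^{k} X_i$. I would then apply the tower property to take conditional expectation with respect to $(W_k, \mathcal{U}(X_{1:n}))$. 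The key observation is that both $S_n$ and $T_k$ are measurable with respect to $\sigma(W_k, \mathcal{U}(X_{1:n}))$: indeed, $S_n$ is determined by $\mathcal{U}(X_{1:n})$, and by definition $T_k = W_k + \frac{k}{n} S_n$.

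Finally, substituting $T_k = W_k + \tfrac{k}{n}S_n$ yields
\[
\mathbb{E}\bigl[W'_k - W_k \,\big|\, W_k,\mathcal{U}(X_{1:n})\bigr]
= \frac{\tfrac{n-k}{n}S_n - W_k}{n-k} - \frac{W_k + \tfrac{k}{n}S_n}{k}
= -W_k\left(\tfrac{1}{n-k} + \tfrac{1}{k}\right)
= -\,\frac{n}{k(n-k)}\,W_k,
\]
and multiplying both sides by $s^{-1} = \tfrac{k(n-k)}{n}$ gives the claimed identity. The only step requiring any care is justifying that the conditioning on $(W_k,\mathcal{U}(X_{1:n}))$ truly pins down $T_k$ as claimed, but this is immediate from the definitions, so there is no genuine obstacle here.
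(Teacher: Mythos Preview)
Your proof is correct and follows essentially the same approach as the paper: both compute $W'_k - W_k = X_J - X_I$, take conditional expectation with respect to a finer $\sigma$-algebra (you use $\sigma(X_1,\dots,X_n)$, the paper uses $\sigma(\mathcal{U}(X_{1:k}),\mathcal{U}(X_{k+1:n}))$), verify the result equals $-sW_k$, and then invoke the tower property. The algebra and the measurability check you give are clean and correct.
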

\begin{proof} As $I\sim \Unif\{1,\dots,k\}$ and $J\sim \Unif\{k+1,\dots,n\}$, we have
    \begin{align*}&
        \frac{1}{s}\mathbb{E}\left[W'_k-W_k|\mathcal{U}(X_{1:k}),\mathcal{U}(X_{k+1:n})\right]\\=&\frac{k(n-k)}{n}\mathbb{E}\bigg[(\sum_{i\le k}X_{(I,J)(i)}-\frac{k}{n}S_n)-(\sum_{i\le k}X_{i}-\frac{k}{n}S_n)\bigg|\mathcal{U}(X_{1:k}),\mathcal{U}(X_{k+1:n})\bigg]\\
        \overset{(a)}{=}&\frac{k(n-k)}{n}\mathbb{E}\Big[X_{J}-X_{I}\Big|\mathcal{U}(X_{1:k}),\mathcal{U}(X_{k+1:n})\Big]\\
        =&\frac{k(n-k)}{n}\Big(\frac{1}{n-k}\sum_{k+1\le j\le n}X_{j} - \frac{1}{k}\sum_{i\le k}X_{i}\Big)\\
        =& \frac{k}{n}(S_n-\sum_{i\le k}X_{i})-\frac{n-k}{n}\sum_{i\le k}X_{i}\\
        =& -W_k.
    \end{align*}where (a) holds because $X_{(I,J)(i)}=X_{i}$ if $i\ne I,J$. The desired result is a direct consequence of the tower property.
\end{proof}

\begin{lemma}\label{lemma:a1_bound}
   Assume that the conditions of \Cref{lemma:Wp_bound_using_Bonis} hold. Then the following identity holds for all $p\ge2$:
    \begin{align*}
        (a_1) =0.
    \end{align*}
\end{lemma}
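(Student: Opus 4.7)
The plan is to observe that the integrand appearing in $(a_1)$ vanishes almost surely, so the integral is identically zero, independent of the domain of integration or the value of $p$. Concretely, reading off the definition of $(a_1)$ from \cref{lemma:Wp_bound_using_Bonis},
\[
(a_1) = \sigma_{n,k}^{-1}\int_{-\frac{1}{2}\log(1-\frac{R^2}{\kappa \sigma_{n,k}^2})}^{\infty}e^{-t}\Big\|\tfrac{1}{s}\mathbb{E}\big[W_k-W_k'\mid W_k,\mathcal{U}(X_{1:n})\big]-W_k\Big\|_p\,dt,
\]
so the claim reduces to checking that the conditional expectation inside the $L^p$ norm equals $W_k$ almost surely.

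The key step is to invoke \cref{lemma:Wk_exchangeable_pair}, which already gives $\tfrac{1}{s}\mathbb{E}[W'_k-W_k\mid W_k,\mathcal{U}(X_{1:n})]=-W_k$ almost surely. Multiplying by $-1$ yields $\tfrac{1}{s}\mathbb{E}[W_k-W'_k\mid W_k,\mathcal{U}(X_{1:n})]=W_k$ almost surely, so
\[
\tfrac{1}{s}\mathbb{E}\big[W_k-W_k'\mid W_k,\mathcal{U}(X_{1:n})\big]-W_k = 0 \text{ a.s.}
\]
Consequently the integrand is the zero function on the entire range of $t$, and the integral vanishes.

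There is no real obstacle here; the statement is essentially a corollary of \cref{lemma:Wk_exchangeable_pair}, recording the cancellation that was engineered into the construction of the exchangeable pair $(W_k,W_k')$. The point of isolating it as a separate lemma is to allow the four-term decomposition in \cref{lemma:Wp_bound_using_Bonis} to be reduced to three effective terms $(a_0)$, $(a_2)$ and $(a_3)$ in the subsequent estimates.
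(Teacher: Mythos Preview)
Your proof is correct and follows essentially the same approach as the paper: both recall the definition of $(a_1)$, invoke \cref{lemma:Wk_exchangeable_pair} to conclude the integrand vanishes identically, and deduce $(a_1)=0$.
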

\begin{proof}
    Recall that 
    $$
    (a_1) = \sigma_{n,k}^{-1}\int_{-\frac{1}{2}\log(1-\frac{R^2}{\kappa \sigma_{n,k}^2})}^{\infty}e^{-t}\Big\|\frac{1}{s}\mathbb{E}\big[W_k-W_k'|W_k,\mathcal{U}(X_{1:n})\big]-
W_k\Big\|_pdt.
    $$
    Note that, by \Cref{lemma:Wk_exchangeable_pair}, we have$\Big\|\frac{1}{s}\mathbb{E}\big[W_k-W_k'|W_k,\mathcal{U}(X_{1:n})\big]-
W_k\Big\|_p = 0$. The result hence follows. 
\end{proof}

\subsubsection{Bounding $(a_2)$}
\begin{lemma}\label{lemma:a1_integrand_Wk_exchgb_bound}
 Assume that the assumptions of \Cref{lemma:Wp_bound_using_Bonis} hold. Then the following upper-bounds holds:
   \begin{align*}
   & \Big\|\frac{1}{2s}\sigma_{n,k}^{-2}\mathbb{E}\Big[(W'_k-W_k)^2\Big|W_k\Big]-1\Big\|_p\\
         &\le  \frac{\sqrt{n}}{\sqrt{k(n-k)}}\Big(\frac{1}{2}\min\Big\{  \sqrt{p-1}(\tilde R^2_s-1)^{1-1/p},A_p\sqrt{\tilde R^2_s-1}+A^*_{n,p}(\tilde R^2_s-1)^{1-1/p} \Big\}\\&\qquad\qquad+\frac{1}{\sqrt{n}}\min\Big\{\sqrt{p-1}\tilde R_s^{1-2/p},A_p +\tilde R_s^{1-2/p}A^*_{k,p}\Big\}^2\Big).  
    \end{align*}
\end{lemma}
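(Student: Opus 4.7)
The plan is to first derive a closed-form expression for $\mathbb{E}[(W'_k-W_k)^2\mid W_k,\mathcal{U}(X_{1:n})]$, then decompose the quantity of interest into three pieces that can be controlled using Rosenthal's inequality (\cref{lemma:rosenthal_explicit-const}) and an Azuma--Hoeffding-type bound for sums of bounded centered variables. Since the permutation $(I,J)$ preserves the total sum, $W'_k - W_k = X_J - X_I$, where $X_I$ is uniform on positions $\{1,\dots,k\}$ and $X_J$ uniform on $\{k+1,\dots,n\}$, independent of one another and of $(X_i)$. Conditioning on $W_k,\mathcal{U}(X_{1:n})$, the cross moment evaluates to $\mathbb{E}[X_IX_J\mid W_k,\mathcal{U}] = S_k(S_n-S_k)/(k(n-k))$ because both $S_k$ and $S_n$ are measurable, while the marginal second moments reduce to $\bar T_k/k$ and $(Q_n-\bar T_k)/(n-k)$, where $T_k:=\sum_{i\le k}X_i^2$, $Q_n:=\sum_{i\le n}X_i^2$, and $\bar T_k:=\mathbb{E}[T_k\mid W_k,\mathcal{U}]$. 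Writing $S_k=W_k+kS_n/n$, a short algebraic simplification (collecting the coefficient of $Q_n$, then of $W_k$) produces the decomposition
\[
\tfrac{1}{2\sigma^2}\mathbb{E}[(W'_k-W_k)^2\mid W_k,\mathcal{U}(X_{1:n})] - 1 \;=\; E_A + E_B + E_C,
\]
with $E_A := \frac{nQ_n-S_n^2}{n^2\sigma^2} - 1$, $E_B := \frac{n-2k}{2k(n-k)\sigma^2}\bigl(D_k - \frac{W_kS_n}{n}\bigr)$ for $D_k := \bar T_k - kQ_n/n$, and $E_C := \frac{W_k^2}{k(n-k)\sigma^2}$. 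Notably the factor $n-2k$ in $E_B$ vanishes at $k=n/2$, matching the simplification expected in the main use case.

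The bulk of the work is then to bound each $\|E_\bullet\|_p$ and match the two minima in the stated estimate. For $E_A$, which is essentially the normalized sample variance fluctuation, I apply Rosenthal to $Q_n-n\sigma^2=\sum_i(X_i^2-\sigma^2)$ using $\|X_1^2-\sigma^2\|_2\le\sigma^2\sqrt{\tilde R_s^2-1}$ and the interpolation bound $\|X_1^2-\sigma^2\|_p\le\sigma^2(\tilde R_s^2-1)^{1-1/p}$ obtained from $\|\,\cdot\,\|_p^p\le\|\,\cdot\,\|_\infty^{p-2}\|\,\cdot\,\|_2^2$; this produces the Rosenthal branch $A_p\sqrt{\tilde R_s^2-1}+A^*_{n,p}(\tilde R_s^2-1)^{1-1/p}$. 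A Hoeffding/Burkholder-type $L_p$-bound for bounded centered martingale differences gives the alternative $\sqrt{p-1}(\tilde R_s^2-1)^{1-1/p}$, and taking the minimum of the two, scaled by $\sqrt{n}/\sqrt{k(n-k)}$, yields the first bracket. For $E_C$, I decompose $W_k = \frac{n-k}{n}S_k - \frac{k}{n}(S_n-S_k)$ into two independent sums and apply Rosenthal in $L_{2p}$, giving $\|W_k\|_{2p}\le\sigma_{n,k}(A_p+A^*_{k,p}\tilde R_s^{1-2/p})$, with the analogous Azuma alternative $\sigma_{n,k}\sqrt{p-1}\tilde R_s^{1-2/p}$; squaring and dividing by $\sigma^2 k(n-k)$ produces exactly the squared second minimum times $1/\sqrt{k(n-k)}$.

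The main obstacle will be the cross term $E_B$, since $D_k$ is a conditional expectation without an explicit closed form. I plan to use conditional Jensen to pass to $\|D_k\|_p\le\|T_k-kQ_n/n\|_p$, then split $T_k - kQ_n/n = (T_k-k\sigma^2) - \frac{k}{n}(Q_n-n\sigma^2)$ and invoke Rosenthal on each iid sum separately; the remaining piece $W_kS_n/n$ is handled by Cauchy--Schwarz, $\|W_kS_n\|_p\le\|W_k\|_{2p}\|S_n\|_{2p}$. The delicate bookkeeping is to show that, after multiplication by the prefactor $(n-2k)/(k(n-k))$, each of these contributions is absorbed into the first minimum (when paired with the $\sqrt{k}$ factor from Rosenthal on $T_k-k\sigma^2$, the resulting scaling is $\frac{|n-2k|\sqrt{k}}{k(n-k)}\le\frac{\sqrt n}{\sqrt{k(n-k)}}$) or into the squared second minimum (when paired with $\|W_k\|_{2p}\|S_n\|_{2p}/n$, which scales as $\sigma_{n,k}\cdot\sqrt n\cdot M_2/n$). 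Consequently no new term appears in the final bound beyond the two minima produced by $E_A$ and $E_C$, completing the argument.
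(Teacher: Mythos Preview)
Your decomposition $E_A+E_B+E_C$ is essentially correct (up to a missing factor of $2$ on the $W_kS_n/n$ piece of $E_B$), but the route has a genuine gap at $E_C$. You claim $\|W_k\|_{2p}\le\sigma_{n,k}(A_p+A^*_{k,p}\tilde R_s^{1-2/p})$ and an ``Azuma alternative'' $\sigma_{n,k}\sqrt{p-1}\tilde R_s^{1-2/p}$, but these constants are the $L_p$ Rosenthal/MZ constants. Applying Rosenthal and Marcinkiewicz--Zygmund \emph{in $L_{2p}$} yields $A_{2p}$, $A^*_{k,2p}$, $\sqrt{2p-1}$, and the moment interpolation gives exponent $\tilde R_s^{1-1/p}$ rather than $\tilde R_s^{1-2/p}$. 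So $\|E_C\|_p$ does not reproduce the squared second minimum with the constants appearing in the lemma, and your argument does not establish the stated bound. The ``absorption'' plan for $E_B$ is also only sketched and would face the same $L_{2p}$ versus $L_p$ mismatch on the $W_kS_n$ piece.

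The paper avoids all of this by conditioning on the \emph{finer} $\sigma$-algebra generated by $\mathcal{U}(X_{1:k})$ and $\mathcal{U}(X_{k+1:n})$, where $T_k=\sum_{i\le k}X_i^2$ is measurable and no $\bar T_k$ appears. One obtains the clean two-term identity
\[
\frac{1}{2s}\,\mathbb{E}\!\big[(W'_k-W_k)^2\,\big|\,\mathcal{U}(X_{1:k}),\mathcal{U}(X_{k+1:n})\big]-\sigma_{n,k}^2
=\frac{1}{2n}\Big[(n-k)\!\sum_{i\le k}(X_i^2-\sigma^2)+k\!\sum_{j>k}(X_j^2-\sigma^2)\Big]-\frac{S_k(S_n-S_k)}{n},
\]
and recovers the coarser conditioning on $W_k$ by Jensen at the very end. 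The crucial gain is the cross term: because $S_k$ and $S_n-S_k$ are independent, $\|S_k(S_n-S_k)\|_p=\|S_k\|_p\,\|S_n-S_k\|_p$ \emph{factors in $L_p$}, so the second minimum in the lemma is assembled from two $L_p$ bounds rather than a single $L_{2p}$ bound---this is precisely how $A_p$, $A^*_{k,p}$, and $\sqrt{p-1}$ (not their $2p$ counterparts) arise. Your $E_C=W_k^2/(k(n-k)\sigma^2)$ has no such product structure, which forces the $L_{2p}$ detour and the wrong constants.
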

\begin{proof}
By definition of $W'_k$, and recall that $s=\frac{n}{k(n-k)}$, we remark that 
\begin{align*}
      &\frac{1}{2s}\mathbb{E}\Big[(W'_k-W_k)^2\Big|\mathcal{U}(X_{1:k}),\mathcal{U}(X_{k+1:n})\Big]
        \\   =&\frac{k(n-k)}{2n}\mathbb{E}\Big[(X_{J}-X_{I})^2\Big|\mathcal{U}(X_{1:k}),\mathcal{U}(X_{k+1:n})\Big]
        \\
        =&\frac{k(n-k)}{2n}\Big(\frac{1}{k(n-k)}\sum_{\substack{1\le i \le k\\k<j\le n}}(X_{j}-X_{i})^2\Big)\\
        =&\frac{1}{2n}\sum_{\substack{1\le i \le k\\k<j\le n}}(X_{j}^2+X_{i}^2-2X_{j}X_{i})\\
        =& \frac{1}{2n}\Big((n-k)\sum_{1\le i\le k}X_{i}^2+k\sum_{k< j\le n}X_{j}^2-2\sum_{\substack{1\le i \le k\\k<j\le n}}X_{j}X_{i}\Big).
    \end{align*} 
    Hence we have
    \begin{align*}
    & \Big\|  \frac{1}{2n}\Big((n-k)\sum_{1\le i\le k}X_{i}^2 + k\sum_{k\le i\le n}X_i^2-2\sum_{\substack{1\le i \le k\\k<j\le n}}X_{j}X_{i}\Big)-\sigma_{n,k}^2\Big\|_p
    \\\le&   \Big\|\frac{1}{2n}\Big((n-k)\sum_{1\le i\le k}X_{i}^2 + k\sum_{k\le i\le n}X_i^2-\sigma_{n,k}^2\Big\|_p+\frac{1}{n}\Big\|\sum_{\substack{1\le i \le k\\k<j\le n}}X_{j}X_{i}\Big\|_p
    \\=& \Big\|  \frac{1}{2n}\Big((n-k)\sum_{1\le i\le k}(X_{i}^2-\sigma^2) + k\sum_{k<i\le n}(X_i^2-\sigma^2)\Big)\Big\|_p+\frac{1}{n}\Big\|\sum_{\substack{1\le i \le k\\k<j\le n}}X_{j}X_{i}\Big\|_p.
    \end{align*}
We will bound each terms on the right hand side successively. Firstly, we note that $$\|X_{i}^2-\sigma^2\|_p\le (R^2_s-\sigma^2)^{1-1/p}~  \textrm{for $p\ge2$.}$$
Hence according to \Cref{lemma:rosenthal_indep_nonidentical}, the following inequality holds:
\begin{align*}& \Big\|  \frac{1}{2n}\Big((n-k)\sum_{1\le i\le k}(X_{i}^2-\sigma^2) + k\sum_{k<i\le n}(X_i^2-\sigma^2)\Big)\Big\|_p
    \\&\le \frac{\sqrt{k(n-k)}}{2\sqrt{n}}\min\begin{cases}
        \sqrt{p-1}\sigma^{2/p}(R^2_s-\sigma^2)^{1-1/p}
        \\A_p\sigma\sqrt{R^2_s-\sigma^2}+A^*_{n,p}\sigma^{2/p}(R^2_s-\sigma^2)^{1-1/p}.
    \end{cases}
    \end{align*}
where recall that $A_p:=2^{1 / p} \sqrt{p / 2+1} e^{\frac{1}{2}+\frac{1}{p}}$ and $A_{n,p}^{*}:= \left(\frac{p}{2}+1\right) n^{1 / p-1 / 2}$.

Now we need to upper bound $\|\sum_{\substack{1\le i \le k\\k<j\le n}}X_{j}X_{i}\|_p$. We remark that as $(X_1,\dots,X_k)$ and $(X_{k+1},\dots,X_n)$ are independent, using  according to Lemma 11 and Lemma 14 in \cite{austern2023efficient}, the following holds \begin{align*}
\sigma^{-2}\Big\|\sum_{\substack{1\le i \le k\\k<j\le n}}X_{j}X_{i}\Big\|_p&=\Big\|\sum_{\substack{1\le i \le k}}X_{i}/\sigma\Big\|_p\Big\|\sum_{\substack{k+1\le i \le n}}X_{i}/\sigma\Big\|_p   
\\&\le \Big\{\sqrt{k}
\min\begin{cases}
    \sqrt{p-1}\tilde R_s^{1-2/p}\\A_p +\tilde R_s^{1-2/p}A^*_{k,p}
\end{cases}\Big\}
\times  \Big\{\sqrt{n-k}\min\begin{cases}
    \sqrt{p-1}\tilde R_s^{1-2/p}\\A_p +\tilde R_s^{1-2/p}A^*_{n-k,p}\end{cases}\Big\}
    \\&\le \sqrt{k(n-k)}\min\Big\{\sqrt{p-1}\tilde R_s^{1-2/p},A_p +\tilde R_s^{1-2/p}A^*_{k,p}\Big\}^2.  \end{align*}
    
    Therefore, if we combine everything together we obtain that 
     \begin{align*}&
   \sigma^{-2} \Big\|\frac{1}{2s}\mathbb{E}\Big[(W'_k-W_k)^2\Big|W_k\Big]-\sigma_{n,k}^{2}\Big\|_p
  \\  \le &  \frac{\sqrt{k(n-k)}}{\sqrt{n}}\Big(\frac{1}{2}\min\Big\{  \sqrt{p-1}(\tilde R^2_s-1)^{1-1/p},A_p\sqrt{\tilde R^2_s-1}+A^*_{n,p}(\tilde R^2_s-1)^{1-1/p} \Big\}\\&\qquad\qquad+\frac{1}{\sqrt{n}}\min\Big\{\sqrt{p-1}\tilde R_s^{1-2/p},A_p +\tilde R_s^{1-2/p}A^*_{k,p}\Big\}^2\Big).  
    \end{align*}
    This immediately implies the desired result by Jensen's inequality.
\end{proof}

\begin{lemma}\label{lemma:a2_bound}
    The following equality holds for all $p\ge2$:
    \begin{align*}
        (a_2)\le &\frac{\sqrt{n}\|H_1\|_p}{\sqrt{k(n-k)}}\Big[\frac{1}{2}\min\Big\{  \sqrt{p-1}(\tilde R^2_s-1)^{1-1/p},A_p\sqrt{\tilde R^2_s-1}+A^*_{n,p}(\tilde R^2_s-1)^{1-1/p} \Big\}\\&\qquad\qquad+\frac{1}{\sqrt{n}}\min\Big\{\sqrt{p-1}\tilde R_s^{1-2/p},A_p +\tilde R_s^{1-2/p}A^*_{k,p}\Big\}^2\Big]\Big(1 - \frac{R}{\sqrt{\kappa}\sigma_{n,k}}\Big). 
    \end{align*}
\end{lemma}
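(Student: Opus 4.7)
The plan is to combine the uniform-in-$t$ moment bound from \cref{lemma:a1_integrand_Wk_exchgb_bound} with an explicit computation of the remaining deterministic integral. Recall that
\[
(a_2)=\|H_1\|_p\int_{-\frac12\log\!\bigl(1-\frac{R^2}{\kappa\sigma_{n,k}^2}\bigr)}^{\infty}\frac{e^{-2t}}{\sqrt{1-e^{-2t}}}\,\Big\|\tfrac{1}{2s}\sigma_{n,k}^{-2}\mathbb{E}\bigl[(W'_k-W_k)^2\bigm|W_k,\mathcal{U}(X_{1:n})\bigr]-1\Big\|_p\,dt.
\]
Observe that the $L_p$-norm inside the integrand is independent of $t$, so we can pull it outside the integral. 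Upon doing so, \cref{lemma:a1_integrand_Wk_exchgb_bound} immediately supplies the bracketed factor appearing in the conclusion, up to an overall $\frac{\sqrt{n}}{\sqrt{k(n-k)}}\|H_1\|_p$.

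The only remaining task is to evaluate the deterministic integral
\[
I:=\int_{-\frac12\log\!\bigl(1-\frac{R^2}{\kappa\sigma_{n,k}^2}\bigr)}^{\infty}\frac{e^{-2t}}{\sqrt{1-e^{-2t}}}\,dt.
\]
I would perform the substitution $v=\sqrt{1-e^{-2t}}$, under which $dv=\frac{e^{-2t}}{\sqrt{1-e^{-2t}}}\,dt$, so that $I=v\big|_{t=t_0}^{t=\infty}$ with $t_0=-\frac12\log(1-\frac{R^2}{\kappa\sigma_{n,k}^2})$. At $t=\infty$, $v\to 1$; at $t=t_0$, $e^{-2t_0}=1-\frac{R^2}{\kappa\sigma_{n,k}^2}$, hence $v=\frac{R}{\sqrt{\kappa}\,\sigma_{n,k}}$. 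This yields $I=1-\frac{R}{\sqrt{\kappa}\,\sigma_{n,k}}$, which is exactly the last factor in the stated bound.

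Multiplying the three factors (the Hermite norm $\|H_1\|_p$, the uniform variance bound from \cref{lemma:a1_integrand_Wk_exchgb_bound}, and the evaluated integral $I$) gives the desired inequality. There is no substantive obstacle here: the argument is purely a computation once \cref{lemma:a1_integrand_Wk_exchgb_bound} is in hand, since the $t$-dependence of the integrand factorises cleanly from the data-dependent moment estimate.
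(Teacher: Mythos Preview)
Your proposal is correct and follows exactly the same approach as the paper: factor out the $t$-independent $L_p$-norm, bound it via \cref{lemma:a1_integrand_Wk_exchgb_bound}, and evaluate the remaining integral $\int_{t_0}^\infty \frac{e^{-2t}}{\sqrt{1-e^{-2t}}}\,dt=1-\frac{R}{\sqrt{\kappa}\,\sigma_{n,k}}$. If anything, you give slightly more detail on the substitution than the paper, which simply states the result of the integral.
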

\begin{proof}
Recall that 
$$
    (a_2) = \int_{-\frac{1}{2}\log(1-\frac{R^2}{\kappa \sigma_{n,k}^2})}^{\infty}\frac{e^{-2t}\|H_1\|_p}{\sqrt{1-e^{-2t}}}\Big\|\frac{1}{2s}\sigma_{n,k}^{-2}\mathbb{E}\big[(W'_k-W_k)^2|W_k,\mathcal{U}(X_{1:n})\big]-1\Big\|_pdt.
$$
By \Cref{lemma:a1_integrand_Wk_exchgb_bound}, we have
\begin{align*}
    (a_2) 
    \le& \frac{\sqrt{n}}{\sqrt{k(n-k)}}\Big[\frac{1}{2}\min\Big\{  \sqrt{p-1}(\tilde R^2_s-1)^{1-1/p},A_p\sqrt{\tilde R^2_s-1}+A^*_{n,p}(\tilde R^2_s-1)^{1-1/p} \Big\}\\&+\frac{1}{\sqrt{n}}\min\Big\{\sqrt{p-1}\tilde R_s^{1-2/p},A_p +\tilde R_s^{1-2/p}A^*_{k,p}\Big\}^2\Big]\|H_1\|_p\int_{-\frac{1}{2}\log(1-\frac{R^2}{\kappa \sigma_{n,k}^2})}^{\infty}\frac{e^{-2t}}{\sqrt{1-e^{-2t}}}dt\\
    \overset{}{\le} & \|H_1\|_p\frac{\sqrt{n}}{\sqrt{k(n-k)}}\Big[\frac{1}{2}\min\Big\{  \sqrt{p-1}(\tilde R^2_s-1)^{1-1/p},A_p\sqrt{\tilde R^2_s-1}+A^*_{n,p}(\tilde R^2_s-1)^{1-1/p} \Big\}\\&\qquad\qquad+\frac{1}{\sqrt{n}}\min\Big\{\sqrt{p-1}\tilde R_s^{1-2/p},A_p +\tilde R_s^{1-2/p}A^*_{k,p}\Big\}^2\Big]\Big(1 -\frac{R}{\sqrt{\kappa}\sigma_{n,k} }\Big).  
\end{align*}
\end{proof}

\subsubsection{Bounding $(a_3)$}

\begin{lemma}\label{lemma:odd_Lp_term_bound}
    For all odd integer $\ell\ge 3$, the following inequality holds for $p>2$:
\begin{align*}
    &
    s^{-1}\sigma_{n,k}^{-\ell}\Big\|\mathbb{E}[(W_k-W_k')^\ell|W_k,\mathcal{U}(X_{1:n})]\Big\|_p
      \\\le & \frac{1}{n}\sigma_{n,k}^{-\ell}\sqrt{p-1}\sqrt{k(n-k)}R^{\ell-\frac{4}{p}} \sigma^{2/p}\bigg( (n-k)(R^2+3\sigma^2)^{\frac{2}{p}}\\& +k\min\Big\{(2^{\frac{1}{p}}R^{\frac{2}{p}} (R^2+3\sigma^2)^{\frac{1}{p}})^2,\Big((R^2+3\sigma^2)^{\frac{1}{p}}~+\frac{R^{\frac{4}{p}}}{\sqrt{k}\sigma^{2/p}}\min\{
    \sqrt{p-1},A_p+A^*_{k,p}\}\Big)^2 \Big\}\bigg)^{1/2}.
\end{align*}
Moreover, for $p\ge 4$, the following inequality also holds:
\begin{align*}
&s^{-1}\sigma_{n,k}^{-\ell}\Big\|\mathbb{E}[(W_k-W_k')^\ell|W_k,\mathcal{U}(X_{1:n})]\Big\|_p\\
\le& \frac{1}{n}\sigma_{n,k}^{-\ell}C_p\bigg\{R^{\ell-2}\sqrt{(n-k)k}\sqrt{(n-k)\sigma^2(R^2+3\sigma^2)+k{(R^2\sigma^2+R^{4-\frac{4}{p}})}}\\&+k^{\frac{1}{p}}(n-k)^{\frac{1}{p}}R^{\ell-\frac{4}{p}}\bigg(k^{p-1} \Big(\sigma^{2/p}(R^2+3\sigma^2)^{1/p}+2^{1/p}k^{-\frac{1}{2}}R^{2/p}\Big)^p\\&\qquad \qquad \qquad \qquad\qquad +(n-k)^{p-1}\sigma^2(R^2+3\sigma^2)\bigg)^{\frac{1}{p}}\bigg\},\end{align*}
where $C_p:=2\sqrt{2}\Big(\frac{p}{4}+1\Big)^{\frac{1}{p}}(1+\frac{p}{\log(p/2)})$.
\end{lemma}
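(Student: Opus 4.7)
My plan is to reduce the conditional expectation to an explicit symmetric functional of the two block multisets, perform a Hoeffding-type decomposition, and then apply Rosenthal's inequality (\cref{lemma:rosenthal_explicit-const}) twice (once per block) to control the resulting $L_p$ norms.

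First, since $\sigma(W_k,\mathcal{U}(X_{1:n}))\subseteq\sigma(\mathcal{U}(X_{1:k}),\mathcal{U}(X_{k+1:n}))$, the tower property and Jensen's inequality give
$$\big\|\mathbb{E}[(W_k-W_k')^\ell\mid W_k,\mathcal{U}(X_{1:n})]\big\|_p\le \big\|\mathbb{E}[(W_k-W_k')^\ell\mid\mathcal{U}(X_{1:k}),\mathcal{U}(X_{k+1:n})]\big\|_p.$$
Using $W_k-W_k'=X_I-X_J$ with $I,J$ independent uniforms on the two blocks, and that $\ell$ is odd, a direct computation analogous to the one in \cref{lemma:Wk_exchangeable_pair} gives
$$s^{-1}\,\mathbb{E}[(W_k-W_k')^\ell\mid\mathcal{U}(X_{1:k}),\mathcal{U}(X_{k+1:n})]=-\frac{1}{n}\sum_{i\le k,\,j>k}(X_j-X_i)^\ell,$$
so the task reduces to bounding the $L_p$ norm of the right-hand side.

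Next, I would apply a Hoeffding-type decomposition to $g(x,y):=(y-x)^\ell$. With $\bar g_1(x):=\mathbb{E}[(X'-x)^\ell]$ and $\bar g_2(y):=\mathbb{E}[(y-X')^\ell]$ for an independent copy $X'$, and using that $\mathbb{E}[(X''-X')^\ell]=0$ for odd $\ell$ (by symmetry of i.i.d.\ pairs), both $\bar g_1(X_i)$ and $\bar g_2(X_j)$ are centered. Writing $h(x,y):=g(x,y)-\bar g_1(x)-\bar g_2(y)$ for the degenerate remainder, we obtain
$$\sum_{i\le k,\,j>k}(X_j-X_i)^\ell=(n-k)\sum_{i\le k}\bar g_1(X_i)+k\sum_{j>k}\bar g_2(X_j)+\sum_{i\le k,\,j>k}h(X_i,X_j),$$
with the two blocks remaining independent throughout. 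For the first two (linear) pieces I would apply Rosenthal's inequality to the centered i.i.d.\ sums, using $\|\bar g_m\|_\infty\le(2R_s)^\ell$ combined with the refined second-moment bound $\|\bar g_1(X_1)\|_2^2\le\|g(X_1,X_2)\|_2^2\le(2R_s)^{2\ell-4}\,\mathbb{E}[(X_2-X_1)^4]$ and the estimate $\mathbb{E}[(X_2-X_1)^4]\le 2\sigma^2(R^2+3\sigma^2)$; these produce the $R^{\ell-4/p}$ and $(R^2+3\sigma^2)^{2/p}$ factors as well as the $\min\{\sqrt{p-1}\,\cdot,\,A_p+A^*_{k,p}\cdot\}$ structure (corresponding to the two forms of Rosenthal, whichever is sharper). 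For the degenerate bilinear term I would first condition on $X_{1:k}$, apply Rosenthal to the now-i.i.d.\ sum over $j>k$, take $L_p$ norms, and apply Rosenthal once more to the resulting centered sum over $i\le k$; the control of $\|h(X_1,X_2)\|_p$ follows again from boundedness $\|h\|_\infty\le C R^\ell$ together with $\|h(X_1,X_2)\|_2^2\le(2R_s)^{2\ell-4}\cdot 2\sigma^2(R^2+3\sigma^2)$. The $p\ge 4$ refinement further exploits the pointwise bound $|X_j-X_i|^{p-4}\le(2R)^{p-4}$ to trade the crude $p$-th moment for the fourth moment, producing the improved constant $C_p$ and the $\sigma^2(R^2+3\sigma^2)$ terms inside the square root.

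The main obstacle will be the careful bookkeeping needed to recover the stated form of the bound, in particular the $\sqrt{k(n-k)}$ outside the square root together with the asymmetric combination $(n-k)(R^2+3\sigma^2)^{2/p}+k\cdot\min\{\cdots\}$ inside. This asymmetry reflects that the two blocks play different roles in the degenerate bilinear form (one plays the role of the outer sum and the other of the inner i.i.d.\ sum after conditioning), so one must propagate both Rosenthal forms simultaneously and combine them in precisely the order that reproduces the stated $\min$-of-$\min$ structure.
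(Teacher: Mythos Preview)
Your initial reduction is correct and matches the paper: both observe that
\[
s^{-1}\big\|\mathbb{E}[(W_k-W_k')^\ell\mid W_k,\mathcal{U}(X_{1:n})]\big\|_p\le \frac{1}{n}\Big\|\sum_{i\le k,\,j>k}(Y_i-Y_j)^\ell\Big\|_p.
\]
From this point on, however, your route diverges from the paper's. The paper does \emph{not} use a Hoeffding decomposition plus iterated Rosenthal. Instead it writes the double sum as a telescoping martingale $\sum_{m\le n}D_m$ with respect to the natural filtration $\mathcal{F}_m=\sigma(Y_1,\dots,Y_m)$, and then applies the Marcinkiewicz--Zygmund inequality (\cref{lemma:MZineq_rio}) to obtain $\|\sum_m D_m\|_p\le\sqrt{p-1}\,\sqrt{\sum_m\|D_m\|_p^2}$. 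The martingale differences split cleanly: for $m\le k$ one has $D_m=(n-k)\,\mathbb{E}[(Y_m-Y_n)^\ell\mid Y_m]$, while for $m>k$ one has $D_m=\sum_{i\le k}\big((Y_i-Y_m)^\ell-\mathbb{E}[(Y_i-Y_m)^\ell\mid Y_i]\big)$. Bounding $\|D_m\|_p$ in each regime (the $m>k$ case in two ways, producing the $\min$) and summing the squares is precisely what yields the factor $\sqrt{k(n-k)}\sqrt{(n-k)(\cdots)+k\min\{\cdots\}}$. Your triangle-inequality split into three Hoeffding pieces would instead produce a \emph{sum} of three separate bounds, not a square-root-of-sum-of-squares; so the ``bookkeeping obstacle'' you flag is structural, not cosmetic, and your approach will not reproduce the stated form without an additional idea.

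For the $p\ge4$ refinement, the paper does not use a pointwise $|X_j-X_i|^{p-4}\le(2R)^{p-4}$ trade as you suggest. It invokes O\k{s}\k{e}kowski's sharp martingale Rosenthal inequality (\cref{lemma:ose_ineq}), which is exactly where the constant $C_p=2\sqrt{2}(p/4+1)^{1/p}(1+p/\log(p/2))$ comes from; this also requires bounding the predictable quadratic variation terms $\|\mathbb{E}[D_m^2\mid\mathcal{F}_{m-1}]\|_{p/2}$ in each regime. Your proposed mechanism would not generate this constant or the two-term structure of the second displayed bound.
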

\begin{proof}
First, note that by the definition of $W_k'$, the difference $W_k - W_k'$ is equal to $X_{I} - X_{J}$. Since $I \sim \mathrm{Unif}\{1, \dots, k\}$ and $J \sim \mathrm{Unif}\{k+1, \dots, n\}$, we have $\mathbb{P}(I=i, J=j)= \frac{1}{k(n-k)}$ for any $i \leq k$ and $k<j\leq n$. Consequently, the following inequality holds:
\begin{align*}
    s^{-1}\sigma_{n,k}^{-\ell}\Big\|\mathbb{E}[(W_k-W_k')^\ell|W_k,\mathcal{U}(X_{1:n})]\Big\|_p
\le&  s^{-1}\frac{1}{k(n-k)}\sigma_{n,k}^{-\ell}\Big\|\sum_{\substack{i\le k\\k<j\le n}}(X_{i}-X_{j})^\ell\Big\|_p
    \\\le&  \frac{1}{n}\sigma_{n,k}^{-\ell}\Big\|\sum_{\substack{i\le k\\k<j\le n}}(X_{i}-X_{j})^\ell\Big\|_p
    \\=&    \frac{1}{n}\sigma_{n,k}^{-\ell}\Big\|\sum_{\substack{i\le k\\k<j\le n}}(Y_{i}-Y_{j})^\ell\Big\|_p.
\end{align*}
To bound the right hand side, we define the following filtration $$\mathcal{F}_m:=\sigma(Y_1,\dots,Y_m),$$
and define $(D_m)_m$ to be the following martingale difference $$D_m=\mathbb{E}\Big[\sum_{\substack{i\le k\\k<j\le n}}(Y_{i}-Y_{j})^\ell\Big|\mathcal{F}_m\Big]-\mathbb{E}\Big[\sum_{\substack{i\le k\\k<j\le n}}(Y_{i}-Y_{j})^\ell\Big|\mathcal{F}_{m-1}\Big].$$ We remark that, as $\ell$ is odd, it follows that $\sum_{\substack{i\le k\\k<j\le n}}\mathbb{E}\big[(Y_{i}-Y_{j})^\ell\big]=0$. Hence by a telescopic sum argument, we notice that   $$\sum_{\substack{i\le k\\k<j\le n}}(Y_{i}-Y_{j})^\ell=\sum_{m\le n}D_m.$$
 Now using the Marcinkiewicz-Zygmund inequality (\Cref{lemma:MZineq_rio}), we obtain that 
 $$\Big\|\sum_{\substack{i\le k\\k<j\le n}}(Y_{i}-Y_{j})^\ell\Big\|_p\le \sqrt{p-1}\sqrt{\sum_{m=1}^n\|D_m\|_p^2}.$$
 Moreover according to Theorem 1 of \cite{osekowski2012note}, for $p\ge 4$ we have:
 \begin{align}
     \Big\|\sum_{\substack{i\le k\\k<j\le n}}(Y_{i}-Y_{j})^\ell\Big\|_p&\le C_p\Big(\mathbb{E}\Big[\Big(\sum_{m\le n}\mathbb{E}[D_m^2|\mathcal{F}_{m-1}]\Big)^{\frac{p}{2}}\Big]^{\frac{1}{p}}+\mathbb{E}\Big[\sum_{m\le n}D_m^p\Big]^{\frac{1}{p}}\Big)\\&\le C_p\Big(\sqrt{\sum_{m\le n}\Big\|\mathbb{E}[D_m^2|\mathcal{F}_{m-1}]\Big\|_{p/2}}+\Big(\sum_{m\le n}\|D_m\|_p^p\Big)^{\frac{1}{p}}\Big).
 \end{align} where $C_p:=2\sqrt{2}\Big(\frac{p}{4}+1\Big)^{\frac{1}{p}}(1+\frac{p}{\log(p/2)})$.

Each one of those inequalities will be further bounded. This is done by getting a bound on both $\|D_m\|_p$ and $\mathbb{E}[D_m^2|\mathcal{F}_{m-1}]$ for all $m\le n.$ 

In this goal we first notice that for all $m\le k$, by triangle inequality, we have\begin{align*}
    \|D_m\|_p&\le \Big\|\sum_{j> k}\mathbb{E}[(Y_m-Y_j)^\ell|Y_m]\Big\|_p
    \\
    &\le \sum_{j> k}\Big\|\mathbb{E}[(Y_m-Y_j)^\ell|Y_m]\Big\|_p
    \\&= (n-k)\Big\|\mathbb{E}[(Y_m-Y_n)^\ell|Y_m]\Big\|_p,
 \end{align*} where the last equality is due to the fact that $(Y_i)$ are identically distributed.
Moreover this can be further bounded by noticing that  
\begin{align*}
  \Big\|\mathbb{E}[(Y_m-Y_n)^\ell|\mathcal{F}_m]\Big\|_p
      &\overset{(a)}{\le }
         R^{\ell-\frac{4}{p}}\mathbb{E}\Big[\mathbb{E}[(Y_m-Y_n)^2|Y_m]^2\Big]^{\frac{1}{p}}       \\&\overset{(b)}{\le }R^{\ell-\frac{4}{p}}\mathbb{E}\Big[(X_m^2+\sigma^2)^2\Big]^{\frac{1}{p}}
          \\&\overset{(c)}{\le }\sigma^{2/p}R^{\ell-\frac{4}{p}}(R^2+3\sigma^2)^{\frac{1}{p}},
\end{align*} 
where step (a) exploits the assumption that the random variables $(Y_i)$ are positive, ensuring $|Y_m-Y_n|\overset{a.s.}{\le} R$. Step $(b)$ holds because $$
\mathbb{E}[(Y_m-Y_n)^2|Y_m] = (Y_m-\mathbb{E}[Y_m])^2+ \mathbb{E}[(Y_n-\mathbb{E}[Y_m])^2|Y_m]= X_m^2+ \mathrm{Var}(Y_m) = X_m^2+\sigma^2.$$ Step $(c)$ is due to the fact that $$\mathbb{E}[(X_m^2+\sigma^2)^2]=\mathbb{E}[X_m^4+2\sigma^2 X_m^2+\sigma^4]\le R^2\mathbb{E}[X_m^2]+2\sigma^2\mathrm{Var}(X_m)+\sigma^4 =\sigma^2(  R^2+3\sigma^2).$$ Hence we obtain that for $m\le k$,
\begin{align}\label{freefood}
    \Big\|\mathbb{E}[(Y_m-Y_n)^\ell|Y_m]\Big\|_p\le R^{\ell-\frac{4}{p}} \sigma^{2/p}(R^2+3\sigma^2)^{\frac{1}{p}}
\end{align} and so that 
\begin{align}
    \Big\|D_m\Big\|_p\le (n-k)R^{\ell-\frac{4}{p}} \sigma^{2/p}(R^2+3\sigma^2)^{\frac{1}{p}}.
\end{align}

Now if $m> k$, we have 
\begin{align*}\|D_m\|_p&\le \Big\|\sum_{i\le k}(Y_i-Y_m)^\ell-\mathbb{E}[(Y_i-Y_m)^\ell|Y_i]\Big\|_p\\
    &\le k\Big\|(Y_1-Y_m)^\ell-\mathbb{E}[(Y_1-Y_m)^\ell|Y_1]\Big\|_p\\
    &\le k\Big(\Big\|(Y_1-Y_m)^\ell\Big\|_p+\Big\|\mathbb{E}[(Y_1-Y_m)^\ell|Y_1]\Big\|_p\Big)\\
    &\overset{(a)}{\le} kR^{\ell-2/p}\mathbb{E}[(Y_1-Y_m)^2]^\frac{1}{p}+k\|\mathbb{E}[(Y_i-Y_1)^\ell|Y_1]\Big\|_p
    \\&\overset{(b)}{\le}2^{\frac{1}{p}}k\sigma^{2/p}R^{\ell-2/p}+kR^{\ell-\frac{4}{p}}\sigma^{2/p}(R^2+3\sigma^2)^{\frac{1}{p}}\\
    &= k\sigma^{2/p}R^{\ell-\frac{4}{p}}\Big(2^{\frac{1}{p}}R^{\frac{2}{p}} + (R^2+3\sigma^2)^{\frac{1}{p}}\Big),
\end{align*}
where again to get $(a)$ we used the fact that $|Y_1-Y_m|\overset{a.s}{\le }R$ and to get $(b)$ we used the fact that $\mathbb{E}[(Y_1-Y_m)^2]=2\mathrm{Var}(X_1)=2\sigma^2$ and \cref{freefood}.
\\In addition, for $m>k$, the following bound also holds: 
\begin{align*}\|D_m\|_p&\le \Big\|\sum_{i\le k}(Y_i-Y_m)^\ell-\mathbb{E}[(Y_i-Y_m)^\ell|Y_i]\Big\|_p\\&\le k\|\mathbb{E}[(Y_i-Y_m)^\ell|Y_m]\|_p+\Big\|\sum_{i\le k}(Y_i-Y_m)^\ell-\mathbb{E}[(Y_i-Y_m)^\ell|Y_m]-\mathbb{E}[(Y_i-Y_m)^\ell|Y_i]\Big\|_p
\\&\le kR^{\ell-\frac{4}{p}}\sigma^{2/p}(R^2+3\sigma^2)^{\frac{1}{p}}+\Big\|\sum_{i\le k}(Y_i-Y_m)^\ell-\mathbb{E}[(Y_i-Y_m)^\ell|Y_m]-\mathbb{E}[(Y_i-Y_m)^\ell|Y_i]\Big\|_p .
\end{align*}
Note that conditionally on $Y_m$, the random variables $(Y_i-Y_m)^\ell-\mathbb{E}[(Y_i-Y_m)^\ell|Y_m]-\mathbb{E}[(Y_i-Y_m)^\ell|Y_i]$, $i=1, \dots, k$ are centered i.i.d. random variables. Hence we obtain that got $m>k$,
\begin{align*}\|D_m\|_p&
\le kR^{\ell-\frac{4}{p}}\sigma^{2/p}(R^2+3\sigma^2)^{\frac{1}{p}}+\Big\|\sum_{i\le k}(Y_i-Y_m)^\ell-\mathbb{E}[(Y_i-Y_m)^\ell|Y_m]-\mathbb{E}[(Y_i-Y_m)^\ell|Y_i]\Big\|_p \\&\le kR^{\ell-\frac{4}{p}}\sigma^{2/p}(R^2+3\sigma^2)^{\frac{1}{p}}+\Big\|\sum_{i\le k}\mathbb{E}\big[(Y_i-Y_m)^\ell-(Y_{n+1}-Y_m)^\ell-(Y_i-Y_{n+1})^\ell|Y_i,Y_m]\Big\|_p  \\&\overset{(a)}{\le} kR^{\ell-\frac{4}{p}}\sigma^{2/p}(R^2+3\sigma^2)^{\frac{1}{p}}+\sqrt{k}R^{\ell}\min\{
    \sqrt{p-1},A_p+A^*_{k,p}\},
\end{align*} 
where to obtain $(a)$ we used \Cref{lemma:rosenthal_indep_nonidentical} together with the fact that, since $Y_i-Y_m=(Y_i-Y_{n+1})+(Y_{n+1}-Y_m)$,  for $\ell$ odd, the following inequality holds:
\begin{align*}
    \Big|(Y_i-Y_m)^\ell-(Y_{n+1}-Y_m)^\ell-(Y_i-Y_{n+1})^\ell\Big|
    &\le \max\{|Y_i-Y_m|^\ell, |Y_{n+1}-Y_m|^\ell, |Y_i-Y_{n+1}|^\ell\} \\&\le R^\ell.
\end{align*}
Hence by Theorem 2.1 of \cite{rio2009moment} (\Cref{lemma:MZineq_rio}), we obtain that
\begin{align*}
    &\Big\|\sum_{\substack{i\le k\\k<j\le n}}(Y_{i}-Y_{j})^\ell\Big\|_p\\
    \le& \sqrt{p-1}\sqrt{k(n-k)}R^{\ell-\frac{4}{p}} \sigma^{2/p}\Big( (n-k)(R^2+3\sigma^2)^{\frac{2}{p}}\\
    &~+k\min\Big\{(2^{\frac{1}{p}}R^{\frac{2}{p}} (R^2+3\sigma^2)^{\frac{1}{p}})^2, ~\Big((R^2+3\sigma^2)^{\frac{1}{p}}+\frac{R^{\frac{4}{p}}}{\sqrt{k}\sigma^{2/p}}\min\{
    \sqrt{p-1}, A_p+A^*_{k,p}\}\Big)^2 \Big\}\Big)^{1/2}.
\end{align*}
If $p\ge 4$ we derive an alternative bound using \cite{osekowski2012note}.
In this goal we remark that if $m\le k$ we have\begin{align*}
    \mathbb{E}[D_m^2|\mathcal{F}_{m-1}]&=(n-k)^2\mathbb{E}[\mathbb{E}[(Y_m-Y_n)^\ell|Y_m]^2]\\&\le (n-k)^2R^{2\ell-4}\mathbb{E}\big[\mathbb{E}[(Y_m-Y_n)^2|Y_m]^2\big]\\&\le (n-k)^2R^{2\ell-4}\mathbb{E}[(\sigma^2+X_m^2)^2]
    \\&\le (n-k)^2R^{2\ell-4}\sigma^2(R^2+3\sigma^2). 
\end{align*} which implies that \begin{align}
    \| \mathbb{E}[D_m^2|\mathcal{F}_{m-1}]\|_{p/2}&\le(n-k)^2R^{2\ell-4}\sigma^2(R^2+3\sigma^2).
\end{align}

Moreover, if, instead, we have $m>k$, then we have \begin{align*}
   \mathbb{E}[D_m^2|\mathcal{F}_{m-1}]&\le \mathbb{E}\Big[\Big(\sum_{i\le k}(Y_i-Y_m)^\ell-\mathbb{E}[(Y_i-Y_m)^\ell|Y_i]\Big)^2\Big|\mathcal{F}_{m-1}\Big]\\
   &\le k\sum_{i\le k}\mathbb{E}\Big[\Big((Y_i-Y_m)^{\ell}-\mathbb{E}[(Y_i-Y_m)^{\ell}|Y_i]\Big)^2\Big|Y_i\Big]\\
   &=k\sum_{i\le k}\mathbb{E}\Big[(Y_i-Y_m)^{2\ell}|Y_i]-\mathbb{E}[(Y_i-Y_m)^{\ell}|Y_i]^2\\&\le kR^{2\ell-2}\sum_{i\le k}\mathbb{E}\Big[(Y_i-Y_m)^{2}\Big|Y_i\Big] \\&\le  kR^{2\ell-2}\sum_{i\le k}(\sigma^2+X_i^2)\\&\le  R^{2\ell-2}(k\sigma^2+\sum_{i\le k}X_i^2).
\end{align*}Hence we obtain that \begin{align*}
  \| \mathbb{E}[D_m^2|\mathcal{F}_{m-1}]\|_{p/2}&\le R^{2\ell-2}k^2\sigma^2+k^2R^{2\ell-4/p}\sigma^{4/p}.
\end{align*} 
Hence it follows that  for $p\ge 4$, the following inequality also holds:
\begin{align*}
&s^{-1}\sigma_{n,k}^{-\ell}\Big\|\mathbb{E}((W_k-W_k')^\ell|W_k,\mathcal{U}(X_{1:n}))\Big\|_p\\
\le& \frac{1}{n}\sigma_{n,k}^{-\ell}C_p\bigg\{R^{\ell-2}\sqrt{(n-k)k}\sqrt{(n-k)\sigma^2(R^2+3\sigma^2)+k{(R^2\sigma^2+R^{4-\frac{4}{p}})}}\\&+k^{\frac{1}{p}}(n-k)^{\frac{1}{p}}R^{\ell-\frac{4}{p}}\bigg[k^{p-1} \Big(\sigma^{2/p}(R^2+3\sigma^2)^{1/p}+2^{1/p}k^{-\frac{1}{2}}R^{2/p}\Big)^p\\&\qquad \qquad \qquad \qquad\qquad +(n-k)^{p-1}\sigma^2(R^2+3\sigma^2)\bigg]^{\frac{1}{p}}\bigg\}.\end{align*}

\end{proof}
\begin{lemma}\label{lemma:even_Lp_term_bound} For all even $\ell\ge 4$ the following holds for $p>2$:
\begin{align*}
    &s^{-1}\sigma_{n,k}^{-\ell}\Big\|\mathbb{E}[(W_k-W_k')^\ell|W_k,\mathcal{U}(X_{1:n})]\Big\|_p
      \\\le&  \frac{1}{n}\sigma_{n,k}^{-\ell}{\sqrt{k(n-k)}}R^{\ell-2}\bigg[\sqrt{p-1}R^{2-\frac{4}{p}}\Big((n-k)\sigma^{4/p}(R^2+3\sigma^2)^\frac{2}{p}
   \\&\quad\quad\quad\quad\quad\quad\quad\quad\quad\quad\quad\quad+kR^{\frac{2}{p}}\min\Big\{(\frac{1}{4})^{\frac{1}{p}}R^{\frac{2}{p}},2^{\frac{1}{p}}\sigma^{2/p}+ R^{-2/p}\sigma^{4/p}(R^2+3\sigma^2)^{\frac{1}{p}}\Big\}^2\Big)^{\frac{1}{2}}\\
      &\quad\quad\quad\quad\quad\quad\quad\quad+{2\sqrt{k(n-k)}\sigma^2}\bigg].
\end{align*}
Moreover, for $p\ge 4$, the following inequality also holds:
\begin{align*}
&s^{-1}\sigma_{n,k}^{-\ell}\Big\|\mathbb{E}[(W_k-W_k')^\ell|W_k,\mathcal{U}(X_{1:n})]\Big\|_p\\
&\le \frac{1}{n}\sigma_{n,k}^{-\ell}C_p\Big\{R^{\ell-2}\sqrt{(n-k)k}\sqrt{2(n-k)\sigma^{4/p}(R^2+3\sigma^2)^{\frac{2}{p}}+ (n-k)R^{2} +k(\sigma^2+\sigma^{4/p}R^{2-\frac{4}{p}})}\\&+k^{\frac{1}{p}}(n-k)^{\frac{1}{p}}R^{\ell-\frac{4}{p}}\Big((n-k)^{p-1}\sigma^2(R^2+3\sigma^2)\\
&+k^{p-1}\min\big\{\frac{R^4}{4},R^2\sigma^2(2^{1/p}+(R^2+3\sigma^2)^{1/p})^p\big\}\Big)^{1/p}\Big\}\\
&+ \frac{2}{n}\sigma_{n,k}^{-\ell}k(n-k)R^{\ell-2}\sigma^2.\end{align*}
Alternatively the following bound also holds
\begin{align*}
    &
    s^{-1}\sigma_{n,k}^{-\ell}\|\mathbb{E}((W_k-W_k')^\ell|W_k,\mathcal{U}(X_{1:n}))\|_p
      \\ \le& \frac{\sigma_{n,k}^{-\ell}}{n}\Big\{k R^{\ell}\|\mathrm{binom}(k,\frac{2\sigma^2}{R^2})\|_p+(n-k)k2^{\frac{1}{p}}R^{\ell-2/p}\sigma^{2/p}\Big\}.
\end{align*}
\end{lemma}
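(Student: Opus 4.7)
The plan is to mirror the martingale-difference argument of \cref{lemma:odd_Lp_term_bound}, with one crucial modification: because $\ell$ is even, $\mathbb{E}[(Y_i-Y_j)^{\ell}]\ne 0$, so I cannot express the sum $\sum_{i\le k, k<j\le n}(Y_i-Y_j)^{\ell}$ as a single martingale starting from zero. First, as in the odd case, I write
$\mathbb{E}[(W_k-W_k')^{\ell}\mid \mathcal{U}(X_{1:n})] = \frac{1}{k(n-k)}\sum_{i\le k, k<j\le n}(Y_i-Y_j)^{\ell}$, and then decompose
$\sum_{i,j}(Y_i-Y_j)^{\ell} = \bigl(\sum_{i,j}(Y_i-Y_j)^{\ell} - k(n-k)\mathbb{E}[(Y_1-Y_{k+1})^{\ell}]\bigr) + k(n-k)\mathbb{E}[(Y_1-Y_{k+1})^{\ell}]$. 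The deterministic piece is controlled via $(Y_1-Y_{k+1})^{\ell}\le R^{\ell-2}(Y_1-Y_{k+1})^2$, giving $\mathbb{E}[(Y_1-Y_{k+1})^{\ell}]\le 2R^{\ell-2}\sigma^2$; this accounts for the additive $2R^{\ell-2}\sigma^2\sqrt{k(n-k)}$ term in the displayed bound.

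For the centered piece I introduce $\mathcal{F}_m=\sigma(Y_1,\dots,Y_m)$ and the martingale differences $D_m = \mathbb{E}[\sum_{i,j}(Y_i-Y_j)^{\ell}\mid \mathcal{F}_m]-\mathbb{E}[\sum_{i,j}(Y_i-Y_j)^{\ell}\mid \mathcal{F}_{m-1}]$. Applying Marcinkiewicz--Zygmund (\cref{lemma:MZineq_rio}) yields $\|\sum_m D_m\|_p \le \sqrt{p-1}\,\sqrt{\sum_m \|D_m\|_p^2}$, and for $p\ge 4$ the sharper Rosenthal-type inequality from \cite{osekowski2012note} additionally uses $\|\mathbb{E}[D_m^2\mid\mathcal{F}_{m-1}]\|_{p/2}$. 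For $m\le k$, $D_m = (n-k)\bigl(\mathbb{E}[(Y_m-Y_{k+1})^{\ell}\mid Y_m]-\mathbb{E}[(Y_m-Y_{k+1})^{\ell}]\bigr)$, and the bound $\mathbb{E}[(Y_m-Y_{k+1})^{\ell}\mid Y_m]\le R^{\ell-2}(\sigma^2+X_m^2)$ (together with $\mathbb{E}[(\sigma^2+X_m^2)^2]\le \sigma^2(R^2+3\sigma^2)$) reproduces the factor $\sigma^{4/p}(R^2+3\sigma^2)^{2/p}$ inside the square root, weighted by $(n-k)$. For $m>k$, I write $D_m = \sum_{i\le k}\bigl((Y_i-Y_m)^{\ell}-\mathbb{E}[(Y_i-Y_m)^{\ell}\mid Y_i]\bigr) - \text{constant}$, take the conditional expectation given $Y_m$, and bound it both trivially by $\tfrac12 k R^{\ell}$ (using $|Y_i-Y_m|\le R$ and the diameter of the support) and by $k R^{\ell-2}(\sigma^2+X_m^2)$, then take the minimum; this produces the $\min\{(\tfrac14)^{1/p}R^{2/p},\,2^{1/p}\sigma^{2/p}+R^{-2/p}\sigma^{4/p}(R^2+3\sigma^2)^{1/p}\}^2$ factor appearing in the stated bound.

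The conditional second moment $\mathbb{E}[D_m^2\mid\mathcal{F}_{m-1}]$, needed for the $p\ge 4$ bound, is handled exactly as in \cref{lemma:odd_Lp_term_bound}: for $m\le k$ by Jensen and the bound $(\sigma^2+X_m^2)^2\le \sigma^2(R^2+3\sigma^2)+R^{4-4/p}$, and for $m>k$ by expanding the sum of independent centered terms conditionally on $Y_m$. The alternative binomial bound follows from the uniform pointwise estimate $(Y_i-Y_j)^{\ell}\le R^{\ell-2}(Y_i-Y_j)^2$ combined with the observation that, conditionally on $Y_m$ and the multiset, $\sum_{i\le k}(Y_i-Y_m)^2$ is bounded above by $R^2$ times the number of indices $i\le k$ with $Y_i\ne Y_m$; since each such indicator has mean at most $2\sigma^2/R^2$, the count is stochastically dominated by a $\mathrm{Binom}(k,2\sigma^2/R^2)$ random variable, whose $L_p$ norm gives the stated expression.

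The main obstacle is the bound for $m>k$: unlike the odd case, I cannot exploit the cancellation identity $(Y_i-Y_m)^{\ell}-(Y_{n+1}-Y_m)^{\ell}-(Y_i-Y_{n+1})^{\ell}$ to centre things ``simultaneously'' on both coordinates, because for even $\ell$ every one of these terms has a non-zero expectation. I therefore have to combine a crude bound ($\tfrac12 R^{\ell}$ per term, which wastes a factor of $R^{2-\frac{2}{p}}/\sigma^{2/p}$) with the sharper $R^{\ell-2}$-times-variance bound via a minimum, which is precisely what produces the somewhat awkward $\min\{\cdot,\cdot\}$ in the statement. Tracking constants through this minimization, and verifying that the two regimes recover the odd-case bound's form up to the extra deterministic mean term, is the only delicate accounting step in the argument.
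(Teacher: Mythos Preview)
Your overall strategy for the first two bounds---centering the double sum, writing the centered piece as a martingale in the filtration $\mathcal{F}_m=\sigma(Y_1,\dots,Y_m)$, and then applying \cref{lemma:MZineq_rio} (respectively the Os\k{e}kowski inequality for $p\ge 4$)---is exactly the paper's approach, and your treatment of the $m\le k$ increments via $\mathbb{E}[(Y_m-Y_n)^\ell\mid Y_m]\le R^{\ell-2}(X_m^2+\sigma^2)$ is correct.

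Two concrete gaps remain.

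\textbf{The $m>k$ increments.} You say you ``take the conditional expectation given $Y_m$'' and bound it by $kR^{\ell-2}(\sigma^2+X_m^2)$. But the object to be controlled is $\|D_m\|_p$ with $D_m=\sum_{i\le k}\bigl((Y_i-Y_m)^\ell-\mathbb{E}[(Y_i-Y_m)^\ell\mid Y_i]\bigr)$ (no constant is subtracted), and passing to $\mathbb{E}[\cdot\mid Y_m]$ does not help for an $L_p$ bound. The paper instead bounds the single summand $\|(Y_i-Y_m)^\ell-\mathbb{E}[(Y_i-Y_m)^\ell\mid Y_i]\|_p$ in two ways: (i) Popoviciu's inequality on the conditional variance (the variable lies in $[0,R^\ell]$, so its conditional variance is $\le R^{2\ell}/4$), followed by the interpolation $\|\cdot\|_p\le\|\cdot\|_\infty^{1-2/p}\|\cdot\|_2^{2/p}$, giving $(1/4)^{1/p}R^\ell$; and (ii) the crude triangle inequality $\|(Y_i-Y_m)^\ell\|_p+\|\mathbb{E}[(Y_i-Y_m)^\ell\mid Y_i]\|_p$, each piece bounded as in the $m\le k$ case. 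Your proposed route via $R^{\ell-2}(\sigma^2+X_m^2)$ does not produce a bound on the \emph{difference}.

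\textbf{The binomial bound.} Your stochastic-domination argument fails: from $(Y_i-Y_m)^2\le R^2\,\mathbb{I}(Y_i\ne Y_m)$ and $\mathbb{E}[(Y_i-Y_m)^2]=2\sigma^2$ one obtains $\mathbb{P}(Y_i\ne Y_m)\ge 2\sigma^2/R^2$, the \emph{reverse} of what you claim; indeed, for any continuous law $\mathbb{P}(Y_i\ne Y_m)=1$. So the indicator count is not dominated by $\mathrm{Binom}(k,2\sigma^2/R^2)$. The paper's argument is different: it applies the moment-comparison inequality of \cref{lemma:moment-inequality} to the i.i.d.\ nonnegative variables $\bigl((Y_i-Y_{k+i})^\ell\bigr)_{i\le k}$, using $\mathbb{E}[(Y_i-Y_{k+i})^\ell]\le 2R^{\ell-2}\sigma^2$ and $\mathbb{E}[(Y_i-Y_{k+i})^{\ell p}]\le 2R^{\ell p-2}\sigma^2$, which yields exactly $\|\sum_{i\le k}(Y_i-Y_{k+i})^\ell\|_p\le R^\ell\|\mathrm{Binom}(k,2\sigma^2/R^2)\|_p$.
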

\begin{proof}
    First, note that by the definition of $W_k'$, the following holds
\begin{align*}&
    s^{-1}\sigma_{n,k}^{-\ell}\|\mathbb{E}((W_k-W_k')^\ell|W_k,\mathcal{U}(X_{1:n}))\|_p
\\\le&  s^{-1}\frac{1}{k(n-k)}\sigma_{n,k}^{-\ell}\Big\|\sum_{\substack{i\le k\\k<j\le n}}(X_{i}-X_{j})^\ell\Big\|_p
    \\\le&  \frac{1}{n}\sigma_{n,k}^{-\ell}\Big\|\sum_{\substack{i\le k\\k<j\le n}}(X_{i}-X_{j})^\ell\Big\|_p
\\=&\frac{1}{n}\sigma_{n,k}^{-\ell}\Big\|\sum_{\substack{i\le k\\k<j\le n}}(Y_{i}-Y_{j})^\ell\Big\|_p.
\end{align*}
As $\ell$ is even, we have
\begin{align*}
    \Big\|\sum_{\substack{i\le k\\k<j\le n}} (Y_i-Y_j)^\ell\Big\|_p\le&\Big\|\sum_{\substack{i\le k\\k<j\le n}}\Big( (Y_i-Y_j)^\ell-\mathbb{E}[(Y_i-Y_j)^\ell]\Big)\Big\|_p+k(n-k) \mathbb{E}[(Y_i-Y_j)^\ell]
    \\\le& \Big\|\sum_{\substack{i\le k\\k<j\le n}} \Big((Y_i-Y_j)^\ell-\mathbb{E}[(Y_i-Y_j)^\ell]\Big)\Big\|_p+2k(n-k)R^{\ell-2}\sigma^2,
\end{align*}
where $\|\sum_{\substack{i\le k\\k<j\le n}} ((Y_i-Y_j)^\ell-\mathbb{E}[(Y_i-Y_j)^\ell])\|_p$ can be again bounded using the Marcinkiewicz-Zygmund type inequality and Theorem 1 of \cite{osekowski2012note}. 
Again, in this goal, we define the following filtration $$\mathcal{F}_i=\sigma(Y_1,\dots,Y_i),$$ and $$D_i=\mathbb{E}\Big[\sum_{\substack{m\le k\\k<j\le n}}(Y_{m}-Y_{j})^\ell\Big|\mathcal{F}_i\Big]-\mathbb{E}\Big[\sum_{\substack{m\le k\\k<j\le n}}(Y_{m}-Y_{j})^\ell\Big|\mathcal{F}_{i-1}\Big].$$ By a telescopic sum argument we notice that   $$\sum_{\substack{i\le k\\k<j\le n}} \Big((Y_i-Y_j)^\ell-\mathbb{E}[(Y_i-Y_j)^\ell]\Big)=\sum_{m\le n}D_m.$$For $i\le k$, we have
\begin{align*}
    \|D_i\|_p&\le \Big\|\sum_{j> k}\mathbb{E}[(Y_i-Y_j)^\ell|Y_i]-\mathbb{E}[(Y_i-Y_j)^\ell]\Big\|_p
    \\&\le (n-k)\Big\|\mathbb{E}[(Y_i-Y_n)^\ell|Y_i]-\mathbb{E}[(Y_n-Y_j)^\ell]\Big\|_p
      \\&= (n-k)\mathbb{E}\Big[\Big|\mathbb{E}[(Y_i-Y_n)^\ell|Y_i]-\mathbb{E}[(Y_n-Y_j)^\ell]\Big|^p\Big]^{\frac{1}{p}}
       \\&\overset{(a)}{\le} (n-k)R^{\ell -2\ell/p}\mathbb{E}\Big[\Big(\mathbb{E}[(Y_i-Y_n)^\ell|Y_i]-\mathbb{E}[(Y_n-Y_j)^\ell]\Big)^2\Big]^{\frac{1}{p}},
\end{align*}
where to get $(a)$ we used the fact that $|Y_i-Y_n|\overset{a.s}{\le }R$. 
Note that we have
\begin{align*}
  &\mathbb{E}\Big[\Big(\mathbb{E}[(Y_i-Y_n)^\ell|Y_i]-\mathbb{E}[(Y_n-Y_j)^\ell]\Big)^2\Big]\\
  =&\mathbb{E}\Big[\mathbb{E}[(Y_i-Y_n)^\ell|Y_i]^2\Big]-\mathbb{E}[(Y_n-Y_j)^\ell]^2\\
  \le& \mathbb{E}\Big[\mathbb{E}[(Y_i-Y_n)^\ell|Y_i]^2\Big]\\
  \le& R^{2\ell-4}\mathbb{E}\Big[\mathbb{E}[(Y_i-Y_n)^2|Y_i]^2\Big] \\
  \overset{(a)}{\le}& R^{2\ell-4}\sigma^2(R^2+3\sigma^2),
\end{align*}
where $(a)$ is because $\mathbb{E}\Big[\mathbb{E}[(Y_i-Y_n)^2|Y_i]^2\Big]\le \mathbb{E}[(X_i^2+\sigma^2)^2] \le \sigma^2(R^2+3\sigma^2)$. Therefore,
$$
\mathbb{E}\Big[\Big(\mathbb{E}[(Y_i-Y_n)^\ell|Y_i]-\mathbb{E}[(Y_n-Y_j)^\ell]\Big)^2\Big]^{\frac{1}{p}}\le R^{\ell-\frac{4}{p}} \sigma^{2/p}(R^2+3\sigma^2)^\frac{1}{p}.
$$
Hence it follows that for $i\le k$,
$$
\|D_i\|_p{\le}(n-k)R^{\ell-\frac{4}{p}}\sigma^{2/p}(R^2+3\sigma^2)^\frac{1}{p}.$$

Now  if $i> k$, by a triangle inequality argument we obtain that \begin{align*}
    \|D_i\|_p&\le \Big\|\sum_{j<k}(Y_i-Y_j)^\ell-\mathbb{E}[(Y_i-Y_j)^\ell|Y_j]\Big\|_p\\
    &\le k\Big\|(Y_i-Y_1)^\ell-\mathbb{E}[(Y_i-Y_1)^\ell|Y_1]\Big\|_p,
\end{align*}
where we have
\begin{align*}
&\Big\|(Y_i-Y_1)^\ell-\mathbb{E}[(Y_i-Y_1)^\ell|Y_1]\Big\|_p\\
    =&
    \mathbb{E}\Big[\Big|(Y_i-Y_1)^\ell-\mathbb{E}[(Y_i-Y_1)^\ell|Y_1]\Big|^p\Big]^{\frac{1}{p}}\\
    \overset{(a)}{\le}& R^{\ell-2\ell/p}\mathbb{E}\Big[\Big((Y_i-Y_1)^\ell-\mathbb{E}[(Y_i-Y_1)^\ell|Y_1]\Big)^2\Big]^{\frac{1}{p}}
    \\\overset{(b)}{\le}&(\frac{1}{4})^{\frac{1}{p}}R^{\ell},
\end{align*} where $(a)$ is by $|Y_i-Y_i|\le R$, and $(b)$ is by Popoviciu's inequality. 
Alternatively, we have
\begin{align*}
&\Big\|(Y_i-Y_1)^\ell-\mathbb{E}[(Y_i-Y_1)^\ell|Y_1]\Big\|_p\\
    \le&
    \Big\|(Y_i-Y_1)^\ell\Big\|_p+\Big\|\mathbb{E}[(Y_i-Y_1)^\ell|Y_1]\Big\|_p\\
    {\le}& 2^{\frac{1}{p}}\sigma^{2/p}R^{\ell-2/p}+R^{\ell-\frac{4}{p}}\sigma^{2/p}(R^2+3\sigma^2)^{\frac{1}{p}}.
\end{align*}
Hence it follows that, for $i>k$, 
$$
\|D_i\|_p{\le}kR^{\ell-\frac{2}{p}}\min\Big\{R^{\frac{2}{p}}/4^{\frac{1}{p}},2^{\frac{1}{p}}\sigma^{2/p}+R^{-2/p}\sigma^{2/p}(R^2+3\sigma^2)^{\frac{1}{p}}\Big\}.
$$
Hence by Theorem 2.1 of \cite{rio2009moment}, we obtain that
\begin{align*}
    &\Big\|\sum_{\substack{i\le k\\k<j\le n}} (Y_i-Y_j)^\ell-\mathbb{E}[(Y_i-Y_j)^\ell]\Big\|^2_p\\
    \le& (p-1)\Big(k\Big((n-k)R^{\ell-\frac{4}{p}}\sigma^{2/p}(R^2+3\sigma^2)^\frac{1}{p}\Big)^2\\&+ (n-k)\Big(kR^{\ell-\frac{2}{p}}\min\Big\{(\frac{1}{4})^{\frac{1}{p}}R^{\frac{2}{p}},2^{\frac{1}{p}}\sigma^{2/p}+R^{-2/p} \sigma^{2/p}(R^2+3\sigma^2)^{\frac{1}{p}}\Big\} \Big)^2\Big).
\end{align*}
Therefore, it follows that 
\begin{align*}
    s^{-1}\sigma_{n,k}^{-\ell}\|\mathbb{E}((W_k-W_k')^\ell&|W_k,\mathcal{U}(X_{1:n}))\|_p
      \\ \le \frac{\sqrt{k(n-k)}}{n}\sigma_{n,k}^{-\ell}R^{\ell-2}\Big[&\sqrt{p-1}R^{2-\frac{4}{p}}\Big((n-k)\sigma^{4/p}(R^2+3\sigma^2)^\frac{2}{p}
  \\&\quad\quad\quad\quad +kR^{\frac{2}{p}}\min\Big\{(\frac{1}{4})^{\frac{1}{p}}R^{\frac{2}{p}},2^{\frac{1}{p}}\sigma^{2/p}+ R^{-2/p}\sigma^{2/p}(R^2+3\sigma^2)^{\frac{1}{p}}\Big\}^2\Big)^{\frac{1}{2}}\\
      &+{2\sqrt{k(n-k)}\sigma^2}\Big].
\end{align*}

Moreover if $p\ge 4$ then we can alternatively use  Theorem 1 of \cite{osekowski2012note} to bound $\|\sum_{\substack{i\le k\\k<j\le n}} ((Y_i-Y_j)^\ell-\mathbb{E}[(Y_i-Y_j)^\ell])\|_p$. In this goal we first note that if $i\le k$ we have
\begin{align*}
    \mathbb{E}[D_i^2|\mathcal{F}_{i-1}]&=(n-k)^2\mathbb{E}\Big[\Big(\mathbb{E}[(Y_i-Y_n)^\ell|Y_i]-\mathbb{E}[(Y_i-Y_n)^\ell]\Big)^2\Big|\mathcal{F}_{i-1}\Big]\\
    &\overset{(a)}{\le}(n-k)^2\mathbb{E}\Big[\mathbb{E}[(Y_i-Y_n)^\ell|Y_i]^2+\mathbb{E}[(Y_i-Y_n)^\ell]^2\Big|\mathcal{F}_{i-1}\Big]
    \\&\le (n-k)^2R^{2\ell-4}\mathbb{E}[\mathbb{E}[(Y_i-Y_n)^2|Y_m]^2] + (n-k)^2R^{2\ell}
    \\&= (n-k)^2R^{2\ell-4}\mathbb{E}[(\sigma^2+X_i^2)^2]+ (n-k)^2R^{2\ell}
    \\&\le (n-k)^2R^{2\ell-4}\sigma^2(R^2+3\sigma^2)+ (n-k)^2R^{2\ell},
\end{align*} 
where $(a)$ is due to the fact that $(Y_i-Y_n)^\ell\ge 0.$ Hence we have that 
\begin{align*}
   \big\|\mathbb{E}[D_i^2|\mathcal{F}_{i-1}]\big\|_{p/2} \le (n-k)^2R^{2\ell-4}\sigma^2(R^2+3\sigma^2)+ (n-k)^2R^{2\ell}.
\end{align*} If $i>k$ we have 
\begin{align*}
   \mathbb{E}[D_i^2|\mathcal{F}_{i-1}]&\le \mathbb{E}\Big[\Big(\sum_{m\le k}(Y_m-Y_i)^\ell-\mathbb{E}[(Y_m-Y_i)^\ell|Y_i]\Big)^2\Big|\mathcal{F}_{i-1}\Big]\\
   &\le k\sum_{m\le k}\mathbb{E}\Big[\Big((Y_m-Y_i)^{\ell}-\mathbb{E}[(Y_m-Y_i)^{\ell}|Y_m]\Big)^2\Big|Y_m\Big]\\
   &=k\sum_{m\le k}\mathbb{E}\big[(Y_m-Y_i)^{2\ell}|Y_m]-\mathbb{E}[(Y_m-Y_i)^{\ell}|Y_m]^2\\&\le kR^{2\ell-2}\sum_{m\le k}\mathbb{E}\Big[(Y_m-Y_i)^{2}\Big|Y_m\Big] \\&\le  kR^{2\ell-2}\sum_{i\le k}(\sigma^2+X_i^2)\\&\le  kR^{2\ell-2}(k\sigma^2+\sum_{i\le k}X_i^2).
\end{align*}
Hence we obtain that \begin{align}
  \| \mathbb{E}[D_i^2|\mathcal{F}_{i-1}]\|_{p/2}&\le R^{2\ell-2}k^2\sigma^2+k^2R^{2\ell-4/p}\sigma^{4/p}.
\end{align}
Hence using Theorem 1 of \cite{osekowski2012note} the following alternative bound is also valid according to 
\Cref{lemma:ose_ineq}
\begin{align*}&\Big\|\sum_{m\le n}D_m\Big\|_p\\&\le C_p\Big\{R^{\ell-2}\sqrt{(n-k)k}\sqrt{2(n-k)\sigma^{4/p}(R^2+3\sigma^2)^{\frac{2}{p}}+ (n-k)R^{2} +k(\sigma^2+\sigma^{4/p}R^{2-\frac{4}{p}})}\\&+k^{\frac{1}{p}}(n-k)^{\frac{1}{p}}R^{\ell-\frac{4}{p}}\Big((n-k)^{p-1}\sigma^2(R^2+3\sigma^2)\\&+k^{p-1}\min\big\{\frac{R^4}{4},R^2\sigma^2(2^{1/p}+(R^2+3\sigma^2)^{1/p})^p\big\}\Big)^{1/p}\Big\}.\end{align*}
Therefore it follows, for $p\ge 4$, that the following inequality also holds:
\begin{align*}
&s^{-1}\sigma_{n,k}^{-\ell}\|\mathbb{E}((W_k-W_k')^\ell|W_k,\mathcal{U}(X_{1:n}))\|_p\\
\le&\frac{1}{n}\sigma_{n,k}^{-\ell}C_p\Big\{R^{\ell-2}\sqrt{(n-k)k}\sqrt{2(n-k)\sigma^{4/p}(R^2+3\sigma^2)^{\frac{2}{p}}+ (n-k)R^{2} +k(\sigma^2+\sigma^{4/p}R^{2-\frac{4}{p}})}\\&+k^{\frac{1}{p}}(n-k)^{\frac{1}{p}}R^{\ell-\frac{4}{p}}\Big((n-k)^{p-1}\sigma^2(R^2+3\sigma^2)\\&+k^{p-1}\min\big\{\frac{R^4}{4},R^2\sigma^2(2^{1/p}+(R^2+3\sigma^2)^{1/p})^p\big\}\Big)^{1/p}\Big\}\\&+\frac{2}{n}\sigma_{n,k}^{-\ell}k(n-k)R^{\ell-2}\sigma^2.\end{align*}
We hence established the first two bounds of \Cref{lemma:even_Lp_term_bound}. Moreover
  if $k\le n/2$ we remark that the following also holds
\begin{align*}
    \|\sum_{\substack{i\le k\\k<j\le n}} (Y_i-Y_j)^\ell\|_p
    &\le k\|\sum_{i\le k} (Y_i-Y_{k+i})^\ell\|_p+(n-2k)k\|(Y_i-Y_n)^l\|_p
       \\&\le k\|\sum_{i\le k} (Y_i-Y_{k+i})^\ell\|_p+(n-2k)k2^{\frac{1}{p}}\sigma^{2/p}R^{\ell-2/p}.
\end{align*}
Now note that the random variables $( (Y_i-Y_{k+i})^\ell)_{i\le k}$ are i.i.d. positive random variables.  Moreover we remark that $\mathbb{E}((Y_i-Y_{k+i})^\ell)\le2 R^{\ell-2}\sigma^2$ and $\mathbb{E}((Y_i-Y_{k+i})^{\ell p})\le2 R^{\ell p-2}\sigma^2$. Therefore using \Cref{lemma:moment-inequality} in \cite{austern2023efficient}, we obtain that $$\|\sum_{i\le k} (Y_i-Y_{k+i})^\ell\|_p\le R^{\ell}\|\mathrm{binom}(k,\frac{2\sigma^2}{R^2})\|_p.$$
Therefore, we have 
\begin{align*}
&s^{-1}\sigma_{n,k}^{-\ell}\|\mathbb{E}((W_k-W_k')^\ell|W_k,\mathcal{U}(X_{1:n}))\|_p \\\le& \frac{1}{n}\sigma_{n,k}^{-\ell}\Big(kR^{\ell}\|\mathrm{binom}(k,\frac{2\sigma^2}{R^2})\|_p + (n-k)k2^{\frac{1}{p}}]\sigma^{2/p}R^{\ell-2/p}\Big).  
\end{align*}
\end{proof}
\begin{lemma}\label{lemma:odd_integral_bound}
For every odd integer $\ell\ge 3$, let $S_{k,\kappa}:=\frac{R^2}{\kappa \sigma_{n,k}^2}$ and $\tilde S_k:=\frac{R^2n}{k\sigma^2}$. Then for any integer $K_p\ge 1$, the following holds:
\begin{align*}
&\sum_{\substack{\ell\ge 3\\\mathrm{odd}}}\sigma_{n,k}^{-\ell}R^{\ell}\int_{-\frac{1}{2}\log(1-\frac{R^2}{\kappa\sigma_{n,k}^2})}^{\infty}\frac{e^{-\ell t}\|H_{\ell-1}\|_p}{\ell!(\sqrt{1-e^{-2t}})^{\ell-1}}dt
\\\le & \frac{1}{2(n-k)}\sum_{K_p\ge m\ge1}\frac{1}{m!}\sigma_{n,k}^{-1}R\tilde S_k^m\Big(\frac{\|H_{2m}\|_pm!}{(2m+1)!}-\frac{e^{19/300}\pi^{1/4}(K+1)^{1/4}(p-1)^{m}}{2^m(2K+3)}\Big)\\&\qquad\qquad \times\int_{\frac{1}{(n-k)}}^{\frac{1}{(n-k)S_{k,\kappa}}}\frac{1}{y^{3/2}\sqrt{y-\frac{1}{n-k}}}(y-\frac{1}{n-k})^mdy
\\&+\frac{e^{19/300}R\sigma_{n,k}^{-1}\pi^{1/4}(K_p+1)^{1/4}}{2(2K_p+3)(n-k)}\int_{\frac{1}{(n-k)}}^{\frac{1}{(n-k)S_{k,\kappa}}}\frac{1}{y^{3/2}\sqrt{y-\frac{1}{n-k}}}\Big[e^{\frac{(p-1)\tilde S_k}{2}(y-\frac{1}{n-k})}-1\Big]dy.
\end{align*}
\end{lemma}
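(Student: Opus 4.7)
The plan is to parametrize the sum and integral via a single change of variable that sends the $t$-integration onto the $y$-integration used on the right-hand side. Writing odd $\ell\ge 3$ as $\ell=2m+1$ with $m\ge 1$, I first set $y=\frac{1}{(n-k)(1-e^{-2t})}$, so that $t^{\star}:=-\frac{1}{2}\log(1-S_{k,\kappa})$ maps to $y=\frac{1}{(n-k)S_{k,\kappa}}$ and $t\to\infty$ maps to $y=\frac{1}{n-k}$. A short computation yields $1-e^{-2t}=\frac{1}{(n-k)y}$, $e^{-t}=\sqrt{(y-1/(n-k))/y}$, and $dt=-\frac{(n-k)(1-e^{-2t})^{2}}{2e^{-2t}}\,dy$. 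Collecting powers in the integrand $e^{-(2m+1)t}(1-e^{-2t})^{-m}$ together with the Jacobian, every $(1-e^{-2t})$-factor and $e^{-t}$-factor telescopes, giving
\[\int_{t^{\star}}^{\infty}\frac{e^{-(2m+1)t}}{(2m+1)!\,(1-e^{-2t})^{m}}\,dt=\frac{(n-k)^{m-1}}{2(2m+1)!}\int_{\frac{1}{n-k}}^{\frac{1}{(n-k)S_{k,\kappa}}}\frac{(y-\frac{1}{n-k})^{m}}{y^{3/2}\sqrt{y-\frac{1}{n-k}}}\,dy.\]
Using $R^{2}/\sigma_{n,k}^{2}=\tilde S_{k}/(n-k)$ to absorb $\sigma_{n,k}^{-(2m+1)}R^{2m+1}=\sigma_{n,k}^{-1}R\,\tilde S_{k}^{m}/(n-k)^{m}$, the full left-hand side becomes $\frac{\sigma_{n,k}^{-1}R}{2(n-k)}\sum_{m\ge 1}\frac{\tilde S_{k}^{m}\|H_{2m}\|_{p}}{(2m+1)!}J_{m}$, where $J_{m}$ denotes the inner $y$-integral above.

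Next, I split the $m$-sum at the cutoff $K_p$, keep the terms $1\le m\le K_p$ intact, and uniformly bound the tail. For $m>K_p$, Gaussian hypercontractivity $\|H_{2m}\|_{p}\le(p-1)^{m}\sqrt{(2m)!}$ combined with the sharp Wallis--Kazarinoff inequality $\binom{2m}{m}\ge 4^{m}/\sqrt{\pi(m+1/2)}$ yields
\[\frac{\|H_{2m}\|_{p}\,m!}{(2m+1)!}\le\frac{(p-1)^{m}}{(2m+1)\sqrt{\binom{2m}{m}}}\le\frac{\pi^{1/4}(p-1)^{m}}{2^{m+1}(m+1/2)^{3/4}}.\]
The factor $1/(m+1/2)^{3/4}$ is decreasing in $m$, so on $m\ge K_p+1$ it is maximized at $m=K_p+1$, and the elementary check $\big((K_p+3/2)/(K_p+1)\big)^{1/4}\le(5/4)^{1/4}\le e^{19/300}$ (valid for every $K_p\ge 1$) upgrades the above to the uniform tail estimate $\frac{\|H_{2m}\|_{p}\,m!}{(2m+1)!}\le\frac{e^{19/300}\pi^{1/4}(K_p+1)^{1/4}(p-1)^{m}}{2^{m}(2K_p+3)}$ for all $m>K_p$.

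Finally, I execute the add-and-subtract: replacing the tail $m>K_p$ by this upper bound and then extending the resulting series back to $m\ge 1$ forces a compensating subtraction of exactly the $e^{19/300}$-correction inside the sum over $1\le m\le K_p$, producing the first term of the stated bound. For the extended tail I swap the $y$-integral and the $m$-sum by Tonelli (all integrands are non-negative) and use $\sum_{m\ge 1}\frac{x^{m}}{m!}=e^{x}-1$ with $x=\frac{(p-1)\tilde S_{k}}{2}(y-\frac{1}{n-k})$, which yields the second integral in the stated bound with the correct prefactor. The main obstacle is the bookkeeping in the first step: after the substitution, all powers of $e^{-t}$ and $(1-e^{-2t})$ must consolidate into the single expression $(y-1/(n-k))^{m}/(y^{3/2}\sqrt{y-1/(n-k)})$ with the correct $(n-k)^{m-1}/(2(2m+1)!)$ prefactor; once this identity is established, the remainder is a routine application of hypercontractivity and the exponential series identity.
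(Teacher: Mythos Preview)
Your proof is correct and follows essentially the same approach as the paper's. The only differences are cosmetic: the paper performs the change of variables in two steps ($x=e^{-t}$ followed by $y^{-1}=(n-k)(1-x^{2})$) rather than your single substitution, and for the tail bound the paper invokes Robbins' Stirling estimate $\sqrt{(2m)!}\ge e^{-19/300}2^{m}m!/(\pi m)^{1/4}$ directly together with the monotonicity of $m^{1/4}/(2m+1)$, whereas you reach the same constant via Wallis--Kazarinoff plus the check $(5/4)^{1/4}\le e^{19/300}$.
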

\begin{proof}
    First, we note that 
\begin{align*}
&\sum_{\substack{\ell\ge 3\\\mathrm{odd}}}\sigma_{n,k}^{-\ell}R^{\ell}\int_{-\frac{1}{2}\log(1-\frac{R^2}{\kappa \sigma_{n,k}^2})}^{\infty}\frac{e^{-\ell t}\|H_{\ell-1}\|_p}{\ell!(\sqrt{1-e^{-2t}})^{\ell-1}}dt
\\
\overset{(a)}{=} & \sum_{\substack{\ell\ge 3\\\mathrm{odd}}}\frac{1}{\ell!}\sigma_{n,k}^{-\ell}\|H_{l-1}\|_pR^{\ell}\int_{0}^{\sqrt{1-\frac{R^2}{\kappa \sigma_{n,k}^2}}}\frac{x^{\ell-1}}{(\sqrt{1-x^2})^{\ell-1}}dx\\
\overset{(b)}{=} & \sum_{m\ge1}\frac{1}{(2m+1)!}\sigma_{n,k}^{-(2m+1)}\|H_{2m}\|_pR^{2m+1}\int_{0}^{\sqrt{1-\frac{R^2}{\kappa \sigma_{n,k}^2}}}\frac{x^{2m}}{(1-x^2)^{m}}dx,
\end{align*}
where $(a)$ is obtained by using a change of variables $x=e^{-t}$ and $(b)$ by writing $\ell=2m+1$. Using the fact that, according to \cite{robbins1955remark}, the following holds: \begin{align*}
\sqrt{(2 m) !} & \geq \sqrt{\sqrt{2 \pi(2 m)} \cdot(2 m / e)^{2 m} \cdot \exp \left(\frac{1}{12(2 m)+1}\right)} \\
& \geq e^{-19 / 300} 2^m m ! /(\pi m)^{1 / 4},
\end{align*} and by using \Cref{lemma:Hermite_bound}, we obtain that for all $K_p\ge 1$, we have\begin{align*}
&\sum_{\substack{\ell\ge 3\\\mathrm{odd}}}\sigma_{n,k}^{-\ell}R^{\ell}\int_{-\frac{1}{2}\log(1-\frac{R^2}{\kappa \sigma_{n,k}^2})}^{\infty}\frac{e^{-\ell t}\|H_{\ell-1}\|_p}{\ell!(\sqrt{1-e^{-2t}})^{\ell-1}}dt\\
\le & \sum_{K_p\ge m\ge1}\frac{1}{(2m+1)!}\sigma_{n,k}^{-(2m+1)}\|H_{2m}\|_pR^{2m+1}\int_{0}^{\sqrt{1-\frac{R^2}{\kappa \sigma_{n,k}^2}}}\frac{x^{2m}}{(1-x^2)^{m}}dx
\\&+\sum_{ m\ge K_p+1}\frac{1}{(2m+1)\sqrt{(2m)!}}\sigma_{n,k}^{-(2m+1)}(p-1)^{m}R^{2m+1}\int_{0}^{\sqrt{1-\frac{R^2}{\kappa \sigma_{n,k}^2}}}\frac{x^{2m}}{(1-x^2)^{m}}dx
\\\le & \sum_{K_p\ge m\ge1}\frac{1}{(2m+1)!}\sigma_{n,k}^{-(2m+1)}\|H_{2m}\|_pR^{2m+1}\int_{0}^{\sqrt{1-\frac{R^2}{\kappa \sigma_{n,k}^2}}}\frac{x^{2m}}{(1-x^2)^{m}}dx
\\&+\sum_{ m\ge K_p+1}\frac{e^{19/300}\pi^{1/4}m^{1/4}}{2^mm!(2m+1)}\sqrt{\frac{n}{(n-k)k}}^{2m+1}(p-1)^{m}R^{2m+1}\int_{0}^{\sqrt{1-\frac{R^2}{\kappa \sigma_{n,k}^2}}}\frac{x^{2m}}{(1-x^2)^{m}}dx
\\\le & \sum_{K_p\ge m\ge1}\frac{1}{m!}\sigma_{n,k}^{-(2m+1)}R^{2m+1}\Big(\frac{\|H_{2m}\|_pm!}{(2m+1)!}-\frac{e^{19/300}\pi^{1/4}(K_p+1)^{1/4}(p-1)^{m}}{2^m(2K_p+3)}\Big)\\&\qquad\int_{0}^{\sqrt{1-\frac{R^2}{\kappa \sigma_{n,k}^2}}}\frac{x^{2m}}{(1-x^2)^{m}}dx
\\&+\frac{e^{19/300}R\sigma_{n,k}^{-1}\pi^{1/4}(K_p+1)^{1/4}}{(2K_p+3)}\int_{0}^{\sqrt{1-\frac{R^2}{\kappa \sigma_{n,k}^2}}}\Big[e^{\frac{x^2(p-1)R^2}{2(1-x^2)\sigma_{n,k}^2}}-1\Big]dx.
\end{align*}
Now by change of variable, $y^{-1}=(n-k)(1-x^2)$ we obtain the desired result:
\begin{align*}
&\sum_{\substack{\ell\ge 3\\\mathrm{odd}}}\sigma_{n,k}^{-\ell}R^{\ell}\int_{-\frac{1}{2}\log(1-\frac{R^2}{\kappa \sigma_{n,k}^2})}^{\infty}\frac{e^{-\ell t}\|H_{\ell-1}\|_p}{\ell!(\sqrt{1-e^{-2t}})^{\ell-1}}dt\\
\le & \frac{1}{2(n-k)}\sum_{K_p\ge m\ge1}\frac{1}{m!}\sigma_{n,k}^{-1}R^{2m+1}\Big(\frac{\|H_{2m}\|_pm!}{(2m+1)!}-\frac{e^{19/300}\pi^{1/4}(K_p+1)^{1/4}(p-1)^{m}}{2^m(2K_p+3)}\Big)\\&\qquad\qquad \times\int_{\frac{1}{(n-k)}}^{\frac{1}{(n-k)S_{k,\kappa}}}\frac{n^m}{\sigma^{2m}k^m}\frac{1}{y^{3/2}\sqrt{y-\frac{1}{n-k}}}(y-\frac{1}{n-k})^mdy
\\&+\frac{e^{19/300}R\sigma_{n,k}^{-1}\pi^{1/4}(K_p+1)^{1/4}}{2(n-k)(2K_p+3)}\int_{\frac{1}{(n-k)}}^{\frac{1}{(n-k)S_{k,\kappa}}}\frac{1}{y^{3/2}\sqrt{y-\frac{1}{n-k}}}\Big[e^{\frac{(p-1)R^2n}{2k\sigma^2}(y-\frac{1}{n-k})}-1\Big]dy.
\end{align*}
\end{proof}

\begin{lemma}\label{lemma:even_integral_bound}For all $\ell\ge 4$ even integer, if we denote $S_{k,\kappa}:=\frac{R^2}{\kappa \sigma_{n,k}^2}$ and $\tilde S_k:=\frac{R^2n}{k\sigma^2}$, choose $K_p\ge 1$ then the following holds
\begin{align*}
&\sum_{\substack{\ell\ge 4\\\mathrm{even}}}\sigma_{n,k}^{-\ell}\int_{-\frac{1}{2}\log(1-\frac{R^2}{\kappa \sigma_{n,k}^2})}^{\infty}\frac{e^{-\ell t}\|H_{\ell-1}\|_p}{\ell!(\sqrt{1-e^{-2t}})^{\ell-1}}R^{\ell-1}dt\\
\le &  \frac{Rn}{2\sigma^2 k(n-k)^{3/2}}\sum_{K_p\ge m\ge1}\frac{\tilde S_k^m}{m!}\Big(\frac{\|H_{2m+1}\|_pm!}{(2m+2)!}-\frac{e^{19/300}\pi^{1/4}(K_p+1)^{1/4}(p-1)^{m+1/2}}{2^m(2K_p+4)\sqrt{2K_p+3}}\Big)\\&\qquad\qquad \times\int_{\frac{1}{(n-k)}}^{\frac{1}{(n-k)S_{k,\kappa}}}\frac{1}{y^{3/2}}(y-\frac{1}{n-k})^{m}dy
\\&+\frac{Rn}{2\sigma^2 k(n-k)^{3/2}}\frac{e^{19/300}\pi^{1/4}(K_p+1)^{1/4}\sqrt{p-1}}{2(2K_p+4)\sqrt{2K_p+3}}\int_{\frac{1}{(n-k)}}^{\frac{1}{(n-k)S_{k,\kappa}}}\frac{1}{y^{3/2}}\Big[e^{\frac{(p-1)\tilde S_k}{2}(y-\frac{1}{n-k})}-1\Big]dy.
\end{align*}
\end{lemma}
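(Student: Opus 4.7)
The plan is to mirror the strategy used for the odd case in \cref{lemma:odd_integral_bound}, adapting it to the even index $\ell=2m+2$. First, I would perform the change of variables $x = e^{-t}$ inside the integral, which turns
\[
\int_{-\frac{1}{2}\log(1-R^2/(\kappa\sigma_{n,k}^2))}^{\infty}\frac{e^{-\ell t}}{(\sqrt{1-e^{-2t}})^{\ell-1}}dt
\]
into $\int_0^{\sqrt{1-S_{k,\kappa}}} x^{\ell-1}/(1-x^2)^{(\ell-1)/2}\,dx$. Writing $\ell=2m+2$ gives $\int_0^{\sqrt{1-S_{k,\kappa}}} x^{2m+1}/(1-x^2)^{m+1/2}\,dx$, which is where the even case diverges structurally from the odd one.

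Next, I would split the outer sum at $m=K_p$. For $m\le K_p$, the exact norm $\|H_{2m+1}\|_p$ stays in the coefficient. For $m\ge K_p+1$, I would use the standard Hermite bound $\|H_{2m+1}\|_p \le (p-1)^{m+1/2}\sqrt{(2m+1)!}$ together with Robbins' refinement $\sqrt{(2m+1)!} \ge \sqrt{2m+1}\cdot e^{-19/300}\,2^m m!/(\pi m)^{1/4}$, which yields
\[
\frac{\|H_{2m+1}\|_p}{(2m+2)!}\le \frac{e^{19/300}\pi^{1/4}(p-1)^{m+1/2}\,m^{1/4}}{(2m+2)\sqrt{2m+1}\,2^m\,m!}.
\]
Since $m\mapsto m^{1/4}/((2m+2)\sqrt{2m+1})$ is decreasing for $m\ge 1$, its value on the tail $m\ge K_p+1$ is dominated by its value at $m=K_p+1$, producing the $(K_p+1)^{1/4}/((2K_p+4)\sqrt{2K_p+3})$ prefactor seen in the statement.

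Then I would apply the add-and-subtract trick: rewrite the full sum as
\[
\sum_{m=1}^{K_p}\frac{\tilde S_k^m}{m!}\Big(\tfrac{m!\|H_{2m+1}\|_p}{(2m+2)!}-c_{K_p}(p-1)^{m+1/2}/2^m\Big)I_m + c_{K_p}\sqrt{p-1}\sum_{m\ge 1}\frac{((p-1)\tilde S_k/2)^m I_m}{m!},
\]
where $c_{K_p}=e^{19/300}\pi^{1/4}(K_p+1)^{1/4}/((2K_p+4)\sqrt{2K_p+3})$ and $I_m$ denotes the corresponding $x$-integral. The infinite tail sum is recognized as the Taylor expansion of $e^{(p-1)\tilde S_k(y-1/(n-k))/2}-1$ once the $y$-variable is introduced, by interchanging sum and integral (justified by absolute convergence on the bounded $x$-domain).

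Finally, I would apply the substitution $y^{-1}=(n-k)(1-x^2)$, which maps the endpoints $x=0,\sqrt{1-S_{k,\kappa}}$ to $y=1/(n-k),\,1/((n-k)S_{k,\kappa})$ and, via the computation $dx = dy/(2x(n-k)y^2)$, converts $\int x^{2m+1}/(1-x^2)^{m+1/2}\,dx$ into $\tfrac{(n-k)^{m-1/2}}{2}\int (y-1/(n-k))^m/y^{3/2}\,dy$. Combining this with $\sigma_{n,k}^{-(2m+2)} = (n/(k(n-k)\sigma^2))^{m+1}$ produces the common prefactor $Rn/(2\sigma^2 k(n-k)^{3/2})$ and the $\tilde S_k^m$ powers, which would then match the asserted form line-by-line. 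I expect the main obstacle to be purely bookkeeping---verifying the monotonicity claim on $m^{1/4}/((2m+2)\sqrt{2m+1})$ for all $m\ge 1$ and carefully tracking powers of $\sigma$, $R$, $n$, $k$, $n-k$ through the two substitutions so that the final constants line up exactly with $Rn/(2\sigma^2 k(n-k)^{3/2})$ and $\tilde S_k$.
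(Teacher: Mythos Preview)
Your proposal is correct and follows essentially the same route as the paper: the change of variable $x=e^{-t}$, the reindexing $\ell=2m+2$, the split at $K_p$ combined with the Hermite bound and Robbins' lower bound on $\sqrt{(2m+1)!}$, the add-and-subtract to recover the exponential series, and the final substitution $y^{-1}=(n-k)(1-x^2)$ are exactly the steps in the paper's proof. The only cosmetic difference is that the paper recognizes the exponential in the $x$-variable first and then changes to $y$, whereas you change to $y$ and then identify the Taylor series; either order works.
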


\begin{proof}

First, we note that 
\begin{align*}
&\sum_{\substack{\ell\ge 4\\\mathrm{even}}}\sigma_{n,k}^{-\ell}\int_{-\frac{1}{2}\log(1-\frac{R^2}{\kappa \sigma_{n,k}^2})}^{\infty}\frac{e^{-\ell t}\|H_{\ell-1}\|_p}{\ell!(\sqrt{1-e^{-2t}})^{\ell-1}}R^{\ell-1}dt\\
\overset{(a)}{=} & \sum_{\substack{\ell\ge 4\\\mathrm{even}}}\sigma_{n,k}^{-\ell}\|H_{\ell-1}\|_pR^{\ell-1}\int_{0}^{\sqrt{1-\frac{R^2}{\kappa \sigma_{n,k}^2}}}\frac{x^{\ell-1}}{\ell!(\sqrt{1-x^2})^{\ell-1}}dx\\
\overset{(b)}{=} & \sum_{m\ge1}\frac{1}{(2m+2)!}\sigma_{n,k}^{-(2m+2)}\|H_{2m+1}\|_pR^{2m+1}\int_{0}^{\sqrt{1-\frac{R^2}{\kappa \sigma_{n,k}^2}}}\frac{x^{2m+1}}{(1-x^2)^{m+\frac{1}{2}}}dx,
\end{align*} where (a) is due to a change of variable $x=e^{-t}$, (b) is due to the fact that any even integer $\ell\ge 4$ can be rewritten as $2m+2$ for $m\ge 1$ and finally where (c) is due to a change of variable $y=(n-k)(1-x^2)$. We then remark that according to \cite{robbins1955remark}, we have $$\sqrt{(2 m+1) !}=\sqrt{2 m+1} \sqrt{2 m !} \geq \sqrt{2 m+1} e^{-19 / 300} 2^m m ! /(\pi m)^{1 / 4}.$$ 
Moreover, by \Cref{lemma:Hermite_bound}, the following inequality for the Hermite polynomials holds
$$
\left\|H_{2m+1}\right\|_p \leq \sqrt{(2m+1) !} \sqrt{p-1}^{2m+1}.
$$
Hence for all $K_p\ge 1$, we obtain that
\begin{align*}
&\sum_{\substack{\ell\ge 4\\\mathrm{even}}}\sigma_{n,k}^{-\ell}\int_{-\frac{1}{2}\log(1-\frac{R^2}{\kappa \sigma_{n,k}^2})}^{\infty}\frac{e^{-\ell t}\|H_{\ell-1}\|_p}{\ell!(\sqrt{1-e^{-2t}})^{\ell-1}}R^{\ell-1}dt\\
\le & \sum_{K_p\ge m\ge1}\frac{1}{(2m+2)!}\sigma_{n,k}^{-(2m+2)}\|H_{2m+1}\|_pR^{2m+1}\int_{0}^{\sqrt{1-\frac{R^2}{\kappa \sigma_{n,k}^2}}}\frac{x^{2m+1}}{(1-x^2)^{m+\frac{1}{2}}}dx\\
+& \sum_{m\ge K_p+1}\frac{1}{(2m+2)\sqrt{(2m+1)!}}\sigma_{n,k}^{-(2m+2)}(p-1)^{m+1/2}R^{2m+1}\int_{0}^{\sqrt{1-\frac{R^2}{\kappa \sigma_{n,k}^2}}}\frac{x^{2m+1}}{(1-x^2)^{m+\frac{1}{2}}}dx\\
\le & \sum_{K_p\ge m\ge1}\frac{1}{(2m+2)!}\sigma_{n,k}^{-(2m+2)}\|H_{2m+1}\|_pR^{2m+1}\int_{0}^{\sqrt{1-\frac{R^2}{\kappa \sigma_{n,k}^2}}}\frac{x^{2m+1}}{(1-x^2)^{m+\frac{1}{2}}}dx\\
&+ \sum_{m\ge K_p+1}\frac{m^{1/4}e^{19/300}\pi^{1/4}(p-1)^{m+1/2}R^{2m+1}}{2^m m!(2m+2)\sqrt{2m+1}}\sigma_{n,k}^{-(2m+2)}\int_{0}^{\sqrt{1-\frac{R^2}{\kappa \sigma_{n,k}^2}}}\frac{x^{2m+1}}{(1-x^2)^{m+\frac{1}{2}}}dx\\
\le & \sum_{K_p\ge m\ge1}\frac{1}{m!}\sigma_{n,k}^{-(2m+2)}R^{2m+1}\Big(\frac{\|H_{2m+1}\|_pm!}{(2m+2)!}-\frac{e^{19/300}\pi^{1/4}(K_p+1)^{1/4}(p-1)^{m+1/2}}{2^m(2K_p+4)\sqrt{2K_p+3}}\Big)\\&\qquad\int_{0}^{\sqrt{1-\frac{R^2}{\kappa \sigma_{n,k}^2}}}\frac{x^{2m+1}}{(1-x^2)^{m+\frac{1}{2}}}dx\\
&+ \frac{e^{19/300}\pi^{1/4}(K_p+1)^{1/4}\sigma_{n,k}^{-2}R\sqrt{p-1}}{(2K_p+4)\sqrt{2K_p+3}}\int_{0}^{\sqrt{1-\frac{R^2}{\kappa \sigma_{n,k}^2}}}\frac{x}{\sqrt{1-x^2}}\Big[e^{\frac{x^{2}(p-1)R^2}{2(1-x^2)\sigma_{n,k}}}-1\Big]dx.
\end{align*}
Finally, by the change of variable $y^{-1}=(n-k)(1-x^2)$, we obtain that 
\begin{align*}
&\sum_{\substack{\ell\ge 4\\\mathrm{even}}}\sigma_{n,k}^{-\ell}\int_{-\frac{1}{2}\log(1-\frac{R^2}{\kappa \sigma_{n,k}^2})}^{\infty}\frac{e^{-\ell t}\|H_{\ell-1}\|_p}{\ell!(\sqrt{1-e^{-2t}})^{\ell-1}}R^{\ell-1}dt
\\\le &  \frac{Rn}{2\sigma^2 k(n-k)^{3/2}}\sum_{K_p\ge m\ge1}\frac{\tilde S_k^m}{m!}\Big(\frac{\|H_{2m+1}\|_pm!}{(2m+2)!}-\frac{e^{19/300}\pi^{1/4}(K_p+1)^{1/4}(p-1)^{m+1/2}}{2^m(2K_p+4)\sqrt{2K_p+3}}\Big)\\&\qquad\qquad \times\int_{\frac{1}{(n-k)}}^{\frac{1}{(n-k)S_{k,\kappa}}}\frac{1}{y^{3/2}}(y-\frac{1}{n-k})^{m}dy
\\&+\frac{Rn}{2\sigma^2 k(n-k)^{3/2}}\frac{e^{19/300}\pi^{1/4}(K_p+1)^{1/4}\sqrt{p-1}}{2(2K_p+4)\sqrt{2K_p+3}}\int_{\frac{1}{(n-k)}}^{\frac{1}{(n-k)S_{k,\kappa}}}\frac{1}{y^{3/2}}\Big[e^{\frac{(p-1)\tilde S_k}{2}(y-\frac{1}{n-k})}-1\Big]dy.
\end{align*}
\end{proof}

Now we may use \Cref{lemma:odd_Lp_term_bound,lemma:even_Lp_term_bound,lemma:odd_integral_bound,lemma:even_integral_bound} to bound the term $(a_3)$. In this goal, we define
\begin{equation}\label{eqn:def_C_odd}
C_{\mathrm{odd}}:=
\begin{cases}
C_{\mathrm{odd},1} & \text{if } p < 4,\\
\min\{C_{\mathrm{odd},1}, C_{\mathrm{odd},2}\} & \text{if } p \geq 4,
\end{cases}
\end{equation}
where
\begin{align*}
C_{\mathrm{odd},1}:= &\frac{1}{n}\sqrt{p-1}\sqrt{k(n-k)}R^{-\frac{4}{p}} \sigma^{2/p}\bigg( (n-k)(R^2+3\sigma^2)^{\frac{2}{p}}+k\min\Big\{(2^{\frac{1}{p}}R^{\frac{2}{p}} (R^2+3\sigma^2)^{\frac{1}{p}})^2,\\& ~\Big((R^2+3\sigma^2)^{\frac{1}{p}}~+\frac{R^{\frac{4}{p}}}{\sqrt{k}\sigma^{2/p}}\min\{
    \sqrt{p-1},A_p+A^*_{k,p}\}\Big)^2 \Big\}\bigg)^{1/2},
\end{align*}
and
\begin{align*}
C_{\mathrm{odd},2}:=& \frac{1}{n}C_p\bigg\{R^{-2}\sqrt{(n-k)k}\sqrt{(n-k)\sigma^2(R^2+3\sigma^2)+k{(R^2\sigma^2+R^{4-\frac{4}{p}})}}\\&+k^{\frac{1}{p}}(n-k)^{\frac{1}{p}}R^{\ell-\frac{4}{p}}\bigg(k^{p-1} \Big(\sigma^{2/p}(R^2+3\sigma^2)^{1/p}+2^{1/p}k^{-\frac{1}{2}}R^{2/p}\Big)^p\\&\qquad \qquad \qquad \qquad\qquad +(n-k)^{p-1}\sigma^2(R^2+3\sigma^2)\bigg)^{\frac{1}{p}}\bigg\},\end{align*}
and 
\begin{equation}\label{eqn:def_C_even}
C_{\mathrm{even}}:=
\begin{cases}
\min\{C_{\mathrm{even},1}, C_{\mathrm{even},3}\} & \text{if } p < 4,\\
\min\{C_{\mathrm{even},1}, C_{\mathrm{even},2},C_{\mathrm{even},3}\} & \text{if } p \geq 4,
\end{cases}
\end{equation}
where
\begin{align*}
    C_{\mathrm{even},1}:=&  \frac{1}{n}{\sqrt{k(n-k)}}R^{-2}\bigg[\sqrt{p-1}R^{2-\frac{4}{p}}\Big((n-k)\sigma^{4/p}(R^2+3\sigma^2)^\frac{2}{p}
   \\&\quad\quad\quad\quad+kR^{\frac{2}{p}}\min\Big\{(\frac{1}{4})^{\frac{1}{p}}R^{\frac{2}{p}},2^{\frac{1}{p}}\sigma^{\frac{2}{p}}+ R^{-2/p}\sigma^{4/p}(R^2+3\sigma^2)^{\frac{1}{p}}\Big\}^2\Big)^{\frac{1}{2}}\\
      &+{2\sqrt{k(n-k)}\sigma^2}\bigg],
\end{align*}
\begin{align*}
 C_{\mathrm{even},2}&:= \frac{1}{n}C_p\Big\{R^{-2}\sqrt{(n-k)k}\sqrt{2(n-k)\sigma^{4/p}(R^2+3\sigma^2)^{\frac{2}{p}}+ (n-k)R^{2} +k(\sigma^2+\sigma^{4/p}R^{2-\frac{4}{p}})}\\&+k^{\frac{1}{p}}(n-k)^{\frac{1}{p}}R^{\ell-\frac{4}{p}}\Big((n-k)^{p-1}\sigma^2(R^2+3\sigma^2)\\&+k^{p-1}\min\big\{\frac{R^4}{4},R^2\sigma^2(2^{1/p}+(R^2+3\sigma^2)^{1/p})^p\big\}\Big)^{1/p}\Big\}\\
&+ \frac{2}{n}\sigma_{n,k}^{-\ell}k(n-k)R^{\ell-2}\sigma^2,\end{align*}
and 
\begin{align*}
    C_{\mathrm{even},3}:= & \frac{1}{n}\Big\{k \|\mathrm{binom}(k,\frac{2\sigma^2}{R^2})\|_p+(n-k)k2^{\frac{1}{p}}R^{-2/p}\sigma^{2/p}\Big\}.
\end{align*}

\begin{lemma}\label{lemma:a3_bound}
The following bound holds for all $p\ge2$:
    \begin{align*}
        (a_3) 
        \le &C_{\mathrm{odd}}\bigg\{\frac{1}{2(n-k)}\sum_{K_p\ge m\ge1}\frac{1}{m!}\sigma_{n,k}^{-1}R\tilde S_k^m\Big(\frac{\|H_{2m}\|_pm!}{(2m+1)!}-\frac{e^{19/300}\pi^{1/4}(K_p+1)^{1/4}(p-1)^{m}}{2^m(2K_p+3)}\Big)\\&\qquad\qquad \times\int_{\frac{1}{(n-k)}}^{\frac{1}{(n-k)S_{k,\kappa}}}\frac{1}{y^{3/2}\sqrt{y-\frac{1}{n-k}}}(y-\frac{1}{n-k})^mdy
\\&+\frac{e^{19/300}R\sigma_{n,k}^{-1}\pi^{1/4}(K_p+1)^{1/4}}{2(2K_p+3)(n-k)}\int_{\frac{1}{(n-k)}}^{\frac{1}{(n-k)S_{k,\kappa}}}\frac{1}{y^{3/2}\sqrt{y-\frac{1}{n-k}}}\Big[e^{\frac{(p-1)\tilde S_k}{2}(y-\frac{1}{n-k})}-1\Big]dy
        \bigg\}\\
        &+ C_{\mathrm{even}}R\bigg\{ \frac{Rn}{2\sigma^2 k(n-k)^{3/2}}\\
        &\qquad\qquad\times\sum_{K_p\ge m\ge1}\frac{\tilde S_k^m}{m!}\Big(\frac{\|H_{2m+1}\|_pm!}{(2m+2)!}-\frac{e^{19/300}\pi^{1/4}(K_p+1)^{1/4}(p-1)^{m+1/2}}{2^m(2K_p+4)\sqrt{2K_p+3}}\Big)\\&\qquad\qquad \times\int_{\frac{1}{(n-k)}}^{\frac{1}{(n-k)S_{k,\kappa}}}\frac{1}{y^{3/2}}(y-\frac{1}{n-k})^{m}dy
\\&+\frac{Rn}{2\sigma^2 k(n-k)^{3/2}}\frac{e^{19/300}\pi^{1/4}(K_p+1)^{1/4}\sqrt{p-1}}{2(2K_p+4)\sqrt{2K_p+3}}\\&\qquad\qquad \times\int_{\frac{1}{(n-k)}}^{\frac{1}{(n-k)S_{k,\kappa}}}\frac{1}{y^{3/2}}\Big[e^{\frac{(p-1)\tilde S_k}{2}(y-\frac{1}{n-k})}-1\Big]dy\bigg\}.
    \end{align*}
\end{lemma}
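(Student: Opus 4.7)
The plan is to bound $(a_3)$ by decoupling the $\ell$-dependent moment factor from the $\ell$-dependent integral factor and then applying the already-established lemmas separately to each piece. Writing out the definition
\[
(a_3) = \sum_{\ell\ge 3}\int_{-\frac{1}{2}\log(1-\frac{R^2}{\kappa\sigma_{n,k}^2})}^{\infty}\frac{e^{-\ell t}\|H_{\ell-1}\|_p}{s\ell!(\sqrt{1-e^{-2t}})^{\ell-1}}\sigma_{n,k}^{-\ell}\Big\|\mathbb{E}\big[(W_k'-W_k)^\ell\,\big|\,W_k,\mathcal{U}(X_{1:n})\big]\Big\|_p\,dt,
\]
I first split the sum according to the parity of $\ell$, into an odd piece ($\ell\ge 3$ odd) and an even piece ($\ell\ge 4$ even). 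This is the right split because \cref{lemma:odd_Lp_term_bound} and \cref{lemma:even_Lp_term_bound} give distinct moment controls in the two cases, and \cref{lemma:odd_integral_bound} and \cref{lemma:even_integral_bound} likewise handle the corresponding integrals.

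For the odd part, I apply \cref{lemma:odd_Lp_term_bound}. The key observation is that, after factoring out $R^\ell\sigma_{n,k}^{-\ell}$, the remaining bound is exactly $C_{\mathrm{odd}}$ as defined in \cref{eqn:def_C_odd} (taking the $\min$ over $C_{\mathrm{odd},1}$ and, when $p\ge 4$, also $C_{\mathrm{odd},2}$), and crucially this quantity does \emph{not} depend on $\ell$. So
\[
s^{-1}\sigma_{n,k}^{-\ell}\big\|\mathbb{E}[(W_k-W_k')^\ell\,|\,W_k,\mathcal{U}(X_{1:n})]\big\|_p \le \sigma_{n,k}^{-\ell}R^{\ell}\,C_{\mathrm{odd}}.
\]
Pulling $C_{\mathrm{odd}}$ outside the summation and the integral, the odd contribution is at most
\[
C_{\mathrm{odd}}\sum_{\substack{\ell\ge 3\\\text{odd}}}\sigma_{n,k}^{-\ell}R^{\ell}\int_{-\frac{1}{2}\log(1-\frac{R^2}{\kappa\sigma_{n,k}^2})}^{\infty}\frac{e^{-\ell t}\|H_{\ell-1}\|_p}{\ell!(\sqrt{1-e^{-2t}})^{\ell-1}}\,dt,
\]
which is exactly the left-hand side of \cref{lemma:odd_integral_bound} and hence can be replaced by the explicit expression stated there.

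For the even part the argument is analogous but shifted by one power of $R$. From \cref{lemma:even_Lp_term_bound}, after factoring out $\sigma_{n,k}^{-\ell}R^{\ell-2}$, the remaining quantity is at most $R^{2}C_{\mathrm{even}}$, i.e.\ we have
\[
s^{-1}\sigma_{n,k}^{-\ell}\big\|\mathbb{E}[(W_k-W_k')^\ell\,|\,W_k,\mathcal{U}(X_{1:n})]\big\|_p \le \sigma_{n,k}^{-\ell}R^{\ell-1}\,\bigl(R\,C_{\mathrm{even}}\bigr),
\]
again with $C_{\mathrm{even}}$ independent of $\ell$ (one takes the $\min$ over $C_{\mathrm{even},1}$, $C_{\mathrm{even},3}$, and, when $p\ge 4$, also $C_{\mathrm{even},2}$). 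Pulling $RC_{\mathrm{even}}$ outside, the even contribution is at most
\[
R\,C_{\mathrm{even}}\sum_{\substack{\ell\ge 4\\\text{even}}}\sigma_{n,k}^{-\ell}\int_{-\frac{1}{2}\log(1-\frac{R^2}{\kappa\sigma_{n,k}^2})}^{\infty}\frac{e^{-\ell t}\|H_{\ell-1}\|_p}{\ell!(\sqrt{1-e^{-2t}})^{\ell-1}}R^{\ell-1}\,dt,
\]
which is the left-hand side of \cref{lemma:even_integral_bound} and is bounded by the explicit expression stated there.

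Summing the odd and even contributions yields the claimed bound. There is essentially no obstacle: the work is purely organisational, since the nontrivial estimates have been isolated in \cref{lemma:odd_Lp_term_bound,lemma:even_Lp_term_bound,lemma:odd_integral_bound,lemma:even_integral_bound}. The only thing one needs to be careful about is matching powers of $R$ between the moment bound (where the $\ell$-dependence is $R^{\ell-4/p}$ or $R^{\ell-2}$) and the integral bounds (where it is $R^{\ell}$ for odd $\ell$ and $R^{\ell-1}$ for even $\ell$), so that $C_{\mathrm{odd}}$ and $C_{\mathrm{even}}$ can be factored cleanly outside the $\ell$-sum.
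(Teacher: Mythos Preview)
Your proposal is correct and follows essentially the same approach as the paper: split $(a_3)$ into odd and even $\ell$, apply \cref{lemma:odd_Lp_term_bound} and \cref{lemma:even_Lp_term_bound} to extract the $\ell$-independent constants $C_{\mathrm{odd}}$ and $C_{\mathrm{even}}$, then invoke \cref{lemma:odd_integral_bound} and \cref{lemma:even_integral_bound} on the remaining sums. The only cosmetic difference is that the paper writes the even moment bound as $C_{\mathrm{even}}\sigma_{n,k}^{-\ell}R^\ell$ directly rather than your equivalent $\sigma_{n,k}^{-\ell}R^{\ell-1}(RC_{\mathrm{even}})$.
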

\begin{proof}
    We have that
\begin{align*}
    (a_3)= &\sum_{\ell\ge 3}\int_{-\frac{1}{2}\log(1-\frac{R^2}{\kappa \sigma_{n,k}^2})}^{\infty}\frac{e^{-lt}\|H_{\ell-1}\|_p}{s\ell!(\sqrt{1-e^{-2t}})^{\ell-1}}\sigma_{n,k}^{-\ell}\Big\|\mathbb{E}\big[(W_k'-W_k)^\ell|W_k,\mathcal{U}(X_{1:n})\big]\Big\|_pdt\\
    = & \sum_{\substack{\ell\ge 3\\\mathrm{odd}}}\frac{1}{s}\sigma_{n,k}^{-\ell}\Big\|\mathbb{E}\big[(W_k'-W_k)^\ell|W_k,\mathcal{U}(X_{1:n})\big]\Big\|_p\int_{-\frac{1}{2}\log(1-\frac{R^2}{\kappa \sigma_{n,k}^2})}^{\infty}\frac{e^{-\ell t}\|H_{\ell-1}\|_p}{\ell!(\sqrt{1-e^{-2t}})^{\ell-1}}dt\\
    & + \sum_{\substack{\ell\ge 4\\\mathrm{even}}}\frac{1}{s}\sigma_{n,k}^{-\ell}\Big\|\mathbb{E}\big[(W_k'-W_k)^\ell|W_k,\mathcal{U}(X_{1:n})\big]\Big\|_p\int_{-\frac{1}{2}\log(1-\frac{R^2}{\kappa \sigma_{n,k}^2})}^{\infty}\frac{e^{-\ell t}\|H_{\ell-1}\|_p}{\ell!(\sqrt{1-e^{-2t}})^{\ell-1}}dt.
\end{align*}
By \Cref{lemma:odd_Lp_term_bound}, we have that for odd $\ell>3$,
\begin{equation*}
    \frac{1}{s}\sigma_{n,k}^{-\ell}\Big\|\mathbb{E}\big[(W_k'-W_k)^\ell|W_k,\mathcal{U}(X_{1:n})\big]\Big\|_p \le C_{\mathrm{odd}}\sigma_{n,k}^{-\ell}R^\ell,
\end{equation*}
and by \Cref{lemma:even_Lp_term_bound}, we have that for even $\ell\ge4$,
\begin{equation*}
    \frac{1}{s}\sigma_{n,k}^{-\ell}\Big\|\mathbb{E}\big[(W_k'-W_k)^\ell|W_k,\mathcal{U}(X_{1:n})\big]\Big\|_p \le C_{\mathrm{even}}\sigma_{n,k}^{-\ell}R^\ell,
\end{equation*}
where $C_{\mathrm{odd}}$ and $C_{\mathrm{even}}$ are defined as in \cref{eqn:def_C_odd} and \cref{eqn:def_C_even}.
Hence we have
\begin{align*}
    (a_3)
    \le & C_{\mathrm{odd}}\sum_{\substack{\ell\ge 3\\\mathrm{odd}}}R^\ell\int_{-\frac{1}{2}\log(1-\frac{R^2}{\kappa \sigma_{n,k}^2})}^{\infty}\frac{e^{-\ell t}\|H_{\ell-1}\|_p}{\ell!(\sqrt{1-e^{-2t}})^{\ell-1}}dt \\&+ C_{\mathrm{even}}R\sum_{\substack{\ell\ge 4\\\mathrm{even}}}R^{\ell-1}\int_{-\frac{1}{2}\log(1-\frac{R^2}{\kappa \sigma_{n,k}^2})}^{\infty}\frac{e^{-\ell t}\|H_{\ell-1}\|_p}{\ell!(\sqrt{1-e^{-2t}})^{\ell-1}}dt.
\end{align*}
Note that by \Cref{lemma:odd_integral_bound}, we have
\begin{align*}
&\sum_{\substack{\ell\ge 3\\\mathrm{odd}}}\sigma_{n,k}^{-\ell}R^{\ell}\int_{-\frac{1}{2}\log(1-\frac{R^2}{\kappa \sigma_{n,k}^2})}^{\infty}\frac{e^{-\ell t}\|H_{\ell-1}\|_p}{\ell!(\sqrt{1-e^{-2t}})^{\ell-1}}dt
\\\le 
 & \frac{1}{2(n-k)}\sum_{K_p\ge m\ge1}\frac{1}{m!}\sigma_{n,k}^{-1}R\tilde S_k^m\Big(\frac{\|H_{2m}\|_pm!}{(2m+1)!}-\frac{e^{19/300}\pi^{1/4}(K_p+1)^{1/4}(p-1)^{m}}{2^m(2K_p+3)}\Big)\\&\qquad\qquad \times\int_{\frac{1}{(n-k)}}^{\frac{1}{(n-k)S_{k,\kappa}}}\frac{1}{y^{3/2}\sqrt{y-\frac{1}{n-k}}}(y-\frac{1}{n-k})^mdy
\\&+\frac{e^{19/300}R\sigma_{n,k}^{-1}\pi^{1/4}(K_p+1)^{1/4}}{2(2K_p+3)(n-k)}\int_{\frac{1}{(n-k)}}^{\frac{1}{(n-k)S_{k,\kappa}}}\frac{1}{y^{3/2}\sqrt{y-\frac{1}{n-k}}}\Big[e^{\frac{(p-1)\tilde S_k}{2}(y-\frac{1}{n-k})}-1\Big]dy.
\end{align*}
Also note that by \Cref{lemma:even_integral_bound}, we have
\begin{align*}
&\sum_{\substack{\ell\ge 4\\\mathrm{even}}}\sigma_{n,k}^{-\ell}\int_{-\frac{1}{2}\log(1-\frac{R^2}{\kappa \sigma_{n,k}^2})}^{\infty}\frac{e^{-\ell t}\|H_{\ell-1}\|_p}{\ell!(\sqrt{1-e^{-2t}})^{\ell-1}}R^{\ell-1}dt\\
\le &  \frac{Rn}{2\sigma^2 k(n-k)^{3/2}}\sum_{K_p\ge m\ge1}\frac{\tilde S_k^m}{m!}\Big(\frac{\|H_{2m+1}\|_pm!}{(2m+2)!}-\frac{e^{19/300}\pi^{1/4}(K_p+1)^{1/4}(p-1)^{m+1/2}}{2^m(2K_p+4)\sqrt{2K_p+3}}\Big)\\&\qquad\qquad \times\int_{\frac{1}{(n-k)}}^{\frac{1}{(n-k)S_{k,\kappa}}}\frac{1}{y^{3/2}}(y-\frac{1}{n-k})^{m}dy
\\&+\frac{Rn}{2\sigma^2 k(n-k)^{3/2}}\frac{e^{19/300}\pi^{1/4}(K_p+1)^{1/4}\sqrt{p-1}}{2(2K_p+4)\sqrt{2K_p+3}}\int_{\frac{1}{(n-k)}}^{\frac{1}{(n-k)S_{k,\kappa}}}\frac{1}{y^{3/2}}\Big[e^{\frac{(p-1)\tilde S_k}{2}(y-\frac{1}{n-k})}-1\Big]dy.
\end{align*}
\end{proof}
\begin{theorem}\label{theorem:pfWp}
    For $p\ge 2$, the following inequality holds for any $K\ge0$:$$\Big\|\mathcal{W}_p(W_k,\mathcal{N}(0,\sigma_{n,k}^2)|\mathcal{U}(X_{1:n}))\Big\|_p 
        \le \omega^R_p(n,k,\sigma),$$ where we set 
    \begin{align}\label{definition_omega}
       \omega^R_p(n,k,\sigma)
        :=\min\Big(\sqrt{p-1}\sigma_{n,k}(1+2\sigma^{-1}),&~ \Omega^R_p(n,k,\sigma)\Big),\end{align} where we have set
        \begin{align*}
  \Omega^R_p(n,k,\sigma)&=  \inf_{\substack{\kappa\le R^2/\sigma_{n,k}^2\\K\ge 1}} \sigma_{n,k}\Bigg\{ \sigma_{n,k}^{-1}\mathcal{S}^p_{\sigma,R}(n,k)\times(1-\sqrt{1-\frac{R^2}{\kappa\sigma_{n,k}^2}}) \\&+\sqrt{2}\left(\frac{\Gamma((p+1) / 2)}{\Gamma(1 / 2)}\right)^{1 / p}\arccos{(\sqrt{1-\frac{R^2}{\kappa\sigma_{n,k}^2}})}  \\
        & + \|H_1\|_p\frac{\sqrt{n}}{\sqrt{k(n-k)}}\Big[\frac{1}{2}\min\Big(  \sqrt{p-1}(\tilde R^2_s-1)^{1-1/p},A_p\sqrt{\tilde R^2_s-1}+A^*_{n,p}(\tilde R^2_s-1)^{1-1/p} \Big)\\&\qquad\qquad+\frac{1}{\sqrt{n}}\min\Big(\sqrt{p-1}\tilde R_s^{1-2/p},A_p +\tilde R_s^{1-2/p}A^*_{k,p}\Big)^2\Big]\Big(1 - \frac{R}{\sigma_{n,k}\sqrt{\kappa}}\Big)\\
        &+C_{\mathrm{odd}}\bigg\{\frac{1}{2(n-k)}\sum_{K\ge m\ge1}\frac{1}{m!}\sigma_{n,k}^{-1}R\tilde S_k^m\Big(\frac{\|H_{2m}\|_pm!}{(2m+1)!}-\frac{e^{19/300}\pi^{1/4}(K+1)^{1/4}(p-1)^{m}}{2^m(2K+3)}\Big)\\&\qquad\qquad \times\int_{\frac{1}{(n-k)}}^{\frac{1}{(n-k)S_{k,\kappa}}}\frac{1}{y^{3/2}\sqrt{y-\frac{1}{n-k}}}(y-\frac{1}{n-k})^mdy
\\&+\frac{e^{19/300}R\sigma_{n,k}^{-1}\pi^{1/4}(K+1)^{1/4}}{2(2K+3)(n-k)}\int_{\frac{1}{(n-k)}}^{\frac{1}{(n-k)S_{k,\kappa}}}\frac{1}{y^{3/2}\sqrt{y-\frac{1}{n-k}}}\Big[e^{\frac{(p-1)\tilde S_k}{2}(y-\frac{1}{n-k})}-1\Big]dy
        \bigg\}\\
        &+ C_{\mathrm{even}}R\bigg\{ \frac{Rn}{2\sigma^2 k(n-k)^{3/2}}\\&\qquad\qquad \times\sum_{K\ge m\ge1}\frac{\tilde S_k^m}{m!}\Big(\frac{\|H_{2m+1}\|_pm!}{(2m+2)!}-\frac{e^{19/300}\pi^{1/4}(K+1)^{1/4}(p-1)^{m+1/2}}{2^m(2K+4)\sqrt{2K+3}}\Big)\\&\qquad\qquad \times\int_{\frac{1}{(n-k)}}^{\frac{1}{(n-k)S_{k,\kappa}}}\frac{1}{y^{3/2}}(y-\frac{1}{n-k})^{m}dy
\\&+\frac{Rn}{2\sigma^2 k(n-k)^{3/2}}\frac{e^{19/300}\pi^{1/4}(K+1)^{1/4}\sqrt{p-1}}{2(2K+4)\sqrt{2K+3}}\\&\qquad\qquad \times\int_{\frac{1}{(n-k)}}^{\frac{1}{(n-k)S_{k,\kappa}}}\frac{1}{y^{3/2}}\Big[e^{\frac{(p-1)\tilde S_k}{2}(y-\frac{1}{n-k})}-1\Big]dy\Big\}\Bigg\}.
    \end{align*} where $\mathcal{S}^p_{\sigma,R}(n,k)$ is defined in \Cref{nelson2}.
    
\end{theorem}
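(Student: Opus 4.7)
The plan is to combine the decomposition from \cref{lemma:Wp_bound_using_Bonis} with the four term-by-term estimates established in the preceding subsections, and then take a minimum with a trivial triangle-inequality bound.

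First, I would apply \cref{lemma:Wp_bound_using_Bonis} with a free parameter $\kappa>0$ (to be optimized later). This yields
\begin{equation*}
\Big\|\mathcal{W}_p(W_k,\mathcal{N}(0,\sigma_{n,k}^2)\mid \mathcal{U}(X_{1:n}))\Big\|_p \;\le\; (a_0)+(a_1)+(a_2)+(a_3),
\end{equation*}
where the four terms are defined in \cref{eqn:W_p_bound_intermediate}. The second step is to feed in the four block estimates already proved: \cref{lemma:a0_bound} bounds $(a_0)$ by the sum of a Wasserstein/$\mathcal{S}^p_{\sigma,R}(n,k)$ contribution and an $\arccos$ contribution; \cref{lemma:a1_bound} shows $(a_1)=0$ (this uses the key identity $\frac{1}{s}\mathbb{E}[W_k'-W_k\mid W_k,\mathcal{U}(X_{1:n})]=-W_k$ from \cref{lemma:Wk_exchangeable_pair}); \cref{lemma:a2_bound} bounds $(a_2)$ in terms of $\|H_1\|_p$ and the $L^p$ discrepancy between $\frac{1}{2s}\sigma_{n,k}^{-2}\mathbb{E}[(W_k'-W_k)^2\mid W_k,\mathcal{U}(X_{1:n})]$ and $1$; and \cref{lemma:a3_bound} packages the tail of the Hermite expansion through $C_{\mathrm{odd}}$ and $C_{\mathrm{even}}$ defined in \cref{eqn:def_C_odd,eqn:def_C_even}.

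Third, I would add up these four bounds, collect the common prefactor $\sigma_{n,k}$, and recognize that the resulting expression is exactly the bracketed quantity inside $\Omega^R_p(n,k,\sigma)$. Because the derivation holds for every admissible $\kappa\le R^2/\sigma_{n,k}^2$ and every integer $K\ge 1$ (these appear as free parameters in \cref{lemma:Wp_bound_using_Bonis} and in the summation cutoff of \cref{lemma:odd_integral_bound,lemma:even_integral_bound}), I can take the infimum over $(\kappa,K)$ to obtain
\begin{equation*}
\Big\|\mathcal{W}_p(W_k,\mathcal{N}(0,\sigma_{n,k}^2)\mid \mathcal{U}(X_{1:n}))\Big\|_p \;\le\; \Omega^R_p(n,k,\sigma).
\end{equation*}

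Finally, to justify the competing bound $\sqrt{p-1}\sigma_{n,k}(1+2\sigma^{-1})$ in the minimum defining $\omega^R_p(n,k,\sigma)$, I would use a triangle-inequality/Marcinkiewicz--Zygmund argument that is valid uniformly in the regime where $\Omega^R_p$ is loose (for example small $n$ or large $p$): take the trivial coupling with an independent Gaussian, then bound $\|\mathcal{W}_p(W_k,\mathcal{N}(0,\sigma_{n,k}^2)\mid \mathcal{U}(X_{1:n}))\|_p \le \|W_k\|_p + \|N\|_p$ where $N\sim\mathcal{N}(0,\sigma_{n,k}^2)$, and apply \cref{lemma:rosenthal_explicit-const} (or the MZ inequality in \cref{lemma:MZineq_rio}) together with $\|X_1\|_\infty\le R_s$ from \cref{lemma:R_s_bound} to get the $\sqrt{p-1}\sigma_{n,k}(1+2\sigma^{-1})$ factor. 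Taking the minimum of the two bounds gives $\omega^R_p(n,k,\sigma)$, and setting $k=n/2$ delivers \cref{eqn:main122}. The main bookkeeping obstacle is just the algebraic verification that the assembled sum matches the long expression in \cref{definition_omega}; the analytic content is entirely contained in the four lemmas already established.
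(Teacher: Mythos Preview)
Your proposal is correct and follows essentially the same route as the paper: invoke \cref{lemma:Wp_bound_using_Bonis} to obtain the $(a_0)+(a_1)+(a_2)+(a_3)$ decomposition, plug in \cref{lemma:a0_bound,lemma:a1_bound,lemma:a2_bound,lemma:a3_bound}, optimize over $(\kappa,K)$ to get $\Omega^R_p(n,k,\sigma)$, and derive the competing trivial bound via the triangle inequality and a Marcinkiewicz--Zygmund estimate on $\|W_k\|_p$. The only minor difference is that the paper's computation of the trivial bound uses $R$ rather than $R_s$ and actually arrives at $\sqrt{p-1}\,\sigma_{n,k}(1+\sqrt{2}R\sigma^{-1})$, not the $(1+2\sigma^{-1})$ appearing in the statement; this is an inconsistency internal to the paper, not a flaw in your outline.
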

\begin{proof}
Recall that in \cref{eqn:W_p_bound_intermediate}, we have established that
\begin{align*}
&   \sigma^{-1}_{n,k} \Big\|\mathcal{W}_p(W_k,\mathcal{N}(0,\sigma_{n,k}^2)|\mathcal{U}(X_{1:n}))\Big\|_p\\&\le  \int_0^{-\frac{1}{2}\log(1-\frac{R^2}{\kappa \sigma_{n,k}^2})} \mathbb{E}\Big[\Big|e^{-t} \sigma_{n,k}^{-1}W_k-\frac{e^{-2t}}{\sqrt{1-e^{-2t}}}\tilde Z\Big|^p\Big]^{\frac{1}{p}}dt
    \\&+\sigma_{n,k}^{-1}\int_{-\frac{1}{2}\log(1-\frac{R^2}{\kappa \sigma_{n,k}^2})}^{\infty}e^{-t}\Big\|\frac{1}{s}\mathbb{E}\big[W_k-W_k'|W_k,\mathcal{U}(X_{1:n})\big]-
W_k\Big\|_pdt
\\&+\int_{-\frac{1}{2}\log(1-\frac{R^2}{\kappa \sigma_{n,k}^2})}^{\infty}\frac{e^{-2t}\|H_1\|_p}{\sqrt{1-e^{-2t}}}\Big\|\frac{1}{2s}\sigma_{n,k}^{-2}\mathbb{E}\big[(W'_k-W_k)^2|W_k,\mathcal{U}(X_{1:n})\big]-1\Big\|_pdt
\\&+\sum_{l\ge 3}\int_{-\frac{1}{2}\log(1-\frac{R^2}{\kappa \sigma_{n,k}^2})}^{\infty}\frac{e^{-lt}\|H_{l-1}\|_p}{sl!(\sqrt{1-e^{-2t}})^{l-1}}\sigma_{n,k}^{-\ell}\Big\|\mathbb{E}\big[(W_k'-W_k)^\ell|W_k,\mathcal{U}(X_{1:n})\big]\Big\|_pdt\\
&:= (a_0)+(a_1)+(a_2)+(a_3).
\end{align*} 
Combining the result of \Cref{lemma:a0_bound,lemma:a1_bound,lemma:a2_bound,lemma:a3_bound} and reorganizing terms gives us the first bound. Secondly, we remark that by a triangle inequality we also obtain the following trivial bound:
\begin{align*}
    \|\mathcal{W}_p(W_k,\mathcal{N}(0,\sigma_{n,k}^2)|\mathcal{U}(X_{1:n}))\Big\|_p&\le \|W_k\|_p+\sigma_{n,k}\|Z\|_p
    \\&\le(1-\frac{k}{n}) \|S_k\|_p+\frac{k}{n}\|S_n-S_k\|_p+\sigma_{n,k}\|Z\|_p
    \\&\le \sqrt{p-1}\Big(\frac{n-k}{n}\sqrt{k}+\frac{k\sqrt{n-k}}{{n}}\Big)R+\sqrt{p-1}\sigma_{n,k}
    \\&\le  \sqrt{p-1}\frac{\sqrt{k(n-k)}}{\sqrt{n}}\Big(\frac{\sqrt{n-k}}{\sqrt{n}}+\frac{\sqrt{k}}{\sqrt{n}}\Big)R+\sqrt{p-1}\sigma_{n,k}
    \\&\le \sqrt{p-1}\sigma_{n,k}(1+\sqrt{2}R\sigma^{-1}).
\end{align*}
\end{proof}
\subsection{Proof of Equation (\ref{eqn:main122b})}\label{dearreader}

We define the following constant
\begin{align}\label{definition_k}
\tilde K_{R,\sigma} :=& \Big\{\KRsig +\frac{R^2}{\sigma}(1+\sigma/2)+ 2^{\frac{3}{p}}\sigma^{2/p-1}\frac{e^{19/300}\pi^{1/4}}{3}\frac{R^2}{2}\Big(e^{R^2}-1\Big)\log\Big(\frac{1+\sqrt{1/2}}{1-\sqrt{1/2}}\Big)\\&\quad+R^{-\frac{4}{p}} \bigg[\tilde R^{2/p}4^{1/p}\sigma^{-1}+{2}\bigg] \frac{e^{19/300}\pi^{1/4}}{8\sqrt{3}}\frac{R^{5/2}}{\sqrt{2}}\Big(e^{R^2}-1\Big)\Big\}\vee (1+\sqrt{2}R\sigma^{-1}),
\end{align}
where $\KRsig$ is defined in \Cref{nelson2s}.
\begin{lemma}\label{lemma:Wp_order}
Let $R > 0$ and $R/2>\sigma > 0$. Then for all $k,n \in \mathbb{N}$ and all $p \in \mathbb{N}$, we have that 
$$
\omega^R_p(n, k, \sigma) \le \tilde K_{R,\sigma}p.
$$
Moreover, for all $0<\sigma_1<\sigma_2\le \frac{R}{2}$, we have $$\max_{\sigma\in [\sigma_1,\sigma_2]}|\tilde K_{R,\sigma}|<\infty.$$
\end{lemma}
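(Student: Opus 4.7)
Recall from \cref{theorem:pfWp} that
\[
\omega_p^R(n,k,\sigma)=\min\bigl(\sqrt{p-1}\,\sigma_{n,k}(1+2\sigma^{-1}),\ \Omega_p^R(n,k,\sigma)\bigr),
\]
so the plan is to show that whichever branch is active, the value is at most $\tilde K_{R,\sigma}\,p$. Since $\sigma_{n,k}^{2}=\sigma^{2}k(n-k)/n\le \sigma^{2}\min(k,n-k)$, the bound splits naturally into two regimes determined by the size of $m_{n,k}:=\min(k,n-k)$. The plan is: (i) handle small $m_{n,k}$ with the trivial bound; (ii) handle large $m_{n,k}$ by bounding each term of $\Omega_p^R$ separately after a careful choice of the free parameters $\kappa$ and $K$; (iii) then read off uniformity in $\sigma\in[\sigma_{1},\sigma_{2}]$ from continuity.

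For the small-$m_{n,k}$ regime, I would fix a threshold $M=M(R,\sigma)$ (for instance $M=4R^{2}/\sigma^{2}$) and observe that whenever $m_{n,k}\le M$ one has $\sigma_{n,k}\le \sigma\sqrt{M}$, so
\[
\sqrt{p-1}\,\sigma_{n,k}(1+2\sigma^{-1})\le \sqrt{M}\,\sigma(1+2\sigma^{-1})\,\sqrt{p-1}\le \tilde K_{R,\sigma}\,p,
\]
where $\tilde K_{R,\sigma}\ge 1+\sqrt{2}R\sigma^{-1}$ by construction, so the last inequality will hold with room to spare.

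For the large-$m_{n,k}$ regime I would work with $\Omega_p^R(n,k,\sigma)$ and choose the free parameter as $\kappa=2R^{2}/\sigma_{n,k}^{2}$ (or any fixed multiple of $R^{2}/\sigma_{n,k}^{2}$), so that $R^{2}/(\kappa\sigma_{n,k}^{2})=1/2$. With this choice $1-\sqrt{1-1/2}$ and $\arccos(\sqrt{1/2})=\pi/4$ are absolute constants, which neutralises the geometric prefactors in front of the $(a_{0})$ and $(a_{2})$ contributions. The $(a_{0})$ term then contributes $\mathcal{S}_{\sigma,R}^{p}(n,k)\cdot(1-\sqrt{1/2})\le \KRsig\,p$ via \cref{nelson2s}, and the Gaussian-tail piece contributes at most $\sqrt{2}\,(\Gamma((p+1)/2)/\Gamma(1/2))^{1/p}\cdot(\pi/4)=O(\sqrt{p})$. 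The $(a_{2})$ term picks up $\|H_{1}\|_{p}=\sqrt{p-1}$ multiplied by $\sqrt{n}/\sqrt{k(n-k)}\le 1/\sqrt{m_{n,k}}$ and the various constants from \cref{lemma:a1_integrand_Wk_exchgb_bound}; in the large-$m_{n,k}$ regime this is $O(\sqrt{p}\,R^{2}/\sigma)$, comfortably $O(p)$.

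The main obstacle is the $(a_{3})$ sum: the Hermite norm bound $\|H_{2m}\|_{p}\le \sqrt{(2m)!}\,(p-1)^{m}$ grows factorially in $m$, so the convergence of the series
\[
\sum_{m\ge 1}\frac{\tilde S_{k}^{m}}{m!}\cdot\frac{\|H_{2m}\|_{p}\,m!}{(2m+1)!}\cdot\int_{1/(n-k)}^{1/((n-k)S_{k,\kappa})}\!\!y^{-3/2}(y-1/(n-k))^{m-1/2}\,dy
\]
must be controlled by a geometric series of a $p$-independent ratio. I would use $\|H_{2m}\|_{p}/(2m+1)!\le \sqrt{(p-1)^{2m}/(2m+1)!}$ together with the Robbins form of Stirling's inequality (already invoked in \cref{lemma:odd_integral_bound,lemma:even_integral_bound}) to reduce the tail of the series to $\sum_{m\ge K+1}(p-1)^{m}\tilde S_{k}^{m}/(2^{m}m!)$, which is at most $e^{(p-1)\tilde S_{k}/2}-1$. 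Choosing $K=\lfloor p\,\tilde S_{k}\rfloor$ makes the head of the sum a finite telescoping object controlled by constants depending only on $R$ and $\sigma$, while the tail exponential combines with the explicit exponential integrand in \cref{lemma:a3_bound} to give the terms $R^{2}(e^{R^{2}}-1)$ visible inside $\tilde K_{R,\sigma}$. After integrating $y^{-3/2}(y-1/(n-k))^{m-1/2}$ over its natural interval, a direct computation reduces everything to $\log\bigl((1+\sqrt{1/2})/(1-\sqrt{1/2})\bigr)$ times the constants already catalogued; matching the result against \cref{definition_k} confirms the claimed bound $\tilde K_{R,\sigma}\,p$. The even-$\ell$ piece is handled identically using \cref{lemma:even_integral_bound}.

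Finally, uniformity over $\sigma\in[\sigma_{1},\sigma_{2}]$: every ingredient of $\tilde K_{R,\sigma}$ depending on $\sigma$ (notably $\KRsig$, the factor $R^{2}/\sigma$, the factor $\sigma^{2/p-1}$ for $p=2$, and $1+\sqrt{2}R\sigma^{-1}$) is a continuous function of $\sigma$ on the compact interval $[\sigma_{1},\sigma_{2}]\subset(0,R/2]$, so $\sup_{\sigma\in[\sigma_{1},\sigma_{2}]}\tilde K_{R,\sigma}<\infty$. This will be verified by inspecting each summand of \cref{definition_k} and noting that none blow up as $\sigma$ stays bounded away from $0$ and at most $R/2$.
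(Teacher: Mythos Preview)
There is a genuine gap in how you handle the $(a_{3})$ contribution. Your choice $\kappa=2R^{2}/\sigma_{n,k}^{2}$ is \emph{independent of $p$}, and so is your threshold $M=M(R,\sigma)$. But the exponential integrand in \cref{lemma:a3_bound} (and in \cref{lemma:odd_integral_bound,lemma:even_integral_bound}) is $e^{(p-1)\tilde S_{k}(y-1/(n-k))/2}$, and over the integration range $y\in[1/(n-k),\,1/((n-k)S_{k,\kappa})]$ its argument reaches $(p-1)\tilde S_{k}(S_{k,\kappa}^{-1}-1)/(2(n-k))=(p-1)R^{2}/(2\sigma_{n,k}^{2})$. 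In your large-$m_{n,k}$ regime, $\sigma_{n,k}^{2}$ is bounded below only by a constant depending on $M$, so this exponent is of order $p$ and the integrand is $e^{O(p)}$. Choosing $K=\lfloor p\,\tilde S_{k}\rfloor$ does not help: the prefactor $(K+1)^{1/4}/(2K+3)$ in front of the exponential integral only decays like $p^{-3/4}$, and the finite head of the sum then itself has order-$p$ many terms whose total behaves like $\sum_{m}(R^{2}(p-1)/(2\sigma_{n,k}^{2}))^{m}/m!\asymp e^{(p-1)R^{2}/(2\sigma_{n,k}^{2})}$. So your claim that the tail exponential reduces to $e^{R^{2}}$ is not justified.

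The paper's proof avoids this by making $\kappa$ depend on $p$: it takes $\kappa=C/(p-1)$ with $C=2R^{2}$ (and $K=0$), so that $(p-1)\kappa=C$ is a fixed constant and the exponential integrand becomes $e^{Cx^{2}/2}-1$, genuinely $p$-independent. The price is that one needs $R^{2}/(\kappa\sigma_{n,k}^{2})=(p-1)/(2\sigma_{n,k}^{2})\le 1/2$, i.e.\ $p\le\sigma_{n,k}^{2}+1$, so the correct case split is on $p$ versus $\sigma_{n,k}^{2}$, not on $m_{n,k}$ alone. When $p>\sigma_{n,k}^{2}+1$ the trivial branch gives $\sigma_{n,k}\sqrt{p-1}(1+\sqrt{2}R\sigma^{-1})\le p(1+\sqrt{2}R\sigma^{-1})$ directly. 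Your argument for uniformity in $\sigma\in[\sigma_{1},\sigma_{2}]$ by continuity is fine once the main bound is repaired.
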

\begin{proof}
We will establish the bound in two steps: First for the case $p\le \sigma_{n,k}^2+1$, and then for $p>\sigma_{n,k}^2+1$. 

In the first case, we use the bound established in \Cref{theorem:pfWp} with $K=0$ and $\kappa=C/(p-1)$ with $C=2R^2$, which tells us that
 \begin{align*}
      & \Omega^R_p(n,k,\sigma)
        \\\le & 
    \mathcal{S}^p_{\sigma,R}(n,k)\times(1-\sqrt{1-\frac{R^2}{\kappa\sigma_{n,k}^2}}) +\sigma_{n,k}\Bigg\{ \sqrt{2}\left(\frac{\Gamma((p+1) / 2)}{\Gamma(1 / 2)}\right)^{1 / p}\arccos{(\sqrt{1-\frac{R^2}{\kappa\sigma_{n,k}^2}})}  \\
        & + \|H_1\|_p\frac{\sqrt{n}}{\sqrt{k(n-k)}}\Big[\frac{1}{2}\min\Big(  \sqrt{p-1}(\tilde R^2_s-1)^{1-1/p},A_p\sqrt{\tilde R^2_s-1}+A^*_{n,p}(\tilde R^2_s-1)^{1-1/p} \Big)\\&\qquad\qquad+\frac{1}{\sqrt{n}}\min\Big(\sqrt{p-1}\tilde R_s^{1-2/p},A_p +\tilde R_s^{1-2/p}A^*_{k,p}\Big)^2\Big]\Big(1 - \frac{R}{\sigma_{n,k}\sqrt{\kappa}}\Big)\\
        &+C_{\mathrm{odd}}\bigg\{\frac{e^{19/300}R\sigma_{n,k}^{-1}\pi^{1/4}}{6(n-k)}\int_{\frac{1}{(n-k)}}^{\frac{1}{(n-k)S_{k,\kappa}}}\frac{1}{y^{3/2}\sqrt{y-\frac{1}{n-k}}}\Big[e^{\frac{(p-1)\tilde S_k}{2}(y-\frac{1}{n-k})}-1\Big]dy
        \bigg\}\\
        &+ C_{\mathrm{even}}\bigg\{ \frac{R^2n}{2\sigma^2 k(n-k)^{3/2}}\frac{e^{19/300}\pi^{1/4}\sqrt{p-1}}{8\sqrt{3}}\int_{\frac{1}{(n-k)}}^{\frac{1}{(n-k)S_{k,\kappa}}}\frac{1}{y^{3/2}}\Big[e^{\frac{(p-1)\tilde S_k}{2}(y-\frac{1}{n-k})}-1\Big]dy\Big\}\Bigg\}.
    \end{align*} We will bound each term successively. In this goal, we first remark that
using \Cref{nelson2s},
$$\mathcal{S}_p^{\sigma,R}(n,k)(1-\sqrt{1-\frac{R^2}{\kappa\sigma_{n,k}^2}}) \le\mathcal{S}_p^{\sigma,R}(n,k) \le \KRsig  p,$$
where $\KRsig$ is a constant defined in \Cref{nelson2s}. Moreover, using the mean value theorem, we observe that 
\begin{align*}
 &  \sqrt{2}\left(\frac{\Gamma((p+1) / 2)}{\Gamma(1 / 2)}\right)^{1 / p}\arccos{(\sqrt{1-\frac{R^2}{\sigma_{n,k}^2 \kappa}})}\\& \le \sqrt{p-1}\arccos{(\sqrt{1-\frac{R^2}{\sigma_{n,k}^2 \kappa}})}\\
\le & \sqrt{p-1}\frac{R^2}{\sigma_{n,k}^2\kappa\sqrt{1-\frac{R^2}{\sigma_{n,k}^2\kappa}}}
\\\le & \sqrt{p-1}\frac{R^2}{\sigma_{n,k}^2\kappa\sqrt{0.5}},
\end{align*}
where for the last inequality we used the fact that $$\frac{R^2}{\sigma_{n,k}^2\kappa}\le 2.$$
Moreover using \Cref{lemma:Hermite_bound}, we note that $\|H_1\|_p\le \sqrt{p-1}$. This directly implies that 
\begin{align*}&
\|H_1\|_p\frac{\sqrt{n}}{\sqrt{k(n-k)}}\Big[\frac{1}{2}\min\Big\{  \sqrt{p-1}(\tilde R^2_s-1)^{1-1/p},A_p\sqrt{\tilde R^2_s-1}+A^*_{n,p}(\tilde R^2_s-1)^{1-1/p} \Big\}\\&\qquad\qquad+\frac{1}{\sqrt{n}}\min\Big\{\sqrt{p-1}\tilde R_s^{1-2/p},A_p +\tilde R_s^{1-2/p}A^*_{k,p}\Big\}^2\Big]\Big(1 - \frac{R}{\sigma_{n,k}\sqrt{\kappa}}\Big)
        \\&\le (p-1)\frac{\sqrt{n}}{\sqrt{k(n-k)}}\Big[\frac{1}{2}(\tilde R^2_s-1)^{1-1/p}+\frac{\sqrt{p-1}}{\sqrt{n}}\tilde R_s^{2-\frac{4}{p}}\Big]
        \\&\le (p-1)\frac{\tilde R^2\sqrt{n}}{\sqrt{k(n-k)}}\Big[1+\frac{\sqrt{p-1}}{\sqrt{n}}\Big].
    \end{align*}
    Moreover, we note that  
\begin{align*}
    C_{\mathrm{odd}}\le&\frac{1}{n}\sqrt{p-1}\sqrt{k(n-k)}R^{-\frac{4}{p}} \sigma^{2/p}\bigg( (n-k)(R^2+3\sigma^2)^{\frac{2}{p}}+k(2^{\frac{1}{p}}R^{\frac{2}{p}} (R^2+3\sigma^2)^{\frac{1}{p}})^2\bigg)^{1/2}
    \\ \le& \frac{\sqrt{k(n-k)}}{\sqrt{n}}\sqrt{p-1}R^{-\frac{4}{p}} 2^{\frac{3}{p}}R^{4/p}\sigma^{2/p}.
\end{align*}Now, note that for all $x\in[0, \sqrt{1-\frac{R^2}{\sigma_{n,k}^2 \kappa}}]$, we have $\frac{1}{(1-x^2)^m}\le \Big(\frac{\sigma_{n,k}^2 \kappa}{R^2}\Big)^{m-1}\frac{1}{1-x^2}.$ Hence, a change of variables implies
\begin{align*}
&\frac{e^{19/300}R\sigma_{n,k}^{-1}\pi^{1/4}}{6(n-k)}\int_{\frac{1}{(n-k)}}^{\frac{1}{(n-k)S_{k,\kappa}}}\frac{1}{y^{3/2}\sqrt{y-\frac{1}{n-k}}}\Big[e^{\frac{(p-1)\tilde S_k}{2}(y-\frac{1}{n-k})}-1\Big]dy\\
= &\frac{e^{19/300}R\sigma_{n,k}^{-1}\pi^{1/4}}{6(n-k)}\int_{0}^{\sqrt{1-S_{k,\kappa}}}2(n-k)\Big[e^{\frac{(p-1)\tilde S_k}{2}\frac{x^2}{(n-k)(1-x^2)}}-1\Big]dx\\
= &\frac{e^{19/300}R\sigma_{n,k}^{-1}\pi^{1/4}}{3}\sum_{m\ge1}\int_{0}^{\sqrt{1-S_{k,\kappa}}} \frac{1}{m!}\frac{(p-1)^m}{2^m}S_{k,\kappa}^m\kappa^m\frac{x^{2m}}{(1-x^2)^m}dx\\
\overset{(a)}{\le}  &\frac{e^{19/300}R\sigma_{n,k}^{-1}\pi^{1/4}}{3}S_{k,\kappa}\sum_{m\ge1}\int_{0}^{\sqrt{1-S_{k,\kappa}}} \frac{1}{m!}\frac{(p-1)^m}{2^m}\kappa^m\frac{x^{2m}}{1-x^2}dx\\
=&\frac{e^{19/300}R^3\sigma_{n,k}^{-3}\pi^{1/4}}{3\kappa}\int_{0}^{\sqrt{1-S_{k,\kappa}}}\Big(e^{\frac{(p-1)\kappa x^2}{2}}-1\Big)\frac{1}{1-x^2}dx\\
\le & \frac{e^{19/300}\pi^{1/4}}{3}\frac{\sigma_{n,k}^{-3}R^3}{\kappa}\Big(e^{\frac{1}{2}(p-1)\kappa (1-S_{k,\kappa})}-1\Big)\log\Big(\frac{1+\sqrt{1-S_{k,\kappa}}}{1-\sqrt{1-S_{k,\kappa}}}\Big)
\\\le& \frac{e^{19/300}\pi^{1/4}}{3}\frac{\sigma_{n,k}^{-3}R^3}{\kappa}\Big(e^{\frac{C}{2}(1-S_{k,\kappa})}-1\Big)\log\Big(\frac{1+\sqrt{1-S_{k,\kappa}}}{1-\sqrt{1-S_{k,\kappa}}}\Big)
\\\le& \frac{e^{19/300}\pi^{1/4}}{3}\frac{\sigma_{n,k}^{-3}R^3(p-1)}{C}\Big(e^{\frac{C}{2}(1-S_{k,\kappa})}-1\Big)\log\Big(\frac{1+\sqrt{1/2}}{1-\sqrt{1/2}}\Big).
\\ \le& \frac{e^{19/300}\pi^{1/4}}{6}{\sigma_{n,k}^{-3}R(p-1)}{}\Big(e^{\frac{C}{2}}-1\Big)\log\Big(\frac{1+\sqrt{1/2}}{1-\sqrt{1/2}}\Big).
\end{align*}
Moreover, we remark that 
\begin{align*}
    C_{\mathrm{even}}& \le \frac{1}{n}R^{-\frac{4}{p}}{\sqrt{k(n-k)}}\bigg[\sqrt{p-1}\Big((n-k)\sigma^{4/p}(R^2+3\sigma^2)^\frac{2}{p}
   +kR^{\frac{2}{p}}(\frac{1}{4})^{\frac{2}{p}}\Big)^{\frac{1}{2}}\\
      &+{2\sqrt{k(n-k)}\sigma^2}\bigg]
      \\&\le R^{-\frac{4}{p}} \bigg[\sqrt{p-1}\tilde R^{2/p}4^{1/p}\frac{{\sqrt{k(n-k)}}}{\sqrt{n}}+{2}\sigma_{n,k}^2\bigg].
\end{align*}
Similarly, we have
\begin{align*}&
 \frac{R^2n}{2\sigma^2 k(n-k)^{3/2}}\frac{e^{19/300}\pi^{1/4}\sqrt{p-1}}{8\sqrt{3}}\int_{\frac{1}{(n-k)}}^{\frac{1}{(n-k)S_{k,\kappa}}}\frac{1}{y^{3/2}}\Big[e^{\frac{(p-1)\tilde S_k}{2}(y-\frac{1}{n-k})}-1\Big]dy\\
 = &\frac{R^2n}{2\sigma^2 k(n-k)^{3/2}}\frac{e^{19/300}\pi^{1/4}\sqrt{p-1}}{8\sqrt{3}}\int_{0}^{\sqrt{1-S_{k,\kappa}}}2(n-k)^{1/2}\frac{x}{(1-x^2)^{1/2}}\Big[e^{\frac{(p-1)\tilde S_k}{2}\frac{x^2}{(n-k)(1-x^2)}}-1\Big]dx\\
 = &\frac{R^2n}{\sigma^2 k(n-k)}\frac{e^{19/300}\pi^{1/4}\sqrt{p-1}}{8\sqrt{3}}\sum_{m\ge1}\int_{0}^{\sqrt{1-S_{k,\kappa}}} \frac{1}{m!}\frac{(p-1)^m}{2^m}S_{k,\kappa}^m\kappa^m\frac{x^{2m+1}}{(1-x^2)^{m+1/2}}dx\\
 = &\frac{R^2n}{\sigma^2 k(n-k)}\frac{e^{19/300}\pi^{1/4}\sqrt{p-1}}{8\sqrt{3}}S_{k,\kappa}\sum_{m\ge1}\int_{0}^{\sqrt{1-S_{k,\kappa}}} \frac{1}{m!}\frac{(p-1)^m}{2^m}\kappa^m\frac{x^{2m+1}}{(1-x^2)^{3/2}}dx\\
 = &\frac{R^2n}{\sigma^2 k(n-k)}\frac{e^{19/300}\pi^{1/4}\sqrt{p-1}}{8\sqrt{3}}\frac{R^2}{\kappa \sigma_{n,k}^2}\int_{0}^{\sqrt{1-S_{k,\kappa}}} \Big(e^{\frac{(p-1)\kappa x^2}{2}}-1\Big)\frac{x}{(1-x^2)^{3/2}}dx\\
\le & \frac{e^{19/300}\pi^{1/4}}{8\sqrt{3}}\frac{R^{4}\sigma_{n,k}^{-4}\sqrt{p-1}}{\kappa}\Big(e^{\frac{1}{2}(p-1)\kappa S_{k,\kappa}}-1\Big)\frac{\sigma_{n,k}\sqrt{\kappa}}{R}.
\\
\le & \frac{e^{19/300}\pi^{1/4}}{8\sqrt{3}}\frac{R^{3}\sigma_{n,k}^{-3}({p-1})}{\sqrt{C}}\Big(e^{\frac{1}{2}C}-1\Big)
\\
\le & \frac{e^{19/300}\pi^{1/4}}{8\sqrt{3}}\frac{R^{5/2}\sigma_{n,k}^{-3}({p-1})}{\sqrt{2}}\Big(e^{R^2}-1\Big).
\end{align*}
Hence, if $p\le \sigma_{n,k}^2+1\le \frac{n\sigma^2}{4}+1$,  this implies that 
 \begin{align*}
      & \omega^R_p(n,k,\sigma)\\
      \le&  \Omega^R_p(n,k,\sigma)
 \\\le& \KRsig  p+
  (p-1)\frac{R^2}{\sigma}\Big[1+\frac{\sqrt{p-1}}{\sqrt{n}}\Big]+\sqrt{p-1}^3\frac{1}{2\sigma_{n,k}\sqrt{1/2}}
\\&+\sqrt{p-1}R^{-\frac{4}{p}} 2^{\frac{3}{p}}R^{4/p}\sigma^{2/p-1}\frac{e^{19/300}\pi^{1/4}}{3}\frac{\sigma_{n,k}^{-1}R(p-1)}{2}\Big(e^{R^2}-1\Big)\log\Big(\frac{1+\sqrt{1/2}}{1-\sqrt{1/2}}\Big)
\\&+R^{-\frac{4}{p}} \bigg[\sqrt{p-1}\tilde R^{2/p}4^{1/p}\sigma^{-1}\sigma_{n,k}^{-1}+{2}\bigg] \frac{e^{19/300}\pi^{1/4}}{8\sqrt{3}}\frac{R^{5/2}({p-1})}{\sqrt{2}}\Big(e^{R^2}-1\Big)
\\\le& p\Big\{\KRsig +\frac{R^2}{\sigma}(1+\sigma/2)+ 2^{\frac{3}{p}}\sigma^{2/p-1}\frac{e^{19/300}\pi^{1/4}}{3}\frac{R^2}{2}\Big(e^{R^2}-1\Big)\log\Big(\frac{1+\sqrt{1/2}}{1-\sqrt{1/2}}\Big)\\&\quad+R^{-\frac{4}{p}} \bigg[\tilde R^{2/p}4^{1/p}\sigma^{-1}+{2}\bigg] \frac{e^{19/300}\pi^{1/4}}{8\sqrt{3}}\frac{R^{5/2}}{\sqrt{2}}\Big(e^{R^2}-1\Big)\Big\}.
    \end{align*}
  Now in the second case where $p>\sigma_{n,k}^2+1$, we have 
  \begin{align*}
      \omega_p^R(n,k,\sigma)&\le \sigma_{n,k}\sqrt{p-1}(1+\sqrt{2}R\sigma^{-1})
      \\&\le p(1+\sqrt{2}R\sigma^{-1}).
  \end{align*}
Hence the desired result directly follows.
\end{proof}

\subsection{Moderate deviation}\label{sec:app_moderate_deviation}
\begin{lemma}\label{lemma:moderate}
The following inequality holds for all $t \ge \frac{4e\tilde K_{R,\sigma}}{\sigma_{n,k}} + 2$
:
    \begin{align*}
    \mathbb{P}(\frac{1}{\sigma_{n,k}} W_k \ge t)\le\Phi^c(t)\Big[1+\frac{e\tilde K_{R,\sigma}\varphi(t)}{\Phi^c(t)\sigma_{n,k}}\Big]\Big(\frac{e\tilde K_{R,\sigma}\varphi(t)}{\sigma_{n,k}}\Big)^{-\frac{e\tilde K_{R,\sigma}(t+1)}{e\tilde K_{R,\sigma}(t+1) + \sigma_{n,k}}},
\end{align*}
where $\tilde K_{R,\sigma}$ is the constant in \Cref{lemma:Wp_order}.
\end{lemma}
\begin{proof}
Recall that, by \Cref{lemma:Wp_order}, we have the following result:
\begin{align*}
    \Big\|\mathcal{W}_p(W_k,\mathcal{N}(0,\sigma_{n,k}^2)|\mathcal{U}(X_{1:n}))\Big\|_p 
        \le \omega^R_p(n,k,\sigma)\le \tilde K_{R,\sigma}p.
\end{align*}
Note that for any $\epsilon > 0$, conditioned on $\mathcal{U}(X_{1:n})$, there exists a $Z_k \sim \mathcal{N}(0,\sigma_{n,k}^2)$ such that 
$$
\mathbb{E}\Big[|W_k-Z_k|^p\Big|\mathcal{U}(X_{1:n})\Big]^{1/p}\le  \mathcal{W}_p(W_k,\mathcal{N}(0,\sigma_{n,k}^2)|\mathcal{U}(X_{1:n})) + \epsilon
$$
It follows that, by Jensen's inequality,
\begin{align*}
    \mathbb{P}(\frac{1}{\sigma_{n,k}} W_k \ge t)\le& \mathbb{E}\Big[\mathbb{P}(\frac{1}{\sigma_{n,k}} W_k \ge t | \mathcal{U}(X_{1:n}))\Big]\\
    =& \mathbb{E}\Big[\mathbb{P}(\frac{1}{\sigma_{n,k}} (W_k - Z_k) +  \frac{1}{\sigma_{n,k}} Z_k\ge t | \mathcal{U}(X_{1:n}))\Big]\\
    \le & \mathbb{P}(\frac{1}{\sigma_{n,k}}Z_k > \rho t) + \mathbb{E}\Big[\mathbb{P}(\frac{1}{\sigma_{n,k}} (W_k - Z_k) \ge (1-\rho)t | \mathcal{U}(X_{1:n}))\Big]\\
    \le & \Phi^c(\rho t) + \mathbb{E}\Bigg[\frac{\frac{1}{\sigma_{n,k}^p} \Big(\mathcal{W}_p(W_k,\mathcal{N}(0,\sigma_{n,k}^2)|\mathcal{U}(X_{1:n}))+\epsilon\Big)^p}{ (1-\rho)^pt^p }\Bigg]
\end{align*}
Since this holds for all $\epsilon > 0$, we have
\begin{align*}
    \mathbb{P}(\frac{1}{\sigma_{n,k}} W_k \ge t)\le \inf_{p\ge2,\rho\in (0,1)}\Phi^c(\rho t) + \frac{\omega^R_p(n,k,\sigma)^p/\sigma_{n,k}^p}{ (1-\rho)^pt^p }.
\end{align*}
For a fix $p\ge 2$, choose $\rho = 1 - e\frac{\omega^R_p(n,k,\sigma)}{\sigma_{n,k}t}$ and obtain that 
$$
\mathbb{P}(\frac{1}{\sigma_{n,k}} W_k \ge t)\le \inf_{p\ge2,\rho\in (0,1)}\Phi^c(t - e\omega^R_p(n,k,\sigma)/{\sigma_{n,k}}) + e^{-p}.
$$
We next note that the function $t \mapsto \log(\Phi^c(t))$ is concave, since it is twice differentiable with second derivative
$\frac{\varphi(t)}{\Phi^c(t)}\big(t - \frac{\varphi(t)}{\Phi^c(t)}\big) \le 0$,
where the inequality follows from bounds on the Mills ratio of a normal random variable; see \cite{mills}. This directly implies that for all $y \ge 0$, we have
$$\Phi^c(t - y) \le \Phi^c(t)\exp\big(y\varphi(t)/\Phi^c(t)\big).$$

Plugging $y = e\omega_p^R(n,k,\sigma)/{\sigma_{n,k}}$ and using the fact that, according to \Cref{lemma:Wp_order}, we have $\omega^R_p(n,k,\sigma)/{\sigma_{n,k}}\le \tilde K_{R,\sigma}p/{\sigma_{n,k}}$, we obtain
\begin{align*}
    \mathbb{P}(\frac{1}{\sigma_{n,k}} W_k \ge t) \le& \inf_{p\ge2}\Phi^c(t)e^{ \frac{e\omega^R_p(n,k,\sigma)\varphi(t)}{\sigma_{n,k}\Phi^c(t)}} + e^{-p}\\
    \le& \inf_{p\ge2}\Phi^c(t)e^{ \frac{e\tilde K_{R,\sigma}p\varphi(t)}{\sigma_{n,k}\Phi^c(t)}} + e^{-p}\\
    \le& \inf_{x\ge e^2}\Phi^c(t)x^{ \frac{e\tilde K_{R,\sigma}\varphi(t)}{\sigma_{n,k}\Phi^c(t)}} + 1/x.
\end{align*}
Choosing $x: = \Big(\frac{\sigma_{n,k}}{e\tilde K_{R,\sigma}\varphi(t)}\Big)^{\frac{\sigma_{n,k}}{e\tilde K_{R,\sigma}(t+1) + \sigma_{n,k}}} \ge e^2$, which can be chosen because $t \ge \frac{4e\tilde K_{R,\sigma}}{\sigma_{n,k}} + 2$, we obtain
\begin{align*}
    \mathbb{P}(\frac{1}{\sigma_{n,k}} W_k \ge t)
    \le& \Phi^c(t)\Big(\frac{\sigma_{n,k}}{e\tilde K_{R,\sigma}\varphi(t)}\Big)^{\frac{e\tilde K_{R,\sigma}(t+1) }{e\tilde K_{R,\sigma}(t+1) + \sigma_{n,k}}}  + \Big(\frac{e\tilde K_{R,\sigma}\varphi(t)}{\sigma_{n,k}}\Big)^{1-\frac{e\tilde K_{R,\sigma}(t+1)}{e\tilde K_{R,\sigma}(t+1) + \sigma_{n,k}}}\\
    \le& \Phi^c(t)\Big[1+\frac{e\tilde K_{R,\sigma}\varphi(t)}{\Phi^c(t)\sigma_{n,k}}\Big]\Big(\frac{e\tilde K_{R,\sigma}\varphi(t)}{\sigma_{n,k}}\Big)^{-\frac{e\tilde K_{R,\sigma}(t+1)}{e\tilde K_{R,\sigma}(t+1) + \sigma_{n,k}}}.
\end{align*}
Note that we used the Mills ratio bound $\frac{\varphi(t)}{\Phi^c(t)}\le t+1$.
\end{proof}

\section{Proofs of results in Section \ref{theoretical_guarantees}}\label{triste}
\begin{proof}
    [Proof of \Cref{ts11}]
The proof proceeds by induction. First, if $L=0$, then we remark that $n=1$ and $W_k=0$. Therefore, if we choose $\tilde Z_k=0$, we know that it satisfies \Cref{coupled} with $\mathcal{I}=\Iintv{0,2^L}$ and is such that 
$$\mathbb{P}\Big(\exists k\le 2^L~\textrm{s.t.}~|W_k-\tilde Z_k|>\delta^L_k\Big)=0=\beta_0.$$

If $L\ge 1$, suppose there exists a Gaussian vector $(\tilde Z^1_k)_{k\le 2^{L-1}}$ satisfying \Cref{coupled} with $\mathcal{I}=\Iintv{0,2^{L-1}}$,  and such that $$\mathbb{P}\Big(\exists k\le 2^{L-1}~\textrm{s.t.}~|W_k-\tilde Z_k|>\delta^{L-1}_k\Big)\le \beta_{L-1}.$$ Then according to \Cref{ausecours} there exists $(\tilde Z_k)_{k\le 2^L}$ satisfying \Cref{coupled} with $\mathcal{I}=\Iintv{1,2^L}$ such that $$\mathbb{P}\Big(\exists k\le 2^{L-1}~\textrm{s.t.}~|W_k-\tilde Z_k|>\delta^{L}_k\Big)\le 2\beta_{L-1}-\beta_{L-1}^2+\alpha_0.$$ Now, by definition, $2\beta_{L-1}-\beta_{L-1}^2+\alpha_0=\beta_L,$ which concludes the proof.\end{proof}

\begin{proof}
    [Proof of \Cref{all_too_well}]First, according to \Cref{lemma:bound_S_and_Z}, we know that there exists $Z_n\in \sigma(S_n, U_n)$ such that $$Z_n\sim \mathcal{N}(0,\sigma^2 n)$$ and, for $p\ge 2$, we have $$\mathbb{P}\Big(|S_n-Z_n|\ge \frac{\mathcal{W}_p(S_n, \mathcal{N}(0,\sigma^2n))}{\alpha_1^{1/p}}\Big) \le \alpha_1.$$ Now, by \Cref{nelson3}, we know that for all $p\ge 2$, we have $$\mathcal{W}_p(S_n, \mathcal{N}(0,\sigma^2n))\le s_p^R(n,\sigma).$$ Hence, this directly implies that $$\mathbb{P}\Big(|S_n-Z_n|\ge \delta^*(\alpha_1)\Big)\le \alpha_1.$$ Now according to \Cref{ts11}
there exists a Gaussian vector $(\tilde Z_k)_{k\le n}$ satisfying \Cref{coupled} with $\mathcal{I}=\Iintv{1,n}$ such that $$\mathbb{P}\Big(\exists k~\textrm{s.t.}~|W_k-\tilde Z_k|\ge \delta_k\Big)\le \beta_L.$$ Now note that \Cref{coupled} implies $(\tilde Z_k)|Z_n\sim \mathcal{N}(0,\tilde \Sigma)$  where $\tilde \Sigma_{i,j}=\sigma^2\frac{i\wedge j(n-j\vee i)}{n}$. Hence, using the conjugacy of Gaussian priors with Gaussian likelihoods, we obtain $$\Big(\tilde Z_k+\frac{k}{n}Z_n\Big)_{k\le n}\sim \mathcal{N}(0,\Sigma_n).$$ Hence if we define $$Z_k:=\tilde Z_k+\frac{k}{n}Z_n,$$ then by a union bound argument we have 
\begin{align*}
    \mathbb{P}\Big(\exists k\le n~|Z_k-S_k|\le \delta_k+\frac{k}{n}\delta_1^*(\alpha_1)\Big)\le&  \mathbb{P}\Big(|S_n-Z_n|\le\delta_1^*(\alpha_1)\Big)
    \\&+ \mathbb{P}\Big(\exists k\le n~|\tilde Z_k-W_k|\le \delta_k\Big)
    \\\le& \beta_L+\alpha_1.
\end{align*}\end{proof}
\begin{proof}
    [Proof of \Cref{thm:main3}]By the definition of $ \nu^*_0$, we know that $\beta_L(\nu_0^*)\le \alpha.$ Hence, according to \Cref{ts11}, there exists $\tilde Z:=(\tilde Z_k)_{0\le k\le n}$ with the same covariance as $(W_k)$, such that the following inequality holds:
\begin{equation*}
    \mathbb{P}(\exists k\le n\textrm{ s.t. }\big|W_k-\tilde {Z}_k\big|\ge \Delta_k(\alpha))\le\beta_L(\nu_0^*)\le \alpha.
\end{equation*}
\end{proof}
\begin{proof}[Proof of \Cref{thm:main2}] A direct consequence of \Cref{thm:main3} combined with \Cref{all_too_well}.
    
\end{proof}

\section{Proof of Theorem \ref{empirical}}\label{empirical_proof}\begin{proof}Define $\tilde W_k:=\sigma^{-1}W_k$ and $\tilde S_k:=\sigma^{-1}S_k.$ Using \Cref{thm:main2,thm:main3} we know that there exists centered Gaussian vectors $(\tilde Z^*_k),(Z^*_k)$ that have the same variance as respectively $(\tilde W_k)$ and $(\tilde S_k)$ such that 
    $$ \mathbb{P}\Big(\exists k\le n~\textrm{s.t.}~|\tilde S_k-Z^*_k|\ge\mathcal{D}_k(\alpha(1-\rho),\frac{R}{\sigma},1)\Big)\le \alpha(1-\rho),$$ and  $$ \mathbb{P}\Big(\exists k\le n~\textrm{s.t.}~|\tilde W_k-\tilde Z^*_k|\ge\Delta_k(\alpha(1-\rho),\frac{R}{\sigma},1)\Big)\le \alpha(1-\rho).$$
    Define $Z_k:=\sigma Z_k^*$ and $\tilde Z_k:=\sigma \tilde Z_k^*$. We remark that $(Z_k)$ and $(\tilde Z_k)$ are centered Gaussian vectors that have the same variance as $(S_k)$ and $(W_k).$ Moreover, we note that 
    \begin{align*}
        & \mathbb{P}\Big(\exists k\le n~\textrm{s.t.}~| S_k-Z_k|\ge\sigma \mathcal{D}_k(\alpha(1-\rho),\frac{R}{\sigma},1)\Big)
         \\= &\mathbb{P}\Big(\exists k\le n~\textrm{s.t.}~| \tilde S_k-Z^*_k|\ge \mathcal{D}_k(\alpha(1-\rho),\frac{R}{\sigma},1)\Big)\\
         \le& \alpha(1-\rho).\end{align*}
         Similarly, we have that  $$ \mathbb{P}\Big(\exists k\le n~\textrm{s.t.}~|\tilde W_k-\tilde Z^*_k|\ge\Delta_k(\alpha(1-\rho),\frac{R}{\sigma},1)\Big)\le \alpha(1-\rho).$$
         Moreover, according to \Cref{increasing_in_R}, we know that $R\mapsto \Delta_k(\alpha(1-\rho),R,1)$ is an increasing function of $R$. Hence the following holds
         \begin{align*}&
              \mathbb{P}\Big(\exists k\le n~\textrm{s.t.}~|S_k-Z_k|\ge \hat\sigma_k^U~\mathcal{D}_k(\alpha(1-\rho),\tilde R_k,1)\Big)
              \\\le& \mathbb{P}\Big(\exists k~\textrm{s.t.}~\sigma\not\in [\sigma_k^L,\sigma_k^U]\Big)+ \mathbb{P}\Big(\exists k\le n~\textrm{s.t.}~|S_k-Z_k|\ge \sigma~\mathcal{D}_k(\alpha(1-\rho),R/\sigma,1)\Big)
              \\\le& \rho\alpha+(1-\rho)\alpha=\alpha.
         \end{align*}Similarly, we can prove that 
          \begin{align*}&
              \mathbb{P}\Big(\exists k\le n~\textrm{s.t.}~|W_k-\tilde Z_k|\ge \hat\sigma_k^U~\Delta_k(\alpha(1-\rho),\tilde R_k,1)\Big)
              \le \alpha.
         \end{align*}
\end{proof}
\section{Proof of Theorem \ref{detection} }\label{marre_proof}
\begin{proof} Let $\delta_1,\delta_2\in (0,1)$ be such that $\delta>\delta_1+\delta_2$ and set $\beta>1$. We denote $\delta_3:=\delta-\delta_1-\delta_2,$ and define $\delta_{2,i}:=\frac{\delta_2\beta^{-i}}{\sum_{j\in\mathbb{N}}\beta^{-j}}.$

The main idea of the proof is to approximate the statistics $\mathcal{T}_{s,t}$ by the same statistics defined on Gaussians instead. The latter is then bounded using \cite{maillard:tel-02162189}. More precisely, by a union bound argument, we remark that 
\begin{align*}
    &\mathbb{P}\Big(\exists s<t~\textrm{s.t.}~|\mathcal{T}_{s,t}|\ge \mathcal{C}_{s,t}\Big)\\\le&   \mathbb{P}\Big(\exists s<t~\textrm{s.t.}~|\mathcal{T}_{s,t}|\ge \mathcal{C}_{s,t}\text{ and }\forall k,~\sigma\in [\hat\sigma_k^L,\hat\sigma_k^U]\Big)+\mathbb{P}\Big(\exists k~\textrm{s.t.}~\sigma\not\in [\hat\sigma_k^L,\hat\sigma_k^U]\Big)
    \\\le& \delta_1+\mathbb{P}\Big(\exists s<t~\textrm{s.t.}~|\mathcal{T}_{s,t}|\ge \mathcal{C}_{s,t}\text{ and } \forall k,~\sigma\in [\hat\sigma_k^L,\hat\sigma_k^U]\Big).
\end{align*}
To further bound this, we first remark that $$\mathcal{T}_{s,t}=\frac{t}{s(t-s)}\big(S_s-\frac{s}{t}S_t),$$ and hence resembles the quantities controlled in \Cref{ts12}. A difficulty, however, is that \Cref{ts12} controls the variation of $S_k-\frac{k}{n}S_n$ uniformly over $k$, but only for a fixed sample size $n$. In this setting, we instead need to control $\mathcal{T}_{s,t}$ uniformly over both $s$ and $t$. To overcome this limitation, we first control $(\mathcal{T}_{s,t})_{s<t}$ for different $t$ that are expressible using powers of $2$, and then extend this control to all $t$.

In this goal, we define $\mathcal{I}^{-}_k=\sum_{i\le \ell_{\mathbf{L}}(k)}2^{L_i}$ and $\mathcal{I}^+_k=\sum_{i\le u_{\mathbf{L}}(k)}2^{L_i}$, and remark that $\mathcal{I}^{-}_k\le k\le \mathcal{I}^{+}_k$. 
Moreover, we define the increments $$S^*_{\mathcal{I}^{+}_k}:=S_{\mathcal{I}^+_k}-S_{\mathcal{I}^-_k},\qquad \tilde W_k^{u_{\mathbf{L}}(k)}:=S_k-S_{\mathcal{I}^-_k}-\frac{k-\mathcal{I}^-_k}{\mathcal{I}^+_k-\mathcal{I}^-_k}S^*_{\mathcal{I}^{+}_k}.$$ 
For all $i\in\mathbb{N}$, we write $N_i=\sum_{j\le i}2^{L_j}$ and let $(\tilde \Delta_{k}(\delta_{2,i}))_{u_{\mathbf{L}}(k)=i}$ and $\tilde \delta^*_{N_i}(\delta_{2,i})$ the output of \Cref{ts13} with confidence $\delta_{2,i}$, $n_1=N_{i-1}$, and $n_2=N_i$. Note that by definition, $I_k^+ = N_{u_{\mathbf{L}}(k)}$ and $I_k^- = N_{\ell_{\mathbf{L}}(k)}$. For the ease of notation, we shorthand $\tilde \delta_i^*:=\tilde \delta^*_{N_i}(\delta_{2,i})$ and $\tilde \Delta_k:=\tilde \Delta_k(\delta_{2,i}).$ 
We remark that, for all $i\in \mathbb{N}$, according to  \Cref{partir} there exist Gaussian vectors $(\tilde Z^i_k)$ and $ (Z^{*}_i)$ that have the same mean and variance as  $(\tilde W^i_k)$ and $ (S^{*}_i)$  such that \begin{equation}\label{smt2}\mathbb{P}\Big(\max_{ k,~ {u^{\mathbf{L}}}(k)=i}~(\tilde \Delta_{k})^{-1}\big|\tilde W_k^i-\tilde Z_k^i\big|\le 1,\quad |S^{*}_i-Z^{*}_i|\le \tilde \delta_{N_i}^*,~\sigma\in \bigcap_k [\hat\sigma_k^L,\hat\sigma_k^U]\Big)\ge 1-\delta_{2,i}.
\end{equation} 
 For each $k$ we define $$Z_k:=\sum_{j\le \ell_{\mathbf{L}(k)}}Z^*_j+\tilde Z_k^{u_{\mathbf{L}}(k)} +\frac{s}{\mathcal{I}_k^+-\mathcal{I}_k^-}Z^*_{\mathcal{I}_k^+}.$$ We note that $(Z_k)$ is a Gaussian process that has the same mean and variance as $(S_k).$

Interestingly, both $\mathcal{T}_{s,t}$ and $\mathcal{T}_{s,t}(Z):=\frac{1}{s}Z_s-\frac{1}{t-s}(Z_t-Z_s)$ can be re-expressed in terms of $(\tilde W_k^i)$, $(S^*_i)$, $(\tilde Z_k^i)$, and $(Z^*_i)$. Indeed, we have:
\begin{align*}
  &\frac{s(t-s)}{t}\mathcal{T}_{s,t}
  \\=&S_s-\frac{s}{t}S_t
  \\=&\tilde W^{u_{\mathbf{L}}(s)}_s-\frac{s}{t}\tilde W^{u_{\mathbf{L}}(t)}_t+\sum_{k\le \ell_{\mathbf{L}}(s)}S_k^*\big(1-\frac{s}{t}\big)+\Big(\frac{s-I_s^-}{I_s^+-I_s^-}-\frac{s}{t}\Big
  )S^*_{u_{\mathbf{L}(s)}}\\&\qquad-\frac{s}{t}\sum_{u_{\mathbf{L}}(s)+1\le k\le \ell_{\mathbf{L}}(t)}S_k^*-\frac{s}{t}\frac{t-I_t^-}{I_t^+-I_t^-}S^*_{u_{\mathbf{L}}(t)},
\end{align*} and \begin{align*}&
    \frac{s(t-s)}{t}  \mathcal{T}_{s,t}(Z)
  \\=&\tilde Z^{u_{\mathbf{L}}(s)}_s-\frac{s}{t}\tilde Z^{u_{\mathbf{L}}(t)}_t+\sum_{k\le \ell_{\mathbf{L}}(s)}Z_k^*\big(1-\frac{s}{t}\big)+\Big(\frac{s-I_s^-}{I_s^+-I_s^-}-\frac{s}{t}\Big
  )Z^*_{u_{\mathbf{L}(s)}}\\&\qquad-\frac{s}{t}\sum_{u_{\mathbf{L}}(s)+1\le k\le \ell_{\mathbf{L}}(t)}Z_k^*-\frac{s}{t}\frac{t-I_t^-}{I_t^+-I_t^-}Z^*_{u_{\mathbf{L}}(t)}.
\end{align*}
Using \cref{smt2} combined with a union bound, we obtain that 
\begin{align*}
    \mathbb{P}\Big(\exists s\le t~\textrm{s.t.} \Big|\mathcal{T}_{s,t}-\mathcal{T}_{s,t}(Z)\Big|\ge g_\beta(t,s,\delta_2) \frac{t}{s(t-s)},~\sigma\in \bigcap_k [\hat\sigma_k^L,\hat\sigma_k^U]\Big)&\le \delta_2,
\end{align*} where $g_\beta(t,s,\delta_2)$ is defined as follows: if $u_{\mathbf{L}}(s)\ne u_{\mathbf{L}}(t)$, then
\begin{align*}
    g_\beta(t,s,\delta_2)=&\sum_{k\le \ell_{\mathbf{L}}(s)}\tilde \delta_{k}^*\big(1-\frac{s}{t}\big)+\frac{s}{t}\sum_{u_{\mathbf{L}}(s)+1\le k\le \ell_{\mathbf{L}}(t)}\tilde \delta_k^*+\frac{s}{t}\frac{t-I_t^-}{I_t^+-I_t^-}\tilde \delta^*_{u_{\mathbf{L}}(t)}\\&+\Big(\frac{s-I_s^-}{I_s^+-I_s^-}-\frac{s}{t}\Big
  )\tilde\delta^*_{u_{\mathbf{L}(s)}}+\tilde \Delta^{u_{\mathbf{L}}(s)}_{s}+\frac{s}{t}\tilde \Delta^{u_{\mathbf{L}}(t)}_{t},
\end{align*} and if $u_{\mathbf{L}}(s)= u_{\mathbf{L}}(t)$, then \begin{align*}
    g_\beta(t,s,\delta_2)=&\sum_{k\le \ell_{\mathbf{L}}(s)}\tilde \delta_{k}^*\big(1-\frac{s}{t}\big)+\frac{s}{t}\tilde\delta^*_{N_{u_{\mathbf{L}(s)}}}+\tilde \Delta^{u_{\mathbf{L}}(s)}_{s}+\frac{s}{t}\tilde\Delta^{u_{\mathbf{L}}(t)}_{t}.
\end{align*}
Moreover according to Theorem 5 of \cite{maillard:tel-02162189} we know that \begin{align*}&
    \mathbb{P}\Big(\exists s\le t ~\textrm{s.t.}~|\mathcal{T}_{s,t}(Z)|\ge\hat\sigma_t^U\sqrt{\frac{2(1+\frac{1}{t-s})}{t-s}\log(\frac{t (\log t)^2\sqrt{t+1-s}}{\delta_3\log 2})} , \sigma\in \bigcap_k [\hat\sigma_k^L,\hat\sigma_k^U]\Big)\\\le&    \mathbb{P}\Big(\exists s\le t ~\textrm{s.t.}~|\mathcal{T}_{s,t}(Z)|\ge\sigma \sqrt{\frac{2(1+\frac{1}{t-s})}{t-s}\log(\frac{t (\log t)^2\sqrt{t+1-s}}{\delta_3\log 2})}\Big) \\
    \le& \delta_3,
\end{align*}
where the last inequality is due to Theorem 5 of \cite{maillard:tel-02162189}.
Therefore, by a union bound argument, we obtain that 
\begin{align*}
   \mathbb{P}_{H_0}\Big(\exists s<t~\textrm{s.t.}~|\mathcal{T}_{s,t}|\ge \mathcal{C}_{s,t}\Big)\le \delta. 
\end{align*}
\end{proof}

 \section{Proof of Theorem \ref{hitting_prob}}\label{morgane_push_proof}
\begin{proof} For each $N$, we define $Y^N_i:=X_i^N+\frac{R}{2}$ and  remark that $(Y^N_i)$ are almost surely positive
 $Y^N_1\overset{a.s}{\ge }0,$ and are bounded by $R$ meaning that $Y^N_1\overset{a.s}{\le }R.$ Moreover we write$$\bar{W}_i^N:=W_i^N-i\mu_N=\sum_{j\le i}Y_j^N-\mathbb{E}[Y_j^N].$$ Using \Cref{thm:main2}, we know that there exists a Gaussian vector $(Z_i^N)$ with same mean and variance as $(\bar{W}_i^N)$ such that for all $\alpha>0$, we have
 \begin{align*}
        \mathbb{P}\Big(\forall {i\le N}\big|\bar{W}_i^N-Z_i^N\big|\ge \mathcal{D}_i(\alpha)\Big)\le \alpha.
    \end{align*}
 
    Hence we obtain that \begin{align*}
        \mathbb{P}(\tau_N\ge N)&=\mathbb{P}(\forall i\le N~ W_i^N\le g_i^N)
        \\&\le \mathbb{P}\Big(\max_{i\le N}\big|\bar{W}_i^N-Z_i^N\big|\ge \mathcal{D}_i(\alpha)\Big)\\&+\mathbb{P}\Big(\forall i\le N,~ Z_i^N+i\mu_n\le g_i^N+\mathcal{D}_i(\alpha)\Big)
        \\&\le \alpha+\mathbb{P}\Big(\forall i\le N~ Z_i^N+i\mu_N\le g_i^N+\mathcal{D}_i(\alpha)\Big).
    \end{align*}
    Now remark that $(\sigma_N^{-1}Z_i^N)\overset{d}{=}(B_t)_{t\in \mathbb{N},t\le N}$. Hence we obtain that  \begin{align*}
        \mathbb{P}(\tau_N\ge N)&=\mathbb{P}(\forall i\le N~ W_i^N\ge g_i^N)
        \\&\le \alpha+\mathbb{P}\Big(\forall i\le N,~ B_i\le \sigma_N^{-1}\{g_i^N-i\mu_N+\mathcal{D}_i(\alpha)\}\Big).
    \end{align*}This establishes the first claim of \Cref{hitting_prob}. The second claim is a direct consequence of \Cref{lemma:Delta_order} which implies that there exists a constant $\kappa^R<\infty$, that does not depend on $N$, such that \begin{align*}
       \max_{i\le N}\big|\mathcal{D}_i(\alpha)\big|\le \log(N)\kappa_R(\log(N)-\log(\alpha)).
    \end{align*}
\end{proof}
\section{Additional algorithms}\label{section:proofs_add_algo}In this section, we present some additional algorithms used in \Cref{change_oint} and mentioned in \Cref{turing}. 
\subsection{Algorithm used in Section \ref{change_oint}}
\begin{algorithm}[H]
\caption{Computing $(\tilde \Delta_k(\alpha))_{k\in (n_1,n_2]},\tilde\delta^*_{n_2}(\alpha)$ knowing $([\hat\sigma_k^L,\hat\sigma_k^U])$}\label{alg:main_delta_computation}\label{ts13}
\DontPrintSemicolon
\KwIn{Positive interval \((n_1,n_2] \), $R>0$, $\sigma>0$, total probability budget $\alpha > 0$ and intervals $([\hat\sigma_k^L,\hat\sigma_k^U])$}
\KwOut{Thresholds $(\tilde \Delta_k(\alpha))_{k\in (n_1,n_2]}$, $\tilde\delta_{n_2}^*(\alpha)$ }

\textbf{Define:} \(L:=\lceil \log_2(n_2-n_1)\rceil\)\; 

Let $\alpha_1^* \gets \alpha - \alpha_0^*$. \;

\tcp{Step 1: For any candidate split $\alpha_0 \in (0, \alpha)$, compute thresholds}
Given $\alpha_0 \in (0, \alpha)$  and $\alpha_1 \gets \alpha - \alpha_0$: 

\Indp
\tcp{Step 1a: Compute} \For{$k = 1$ \KwTo $n_2-n_1$}{
    $\Delta_{k}(\alpha_0) \gets$ output of \Cref{ts122} with input  $n_2-n_1$, $R/\hat\sigma_{k+n_1}^L$, $\sigma$, $n_2-n_1$ and $\alpha_0$\;}
\tcp{Step 1b: Calculate $\delta^\star$ using $\alpha_1$}
$\delta^\star(\alpha_1) \gets \hat\sigma_{n_2}^U\min_{2 \le p}\frac{S_p^{R/\hat\sigma_{n_2}^L}(n,1)}{(\alpha_1)^{1/p}}$\;

\tcp{Step 1c: Compute $\mathcal{D}_k(\alpha)$}
\For{$k = 1$ \KwTo $n$}{
$\mathcal{D}_k(\alpha):=\tilde \Delta_k(\alpha)+ \frac{k}{n}\delta^\star(\alpha_1)$}

\Indm

\tcp{Step 2: Optimize the split}
Find $\alpha_0^* \in (0, \alpha)$ that minimizes $\max_{k \le n} \mathcal{D}_k(\alpha)$.\;
Set $\alpha_1^* := \alpha - \alpha_0^*$\;

\tcp{Step 3: Compute Final output}
Set $\tilde \Delta_k(\alpha) := \hat\sigma_k^U \cdot \Delta_{k - n_1}(\alpha_0^*)$ for all $k \in (n_1, n_2]$.\; Set $\tilde \delta_{n_2}^*(\alpha) := \delta^\star(\alpha_1^*)$\;

\Return{$(\tilde \Delta_k(\alpha))_{k\in (n_1,n_2]}$, $\tilde\delta_{n_2}^*(\alpha)$}
\end{algorithm}

\subsection{Alternative algorithms mentioned in Theorem \ref{turing}}\label{turing_details}In this subsection we will let $(C_\ell)$ be a sequence of prespecified constants; and $(\zeta_i(\cdot))$ be any functions such that $$\mathbb{P}\big(|W_{n/2}-\tilde Z_{n/2}|\ge \zeta_{n/2}(\alpha)\big)\le \alpha\qquad \forall \alpha\in (0,1), n\in \mathbb{N}.$$ For example we can choose $C_\ell=\beta^\ell$ for some $\beta>1$ and $\zeta_{n/2}(\alpha)\le Rn/2+\sigma n/2\Phi^{-1}(\alpha).$

\begin{algorithm}[H]
\caption{ Computing $(\Delta_k(\alpha))_{k\le n}$}\label{alg:main_delta_computation_alt}\label{ts122_r}
\DontPrintSemicolon
\KwIn{An integer \(n >0\), total probability budget $\alpha > 0$, a prespecified sequence $(C_\ell)$}
\KwOut{Thresholds $(\Delta_k(\alpha))$}

\textbf{Define:}\(L:=\lceil \log_2(n)\rceil\)\;

\tcp{Step 1: Determine $\nu_0$ based on the chosen $\alpha$}
\textbf{Initialize for $\nu_0$ search:} Choose an initial guess for \( \nu_0 \) and set $\beta_0(\nu_0)=0$\;

\textbf{Compute $\beta_k$ recursively:}

\For{$k = 1$ \KwTo $L$}{
    $\beta_k(\nu_0) \gets 2\beta_{k-1}(\nu_0) - \beta_{k-1}^2(\nu_0) + C_{k} \nu_0$\;
}
\textbf{Finalize $\nu_0$:} Find the largest \( \nu_0^* \) such that \( \beta_L(\nu_0^*) < \alpha\)\;

\tcp{Step 2: Calculate intermediate $\delta^M_k$ values using $\nu_0^*$}
$\delta_1^0 \gets 0$\;
\For{$M = 1$ \KwTo $L$}{
    $\delta_{2^{M-1}}^M\gets  \min\big(\min_{2 \le p }\frac{\omega_p^R(2^M,\sigma)}{(C_M\nu_0^*)^{1/p}},\zeta_{M}(C_M\nu_0^*))$\;
    \For{$k = 1$ \KwTo $2^{M-1}$}{
        \(\delta_k^M \gets \delta_k^{M-1} + \frac{k}{2^{M-1}} \delta_{2^{M-1}}^M\)\;
    }
    \For{$k = 2^{M-1}+1$ \KwTo $2^M-1$}{
        \(\delta_k^M \gets \delta_{2^{M}-k}^{M-1} + \frac{2^{M}-k}{2^{M-1}} \delta_{2^{M-1}}^M\)\;
    }

}
Let $\delta^L_k$ denote the final values $\delta_k^L$ from this step.


\For{$k = 1$ \KwTo $2^L$}{
    $\Delta_k(\alpha) \gets \delta^L_k$\;
}

\Return{$(\Delta_k(\alpha))_{k\le n}$}
\end{algorithm}

\begin{algorithm}[H]
\caption{ Computing $(\mathcal{D}_k(\alpha))_{k\le n}$}\label{ts12_r}
\DontPrintSemicolon
\KwIn{An integer \(n>0\), $R>0$, $\sigma>0$, total probability budget $\alpha > 0$,and a pre-specified sequence of constants $(C_\ell)$}
\KwOut{Thresholds $(\mathcal{D}_k(\alpha))$ minimized by optimal choice of $\alpha_0$}

\textbf{Define:} \(L:=\lceil \log_2(n)\rceil\)\;

\tcp{Step 1: For any candidate split $\alpha_0 \in (0, \alpha)$, compute thresholds}
Given $\alpha_0 \in (0, \alpha)$ and $\alpha_1 := \alpha - \alpha_0$: 

\Indp
    \tcp{Step 1a: Compute $\Delta_k$ using $\alpha_0$}
    \For{$k = 1$ \KwTo $n$}{
        $\Delta_k(\alpha_0) \gets \text{output of \Cref{ts122_r} with input } n \text{, } \alpha_0$ and $(C_\ell)$\;
    }

    \tcp{Step 1b: Compute $\delta^\star$ using $\alpha_1$}
    $\delta^\star(\alpha_1) \gets \min_{2 \le p}\frac{s_p^R(n,\sigma)}{(\alpha_1)^{1/p}}$\;

    \tcp{Step 1c: Compute $\mathcal{D}_k(\alpha)$, values to be minimized}
    \For{$k = 1$ \KwTo $n$}{
        $\mathcal{D}_k(\alpha) \gets \Delta_k(\alpha_0) + \frac{k}{n}\delta^\star(\alpha_1)$\;
    }
\Indm
\tcp{Step 2: Optimize the split}
Find $\alpha_0^* \in (0, \alpha)$ that minimizes $\max_{k \le n} \mathcal{D}_k(\alpha)$. Let $\alpha_1^* := \alpha- \alpha_0^*$

$\mathcal{D}_k(\alpha) \gets \Delta_k(\alpha^*_0) + \frac{k}{n}\delta^\star(\alpha_1^*)$\;

\tcp{Step 3: Return optimized thresholds}
\Return{$(\mathcal{D}_k(\alpha))_{k \le n}$}
\end{algorithm}

It is a straightforward to check that the output of these algorithms give us the same guarantees than we have for \Cref{ts12,ts122}.
\begin{proposition}
    Assume that the conditions of \Cref{thm:main2} hold. Let $\alpha>0$ and $L\in\mathbb{N}$. Set $n=2^L$. Define $
(\Delta^b_k(\alpha))$ and $
(\mathcal{D}^b_k(\alpha))$ as the output of respectively \Cref{ts122_r} and \Cref{ts12_r}.
Then there exist  centered Gaussian vector $(Z_k)_{0\le k\le n}$ and $(\tilde Z_k)_{0\le k\le n}$ with covariance $\mathrm{Var}(Z)=\Sigma_n$ and $\mathrm{Var}(\tilde Z)=\mathrm{Var}(W)$ such that the following inequality holds:
\begin{align*}
    &\mathbb{P}(\exists k\le n\textrm{ s.t. }\big|S_k-{Z}_k\big|\ge \mathcal{D}_k(\alpha))\le\alpha,
   \\& \mathbb{P}(\exists k\le n\textrm{ s.t. }\big|W_k-\tilde{Z}_k\big|\ge \Delta_k(\alpha))\le\alpha.
\end{align*}
\end{proposition}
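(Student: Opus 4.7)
The plan is to mirror the proofs of \cref{ts11,all_too_well,thm:main3,thm:main2} essentially verbatim, substituting the modified budget sequence $(C_\ell\nu_0)$ for the fixed $\alpha_0$ at each level and allowing the alternative bound $\zeta_M(C_M\nu_0)$ in place of the Wasserstein-based one when it is tighter. In particular, \cref{ausecours} is distribution-free in its choice of $\delta_{n/2}$: it only requires a coupling lemma that produces $\tilde Z_{n/2}$ with $\mathbb{P}(|W_{n/2}-\tilde Z_{n/2}|\ge \delta_{n/2})\le \alpha_0'$ for some budget $\alpha_0'$. We therefore need to verify that both candidates used in Line 9 of \cref{ts122_r} supply such a coupling with budget $C_M\nu_0^*$.

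For the Wasserstein candidate $\inf_{p\ge 2}\omega_p^R(2^M,\sigma)/(C_M\nu_0^*)^{1/p}$, this is exactly what \cref{lemma:bound_S_and_Z} combined with \cref{thm:main1} provides, giving failure probability at most $C_M\nu_0^*$. For the alternative candidate $\zeta_M(C_M\nu_0^*)$, the definition imposed on the family $(\zeta_i)$ in \cref{turing_details} directly yields $\mathbb{P}(|W_{n/2}-\tilde Z_{n/2}|\ge \zeta_M(C_M\nu_0^*))\le C_M\nu_0^*$. Taking the minimum of the two candidates therefore still produces a coupling with budget $C_M\nu_0^*$. Feeding this into the recursion of \cref{ausecours} yields, in analogy with \cref{ts11}, a Gaussian vector $(\tilde Z_k)$ satisfying \cref{coupled} on $\Iintv{1,2^L}$ such that
\begin{equation*}
\mathbb{P}\big(\exists k\le n \ \text{s.t.}\ |W_k-\tilde Z_k|>\Delta_k^b(\alpha)\big)\le \beta_L(\nu_0^*),
\end{equation*}
where now $\beta_\ell$ is governed by the modified recursion $\beta_\ell=2\beta_{\ell-1}-\beta_{\ell-1}^2+C_\ell\nu_0^*$, exactly as computed in the algorithm. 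By the choice of $\nu_0^*$ as the largest value with $\beta_L(\nu_0^*)\le \alpha$, the first inequality follows. The second inequality (for $(S_k)$) is then obtained by absorbing the extra Gaussian step on $S_n$, as in \cref{all_too_well} and the proof of \cref{thm:main2}: decompose $\alpha=\alpha_0+\alpha_1$ optimally via Step~2 of \cref{ts12_r}, couple $S_n$ with an independent Gaussian via \cref{lemma:bound_S_and_Z} at budget $\alpha_1$, and set $Z_k:=\tilde Z_k+\frac{k}{n}Z_n$, which has the correct covariance $\Sigma_n$ by Gaussian conjugacy.

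The only delicate point, and the main potential obstacle, is checking that the modified recursion $\beta_\ell=2\beta_{\ell-1}-\beta_{\ell-1}^2+C_\ell\nu_0$ is compatible with the union-bound structure of \cref{ausecours}: the lemma produces a term $\alpha_0$ (one copy of the current-level failure probability) added to the doubled-minus-squared contribution from the lower levels. Since at level $M$ we allot budget $C_M\nu_0^*$ rather than $\nu_0^*$, the recursion cleanly becomes $\beta_M=2\beta_{M-1}-\beta_{M-1}^2+C_M\nu_0^*$, matching the algorithm. No other step of the original proofs uses the value of $\alpha_0$ in any way other than through this recursion, so everything else (the interpolation in \cref{eqn:def_Z_k_ell}, the measurability with respect to the two halves, the symmetry argument giving the squared term) carries through unchanged. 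The final bound $\beta_L(\nu_0^*)\le \alpha$ is enforced by construction in Step~1 of \cref{ts122_r}, which completes the proof.
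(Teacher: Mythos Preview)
Your proposal is correct and takes essentially the same approach as the paper, which records only that ``the proof of this result is a trivial adaptation of the proof of \cref{thm:main2,thm:main3}.'' You have spelled out that adaptation accurately: the key observations are that \cref{ausecours} works for any threshold $\delta_{n/2}$ satisfying $\mathbb{P}(|W_{n/2}-\tilde Z_{n/2}|\ge \delta_{n/2})\le \alpha_0'$ (so the minimum of the two candidates is valid, both applying to the same quantile-coupled $\tilde Z_{n/2}$), and that level-dependent budgets $C_M\nu_0^*$ simply replace $\alpha_0$ in the recursion $\beta_\ell=2\beta_{\ell-1}-\beta_{\ell-1}^2+C_\ell\nu_0^*$, with the remainder of the argument unchanged.
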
\begin{proof}
    The proof of this result is a trivial adaptation of the proof of \Cref{thm:main2,thm:main3}.
\end{proof}
 \section{\texorpdfstring{Computable bound for $\mathcal{W}_p(W_k,\mathcal{N}(0,\sigma^2\frac{k(n-k)}{n}))$}
{Computable bound for Wasserstein distance}}
\label{section:proofs_computable}
In this subsection we present a computable upper bound for $\mathcal{W}_p(W_k,\mathcal{N}(0,\sigma^2\frac{k(n-k)}{n}))$.\subsection{Notations}We first present some constants that appear in \Cref{nelson2}.
\begin{align*}
    D_{n,p}:=\frac{1}{2\sqrt{n}}\min
\begin{cases}
\sqrt{p-1}\max\Big((\tilde R_s^2-1)^{1-1/p},\Big[\frac{ (\tilde R_s^2-1)^{p}+\tilde R_s^2-1}{\tilde R_s^2}\Big]^{1/p}\Big)\\
\max\Big((\tilde R_s^2-1)^{1-1/p} ,\Big[\frac{ (\tilde R_s^2-1)^{p}\tilde R_s^2-1}{\tilde R_s^2}\Big]^{1/p}\Big)A^*_{n,p}
+\sqrt{\tilde R_s^2-1}A_p\\\sqrt{2}\Big(\frac{\Gamma(\frac{p+1}{2})}{\sqrt{\pi}}\Big)^{1/p}\tilde R_s^{2(1-2/p)}(\tilde R_s^2-1)^{1/p} \\\sqrt{2}2^{-1/p}\Big(\frac{\Gamma(\frac{p+1}{2})}{\sqrt{\pi}}\Big)^{1/p}\frac{\tilde R^2}{\sqrt{n}}\sqrt{\|\Bin(n,\frac{2(\tilde R_s^2-1)}{\tilde R_s^4})\|_{p/2}}\quad\stext{if} p\ge4.\end{cases} 
\end{align*}Moreover we write \begin{align*}
   C_{n,p}:= 
\min\begin{cases}\tilde A_{n,p}\tilde R 2^{1/p}+\sqrt{2}A_{n,p}\\{\sqrt{2}}\Big(\frac{\Gamma(\frac{p+1}{2})}{\sqrt{\pi}}\Big)^{1/p}\begin{cases}2^{1/p}\tilde R^{1-2/p}\text{\quad if }p<4\\2^{1/p}\tilde R^{1-2/p} \wedge \frac{\tilde R}{\sqrt{n}}\sqrt{\|\Bin(n,\frac{2}{\tilde R^2})\|_{p/2}}\quad \text{otherwise};\end{cases} \end{cases} 
\end{align*}and $$B_{p,n}:= \min\Big(\frac{\tilde R^2}{n}\|\Bin(n,\frac{2}{\tilde R^2})\|_p, 1+\frac{1}{\sqrt{n}}\tilde U_{n,p}\tilde R\Big).$$
\subsection{Computable bound}
\begin{lemma}\label{nelson2} Let $Z\sim \mathcal{N}(0,1)$ be an independent standard random variable.
Let $p\ge 2$ be a constant. 
   Then for all $k\le n$ we have that \begin{align*}
&\mathcal{W}_p(W_k,\mathcal{N}(0,\sigma_{n,k}^2))\\&\le \mathcal{S}_p^{\sigma,R}(n,k):=\sqrt{p-1}\sqrt{n}(R+\sigma)\wedge\begin{cases}\max\big(\tilde \omega_p^R(\sigma,k),\tilde \omega_p^R(\sigma,n-k))~\rm{if}~k\ne n/2\\\frac{1}{2}\{\tilde \omega_p^R(\sigma,n)\}~\textrm{otherwise}.\end{cases}   
   \end{align*} where\begin{align*}&\sigma^{-1}\tilde \omega_p^R(\sigma,n)\\&:=\inf_{\substack{\kappa\le \tilde R^2/n\\K_p\ge 1}} \frac{\sqrt{n}}{\mnkappa\mathbb{I}(p\ne 2)+\mathbb{I}(p=2)}\Big\{\|Z\|_p\Big(\frac{\pi}{2}-\sin^{-1}(\sqrt{1-\frac{\tilde R^2}{n\kappa}})\Big)+\|Z\|_p D_{n,p} M_{n,\kappa}^2\\&+  \frac{B_{p,n}}{2} \Big\{\sum_{\substack{K_p>j\ge 1}}\frac{R^{2k}}{\sqrt{n}}\Big(\frac{\|H_{2j+1}\|_p}{(2j+2)!}-
   \frac{2^{-j}e^{19/300}\pi^{1/4}K_p^{1/4}\sqrt{p-1}^{2k+1}}{2(K_p+1)\sqrt{2K_p+1}j!}\Big)\int_{{\frac{\tilde R^2}{\kappa}}}^n\frac{1}{\sqrt{y}}\Big(\frac{1}{y}-\frac{1}{n}\Big)^jdy
   \\&\quad+\frac{1}{4}e^{19/300}\pi^{1/4} \frac{K_p^{1/4}\sqrt{p-1}}{(K_p+1)\sqrt{2K_p+1}}\frac{1}{\sqrt{n}}\int_{{\frac{\tilde R^2}{\kappa}}}^n\frac{1}{\sqrt{y}}\big(e^{\frac{1}{2}(p-1)\tilde R^2(\frac{1}{y}-\frac{1}{n})}-1\big)dy\Big\}
\\&+\frac{C_{n,p}}{2}\Big\{\sum_{\substack{1\le j\le K_p-1}}
 \frac{\tilde R^{2j}}{n}\Big(\frac{\|H_{2j}\|_p}{(2j+1)!}-\frac{K_p^{1/4}2^{-j}(p-1)^je^{19/300}\pi^{1/4}}{(2K_p+1)j!}\Big)\int_{\frac{\tilde R^2}{\kappa}}^n\frac{1}{\sqrt{y}}\big(\frac{1}{y}-\frac{1}{n}\big)^{j-\frac{1}{2}}dy
 \\&\quad +e^{19/300}\pi^{1/4}\frac{K_p^{1/4}}{(2K_p+1)n}
\int_{\frac{1}{n}}^{\frac{\kappa}{\tilde R^2}}\frac{1}{y^{3/2}\sqrt{y-\frac{1}{n}}}\big[e^{\frac{1}{2}(p-1)\tilde R^2\big(y-\frac{1}{n}\big)}-1\big]dy\Big\}\Big\},
\end{align*}
where $M_{n,\kappa}:={\sqrt{1-\frac{\tilde R^2}{n\kappa}}}$ and $\sigma^2_{n,k}:=\frac{\sig k(n-k)}{n}.$ 

\end{lemma}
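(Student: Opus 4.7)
The plan is to reduce the bound on $\mathcal{W}_p(W_k,\mathcal{N}(0,\sigma_{n,k}^2))$ to bounds on Wasserstein distances of i.i.d.\ partial sums to Gaussians (which is what $\tilde\omega_p^R(\sigma,\cdot)$ encodes) via the fact that $W_k$ is a deterministic affine combination of two independent partial sums.

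First, I would establish the trivial branch of the minimum, $\sqrt{p-1}\sqrt{n}(R+\sigma)$, by a triangle inequality combined with a Rosenthal-type bound. Since $\mathcal{W}_p(W_k,\mathcal{N}(0,\sigma_{n,k}^2))\le \|W_k\|_p+\sigma_{n,k}\|Z\|_p$ for $Z\sim\mathcal{N}(0,1)$, and since $W_k=\frac{n-k}{n}S_k-\frac{k}{n}(S_n-S_k)$ is a weighted sum of independent centered bounded sums, \cref{lemma:rosenthal_explicit-const} applied to each block, together with $\|Z\|_p\le\sqrt{p-1}$ and $\sigma_{n,k}\le\sqrt{n}\sigma$, yields the claim directly.

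Second, for the main branch with $k\neq n/2$, the key observation is that $W_k=\frac{n-k}{n}S_k-\frac{k}{n}(S_n-S_k)$ is a sum of two \emph{independent} random variables, and the Gaussian target variance splits as $\sigma_{n,k}^2=\sigma^2\frac{k(n-k)^2}{n^2}+\sigma^2\frac{k^2(n-k)}{n^2}$. Using the subadditivity of $\mathcal{W}_p$ under independent sums (take the product of optimal couplings and apply Minkowski) together with the scaling identity $\mathcal{W}_p(cX,cY)=c\,\mathcal{W}_p(X,Y)$ and the fact that $S_n-S_k\eqdist S_{n-k}$, this would give
\begin{align*}
\mathcal{W}_p(W_k,\mathcal{N}(0,\sigma_{n,k}^2))\le \tfrac{n-k}{n}\mathcal{W}_p\bigl(S_k,\mathcal{N}(0,k\sigma^2)\bigr)+\tfrac{k}{n}\mathcal{W}_p\bigl(S_{n-k},\mathcal{N}(0,(n-k)\sigma^2)\bigr).
\end{align*}
Bounding each iid partial-sum Wasserstein distance by $\tilde\omega_p^R(\sigma,\cdot)$ and noting that a convex combination is dominated by the maximum yields the $\max(\tilde\omega_p^R(\sigma,k),\tilde\omega_p^R(\sigma,n-k))$ branch. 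For the sharper $k=n/2$ case, I would exploit the identity $2W_{n/2}=S_{n/2}-(S_n-S_{n/2})=\sum_{i\le n/2}X_i+\sum_{i>n/2}(-X_i)$. Since $\{-X_i\}_{i>n/2}$ are i.i.d.\ centered with the same variance $\sigma^2$ and almost-sure bound $R_s$ as the original summands, $2W_{n/2}$ is a centered i.i.d.\ sum of $n$ bounded variables, so the scaling identity gives $\mathcal{W}_p(W_{n/2},\mathcal{N}(0,n\sigma^2/4))=\tfrac12\mathcal{W}_p(2W_{n/2},\mathcal{N}(0,n\sigma^2))\le\tfrac12\tilde\omega_p^R(\sigma,n)$.

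The remaining ingredient---and the main technical obstacle---is verifying that $\tilde\omega_p^R(\sigma,m)$ indeed upper bounds $\mathcal{W}_p(S_m,\mathcal{N}(0,m\sigma^2))$ for arbitrary $m$, which should be stated as a separate (already-referenced) auxiliary lemma. This proceeds by Stein's method of exchangeable pairs applied to $S_m$ with the standard swap-one-summand exchangeable pair, converted to a $\mathcal{W}_p$ bound via the Otto--Villani/OU-semigroup argument of \cref{lemma:Wp_bound_using_Bonis_basic2}. The complicated terms $B_{p,n}$, $C_{n,p}$, $D_{n,p}$ and the sums involving Hermite polynomials and the cutoff $\kappa$ come from estimating the $L_p$-norms of the polynomial conditional expectations $\mathbb{E}[(\tilde S_m-S_m)^\ell\mid S_m]$ at each order, mirroring the machinery developed in \cref{lemma:a1_integrand_Wk_exchgb_bound,lemma:odd_Lp_term_bound,lemma:even_Lp_term_bound,lemma:odd_integral_bound,lemma:even_integral_bound} but in the simpler i.i.d.\ setting (without the sampling-without-replacement dependence); the main work is purely computational bookkeeping of explicit constants.
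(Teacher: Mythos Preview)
Your approach for the $k\neq n/2$ branch and the trivial bound matches the paper's proof essentially line for line: decompose $W_k=(1-\tfrac{k}{n})S_k-\tfrac{k}{n}(S_n-S_k)$, couple each independent block optimally, apply Minkowski, and invoke the auxiliary i.i.d.\ bound $\tilde\omega_p^R(\sigma,\cdot)$ (which the paper cites as Lemma~8 of \cite{austern2023efficient} rather than re-deriving).

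For $k=n/2$ you take a different and shorter route than the paper, but there is a gap. Your claim that ``$2W_{n/2}$ is a centered i.i.d.\ sum of $n$ bounded variables'' is false: the summands $X_1,\dots,X_{n/2},-X_{n/2+1},\dots,-X_n$ are independent, centered, variance-$\sigma^2$, and bounded by $R_s$, but they are \emph{not} identically distributed unless $X_1$ happens to be symmetric. You therefore cannot invoke the i.i.d.\ bound $\tilde\omega_p^R(\sigma,n)$ as a black box. The gap is repairable: the Stein exchangeable-pair derivation of $\tilde\omega_p^R(\sigma,n)$ only uses the quantities $\mathbb{E}[X_i]=0$, $\mathbb{E}[X_i^2]=\sigma^2$, $|X_i|\le R_s$, and the law of the symmetrized difference $X_i-X_i'$, all of which are invariant under $X_i\mapsto -X_i$; so the bound does extend to this independent-but-not-i.d.\ sum. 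But you must state and justify this extension.

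The paper sidesteps this issue entirely by redoing the full Stein/OU calculation directly on $\tilde W_{n/2}=W_{n/2}/\sigma_{n,n/2}$ with the replace-one-summand exchangeable pair $(I\sim\Unif\{1,\dots,n\}$, $X_I\to X_I')$. Because the coefficients are $\pm\tfrac12$ and the differences $X_I-X_I'$ are symmetric, every term $(a_0),(a_1),(a_2),(a_3)$ collapses to the same expression as for $S_n/(\sigma\sqrt n)$, and the factor $\tfrac12$ falls out from $\sigma_{n,n/2}=\tfrac{\sigma\sqrt n}{2}$. This is longer but self-contained; your shortcut is cleaner once the sign-invariance of $\tilde\omega_p^R$ is made explicit.
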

\begin{proof}

If $k\ne n/2$ then we first remark that \begin{align*}
    W_k=\Big(1-\frac{k}{n}\Big)S_k-\frac{k}{n}(S_n-S_k).
\end{align*}We note that $S_k$ and $S_n-S_k$ are two independent sums of i.i.d random variables. Hence according to the definition of the Wasserstein-p distance we know that for all $\epsilon>0$ there exists $G_1\sim \mathcal{N}(0,k\sigma^2)$ and $G_2\sim \mathcal{N}(0,(n-k)\sigma^2) $ such that $$\|S_k-G_1\|_p\le \mathcal{W}_p(S_k, \mathcal{N}(0,\sigma^2k))+\epsilon,~\|S_n-S_k-G_2\|_p\le \mathcal{W}_p(S_n-S_k, \mathcal{N}(0,\sigma^2(n-k)))+\epsilon.$$ Moreover we remark that $$\Big(1-\frac{k}{n}\Big)G_1-\frac{k}{n}G_2\sim \mathcal{N}(0,\frac{k(n-k)\sigma^2}{n}).$$ Hence we obtain that $$\mathcal{W}_p(W_k, \mathcal{N}(0,\sigma^2_{n,k}))\le (1-\frac{k}{n})\mathcal{W}_p(S_k, \mathcal{N}(0,\sigma^2k))+\frac{k}{n}\mathcal{W}_p(S_n-S_k, \mathcal{N}(0,\sigma^2(n-k))) +2\epsilon.$$ As $\epsilon>0$ is arbitrary we directly obtain that $$\mathcal{W}_p(W_k, \mathcal{N}(0,\sigma^2_{n,k}))\le \Big(1-\frac{k}{n}\Big)\mathcal{W}_p(S_k, \mathcal{N}(0,\sigma^2k))+\frac{k}{n}\mathcal{W}_p(S_n-S_k, \mathcal{N}(0,\sigma^2(n-k))).$$
According to Lemma 8 \cite{austern2023efficient} we obtain that $$\mathcal{W}_p(W_k, \mathcal{N}(0,\sigma^2_{n,k}))\le \max\Big(\tilde \omega_p^R(\sigma,k),\tilde \omega_p^R(\sigma,n-k)\Big).$$
Moreover using \Cref{lemma:MZineq_rio} we note that the following trivial bound also holds:
\begin{align*}
    \mathcal{W}_p(W_k, \mathcal{N}(0,\sigma_{n,k}^2))&\le \|W_k\|_p+\sigma_{n,k}\|Z\|_p
    \\&\le \sqrt{p-1}\Big((1-\frac{k}{n})R\sqrt{k}+\frac{k\sqrt{n-k}R}{n}+\sigma_{n,k}\Big)
    \\&\le \sqrt{p-1}\frac{\sqrt{n}}{2}\Big(R\sqrt{2}+\sigma\Big).
\end{align*}

We finally prove a tighter bound if $k=n/2$. In 
Fix any $p\geq 2$, define $\tilde W_k:= \frac{W_k\sqrt{n}}{\sig \sqrt{k(n-k)}}$, and let $Z$ represent an independent standard normal variable. 
We first remark that $$\mathcal{W}_p(W_k,\mathcal{N}(0,\sigma_{n,k}^2)\le\sigma_{n,k}\mathcal{W}_p(\tilde W_k,Z). $$ We first bound the right hand side. In this goal, we note that, for all $j\ge 1$, the moments of $X_i^j$ can be upper bounded as
\begin{align}\label{moment_eq_t}&\|X_i^j\|_p=\big(\mathbb{E}(|X_i|^{jp})\big)^{1/p}\le \big(\|X_i\|_{\infty}^{kp-2}\mathbb{E}(|X_i|^2)\big)^{1/p} \le\frac{R_s^j\sig^{2/p}}{R_s^{2/p}}.
\end{align}%

Consider a random index $I\sim \Unif(\{1,\dots,n\})$ and a sequence $(X_i')_{i\geq1}\eqdist (X_i)_{i\geq1}$ with \\ $(I,(X_i')_{i\geq1},(X_i)_{i\geq1})$ mutually independent.
Define an exchangeable copy of $\tilde W_k$,
\begin{align}
W'_{k}:= \tilde W_k+(X'_I-X_I)\sigma_{n,k}^{-1}\begin{cases}
    (1-\frac{k}{n})\quad \text{if}~I\le k\\-\frac{k}{n}\quad \text{otherwise}
\end{cases},
\end{align} 
and the exchangeable pair difference,
\begin{align*}
Y:=\tilde W_k-W'_j.
\end{align*} 
 Fix any $\kappa\ge \frac{\tilde R^2}{n}$ and define
\begin{align}
\mnkappa:= {\sqrt{1-\frac{\tilde R^2}{n\kappa}}}.
\end{align}
A slight modification of Theorem 3 of \cite{bonis2020stein} shows that
\begin{align*}\label{sneeze}\wass (\tilde W_k,Z)\le &\int^{-\frac{1}{2}\log(1-\frac{\tilde R^2}{n\kappa})}_0\|e^{-t}\tilde W_k-\frac{e^{-2t}}{\sqrt{1-e^{-2t}}}Z\|_pdt \\&+\int_{-\frac{1}{2}\log(1-\frac{R^2}{n\kappa})}^{\infty}e^{-t}\|n\mathbb{E}\big(Y|\tilde W_k\big)-
\tilde W_k\|_pdt\\&+\int_{-\frac{1}{2}\log(1-\frac{\tilde R^2}{n\kappa})}^{\infty}\frac{e^{-2t}\|H_1\|_p}{\sqrt{1-e^{-2t}}}\|\frac{n}{2}\mathbb{E}\big(Y^2|\tilde W_k\big)-1\|_pdt
\\&+\sum_{j\ge 3}\int_{-\frac{1}{2}\log(1-\frac{\tilde R^2}{n\kappa})}^{\infty}\frac{e^{-jt}\|H_{j-1}\|_p}{j!(\sqrt{1-e^{-2t}})^{j-1}}n\|\mathbb{E}\big(Y^j|\tilde W_k\big)\|_pdt
\\
:= &\ (a_0)+(a_1)+(a_2)+(a_3).
\end{align*}
   
We first bound $(a_0).$ 
To this end, fix $\epsilon>0$ and select $G\sim \mathcal{N}(0,1)$ independent from $Z$ such that $\|\tilde W_k-G\|_p\le \mathcal{W}_p(\tilde W_k,Z)+\epsilon.$ 
By the triangle inequality we have
\begin{align*}&
   \int^{-\frac{1}{2}\log(1-\frac{\tilde R^2}{n\kappa})}_0\|e^{-t}\tilde W_k-\frac{e^{-2t}}{\sqrt{1-e^{-2t}}}Z\|_pdt  \\&\le\int^{-\frac{1}{2}\log(1-\frac{\tilde R^2}{n\kappa})}_{0}e^{-t}\|\tilde W_k-G\|_p+\|e^{-t}G-\frac{e^{-2t}}{\sqrt{1-e^{-2t}}}Z\|_p dt
    \\&\overset{(a)}{=} (1-\mnkappa)\|\tilde W_k-G\|_p+\|Z\|_p\int^{-\frac{1}{2}\log(1-\frac{\tilde R^2}{n\kappa})}_{0}\frac{e^{-t}}{\sqrt{1-e^{-2t}}}dt  \\&\le (1-\mnkappa)\big\{\mathcal{W}_p(\tilde W_k,Z)+\epsilon\big\} +\|Z\|_p\big[\frac{\pi}{2}-\mathrm{sin}^{-1}(\mnkappa)\big],
\end{align*}
where (a) follows from the fact $e^{-t}G-\frac{e^{-2t}}{\sqrt{1-e^{-2t}}}Z\overset{d}{=} \frac{e^{-t}}{\sqrt{1-e^{-2t}}}Z$.
Since $\epsilon>0$ was arbitrary, we have \begin{align}&
 (a_0)
 \le (1-\mnkappa)\mathcal{W}_p(\tilde W_k,Z)+\|Z\|_p\big[\frac{\pi}{2}-\mathrm{sin}^{-1}(\mnkappa)\big].
\end{align} 
In addition, if $p=2$, by independence of $Z$ and $S_n$ we obtain that 
\begin{align}\label{eq:02}
   \int^{-\frac{1}{2}\log(1-\frac{\tilde R^2}{n\kappa})}_0\|e^{-t}\tilde W_k-\frac{e^{-2t}}{\sqrt{1-e^{-2t}}}Z\|_pdt &\le\int^{-\frac{1}{2}\log(1-\frac{\tilde R^2}{n\kappa})}_0\sqrt{e^{-2t}\|\tilde W_k\|_2^2+\frac{e^{-4t}}{{1-e^{-2t}}}\|Z\|_2^2} dt
    \\&\le\int^{-\frac{1}{2}\log(1-\frac{\tilde R^2}{n\kappa})}_0\frac{e^{-t}}{\sqrt{1-e^{-2t}}}dt
    \le
 \frac{\pi}{2}-\mathrm{sin}^{-1}(\mnkappa).
\end{align} 

Therefore, according to \cref{sneeze}, if $p>2$ we have 
\begin{align*}\mnkappa\wass (\tilde W_k,Z)\le & \|Z\|_p\Big(\frac{\pi}{2}-\mathrm{sin}^{-1}(\sqrt{1-\frac{\tilde R^2}{n\kappa}})\Big)+(a_1)+(a_2)+(a_3).\end{align*} and if $p=2$ we have\begin{align*}\wass (\tilde W_k,Z)\le & \|Z\|_p\Big(\frac{\pi}{2}-\mathrm{sin}^{-1}(\sqrt{1-\frac{\tilde R^2}{n\kappa}})\Big)+(a_1)+(a_2)+(a_3).\end{align*} 

 We will next bound $(a_1)$ of \cref{sneeze}. 
 Note that, since $I\sim\Unif(\{1,\dots,n\})$, 
\begin{align*}\label{merur2_t}
\mathbb{E}\big(Y|\tilde W_k\big)=\frac{\sigma_{n,k}^{-1}}{2n}\sum_{i\le k}\mathbb{E}(X_i-X'_i|\tilde W_k)-\frac{\sigma_{n,k}^{-1}}{n}\sum_{i\le k}\mathbb{E}(X_i-X'_i|\tilde W_k)=\frac{1}{n}\tilde W_k,
\end{align*} 
and hence $\|n\mathbb{E}\big(Y|\tilde W_k\big)-\tilde W_k\|_p=0$. 
Therefore $(a_1)=0$.

We now turn to bounding $(a_2)$ of \cref{sneeze}. 
By Jensen's inequality, we have
\begin{align*}\|\frac{n}{2}\mathbb{E}\big(Y^2|\tilde W_k\big)-1\|_p
&\le \|\frac{1}{2\sigma_{n,k}^2}\sum_{i\le k}(1-\frac{k}{n})^2(\mathbb{E}(X_i^2)+X_i^2)+\frac{1}{2\sigma_{n,k}^2}\sum_{i>k}\frac{k^2}{n^2}(\mathbb{E}(X_i^2)+X_i^2)-1\|_p
\\&=\frac{1}{8} \|\frac{1}{\sigma_{n,k}^2}\sum_{i\le k}X_i^2+\frac{1}{\sigma_{n,k}^2}\sum_{i>k}\sum_{i\le n}X_i^2-1\|_p\\&=\frac{1}{2n} \|\sum_{i\le n}\sigma^{-2}X_i^2-1\|_p.
\end{align*} 
To better bound this we first obtain an upper bound for $\|\sigma^{-2}X_i^2-1\|_p$. In this goal, we remark that   $|\sigma^{-2}X_i^2-1|\overset{a.s.}{\le}\max(\tilde R_s^2-1,1\big)$. We will get a different upper bound depending on if $\tilde R_s^2-1\le 1$ or not. 
First suppose $2\ge\tilde R_s^2$.
Since $\mathbb{E}(X_i^2)=\sigma^2$, we know $\|\sigma^{-2}X_i^2-1\|_p$  is maximized when  $\frac{1}{R_s^2} X_i^2\sim \Ber(\frac{1}{\tilde R_s^2}).$
Hence
\begin{align}
\|\sigma^{-2}X_i^2-1\|_p
\le  
\Big[\frac{ \tilde R_s^2-1}{ \tilde R_s^2}\Big]^{1/p}\big( (\tilde R_s^2-1)^{p-1}+1\big)^{1/p}.
\end{align} 
Now suppose $\tilde R_s^2\geq 2$. We instead obtain that 
\begin{align}
\|\sigma^{-2}X_i^2-1\|_p
\le  
(\|X_i^2\sigma^{-2}-1\|_\infty^{p-2} \|X_i^2\sigma^{-2}-1\|_2^2)^{1/p}
\le
\big( \tilde R_s^2-1\big)^{1-1/p}.
\end{align} 
Therefore using the Marcinkiewicz-Zygmund inequality (\Cref{lemma:MZineq_rio}) we have if $\tilde R_s^2\ge 2$
 \begin{align*}\|\frac{n}{2}\mathbb{E}\big(Y^2|\tilde W_k\big)-1\|_p
\overset{}{\le}\frac{1}{2\sqrt{n}}\sqrt{p-1}(\tilde R_s^2-1)^{1-1/p}
\end{align*} and if $\tilde R_s^2\le 2$ we instead obtain that \begin{align*}&\|\frac{n}{2}\mathbb{E}\big(Y^2|\tilde W_k\big)-1\|_p
\\&\overset{}{\le}\frac{1}{2\sqrt{n}}\sqrt{p-1}\Big[\frac{ R_s^2-1}{ R_s^2}\Big]^{1/p}\big( (R_s^2-1)^{p-1}+1\big)^{1/p}
\end{align*}
Moreover using the Rosenthal inequality \Cref{lemma:rosenthal_indep_nonidentical} we find that 
\begin{align*}\|\frac{n}{2}\mathbb{E}\big(Y^2|\tilde W_k\big)-1\|_p
\overset{}{\le}\frac{1}{2\sqrt{n}}\Big[(\tilde R_s^2-1)^{1-1/p}A_{n,p}^*+\sqrt{\tilde R_s^2-1}A_p\Big].
\end{align*}
for  $ \tilde R_s^2\ge 2$ and
\begin{align*}&\|\frac{n}{2}\mathbb{E}\big(Y^2|\tilde W_k\big)-1\|_p
\\&\overset{}{\le}\frac{1}{2\sqrt{n}}\Big[\frac{ R_s^2-1}{ R_s^2}\Big]^{1/p}\Big[\big( (R_s^2-1)^{p-1}+1\big)^{1/p}A^*_{n,p}+\sqrt{R_s^2-1}A_p\Big]
\end{align*}
for $\tilde R_s^2\le 2$.
Alternatively, if $k=n/2$ by \cite[Eq.~(2.8)]{esseen1975bounds} for $p<4$ and  \cite[Thm.~2.6]{cox1983sharp} for $p\ge 4$, we have the symmetrized estimate
\begin{align*}
\|\sigma^{-2}\sum_{i\le n}X_i^2-1\|_p\le2^{-1/p} \|\sigma^{-2}\sum_{i\le n}X_i^2-(X'_i)^2\|_p.
\end{align*}
Since the random variables $\big(X_i^2-(X'_i)^2\big)_{i\geq 1}$ are symmetric, with
\begin{align*}
\mathbb{E}[(X_i^2-(X'_i)^2)^2]
\le \frac{2}{n^2}(R_s^2-\sigma^2)
\qtext{and} \mathbb{E}[(X_i^2-(X'_i)^2)^p]\le \frac{2}{n^{p}}(R_s^2-\sigma^2)R_s^{2(p-2)},
\end{align*}
an improvement on the  Marcinkiewicz-Zygmund inequality for symmetric random variables (\Cref{dino_veg_symm}) implies
\begin{align*}
\|\sigma^{-2}\sum_{i\le n}X_i^2-(X'_i)^2\|_p
   \le
\begin{cases}
{\sqrt{2}2^{1/p}}{\sqrt{n}} \Big(\frac{\Gamma(\frac{p+1}{2})}{\sqrt{\pi}}\Big)^{1/p}\tilde R_s^{2(1-2/p)}(\tilde R_s^2-1)^{1/p} & \stext{if} p\ge2\\
{\sqrt{2}}\Big(\frac{\Gamma(\frac{p+1}{2})}{\sqrt{\pi}}\Big)^{1/p}{\tilde R^2}\|\Bin(n,\frac{2(\tilde R_s^2-1)}{\tilde R_s^4})\|_p &\stext{if} p\ge4.
\end{cases}
\end{align*}
Hence we finally obtain that 
\begin{align*}\label{highsch}&\qquad\|\frac{n}{2}\mathbb{E}\big(Y^2|\tilde W_k\big)-1\|_p
\\&\overset{}{\le}D^k_{n,p}:=\frac{1}{2\sqrt{n}}\min
\begin{cases}
\sqrt{p-1}\max\Big((\tilde R_s^2-1)^{1-1/p},\Big[\frac{ (\tilde R_s^2-1)^{p}+\tilde R_s^2-1}{\tilde R_s^2}\Big]^{1/p}\Big)\\
\max\Big((\tilde R_s^2-1)^{1-1/p} ,\Big[\frac{ (\tilde R_s^2-1)^{p}\tilde R_s^2-1}{\tilde R_s^2}\Big]^{1/p}\Big)A^*_{n,p}
+\sqrt{\tilde R_s^2-1}A_p\\\sqrt{2}\Big(\frac{\Gamma(\frac{p+1}{2})}{\sqrt{\pi}}\Big)^{1/p}\tilde R_s^{2(1-2/p)}(\tilde R_s^2-1)^{1/p} \\\sqrt{2}2^{-1/p}\Big(\frac{\Gamma(\frac{p+1}{2})}{\sqrt{\pi}}\Big)^{1/p}\frac{\tilde R^2}{\sqrt{n}}\sqrt{\|\Bin(n,\frac{2(\tilde R_s^2-1)}{\tilde R_s^4})\|_{p/2}}\quad\stext{if} p\ge4.\end{cases} 
\end{align*}
This implies that 
{\begin{align*}
    &\int_{-\frac{1}{2}\log(1-\frac{R^2}{n\kappa})}^{\infty}\|\frac{n}{2}\mathbb{E}\big((Y)^2|\tilde W_k\big)-1\|_p\frac{e^{-2t}}{\sqrt{1-e^{-2t}}}dt
    \\&\le D_{n,p}\int_{-\frac{1}{2}\log(1-\frac{R^2}{n\kappa})}^{\infty}\frac{e^{-2t}}{\sqrt{1-e^{-2t}}}dt
     = D_{n,p}\int^{\sqrt{1-\frac{R^2}{n\kappa}}}_{0}\frac{t}{\sqrt{1-t^2}}dt
    = \mnkappa^2 D_{n,p}.
\end{align*}}
Therefore since $H_1(Z)=Z$ we obtain 
\begin{align*}
(a_2)&\le (b_{2,1}^{\kappa,p,\tilde R})
:=
\|Z\|_p D_{n,p} M_{n,\kappa}^2.
\end{align*}

Finally, we turn to bounding $(a_3)$ of \cref{sneeze}.  
Since
\begin{align*}
   n \mathbb{E}(Y^j|X_1,\dots, X_n)=\sigma_{n,k}^{-j}\sum_{i\le n}\mathbb{E}((X_i-X'_i)^j|X_1,\dots, X_n)
   \qtext{for all}
  j \geq 1,
\end{align*} 
by Jensen's inequality, we have for all $j\ge 1$

\begin{align*}
n\big\|\mathbb{E}(Y^j|\tilde W_k)\big\|_p&\le\sigma_{n,k}^{-j}\big\|\sum_{i\le k}(1-\frac{k}{n})^j(X_i-X'_i)^j+\sum_{k < i\le n}(\frac{k}{n})^j(X_i-X'_i)^j\big\|_p
\\&=\sigma_{n,k}^{-j}2^{-j}\big\|\sum_{i\le k}(X_i-X'_i)^j+\sum_{k< i\le n}(X_i-X'_i)^j\big\|_p.
\end{align*}
We will derive different bounds for odd and even $j$, so we begin by writing
\begin{align*}
    (a_3)&\le \sum_{\substack{j\ge 3\\j ~\rm{is~odd}}}\int_{-\frac{1}{2}\log(1-\frac{\tilde R^2}{n\kappa})}^{\infty}\frac{e^{-jt}\|H_{j-1}\|_p}{j!(\sqrt{1-e^{-2t}})^{j-1}}n\|\mathbb{E}\big(Y^j|\tilde W_k\big)\|_pdt
    \\&+ \sum_{\substack{j\ge 4\\j ~\rm{is~even}}}\int_{-\frac{1}{2}\log(1-\frac{\tilde R^2}{n\kappa})}^{\infty}\frac{e^{-jt}\|H_{j-1}\|_p}{j!(\sqrt{1-e^{-2t}})^{j-1}}n\|\mathbb{E}\big(Y^j|\tilde W_k\big)\|_pdt
    :=(a_{3,1})+(a_{3,2}).
\end{align*} 

Let $j\ge 3$ be an \emph{odd} integer. The random variables $((X_i-X'_i)^j)_{i\ge1}$ are symmetric and therefore have a mean of zero. 
Moreover we note that 
\begin{align*}&\|(X_i-X'_i)^j\|_p=\|(Y_i-Y'_i)^j\|_p
\\&\le\|\max(Y_i,Y_i')^{j-2/p}|Y_i-Y'_i|^{2/p}\|_p
\\&\le  R^{j-2/p}\||Y_i-Y'_i|^{2/p}\|_p
\le {2^{1/p} R^{j-2/p}\sigma^{2/p}}.
\end{align*}
Similarly, we also have \begin{align}\|(X_i-X'_i)^j\|_2\le{\sqrt{2} R^{j-1}}\sigma.\end{align}
Therefore, using \Cref{lemma:MZineq_rio,dino_veg_symm,lemma:rosenthal_indep_nonidentical}
we obtain that 
$\sigma^{-j}\big\|\sum_{i\le n}(X_i-X'_i)^j\big\|_p\le 
\frac{ \tilde R^{j-1}}{\sqrt{n}^{j-1}}C_{n,p}$ where we set \begin{align*}
   C_{n,p}:= 
\min\begin{cases}\tilde A_{n,p}\tilde R 2^{1/p}+\sqrt{2}A_{n,p}\\{\sqrt{2}}\Big(\frac{\Gamma(\frac{p+1}{2})}{\sqrt{\pi}}\Big)^{1/p}\begin{cases}2^{1/p}\tilde R^{1-2/p}\text{\quad if }p<4\\2^{1/p}\tilde R^{1-2/p} \wedge \frac{\tilde R}{\sqrt{n}}\sqrt{\|\Bin(n,\frac{2}{\tilde R^2})\|_{p/2}}\quad \text{otherwise}.\end{cases} \end{cases} 
\end{align*}
Hence we obtain that 
\begin{align*}(a_{3,1})&=\sum_{\substack{j\ge 3\\odd}}\int_{-\frac{1}{2}\log(1-\frac{\tilde R^2}{n\kappa})}^{\infty}\frac{e^{-tj}\|H_{j-1}\|_p}{j!\sqrt{{1-e^{-2t}}}^{j-1}}n\|\mathbb{E}\big(Y^j|S_n\big)\|_pdt
\\&\le\sum_{\substack{j\ge 3\\odd}}
 {C_{n,p}}
 \int^{\sqrt{1-\frac{\tilde R^2}{n\kappa}}}_0\frac{x^{j-1}\tilde R^{j-1}\|H_{j-1}\|_p}{j!\sqrt{1-x^2}^{j-1}\sqrt{n}^{j-1}}dx
 \\&\overset{(a)}{\le} \sum_{\substack{j\ge 1}}C_{n,p}
 \frac{\tilde R^{2j}\|H_{2j}\|_p}{n^j(2j+1)! }\int^{\sqrt{1-\frac{\tilde R^2}{n\kappa}}}_0\frac{x^{2j}}{(1-x^2)^j}dx
 \\&\overset{(b)}{=}\frac{1}{2}\sum_{\substack{j\ge 1}}
C_{n,p} \frac{\tilde R^{2j}\|H_{2j}\|_p}{n(2j+1)!}\int_{\frac{\tilde R^2}{\kappa}}^n\frac{1}{\sqrt{y}}\big(\frac{1}{y}-\frac{1}{n}\big)^{j-\frac{1}{2}}dy,
\end{align*}
where (a) is obtained by noting that all odd numbers $k$ can be written as $2m+1$ for an $m\in \mathbb{N}_+$ and (b) by the change of variables $y=n(1-x^2)$.
 To further upper bound the right-hand side, we  will invoke the Hermite polynomial moment bound (\Cref{lemma:Hermite_bound})  $\|H_{j-1}\|_p\le {\sqrt{p-1}^{j-1}}{\sqrt{j-1!}}$ and use two applications of Stirling's approximation \citep{robbins1955remark} to conclude that,  for all  $m\in \mathbb{N}\setminus \{0\}$,
\begin{align}\label{taylor_swift}
        \sqrt{(2m)!} &\geq \sqrt{\sqrt{2\pi(2m)} \cdot (2m/e)^{2m} \cdot \exp\left(\frac{1} {12(2m)+1}\right)} 
        \geq e^{-19/300} 2^m m! / (\pi m)^{1/4}.
    \end{align}
These estimates imply that, for all $K_p\in \mathbb{N}_+$,
\begin{align*}
  (a_{3,1})&\le \frac{C_{n,p}}{2}\Big\{\sum_{\substack{1\le j\le K_p-1}}
 \frac{\tilde R^{2j}\|H_{2j}\|_p}{n(2j+1)!}\int_{\frac{\tilde R^2}{\kappa}}^n\frac{1}{\sqrt{y}}\big(\frac{1}{y}-\frac{1}{n}\big)^{j-\frac{1}{2}}dy
 \\&\quad +e^{19/300}\pi^{1/4}\sum_{\substack{j\ge K_p}}
 \frac{2^{-j}\tilde R^{2j}(p-1)^jk^{1/4}}{(2k+1) n{j!}}\int_{\frac{\tilde R^2}{\kappa}}^n\frac{1}{\sqrt{y}}\big(\frac{1}{y}-\frac{1}{n}\big)^{j-\frac{1}{2}}dy\Big\}
 \\&\overset{(a)}{\le}\frac{C_{n,p}}{2} \Big\{\sum_{\substack{1\le j\le K_p-1}}
 \frac{\tilde R^{2j}\|H_{2j}\|_p}{n(2j+1)!}\int_{\frac{\tilde R^2}{\kappa}}^n\frac{1}{\sqrt{y}}\big(\frac{1}{y}-\frac{1}{n}\big)^{j-\frac{1}{2}}dy
 \\&\quad +e^{19/300}\pi^{1/4}\frac{K_p^{1/4}}{(2K_p+1)}\sum_{\substack{j\ge K_p}}
 \frac{2^{-j}\tilde R^{2j}(p-1)^j}{ n{j!}}\int_{\frac{\tilde R^2}{\kappa}}^n\frac{1}{\sqrt{y}}\big(\frac{1}{y}-\frac{1}{n}\big)^{j-\frac{1}{2}}dy\Big\}
  \\&\overset{(b)}{\le}\frac{C_{n,p}}{2} \Big\{\sum_{\substack{1\le j\le K_p-1}}
 \frac{\tilde R^{2j}}{n}\Big(\frac{\|H_{2j}\|_p}{(2j+1)!}-\frac{K_p^{1/4}2^{-j}(p-1)^je^{19/300}\pi^{1/4}}{(2K_p+1)j!}\Big)\int_{\frac{\tilde R^2}{\kappa}}^n\frac{1}{\sqrt{y}}\big(\frac{1}{y}-\frac{1}{n}\big)^{j-\frac{1}{2}}dy
 \\&\quad +e^{19/300}\pi^{1/4}\frac{K_p^{1/4}}{(2K_p+1)n}
\int_{\frac{\tilde R^2}{\kappa}}^n\frac{1}{\sqrt{1-\frac{y}{n}}}\big[e^{\frac{1}{2}(p-1)\tilde R^2\big(\frac{1}{y}-\frac{1}{n}\big)}-1\big]dy\Big\},
\end{align*}
where (a) follows from the fact that $x\rightarrow\frac{x^{1/4}}{(2x+1)}$ is decreasing and (b) from the fact that \begin{align*}e^{\frac{1}{2}(p-1)\tilde R^2(\frac{1}{y}-\frac{1}{n})}-1=\sum_{j=1}^{K_p-1}\frac{(\tilde R^2(p-1))^j2^{-j} }{j!}\big(\frac{1}{y}-\frac{1}{n}\big)^j+\sum_{j=K_p}^{\infty}\frac{(\tilde R^2(p-1))^j2^{-j} }{j!}\big(\frac{1}{y}-\frac{1}{n}\big)^j.\end{align*}

Next suppose $j\ge 4$  is even.  Then $\big((X_i-X'_i)^j\big)_{i\ge 1}$ are almost surely nonnegative. Moreover, 
\begin{align*}\sigma^{-j}\mathbb{E}((X_i-X'_i)^j)\le 2\frac{\tilde R^{j-2}}{\sqrt{n}^j},\qquad \sigma^{-jp}\mathbb{E}((X_i-X'_i)^{jp})\le 2\frac{\tilde R^{jp-2}}{\sqrt{n}^{jp}}.\end{align*} 
Therefore we can invoke a moment inequality for nonnegative random variables (\Cref{lemma:moment-inequality}) to conclude
\begin{align*}n\|\mathbb{E}(Y^j|\tilde W_k)\|_p&\le \sigma_{n,k}^{-j}\|\sum_{i\le n}(X_i-X'_i)^j\|_p
\le  \frac{\tilde R^j}{\sqrt{n}^j}\Big\|\Bin\Big(n, \frac{2}{\tilde R^2}\Big)\Big\|_p.
\end{align*}Moreover by the triangle inequality and \Cref{lemma:MZineq_rio} the following upper also holds\begin{align*}
  n  \big\|\mathbb{E}(Y^j|\tilde W_k)\|_p&\le   \sigma_{n,k}^{-j} \big\|\sum_{i\le n}(X_i-X_i')^j\|_p
  \\&\le n\sigma_{n,k}^{-j}\mathbb{E}((X_1-X'_1)^j)+ \sigma_{n,k}^{-j}\big\|\sum_{i\le n}(X_i-X_i')^j-\mathbb{E}((X_1-X'_1)^j)\|_p\\&\le \frac{\tilde R^{j-2}}{\sqrt{n}^{j-2}}\Big[1+\frac{1}{\sqrt{n}}\tilde U_{n,p}\tilde R\Big].
\end{align*} If instead $k\ne n/2$ we have\begin{align*}&
  n  \big\|\mathbb{E}(Y^j|\tilde W_k)\|_p\\&\qquad+ n\sigma_{n,k}^{-j}\mathbb{E}((X_1-X'_1)^j)\\&\le \frac{\tilde R^{j-2}}{\sqrt{n}^{j}}\Big[k\big(\frac{n-k}{n}\big)^{j/2}+(n-k)\big(\frac{k}{n}\big)^{j/2}+\sqrt{p-1}\tilde R^{2-2/p}\sqrt{k\big(\frac{n-k}{n}\big)^{j}+(n-k)\big(\frac{k}{n}\big)^{j}}\Big].
\end{align*}  Hence we obtain \begin{align*}
    n\|\mathbb{E}(Y^j|\tilde W_k)\|_p&\le \sigma_{n,k}^{-j} \|\sum_{i\le n}(X_i-X'_i)^j\|_p
\le  \frac{B_{p,n}\tilde R^{j-2}}{\sqrt{n}^{j-2}}
\end{align*}
for $$B_{p,n}:= \min\Big(\frac{\tilde R^2}{n}\|\Bin(n,\frac{2}{\tilde R^2})\|_p, 1+\frac{1}{\sqrt{n}}\tilde U_{n,p}\tilde R\Big).$$
This gives us the upper estimate  
{\begin{align*}&
   \int_{-\frac{1}{2}\log(1-\frac{\tilde R^2}{n\kappa})}^{\infty}\frac{e^{-tj}}{\sqrt{1-e^{-2t}}^{j-1}}n\|\mathbb{E}\big(Y^j|S_n\big)\|_pdt
   \\&\le B_{p,n}  \int_{-\frac{1}{2}\log(1-\frac{\tilde R^{2}}{n\kappa})}^{\infty}\frac{e^{-tj}\tilde R^{j-2}}{\sqrt{1-e^{-2t}}^{j-1}\sqrt{n}^{j-2}}dt.
\end{align*} 
To bound $(a_{3,2})$,  it remains to bound
     \begin{align*}&
  \sum_{\substack{j\ge 4\\even}}\frac{\|H_{j-1}\|_p}{j!} \int_{-\frac{1}{2}\log(1-\frac{\tilde R^{2}}{n\kappa})}^{\infty}\frac{e^{-tj}\tilde R^{j-2}}{\sqrt{1-e^{-2t}}^{j-1}\sqrt{n}^{j-2}}dt
  \\&\overset{(a)}{\le} 
  \sum_{\substack{j\ge 3\\odd}}\frac{\tilde R^{j-1}\|H_{j}\|_p}{(j+1)!\sqrt{n}^{j-1}}\int^{\sqrt{1-\frac{\tilde R^2}{n\kappa}}}_0\frac{x^{j}}{\sqrt{1-x^2}^{j}}dx
    \\&\overset{(b)}{\le} 
   \sum_{\substack{j\ge 1}}\frac{\tilde R^{2j}\|H_{2j+1}\|_p}{(2j+2)!}\int^{\sqrt{1-\frac{\tilde R^2}{n\kappa}}}_0\frac{x^{2j+1}}{\sqrt{1-x^2}^{2j+1}n^j}dx
      \\&\overset{(c)}{\le} 
  \frac{1}{2}\sum_{\substack{j\ge 1}}\frac{\tilde R^{2j}\|H_{2j+1}\|_p}{(2j+2)!\sqrt{n}}\int_{{\frac{\tilde R^2}{\kappa}}}^n\frac{1}{\sqrt{y}}\Big(\frac{1}{y}-\frac{1}{n}\Big)^jdy,
\end{align*}
where (a) and (c) are obtained by a change of variable, and (b) is a consequence of the fact that every odd number can be written as $2m+1$ for an $m\ge 1$. 
To upper bound this quantity we will again employ a Hermite polynomial moment bound (\Cref{lemma:Hermite_bound}),  $\|H_{j-1}\|_p\le \sqrt{p-1}^{j-1}\sqrt{j-1!}$, and use Stirling's approximation to deduce 
that, for all $m\in \mathbb{N}\setminus\{0\}$, we have \begin{align*}\begin{split} 
    \sqrt{(2m+1)!}= \sqrt{2m+1}\sqrt{2m!}\ge  \sqrt{2m+1}e^{-19/300} 2^m m! / (\pi m)^{1/4}. 
\end{split}\end{align*}
Hence for any $K_p\in \mathbb{N}_+$ we obtain that      \begin{align*}(a_{3,2})=&
 \sum_{\substack{j\ge 4\\even}} \frac{\|H_{j-1}\|_p}{j!}\int_{-\frac{1}{2}\log(1-\frac{\tilde R^2}{n\kappa})}^{\infty}\frac{e^{-tj}}{\sqrt{1-e^{-2t}}^{j-1}}n\|\mathbb{E}\big(Y^j|S_n\big)\|_pdt
\\&\overset{}{\le} 
   \frac{B_{p,n}}{2}\sum_{\substack{K_p>j\ge 1}}\frac{\tilde R^{2j}\|H_{2j+1}\|_p}{(2j+2)!\sqrt{n}}\int_{{\frac{\tilde R^2}{\kappa}}}^n\frac{1}{\sqrt{y}}\Big(\frac{1}{y}-\frac{1}{n}\Big)^jdy
   \\&\quad+ \frac{B_{p,n}}{4}\sum_{\substack{K_p\le j}}\frac{\tilde R^{2j}(p-1)^{k+1/2}}{(k+1)\sqrt{2k+1!}\sqrt{n}}\int_{{\frac{\tilde R^2}{\kappa}}}^n\frac{1}{\sqrt{y}}\Big(\frac{1}{y}-\frac{1}{n}\Big)^jdy
      \\&\overset{}{\le} 
  \frac{B_{p,n}}{2} \sum_{\substack{K_p>j\ge 1}}\frac{\tilde R^{2j}}{\sqrt{n}}\Big(\frac{\|H_{2j+1}\|_p}{(2j+2)!}-
   \frac{2^{-j}e^{19/300}\pi^{1/4}K_p^{1/4}\sqrt{p-1}^{2k+1}}{2(K_p+1)\sqrt{2K_p+1}j!}\Big)\int_{{\frac{\tilde R^2}{\kappa}}}^n\frac{1}{\sqrt{y}}\Big(\frac{1}{y}-\frac{1}{n}\Big)^jdy
   \\&\quad+\frac{B_{p,n}}{4}e^{19/300}\pi^{1/4} \frac{K_p^{1/4}\sqrt{p-1}}{(K_p+1)\sqrt{2K_p+1}}\frac{1}{\sqrt{n}}\int_{{\frac{\tilde R^2}{\kappa}}}^n\frac{1}{\sqrt{y}}\big(e^{\frac{1}{2}(p-1)\tilde R^2(\frac{1}{y}-\frac{1}{n})}-1\big)dy.
\end{align*}}

Therefore, to conclude,  we obtain that 
\begin{align}\label{fr_al_3}&(a_3)\le (b_{3,1}^{\kappa,p,\tilde R})\\&:=      \frac{B_{p,n}}{2} \Big\{\sum_{\substack{K_p>j\ge 1}}\frac{R^{2k}}{\sqrt{n}}\Big(\frac{\|H_{2j+1}\|_p}{(2j+2)!}-
   \frac{2^{-j}e^{19/300}\pi^{1/4}K_p^{1/4}\sqrt{p-1}^{2k+1}}{2(K_p+1)\sqrt{2K_p+1}j!}\Big)\int_{{\frac{\tilde R^2}{\kappa}}}^n\frac{1}{\sqrt{y}}\Big(\frac{1}{y}-\frac{1}{n}\Big)^jdy
   \\&\quad+\frac{1}{4}e^{19/300}\pi^{1/4} \frac{K_p^{1/4}\sqrt{p-1}}{(K_p+1)\sqrt{2K_p+1}}\frac{1}{\sqrt{n}}\int_{{\frac{\tilde R^2}{\kappa}}}^n\frac{1}{\sqrt{y}}\big(e^{\frac{1}{2}(p-1)\tilde R^2(\frac{1}{y}-\frac{1}{n})}-1\big)dy\Big\}
\\&+\frac{C_{n,p}}{2}\Big\{\sum_{\substack{1\le j\le K_p-1}}
 \frac{\tilde R^{2j}}{n}\Big(\frac{\|H_{2j}\|_p}{(2j+1)!}-\frac{K_p^{1/4}2^{-j}(p-1)^je^{19/300}\pi^{1/4}}{(2K_p+1)j!}\Big)\int_{\frac{\tilde R^2}{\kappa}}^n\frac{1}{\sqrt{y}}\big(\frac{1}{y}-\frac{1}{n}\big)^{j-\frac{1}{2}}dy
 \\&\quad +e^{19/300}\pi^{1/4}\frac{K_p^{1/4}}{(2K_p+1)n}
\int_{\frac{1}{n}}^{\frac{\kappa}{\tilde R^2}}\frac{1}{y^{3/2}\sqrt{y-\frac{1}{n}}}\big[e^{\frac{1}{2}(p-1)\tilde R^2\big(y-\frac{1}{n}\big)}-1\big]dy\Big\}.
\end{align}

Finally, using \Cref{lemma:MZineq_rio} we once again note that the following trivial upper-bound also holds
\begin{align*}
    \mathcal{W}_p(W_k,\mathcal{N}(0,\sigma_{n,k}^2))&\le \frac{\sqrt{p-1}\sqrt{n}}{2}(\sigma+R).
\end{align*}
\end{proof}
\begin{lemma}
    \label{nelson2s} For all $ p\ge 2$ and for all $n\in \mathbb{N}$ we have  
    $$\mathcal{S}_p^{\sigma,R}(n,k)\le { \KRsig p}$$
    for
    \begin{align}\label{KRsig}\notag
K_{R,\sigma}\defeq&\frac{1}{2}(\sigma+\sqrt{2}R))\vee \Big\{\textstyle\max(\frac{R}{\sig}, \frac{3\sigma}{4}\Big(\sqrt{\widetilde R_{\sigma}^2-1}\frac{\sqrt{e}(2e)^{1/2}}{\sqrt{2}}+\frac{\pi^{1/4}e^{19/300}}{4\sqrt{3}}\big[e^{\frac{\tilde R^2}{2} }-1\big]\Big)\\ \tag{$K_{R,\sig}$}&\textstyle+\sigma\sqrt{2}\Big(1+\log(4)\Big)\Big\{\frac{\tilde R^{1-2/p}\sqrt{8\pi}^{1/2}}{3\sqrt{e}}e^{19/300}\pi^{1/4}[e^{\frac{\tilde R^2}{2}}-1]\Big\}
\\ \notag&\textstyle+\sqrt{4}{\sigma}\frac{(\max(\widetilde R_{\sigma}^2-1,1))^{1/2}}{\sqrt{2}}
+{\sigma}\tilde R {\sqrt{e}(2e)^{1/2}}\frac{\pi^{1/4}e^{19/300}}{4\sqrt{3}}\big[e^{\frac{\tilde R^2}{2}}-1\big]
 \\ \notag&\textstyle+\frac{4\sigma\tilde R^{2-2/p}}{\sqrt{2n}}\frac{\pi^{1/4}e^{19/300}}{4\sqrt{3}}\big[e^{\frac{\tilde R^2}{2}}-1\big]+1)\Big\}.
 \end{align} 
 Moreover, for any $0<\sigma_1<\sigma_2\le \frac{R}{2}$ we have $$\max_{\sigma\in [\sigma_1,\sigma_2]}|K_{R,\sigma}|<\infty.$$
\end{lemma}
\begin{proof}
If $n+1\ge p$ then this was already proven in Lemma 9 of \cite{austern2023efficient}. For $p>n$, this is a direct consequence of \Cref{nelson2}.
\end{proof}

\section{Alternative bounds}\label{section:compare}
\subsection[Bounds in Bhattacharjee (2016)]{Bounds in \cite{Bhattacharjee16}}
In this section, under the additional assumption that the random variables $(Y_i)$ have finite support, we derive the computable thresholds obtained in \cite{Bhattacharjee16}. As the bounds in \cite{Bhattacharjee16} depend on constants that are not given in closed form, instead those constants are the solution of inequalities. We first have to derive those constants. As a remainder we denote $X_i=Y_i-\mathbb{E}(Y_i).$

In this subsection, we will assume that the random variables $(X_i)$ take values in a finite alphabet $\mathcal{A}$ that is not allowed to contain $0$, have variance $\sigma^2=1$ and satisfy $\mathbb{E}(X_1^3)=0$. We introduce the following notation.
\begin{itemize}
    \item 
$
\mathcal{D}=\{b-a: a, b \in \mathcal{A}\} \quad \text { and } \quad \mathcal{D}^{+}=\mathcal{D} \cap[0, \infty),
$
\item Define $q = |\mathcal{D}^{+}|+1$, $D = \frac{1}{2} \sum_{d \in \mathcal{D}^{+}} d,$ and $Q:=\sum_{d \in \mathcal{D}^{+}}d^2$. 
\item $W_{k,d}:=\frac{1}{n}\sum_{\substack{i\le k\\j>n}}(X_i-X_j)\mathbb{I}(|X_i-X_j|=d)$.
\end{itemize}
\begin{theorem}[Theorem 2.2 of \cite{Bhattacharjee16}]\label{Milktea}
     For $n \geq 1$, let $(Y_i)$ be random variables that satisfy \assumptionRS and that are in addition assumed to take values in $\mathcal{A} \subset \mathbb{R}$. Let $\gamma^2=n^{-1} \sum_{i=1}^n X_i^2$. Assume that $\sigma=1$.
 Then for all $\nu>0$ there exist positive constants $c_1, c_2$ and $\theta_2$ depending only on $\mathcal{A}$ and $\nu$ such that for any integer $n \geq 1$, an integer $k$ such that $|2 k-n| \leq 1$, and any $\eta \geq \nu$, it is possible to construct a version of $W_k$ and a Gaussian random variable $\tilde Z_k$ with mean 0 and variance $k(n-k) / n$ on the same probability space such that for all $\theta \leq \theta_2$,
\begin{equation}\label{kitkat}
\mathbb{E} \exp \left(\theta\left|W_k-\eta\tilde Z_k\right|\big|\mathcal{U}(X_{1:n})\right) \leq \exp \left(3+\frac{c_1 \theta^2 S_n^2}{n}+c_2 \theta^2 n\left(\gamma^2-\eta^2\right)^2\right).
\end{equation}
\end{theorem}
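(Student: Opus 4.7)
The plan is to work conditionally on the unordered multiset $\mathcal{U}(X_{1:n})$, under which $S_n$ and $\gamma^2$ become deterministic while $W_k$ has the distribution of a centered sample drawn uniformly without replacement from the fixed multiset. Three sources of error will feed into the bound: Gaussian approximation of the conditional law of $W_k$, the mismatch between $\mathrm{Var}(W_k \mid \mathcal{U})$ and the target variance $\eta^2 k(n-k)/n$, and the fact that the conditional mean of the sampled values is $S_n/n$ rather than $0$. The technique combines the Chatterjee-type Stein-kernel exponential moment bound with the randomized quantile coupling from \cref{lemma:PITdiscrete}.

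First, I would reuse the exchangeable pair $(W_k,W_k')$ of \cref{lemma:Wk_exchangeable_pair}, which already delivers the linearity condition $\mathbb{E}[W_k - W_k'\mid W_k,\mathcal{U}] = (2/n) W_k$, and introduce the Stein-kernel proxy $T := \tfrac{n}{4}\mathbb{E}[(W_k - W_k')^2\mid W_k,\mathcal{U}]$. Since the alphabet is finite and $\mathbb{E}[X_1^3]=0$, I expect $|T - \mathrm{Var}(W_k\mid\mathcal{U})|$ and $\mathbb{E}[|W_k - W_k'|^3\mid W_k]$ to be uniformly bounded by constants $c(\mathcal{A})$; the vanishing third moment is what kills the leading error in the Taylor expansion of the Stein identity. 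Chatterjee's 2012 exponential moment bound for exchangeable pairs should then give, conditionally on $\mathcal{U}$ and for $|\theta|\le \theta_2$,
\begin{equation*}
\mathbb{E}\bigl[e^{\theta W_k}\bigm|\mathcal{U}\bigr] \;\le\; \exp\Bigl(\tfrac12 \mathrm{Var}(W_k\mid\mathcal{U})\,\theta^2 + c(\mathcal{A})\,\theta^2\Bigr).
\end{equation*}
Next I would construct $\tilde Z_k$ by the randomized PIT of \cref{lemma:PITdiscrete}: set $\tilde Z_k := \sqrt{k(n-k)/n}\,\Phi^{-1}(\tilde F^{U}_{W_k}(W_k))$ so that $\tilde Z_k\sim\mathcal{N}(0,k(n-k)/n)$, and transfer the MGF bound on $W_k$ to a pointwise MGF bound on $|W_k - \eta\tilde Z_k|$ by comparing the quantile functions of the two laws and applying $e^{\theta|x|}\le e^{\theta x}+e^{-\theta x}$ (the latter producing the additive $3$ in the exponent).

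Finally, the two error terms in \eqref{kitkat} would be accounted for by expanding
\begin{equation*}
\mathrm{Var}(W_k\mid\mathcal{U}) \;=\; \tfrac{k(n-k)}{n-1}\bigl(\gamma^2 - (S_n/n)^2\bigr),
\end{equation*}
subtracting $\eta^2 k(n-k)/n$ inside the Gaussian exponent, and absorbing the cross-term $(S_n/n)^2$ into the $c_1\theta^2 S_n^2/n$ contribution and the bulk term $(\gamma^2-\eta^2)^2$ into the $c_2\theta^2 n(\gamma^2-\eta^2)^2$ contribution (the extra factor $n$ reflecting the natural scale $\sqrt n$ of $W_k$). The main obstacle will be Step 2: securing an MGF bound whose constants depend only on $\mathcal{A}$ and $\nu$ (so that $\theta_2,c_1,c_2$ are genuinely uniform), which requires both the zero third moment to cancel the leading Stein remainder and a careful tracking of the quantile-coupling losses so that the pointwise bound is not weaker than the Wasserstein bound it is built from.
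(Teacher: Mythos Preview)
This statement is not proved in the paper at all: it is quoted verbatim as ``Theorem~2.2 of \cite{Bhattacharjee16}'' and used only as a black box to extract explicit constants in the subsequent \cref{mogetee}. The proof lives entirely in \cite{Bhattacharjee16}, and the present paper does not reproduce or adapt it. Consequently there is no ``paper's own proof'' to compare your proposal against.

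That said, your sketch is broadly aligned with the actual Bhattacharjee--Goldstein argument (conditional Stein exchangeable pair plus Chatterjee's exponential moment bound), but a few details are off. The linearity constant from \cref{lemma:Wk_exchangeable_pair} is $s=n/(k(n-k))$, which at $k=n/2$ gives $4/n$, not $2/n$. More importantly, the passage from an MGF bound on $W_k$ to a pointwise MGF bound on $|W_k-\eta\tilde Z_k|$ cannot be obtained simply by quantile-coupling plus $e^{\theta|x|}\le e^{\theta x}+e^{-\theta x}$: that inequality controls marginals, not the joint deviation, and the route in \cite{Bhattacharjee16} goes through a more delicate induction that simultaneously constructs the coupling and propagates the exponential bound (this is exactly where the constants $c_1,c_2,\theta_2$ and the ``$3$'' in the exponent are produced). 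Your Step~2 obstacle is real, and resolving it is essentially the entire content of the Bhattacharjee--Goldstein proof; the present paper deliberately avoids that machinery, which is why it only cites the result.
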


\begin{corollary}\label{mogetee}
Suppose the assumptions of \Cref{Milktea} hold. Then the bound in \cref{kitkat} holds for all $\nu > 0$ with the following explicit constants:
$$
c_1 = \frac{3C}{2} q, \quad c_2 = 2C,
$$
where
$$
C = \frac{1}{\nu^2} \left( 2\left(\frac{R_s^2}{2} + \frac{1}{8}Q \right)^2 + \frac{1}{4} + 3\left( R_s^2 + \frac{1}{4}Q \right) \right),
$$
and where $\theta_2 := \theta_3 \wedge \theta_4$, with $\theta_4 > 0$ satisfying
$$
2 \exp \left( D \theta_4 + 2C \theta_4^2 \right) \le e,
$$
and $\theta_3$ defined via
$$
2Cq \theta_3^2 = \alpha_1, \quad \text{with $\alpha_1$ satisfying} \quad \frac{1}{1 - 4 \alpha_1 R_s^2} \le \frac{9}{8}.
$$
\end{corollary}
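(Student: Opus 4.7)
The plan is to retrace the proof of Theorem 2.2 in \cite{Bhattacharjee16} step by step, keeping every constant symbolic rather than absorbing it into a generic $c$. The Bhattacharjee--Goldstein proof constructs an exchangeable pair $(W_k, W_k')$ by swapping coordinates across the split at $k$, writes a Stein differential equation for $\psi(\theta) := \mathbb{E}[\exp(\theta (W_k - \eta \tilde Z_k)) \mid \mathcal{U}(X_{1:n})]$, and bounds its growth via a differential inequality whose coefficients depend on conditional moments of $(W_k - W_k')^\ell$. Those conditional moments are what introduce $R_s$, $D$, $Q$, and $q$, and the goal of the present corollary is simply to track these through the argument.

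First, I would substitute the almost-sure bound $|X_1| \leq R_s$ from \cref{lemma:R_s_bound} into the moment estimates used in \cite{Bhattacharjee16}. Since $|X_i - X_j| \in \mathcal{D}^+$ almost surely, any sum $\sum_{\substack{i\le k\\j>k}} \varphi(X_i - X_j)$ decomposes into $|\mathcal{D}^+|$ pieces $W_{k,d}$, which explains both the appearance of the factor $q = |\mathcal{D}^+| + 1$ in $c_1$ and the emergence of the quantities $D = \tfrac{1}{2}\sum_{d} d$ and $Q = \sum_d d^2$ through mean and variance calculations, respectively. The baseline variance contribution from the Stein kernel, once divided by $n$, yields the $\tfrac{1}{4}$ term, while the second moment of $(W_k-W_k')^2$ gives the $(R_s^2/2 + Q/8)^2$ term, and cross terms arising from the Taylor expansion of $\psi$ to second order give the $3(R_s^2 + Q/4)$ contribution. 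Collecting these and dividing by $\nu^2$ (which appears because the normalization of the Gaussian target involves $\eta^{-2}$ and $\eta \geq \nu$) yields exactly the stated $C$.

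Second, I would verify the admissible range $\theta \leq \theta_2 = \theta_3 \wedge \theta_4$. Here $\theta_3$ is determined by the convergence of a geometric-type series appearing when expanding $\mathbb{E}[\exp(\theta(W_k - W_k'))\mid W_k]$: the ratio is $4\alpha_1 R_s^2$ where $\alpha_1 = 2Cq\theta_3^2$, and the contraction condition $\tfrac{1}{1 - 4\alpha_1 R_s^2} \leq \tfrac{9}{8}$ gives the $3Cq/2$ factor in $c_1$ (the $3/2$ comes exactly from the reciprocal $(1 - 4\alpha_1 R_s^2)^{-1}$ being bounded by $9/8$, combined with the factor of $2$ already present in $c_2$). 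The constant $\theta_4$ controls a different term: $2\exp(D\theta_4 + 2C\theta_4^2) \leq e$ is precisely the condition needed for the exponential martingale bound on the Stein-process interpolation to remain uniformly controlled by the factor $e^3$ on the right-hand side of \cref{kitkat}.

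The third and final step is to verify that assembling the bounds from the Stein identity under these constraints reproduces \cref{kitkat} with $c_1 = 3Cq/2$ and $c_2 = 2C$. The $|W_k - \eta \tilde Z_k|$ on the left-hand side is handled by the standard symmetrization $\mathbb{E}[\exp(\theta |Z|)] \leq \mathbb{E}[\exp(\theta Z)] + \mathbb{E}[\exp(-\theta Z)]$, which accounts for the constant $3$ in the exponent (absorbing the factor of $2$ from symmetrization into $e^3$). The main obstacle will be the careful accounting in step two, where several Taylor remainder terms and Young-type splittings appear; each choice of how to split a cross term (e.g., $ab \leq \tfrac{a^2}{2\epsilon} + \tfrac{\epsilon b^2}{2}$) affects the final constant, and one must choose these splittings to obtain exactly $3C/2$ and $2C$ rather than suboptimal variants. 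Once this bookkeeping is verified, \cref{Milktea} immediately yields \cref{mogetee}.
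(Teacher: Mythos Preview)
Your proposal over-engineers the argument and misidentifies the mechanism behind the constants. The paper does not retrace the entire exchangeable-pair construction or analyze a differential inequality for $\psi(\theta)$; instead it simply observes that the proof of Theorem~2.2 in \cite{Bhattacharjee16} already concludes with $c_1=\tfrac{3C}{2}q$ and $c_2=2C$ for an unspecified $C$, and that $\theta_2=\theta_3\wedge\theta_4$ with the stated constraints is read off directly from Lemma~2.9 there. The only actual computation is isolating $C$: the paper quotes a single displayed inequality from page~16 of \cite{Bhattacharjee16},
\[
\frac{(T-\sigma^2)^2}{\sigma^2}\le \frac{n}{k(n-k)\nu^2}\Bigl(R_s^2/2+|\widetilde\sigma^2-\sigma^2|+R_s|S_k|+\textstyle\sum_{d}\tfrac{d}{2}|W_{k,d}|+C_0\Bigr)^2,
\]
with $C_0=\tfrac18 Q$, and applies Cauchy--Schwarz to the five-term square to produce the factor $1+n(\gamma^2-\eta^2)^2+S_k^2/k+\sum_d W_{k,d}^2/k$. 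The prefactor, maximized over $k$ with $|2k-n|\le1$, is exactly the claimed $C$.

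Your heuristic attributions --- the $(R_s^2/2+Q/8)^2$ term from ``the second moment of $(W_k-W_k')^2$'', the $3(R_s^2+Q/4)$ from ``cross terms in the Taylor expansion of $\psi$'', and the $3/2$ from the $9/8$ bound ``combined with the factor of $2$ already present in $c_2$'' --- do not match this mechanism (note $9/8\times 2=9/4\ne 3/2$). If you pursued your plan you would likely arrive at \emph{some} explicit constants, but not the ones stated, because the stated ones come from a specific Cauchy--Schwarz splitting, not from Young-type inequalities on Taylor remainders. The corollary is really a citation exercise plus one Cauchy--Schwarz step; the work you describe belongs to \cite{Bhattacharjee16}, not to this proof.
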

\begin{proof}
    The inequality at the end of the proof of Theorem 2.2 (page 16) in \cite{Bhattacharjee16} states that
 the constants $c_1$ and $c_2$ in \cref{kitkat} can be chosen to be
$$
c_1 = \frac{3C}{2}q, \quad c_2 = 2C.
$$
We now derive the expression for $C$. In the same proof, it is stated that for $k$ such that $|2k-n|\le 1$, the following inequality holds for some constant $C$ depending uniquely on $\mathcal{A}$ and $\nu$: 
$$
\begin{aligned}
\frac{\left(T-\sigma^2\right)^2}{\sigma^2} & \leq \frac{n}{k(n-k) \nu^2}\left(R_s^2 / 2+\left|\widetilde{\sigma}^2-\sigma^2\right|+R_s\left|S_k\right|+\sum_{d \in \mathcal{D}^{+}} \frac{d}{2}\left|W_{k, d}\right|+C_0\right)^2 \\
& \leq C\left(1+n\left(\gamma^2-\eta^2\right)^2+\frac{S_k^2}{k}+\sum_{d \in \mathcal{D}^{+}} \frac{W_{k, d}^2}{k}\right),
\end{aligned}
$$
where $C_0$ is defined in (2.26) of \cite{Bhattacharjee16} and can be taken as
$C_0:=\frac{1}{8} \sum_{d \in \mathcal{D}^{+}}d^2.$ 
Applying the Cauchy-Schwarz inequality, we obtain
\begin{align*}
    & \frac{n}{k(n-k) \nu^2}\left(R_s^2 / 2+
\left|\frac{k(n-k)}{n}\left(\gamma^2-\eta^2\right)\right|
+R_s\left|S_k\right|+\sum_{d \in \mathcal{D}^{+}} \frac{d}{2}\left|W_{k, d}\right|+C_0\right)^2\\
    \leq & \frac{n}{k(n-k) \nu^2}\left((R_s^2 / 2+ C_0)^2 + \left(\frac{k(n-k)}{n}\right)^2\frac{1}{n} + R_s^{2}k + \sum_{d \in \mathcal{D}^{+}} \frac{d^2}{4}k\right)\\
    & \qquad\qquad \times \left(1+n(\gamma^2-\eta^2)^2+\frac{1}{k}|S_k|^2+\sum_{d\in \mathcal{D}^{+}}\frac{W_{k,d}^2}{k}\right).
\end{align*}
Recall that we denote $Q:=\sum_{d \in \mathcal{D}^{+}}d^2$. Then we have
\begin{align*}
&\frac{1}{\nu^2}\left(\frac{n}{k(n-k)}(R_s^2 / 2+ \frac{1}{8}Q)^2 + \frac{k(n-k)}{n}\frac{1}{n} + \frac{n}{n-k} R_s^{2} + \frac{n}{n-k}\frac{1}{4}Q\right)\\
\le& \frac{1}{\nu^2}\left(\frac{n}{\lfloor\frac{n}{2}\rfloor(n-\lfloor\frac{n}{2}\rfloor)}(R_s^2 / 2+ \frac{1}{8}Q)^2 + \frac{\lfloor\frac{n}{2}\rfloor(n-\lfloor\frac{n}{2}\rfloor)}{n}\frac{1}{n} + \frac{n}{n-\lceil\frac{n}{2}\rceil} R_s^{2} + \frac{n}{n-\lceil\frac{n}{2}\rceil}\frac{1}{4}Q\right)\\
\le&\frac{1}{\nu^2}\left(2(R_s^2 / 2+ \frac{1}{8}Q)^2 + \frac{1}{4} + 3\left( R_s^{2} + \frac{1}{4}Q\right)\right).
\end{align*}
Therefore, we can take 
\begin{align*}
C = \frac{1}{\nu^2}\left(2(R_s^2 / 2+ \frac{1}{8}Q)^2 + \frac{1}{4} + 3\left( R_s^{2} + \frac{1}{4}Q\right)\right).
\end{align*}

The value of $\theta_2$ follows directly from the proof of Theorem 2.2 and Lemma 2.9 of \cite{Bhattacharjee16}.
\end{proof}

\begin{theorem}[Lemma 4.1 of \cite{Bhattacharjee16}]\label{thm:BhattacharjeeLemma4.1}
Assume that the random variables $(X_i)$ satisfy the conditions of \Cref{Milktea} and are in addition such that $\mathbb{E}(X_1^3)=0$. There exists a constant $A$ such that there exists a constant $\lambda>0$ such that for any positive integer $n$, it is possible to construct a version of the sequence $\left(S_k\right)_{0 \leq k \leq n}$ and Gaussian random variables $\left(Z_k\right)_{0 \leq k \leq n}$ with mean zero and $\operatorname{Cov}\left(Z_i, Z_j\right)=i \wedge j$ on the same probability space such that
$$
\mathbb{E} \exp \left(\lambda\left|S_n-Z_n\right|\right) \leq A
$$
and
\begin{equation}\label{eqn:BhattacharjeeLemma4.1bound}
\mathbb{E} \exp \left(\lambda \max _{0 \leq k \leq n}\left|S_k-Z_k\right|\right) \leq A \exp (A \log n).
\end{equation}
\end{theorem}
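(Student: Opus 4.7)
The plan is to build the coupling by a dyadic induction in which \cref{Milktea} serves as the single-step building block, mirroring the classical KMT strategy. Write $n=2^L$ without loss of generality (otherwise embed into the next power of two and discard the tail). First, construct the top-level coupling $(S_n,Z_n)$ by a one-dimensional quantile argument: since $(X_i)$ is bounded, centered, has unit variance and vanishing third moment, a Berry--Esseen-type Wasserstein bound gives constants $\lambda_0,A_0$ independent of $n$ with $\mathbb{E}\exp(\lambda_0|S_n-Z_n|)\le A_0$, where $Z_n\sim\mathcal{N}(0,n)$ and $Z_n\in\sigma(S_n,U)$ for an independent uniform $U$.

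Next, perform the dyadic step. For a dyadic block $\Iintv{a+1,a+2m}$, condition on the endpoint values, form $W_m:=S_{a+m}-S_a-\tfrac12(S_{a+2m}-S_a)$, and apply \cref{Milktea} with $\eta$ chosen so that $\eta^2$ tracks the conditional empirical second moment $\gamma^2$ of the block (this makes the $c_2\theta^2 n(\gamma^2-\eta^2)^2$ contribution negligible). Pick a Gaussian $\tilde Z_m\sim\mathcal{N}(0,m/2)$ coupled to $W_m$ so that
\begin{equation*}
\mathbb{E}\bigl[\exp(\theta|W_m-\eta\tilde Z_m|)\,\big|\,\mathcal{U}(X_{a+1:a+2m})\bigr]\le\exp\!\Bigl(3+c_1\theta^2\tfrac{(S_{a+2m}-S_a)^2}{2m}+c_2\theta^2(2m)(\gamma^2-\eta^2)^2\Bigr),
\end{equation*}
and define $Z_{a+m}:=\tfrac12(Z_a+Z_{a+2m})+\eta\tilde Z_m$; by Gaussian conjugacy $(Z_k)$ then has the correct Brownian covariance. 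Iterating this over levels $\ell=1,\dots,L$ produces the full process $(Z_k)_{k\le n}$.

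Finally, control the supremum. At level $\ell$ there are $2^\ell$ midpoints, and for each the one-step MGF above is bounded by a constant after taking expectations over the endpoint values and the block variance (using that $(S_{a+2m}-S_a)^2/(2m)$ and $(\gamma^2-1)^2$ have uniformly bounded exponential moments thanks to boundedness of the $X_i$). Choose $\lambda\le\theta_2$ small enough that $c_1\lambda^2$ and $c_2\lambda^2$ fall inside these exponential-moment windows. A union bound over the $\sum_\ell 2^\ell\le 2n$ midpoints, combined with Markov's inequality and the tail-integral identity $\mathbb{E}\exp(\lambda M)=\int_0^\infty \lambda e^{\lambda t}\mathbb{P}(M\ge t)\,dt$, then yields $\mathbb{E}\exp(\lambda\max_{k\le n}|S_k-Z_k|)\le A\exp(A\log n)$.

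The main obstacle will be propagating the constants cleanly through the induction: the one-step MGF bound depends on the block's empirical statistics $S_{a+2m}-S_a$ and $\gamma^2$, so tower-property integration at each level must not inflate $c_1,c_2$. The fix is to truncate to the event $\{\gamma^2\ge\nu^2\}$ for a fixed $\nu>0$ (the complement has exponentially small probability by Hoeffding), to pick $\eta$ as a projection of $\gamma$ onto $[\nu,\infty)$, and to absorb the resulting bias into $A$. The assumption $\mathbb{E}(X_1^3)=0$ is crucial in Step 1 for a dimension-free base coupling and implicit in \cref{Milktea} via the choice of $\eta$ that kills the first-order variance mismatch.
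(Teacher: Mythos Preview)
The paper does not prove this statement; it is quoted verbatim as Lemma~4.1 of \cite{Bhattacharjee16}, and the subsequent lemma merely extracts explicit constants from that reference's proof. So there is no ``paper's own proof'' to compare against beyond the original in \cite{Bhattacharjee16}.

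That said, your outline correctly identifies the dyadic scheme used in \cite{Bhattacharjee16}, with \cref{Milktea} as the per-step building block. Where your sketch breaks down is the final aggregation. The quantity $\max_{k\le n}|S_k-Z_k|$ is \emph{not} the maximum of the $\sim 2n$ midpoint errors $|W_m-\eta\tilde Z_m|$; by the interpolation you describe, each $|S_k-Z_k|$ is the base error $|S_n-Z_n|$ plus a telescoping sum of $L=\log_2 n$ midpoint errors along the dyadic path to $k$. A union bound over all midpoints controls the wrong object: it bounds $\max_j|\text{midpoint error}_j|$, not $\max_k\sum_{\ell}|\text{midpoint error on path to }k\text{ at level }\ell|$. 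Plugging a union-bound tail estimate into the layer-cake identity for $\mathbb{E}e^{\lambda M}$ therefore does not yield the claimed $A\exp(A\log n)$.

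What \cite{Bhattacharjee16} actually do is an induction on $L$: conditionally on the block endpoints, the two half-blocks are handled by the inductive hypothesis, and \cref{Milktea} controls the new midpoint; the tower property and the conditional MGF bound (with the $c_1\theta^2 S_n^2/n$ and $c_2\theta^2 n(\gamma^2-\eta^2)^2$ terms integrated out) give a multiplicative factor per level, producing $B^L=\exp(L\log B)$. Your ``tower-property integration at each level must not inflate $c_1,c_2$'' acknowledges this difficulty, but the resolution is the nested-conditioning induction, not a flat union bound. A second, smaller gap: a single Berry--Esseen or Wasserstein-$p$ bound for fixed $p$ does not give $\mathbb{E}\exp(\lambda_0|S_n-Z_n|)\le A_0$; in \cite{Bhattacharjee16} this base step comes from their zero-bias Stein coupling (their Theorem~1.4), which is where the finite-alphabet and $\mathbb{E}(X_1^3)=0$ hypotheses genuinely enter.
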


\begin{lemma}[Corollary to Lemma 4.1 of \cite{Bhattacharjee16}]\label{lemma:finte_alphabet_result}
Suppose that the assumptions of \Cref{thm:BhattacharjeeLemma4.1} hold. Then \cref{eqn:BhattacharjeeLemma4.1bound} holds with the following constants:
Let $C$, $K_1$, $K_2$, and $\lambda_0$ be as in Theorem 1.4, for $\nu = \min_{a \in \mathcal{A}} |a|$. 
Let $A \ge 8$ be such that
$$
8^{1/2}  \Big[\exp(C \log n) + 4 \exp(C_1 \log n)\Big]^{1/2} \le A \exp(A \log n), 
$$
where
$$
C_1 := 2 + 8(R_s + 1) \sqrt{2\pi n} \lambda.
$$
Let $\lambda$ be
$$
\lambda=\min \left\{\frac{\theta_1}{2}, \frac{\lambda_0}{4}, \frac{1}{4\sqrt{2}R\sqrt{K_1}}, \frac{1}{{2}R_s^2}, \frac{1}{R_s+1}\right\},
$$
where
$$\theta_1 = \frac{1}{2\sqrt{3}}\min\{\vartheta_{\ell\left(\epsilon^2-1\right)}, \vartheta_{\ell\left(\epsilon\right)}\},
$$
$$
\quad K_1=8 c_1, \quad K_2=18 c_2 \quad \text { and } \quad \lambda_0=\sqrt{\frac{\alpha_1}{32 c_1}} \wedge \frac{\theta_2}{2} \wedge \frac{\theta_5}{\sqrt{72 c_2}} .
$$
Here, $\theta_5$ is the unique positive solution to
$$
\frac{1}{\sqrt{1-R_s^4 \theta^2 / 2}}=\frac{4}{3}.
$$
Moreover, the following inequality holds:
\begin{equation}\label{eqn:bhattercharjee_cor_fixed_n_KMT}
\mathbb{P} \left( \max_{0 \le k \le n} |S_k - Z_k| \ge \lambda^{-1} \left( A \log n - \log \alpha + \log A \right) \right) \le \alpha.
\end{equation}
\end{lemma}
\begin{proof}
The choice of constants follows directly from the proof of Lemma 4.1 in \cite{Bhattacharjee16}, and by tracing the definitions of all the constants involved. \Cref{eqn:bhattercharjee_cor_fixed_n_KMT} follows by applying the Chernoff bound to \cref{eqn:BhattacharjeeLemma4.1bound}.
\end{proof}

\subsection[Bounds in Castelle \& Laurent-Bonvalot (1998)]{Bounds in \cite{castelle1998strong}}\label{section:app_compare_clb_bg}

We now derive the computable bounds from the uniform empirical process literature \cite{ castelle1998strong}.

\begin{lemma}\label{lemma:emp_process_res}
    Suppose $(Y_i)_{i\ge1}$ satisfy Assumption $(R, \sigma)$. There exists $(G_i)_{i\ge1} \overset{\text{i.i.d.}}{\sim} \mathcal{N}(0, \sigma^2)$ such that the following holds for any $\alpha\in(0,1)$:
\begin{equation}
\mathbb P\Big(
\sup_{1\le i\le n}\big|\sum_{j=1}^i Y_j-\sum_{j=1}^i G_j\big|
\ge R\big(60\log n + 30\log(0.67/\alpha)\big)\log n
\Big)\le \alpha.
\end{equation}
\end{lemma}
\begin{proof}
Let $(U_i)_{i\ge1}$ be i.i.d.\ $\mathrm{Unif}(0,1)$ and define the coupling $Y_i = F^{-1}(U_i)$, where $F$ is the CDF of $Y_1$.
For each $k\ge1$, let the empirical CDF be $\hat{F}_k(u) = k^{-1}\sum_{i=1}^k \mathbb 1\{U_i \le u\}$, and define 
$\alpha_k(u) = \frac{1}{\sqrt{k}}\sum_{i=1}^k (\mathbb 1\{U_i \le u\} - u) = \sqrt{k}(\hat{F}_k(u) - u)$ for $u \in [0,1]$. By Theorem~2.2 (ii) of \cite{castelle1998strong}, there exists a Kiefer process $K$ such that, for all $x>0$,
\begin{equation}\label{eq:CLB}
     \mathbb P\!\Big(\sup_{1\le m\le n}\sup_{0\le t\le 1}\big| \sqrt{m}\,\alpha_m(t)-K(m,t)\big| \,\ge\, \big(x+60\log n\big)\,\log n\Big)\ \le\ 0.67\,\exp(-x/30). 
\end{equation}

Note that since $Y_i\in[0,R]$, we have
$Y_i - \mathbb E [Y_i] = \int_0^R (\mathbb 1\{Y_i > t\} - \mathbb P(Y_i > t))\,dt
= \int_0^R (F(t) - \mathbb 1\{U_i \le F(t)\})\,dt,$
we obtain
$$\sum_{i=1}^k (Y_i - \mathbb E [Y_i])
 = \int_0^R (kF(t) - \sum_{i=1}^k \mathbb 1\{U_i \le F(t)\})\,dt
 = -\int_0^R \sqrt{k}\alpha_k(F(t))\,dt.$$
Define $W(m):=-\int_0^R K(m,F(t))\,dt$.  
Because $K$ is the Kiefer process, $(W(m))_{m\ge1}$ is a centered Gaussian process with independent increments, so we can write $W(m)=\sum_{i=1}^m G_i$ for i.i.d.\ Gaussian $G_i$ with $\operatorname{Var}(G_i)=\operatorname{Var}(Y_1)$. Then we have that, for every $m\le n$,  
$$\sum_{i=1}^m(Y_i-\mathbb E[Y_i])-W(m)
=-\int_0^R(\sqrt{m}\,\alpha_m(F(t))-K(m,F(t)))\,dt.$$  
Therefore,
$$\sup_{1\le m\le n}\big|\sum_{i=1}^m(Y_i-\mathbb E[Y_i])-W(m)\big|
\le R\,\sup_{1\le m\le n}\sup_{0\le u\le1}\big|\sqrt{m}\,\alpha_m(u)-K(m,u)\big|.$$ 
Combining this with \eqref{eq:CLB} gives for any $\alpha\in(0,1)$, we obtain
\begin{equation}
\mathbb P\Big(
\sup_{1\le i\le n}\big|\sum_{j=1}^i Y_j-\sum_{j=1}^i G_j\big|
\ge R\big(60\log n + 30\log(0.67/\alpha)\big)\log n
\Big)\le \alpha.
\end{equation}
\end{proof}


\section{Additional lemmas}\label{additional}
\begin{lemma}[Rosenthal's inequality with explicit constants]\label{dino_veg2} \quad Let $(\tilde X_i)_{i\geq 1}$ be a sequence of centered \iid observations. If $\|\tilde X_1\|_p<\infty$ for some $p\ge 2$, then 
 \begin{talign}\|\frac{1}{\sqrt{n}}\sum_{i\le n}\tilde X_i\|_p\le (\frac{p}{2}+1)n^{1/p-1/2}\|\tilde X_1\|_p+2^{1/p}\sqrt{p/2+1}e^{\frac{1}{2}+\frac{1}{p}}\|\tilde X_1\|_2.\end{talign}
\end{lemma}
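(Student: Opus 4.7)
The plan is to follow a Pinelis-style proof of Rosenthal's inequality combining truncation with a sharp exponential martingale bound. Fix a threshold $t>0$ (to be optimized at the end) and decompose $\tilde X_i = Y_i + Z_i$, where $Y_i := \tilde X_i \mathbb{I}(|\tilde X_i|\le t) - \mathbb{E}[\tilde X_i \mathbb{I}(|\tilde X_i|\le t)]$ is the centered truncation and $Z_i := \tilde X_i - Y_i$ is the centered tail. Both sequences are centered and i.i.d., and Minkowski's inequality in $L_p$ yields
$$\Big\|\tfrac{1}{\sqrt n}\sum_{i\le n}\tilde X_i\Big\|_p \;\le\; \tfrac{1}{\sqrt{n}}\Big\|\sum_{i\le n}Y_i\Big\|_p + \tfrac{1}{\sqrt{n}}\Big\|\sum_{i\le n}Z_i\Big\|_p.$$
The two pieces are then handled by different tools and balanced at the end.

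For the bounded martingale $S_k^Y := \sum_{i\le k}Y_i$, I would use $|Y_i|\le 2t$ and $\sum\mathrm{Var}(Y_i)\le n\|\tilde X_1\|_2^2$ together with a Bernstein/Prokhorov-type exponential moment bound (Pinelis 1994) to obtain a sub-Gaussian-plus-sub-exponential tail. Converting that tail bound into an $L_p$ norm through the layer-cake formula and a Gamma-function identity produces a bound of the form $c\sqrt{p/2+1}\,e^{1/2+1/p}\sqrt{n}\|\tilde X_1\|_2$, matching the second term of the claim after dividing by $\sqrt n$. For the tail part $\sum_i Z_i$, I would first apply triangle inequality to strip the centering, then bound $\|\sum_i|\tilde X_i|\mathbb{I}(|\tilde X_i|>t)\|_p$ via a Hoffmann-Jørgensen-style inequality (using that the tail event is rare, so the sum behaves like a single maximal summand), yielding a bound of order $(p/2+1) n^{1/p}\|\tilde X_1\|_p$. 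Choosing $t$ proportional to $n^{1/p-1/2}\|\tilde X_1\|_p / \|\tilde X_1\|_2$ equates the two contributions and gives the advertised inequality, with the factor $2^{1/p}$ coming from the symmetric recentering of $Z_i$.

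The main obstacle is producing the \emph{sharp} constants $(p/2+1)$ and $2^{1/p}\sqrt{p/2+1}\,e^{1/2+1/p}$, rather than the softer $O(p/\log p)$ and $O(\sqrt{p})$ constants delivered by generic Burkholder or symmetrization arguments. This requires invoking Pinelis's exponential inequality for bounded martingale differences with its optimal numerical prefactors, and carefully tracking constants through the passage from exponential tails to $L_p$ norms (the $\sqrt{p/2+1}$ comes from the optimal scaling in the Gaussian exponent, while the $e^{1/2+1/p}$ arises as $\Gamma(p/2+1)^{1/p}$ via Stirling). A secondary subtlety is ensuring that the Hoffmann-Jørgensen step for the tail part does not accrue extra multiplicative factors beyond those absorbed in $p/2+1$; this is done by iterating the weak $L_p$ estimate a bounded number of times with explicit geometric constants.
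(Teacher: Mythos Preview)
Your approach is genuinely different from the paper's, and considerably harder. The paper does not reprove Rosenthal's inequality from scratch: it simply invokes Nagaev's moment inequality (Theorem~2 of \cite{nagaev1978some}), which already gives
\[
\Big\|\tfrac{1}{\sqrt n}\sum_{i\le n}\tilde X_i\Big\|_p^p \;\le\; \inf_{c>p/2}\Big\{ c^p n^{1-p/2}\|\tilde X_1\|_p^p + p\,c^{p/2}e^c\,B(\tfrac{p}{2},c-\tfrac{p}{2})\,\|\tilde X_1\|_2^p\Big\},
\]
sets $c=p/2+1$ so that $B(p/2,1)=2/p$, and then takes the $p$-th root using subadditivity. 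That is the whole argument; the specific constants $(p/2+1)$, $2^{1/p}$, $\sqrt{p/2+1}$ and $e^{1/2+1/p}$ all fall out of this single parameter choice ($e^{1/2+1/p}$ is literally $e^{c/p}$ at $c=p/2+1$, not a Stirling approximation of $\Gamma(p/2+1)^{1/p}$ as you suggest).

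Your truncation-plus-Pinelis-plus-Hoffmann--J{\o}rgensen route is a legitimate way to prove \emph{some} Rosenthal bound, but you yourself flag the real difficulty: recovering these \emph{exact} constants. Your sketch does not close that gap --- you assert that the bounded part ``produces a bound of the form $c\sqrt{p/2+1}\,e^{1/2+1/p}\sqrt n\|\tilde X_1\|_2$'' and that Hoffmann--J{\o}rgensen delivers exactly $(p/2+1)n^{1/p}\|\tilde X_1\|_p$, but neither claim is substantiated, and the heuristic you give for $e^{1/2+1/p}$ is incorrect. Since the point of this lemma in the paper is precisely to have computable constants, an outline that leaves the constants unverified is not a proof of the stated result. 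The fix is straightforward: cite Nagaev and optimize, as the paper does.
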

\begin{proof}
    According to \cite[Thm.~2]{nagaev1978some} we have
    \begin{talign}
        \| \frac{1}{\sqrt{n}}\sum_{i\le n} \tilde X_i\|_p^p 
\le \inf_{c > \frac{p}{2}}  c^p n^{1-\frac{p}{2}} \| \tilde X_1 ||_p^p + p c^{p/2} e^c B(\frac{p}{2}, c-\frac{p}{2})\| \tilde X_1 \|_2^p,
    \end{talign} where $B(\cdot,\cdot)$ is the Beta function.
    The choice $c=\frac{p}{2}+1$ yields
    \begin{talign}
        \| \frac{1}{\sqrt{n}}\sum_{i\le n} \tilde X_i\|_p^p 
&\le \big(\frac{p}{2}+1)^p n^{1-\frac{p}{2}} \| \tilde X_1 ||_p^p + p (\frac{p}{2}+1)^{p/2} e^{\frac{p}{2}+1} B(\frac{p}{2}, 1)\| \tilde X_1 \|_2^p
\\&= (\frac{p}{2}+1)^p n^{1-\frac{p}{2}} || \tilde X_1 ||_p^p + 2 (\frac{p}{2}+1)^{\frac{p}{2}} e^{\frac{p}{2}+1}  || \tilde X_1 ||_2^p .
    \end{talign}
    The subadditivity of the $p$-th root now implies the result.
\end{proof}
\begin{lemma}[Generalization of Rosenthal inequality with explicit constants]\label{lemma:rosenthal_indep_nonidentical}
    The following inequality holds for all $p>2$:
\begin{align*}& \Big\|  \frac{1}{2n}\Big(k\sum_{k<i\le n}(X_i^2-1)+(n-k)\sum_{1\le i\le k}(X_{i}^2-1)\Big)\Big\|_p
    \\&\le \frac{\sqrt{k(n-k)}}{2\sqrt{n}}\min\begin{cases}
        \sqrt{p-1}(R^2_s-1)^{1-1/p}
        \\A_p\sqrt{R^2_s-1}+A^*_{n,p}(R^2_s-1)^{1-1/p}.
    \end{cases}
    \end{align*}
where recall that $A_p:=2^{1 / p} \sqrt{p / 2+1} e^{\frac{1}{2}+\frac{1}{p}}$ and $A_{n,p}^{*}:= \left(\frac{p}{2}+1\right) n^{1 / p-1 / 2}$.
\end{lemma}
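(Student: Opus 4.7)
Setting $\tilde Y_i := X_i^2 - 1$ and $\alpha_i := (n-k)/(2n)$ for $i\le k$, $\alpha_i := k/(2n)$ for $i>k$, the expression inside the norm equals $T := \sum_{i=1}^n \alpha_i \tilde Y_i$, a sum of independent centered random variables. By \cref{lemma:R_s_bound} we have $|\tilde Y_i|\le R_s^2-1$ a.s., and $\mathrm{Var}(\tilde Y_i) = \mathbb{E}X_i^4 - 1 \le R_s^2-1$ (using $X_i^4\le R_s^2 X_i^2$ since $\sigma=1$), so interpolation gives $\|\tilde Y_1\|_p \le (R_s^2-1)^{1-1/p}$ and $\|\tilde Y_1\|_2 \le \sqrt{R_s^2-1}$. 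A direct calculation yields the key variance identity
$$\sum_i \alpha_i^2 \;=\; \frac{k(n-k)^2 + (n-k)k^2}{(2n)^2} \;=\; \frac{k(n-k)}{4n}.$$

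The first term of the minimum follows immediately from the Marcinkiewicz--Zygmund inequality (\cref{lemma:MZineq_rio}) applied to the independent centered sequence $(\alpha_i \tilde Y_i)$:
$$\|T\|_p \;\le\; \sqrt{p-1}\,\|\tilde Y_1\|_p\,\Bigl(\sum_i \alpha_i^2\Bigr)^{1/2} \;\le\; \sqrt{p-1}\,(R_s^2-1)^{1-1/p}\,\frac{\sqrt{k(n-k)}}{2\sqrt n}.$$
For the second term, I would invoke the Nagaev--Rosenthal inequality underlying \cref{dino_veg2} (which applies equally to independent, not-necessarily-iid centered summands, by taking $c = p/2+1$ in Nagaev's Theorem 2):
$$\|T\|_p \;\le\; (p/2+1)\Bigl(\sum_i \alpha_i^p \|\tilde Y_1\|_p^p\Bigr)^{1/p} \;+\; A_p\Bigl(\sum_i \alpha_i^2 \|\tilde Y_1\|_2^2\Bigr)^{1/2}.$$
The $L^2$ piece evaluates directly to $A_p\sqrt{R_s^2-1}\cdot\sqrt{k(n-k)}/(2\sqrt n)$, reproducing the Gaussian-type contribution in the claim. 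The $L^p$ piece, after using $\|\tilde Y_1\|_p\le(R_s^2-1)^{1-1/p}$, reduces to bounding $(\sum_i \alpha_i^p)^{1/p}$ by $\sqrt{k(n-k)}\,n^{1/p-1/2}/(2\sqrt n)$ to pick up the factor $A^*_{n,p} = (p/2+1)n^{1/p-1/2}$.

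The hard part will be this last algebraic step: after computing $\sum_i \alpha_i^p = k(n-k)\bigl[(n-k)^{p-1}+k^{p-1}\bigr]/(2n)^p$, one needs
$$(n-k)^{p-1}+k^{p-1} \;\le\; n\,(k(n-k))^{p/2-1}.$$
This holds with equality at the balanced point $k=n/2$, since both sides then equal $2(n/2)^{p-1}$, and this is precisely the case used in the paper via the specialization $\omega_p^R(n,\sigma):=\omega_p^R(n,n/2,\sigma)$ in \cref{thm:main1}. For general $k$ the estimate needs to be sharpened --- either by absorbing the worst-case ratio $\bigl[\max(k,n-k)^{p-2}/(k(n-k))^{p/2-1}\bigr]^{1/p}$ into a slightly larger constant, or by splitting $T$ into its two independent blocks $\frac{n-k}{2n}\sum_{i\le k}\tilde Y_i + \frac{k}{2n}\sum_{i>k}\tilde Y_i$ and applying iid Rosenthal (\cref{dino_veg2}) to each block separately, yielding an analogous bound with $A^*_{\min(k,n-k),p}$ in place of $A^*_{n,p}$. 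Once the balanced case is in hand, the remainder is routine bookkeeping.
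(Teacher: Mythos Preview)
Your approach is exactly the adaptation of Nagaev's inequality that the paper has in mind; the paper's own ``proof'' is simply the one-line citation ``Adaptation of proof from \cite{nagaev1978some}'' with no further detail, so you have in fact supplied more than the paper does.

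More importantly, the gap you flag is real. The algebraic inequality
\[
(n-k)^{p-1}+k^{p-1}\;\le\; n\,(k(n-k))^{p/2-1}
\]
that is needed to force the $L^p$ piece of Nagaev--Rosenthal into the form $A^*_{n,p}\cdot\tfrac{\sqrt{k(n-k)}}{2\sqrt n}$ is \emph{false} for unbalanced $k$: already at $k=1$, $n=10$, $p=4$ one gets $730>90$. So the Rosenthal branch of the lemma, as stated for general $k$, does not follow from the indicated argument. Your observation that equality holds precisely at $k=n/2$ (both sides equal $n^{p-1}/2^{p-2}$) is correct, and this is the only case the paper's construction actually uses: \cref{ts122} and \cref{thm:main1} invoke only $\omega_p^R(n,\sigma):=\omega_p^R(n,n/2,\sigma)$. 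Thus the defect does not propagate to the main results. Of your two proposed repairs, the blockwise one (apply i.i.d.\ Rosenthal separately to $\tfrac{n-k}{2n}\sum_{i\le k}\tilde Y_i$ and $\tfrac{k}{2n}\sum_{i>k}\tilde Y_i$, then combine via the triangle inequality and $\sqrt{k}+\sqrt{n-k}\le\sqrt{2n}$) is the cleaner fix; it yields the stated bound up to a harmless factor $\sqrt2$ and with $A^*_{\min(k,n-k),p}\ge A^*_{n,p}$ in place of $A^*_{n,p}$.
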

\begin{proof}
    Adaptation of proof from \cite{nagaev1978some}.
\end{proof}

\begin{lemma}[Theorem 2.1 from \cite{rio2009moment}: Marcinkiewicz-Zygmund type inequality for martingales] \label{lemma:MZineq_rio}
Let $p>2$ and $\left(S_n\right)_{n \geq 0}$ be a sequence of random variables in $\mathbb{L}^p$. Set $X_k=S_k-S_{k-1}$. Assume that $\mathbb{E}\left(X_k \mid S_{k-1}\right)=0$ a.s.for any positive $k$. Then
$$
\left\|S_n\right\|_p^2 \leq\left\|S_0\right\|_p^2+(p-1)\left(\left\|X_1\right\|_p^2+\left\|X_2\right\|_p^2+\cdots+\left\|X_n\right\|_p^2\right).
$$
\end{lemma}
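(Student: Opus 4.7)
The plan is to establish the inequality by induction on $n$, reducing the full statement to a one-step bound of the form
$$\|U+V\|_p^2 \le \|U\|_p^2 + (p-1)\|V\|_p^2$$
valid whenever $U,V \in L^p$ and $\mathbb{E}[V \mid U]=0$. Applying this with $U=S_{n-1}$ and $V=X_n$ (using that $S_{n-1}$ is measurable with respect to $\sigma(S_0,X_1,\ldots,X_{n-1})$, so $\mathbb{E}[X_n \mid S_{n-1}] = 0$ by the tower law and the assumed martingale-difference property) and telescoping delivers the claim.

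For the one-step bound, I would introduce the interpolation
$$g(t):=\mathbb{E}|U+tV|^p,\qquad f(t):=g(t)^{2/p}=\|U+tV\|_p^2,\qquad t\in[0,1].$$
Since $p>2$ the map $u\mapsto|u|^p$ is $C^2$, so after checking dominated-convergence hypotheses I can differentiate under the expectation to obtain
$$g'(t)=p\,\mathbb{E}\!\left[\mathrm{sgn}(U+tV)|U+tV|^{p-1}V\right],\qquad g''(t)=p(p-1)\,\mathbb{E}\!\left[|U+tV|^{p-2}V^2\right].$$
The martingale property $\mathbb{E}[V \mid U]=0$ makes $g'(0)=0$ vanish because $\mathrm{sgn}(U)|U|^{p-1}$ is $\sigma(U)$-measurable, and hence $f'(0)=\tfrac{2}{p}g(0)^{2/p-1}g'(0)=0$ as well.

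Computing
$$f''(t)=\tfrac{2}{p}\!\left[\bigl(\tfrac{2}{p}-1\bigr)g(t)^{2/p-2}g'(t)^2+g(t)^{2/p-1}g''(t)\right]$$
and using $2/p-1<0$ for $p>2$ discards the non-positive square term, giving $f''(t)\le 2(p-1)\,g(t)^{2/p-1}\,\mathbb{E}[|U+tV|^{p-2}V^2]$. H\"older's inequality with conjugate exponents $p/(p-2)$ and $p/2$ then yields $\mathbb{E}[|U+tV|^{p-2}V^2]\le g(t)^{(p-2)/p}\|V\|_p^2$, and the exponents of $g(t)$ collapse via $(2/p-1)+(p-2)/p=0$, leaving the crucial clean bound $f''(t)\le 2(p-1)\|V\|_p^2$. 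Taylor's remainder formula $f(1)=f(0)+f'(0)+\int_0^1(1-t)f''(t)\,dt$ together with $f'(0)=0$ then completes the one-step inequality, and hence the lemma.

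The main obstacle I anticipate is bookkeeping: the exact cancellation of exponents $(2/p-1)+(p-2)/p=0$ is precisely what produces the sharp constant $p-1$ (rather than something of order $p$), so this needs careful handling, and any looser use of H\"older or of the triangle inequality on $\|U+tV\|_p$ would spoil the sharpness. The differentiation-under-the-expectation step is routine but requires a brief justification via a smooth regularization $u\mapsto(u^2+\varepsilon)^{p/2}$ near the origin to accommodate the mild singularity of $|u|^{p-2}$ when $2<p<3$, followed by passing $\varepsilon\downarrow 0$ with dominated convergence.
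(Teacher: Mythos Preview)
The paper does not prove this lemma; it is simply stated with a citation to Rio (2009), Theorem~2.1, as an imported result. Your proposal is a correct and essentially complete reconstruction of Rio's own argument: the one-step interpolation $f(t)=\|U+tV\|_p^2$, the vanishing of $f'(0)$ from the conditional-mean-zero assumption, the discarding of the negative $(2/p-1)g'^2$ term, the H\"older step with exponents $p/(p-2)$ and $p/2$, and the exponent cancellation $(2/p-1)+(p-2)/p=0$ are exactly the ingredients Rio uses, and your remarks on the regularization for $2<p<3$ and the induction to pass to general $n$ are appropriate. One minor redundancy: the hypothesis already gives $\mathbb{E}[X_n\mid S_{n-1}]=0$ directly, so no tower-law reduction is needed at the induction step.
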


\begin{lemma}[Lemma 6 from \cite{bonis2020stein}]\label{lemma:Hermite_bound_Bonis} Let $Z$ be a normal random variable and let $\left(M_\alpha\right)_{\alpha \in \mathbb{N}^d} \in \mathbb{R}^d$. Then,
$$
\mathbb{E}\left[\left\|\sum M_\alpha H_\alpha(Z)\right\|^p\right]^{2 / p} \leq \sum \max (1, p-1)^{|\alpha|} \alpha!\left\|M_\alpha\right\|^2.
$$
\end{lemma}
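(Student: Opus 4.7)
The plan is to deduce this Hermite $L^p$ inequality from the classical Gaussian hypercontractivity theorem of Nelson. Let $Z$ denote a standard Gaussian on $\mathbb{R}^d$, let $\gamma_d$ be its law, and let $(H_\alpha)_{\alpha\in\mathbb{N}^d}$ denote the tensorised Hermite polynomials, which satisfy the orthogonality relation $\mathbb{E}[H_\alpha(Z)H_\beta(Z)]=\alpha!\,\delta_{\alpha,\beta}$. The Ornstein--Uhlenbeck semigroup $(P_t)_{t\ge 0}$ acts on each chaos component by $P_t H_\alpha=e^{-t|\alpha|}H_\alpha$, and Nelson's theorem asserts that for every $p\ge 2$ and every $t\ge 0$ with $e^{2t}\ge p-1$, one has $\|P_t g\|_{L^p(\gamma_d)}\le \|g\|_{L^2(\gamma_d)}$ for every $g\in L^2(\gamma_d)$. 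This is the single non-elementary ingredient of the proof.

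First I would establish the scalar version of the bound, assuming $p\ge 2$ and $M_\alpha\in\mathbb{R}$. Choose $t:=\tfrac{1}{2}\log(p-1)$ and set $g(z):=\sum_\alpha (p-1)^{|\alpha|/2}M_\alpha H_\alpha(z)$, so that the semigroup action gives $P_t g=\sum_\alpha M_\alpha H_\alpha$. Combining hypercontractivity with Hermite orthogonality then yields
\[
\mathbb{E}\Big[\Big|\sum_\alpha M_\alpha H_\alpha(Z)\Big|^p\Big]^{2/p}=\|P_t g\|_{L^p(\gamma_d)}^2\le \|g\|_{L^2(\gamma_d)}^2=\sum_\alpha (p-1)^{|\alpha|}\alpha!\,M_\alpha^2,
\]
which is the scalar version of the lemma. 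For $p\in[1,2]$, Jensen's inequality gives $\|\cdot\|_{L^p(\gamma_d)}\le\|\cdot\|_{L^2(\gamma_d)}$, so orthogonality alone produces the bound with the constant $\max(1,p-1)^{|\alpha|}=1$. These two cases together cover the entire range and account for the $\max(1,p-1)$ in the statement.

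To lift the scalar inequality to the vector-valued statement, write $M_\alpha\in\mathbb{R}^m$ in coordinates as $(M_{\alpha,1},\ldots,M_{\alpha,m})$ and let $f_j(z):=\sum_\alpha M_{\alpha,j}H_\alpha(z)$, so that $\|f(z)\|^2=\sum_{j=1}^m f_j(z)^2$. For $p\ge 2$, Minkowski's inequality in $L^{p/2}(\gamma_d)$ gives
\[
\mathbb{E}[\|f(Z)\|^p]^{2/p}=\Big\|\sum_{j=1}^m f_j^2\Big\|_{L^{p/2}(\gamma_d)}\le \sum_{j=1}^m \|f_j\|_{L^p(\gamma_d)}^2,
\]
and applying the scalar bound to each $f_j$ and re-summing in $\alpha$ produces $\sum_\alpha (p-1)^{|\alpha|}\alpha!\sum_j M_{\alpha,j}^2=\sum_\alpha (p-1)^{|\alpha|}\alpha!\,\|M_\alpha\|^2$, exactly the required estimate. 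The case $p\le 2$ is again immediate from Jensen.

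The main obstacle is really only whether one is willing to cite Nelson's hypercontractivity as a black box; everything else is orthogonality, Jensen, and Minkowski. Convergence and summability are handled by first restricting attention to finite Hermite expansions (making all manipulations legitimate) and then passing to the limit via Fatou's lemma. If a self-contained derivation were desired, the scalar inequality could instead be established for even integers $p=2k$ by expanding $(\sum M_\alpha H_\alpha)^{2k}$ with Wick's formula and then interpolated to general $p\ge 2$ via Riesz--Thorin against the trivial $L^2$ identity, but neither refinement is needed here.
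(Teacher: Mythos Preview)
Your proof is correct. The paper does not actually prove this lemma; it simply records it as ``Lemma 6 from \cite{bonis2020stein}'' and invokes it as a black box in the proof of \cref{lemma:Wp_bound_using_Bonis_basic2}. Your hypercontractivity argument---reducing to the scalar case via Nelson's theorem applied at the critical time $t=\tfrac12\log(p-1)$, then lifting to vector coefficients with Minkowski in $L^{p/2}$---is precisely the standard route and is essentially how Bonis establishes it in the cited reference, so there is no meaningful divergence to report.
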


\begin{lemma}[Bound on the norm of the Hermite polynomials]\label{lemma:Hermite_bound} Let $H_k(x) \triangleq e^{x^2 / 2} \frac{\partial^k}{\partial x} e^{-x^2 / 2}$. Shorthand $H_K \triangleq H_k(Z)$. Then the following holds for all $k, p \in \mathbb{N}$ :
$$
\left\|H_k\right\|_p \leq \sqrt{k!}{\sqrt{p-1}^k}.
$$

\end{lemma}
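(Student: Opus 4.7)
The target inequality $\|H_k(Z)\|_p \le \sqrt{k!}\,(p-1)^{k/2}$ for $Z\sim\mathcal{N}(0,1)$ is a classical consequence of Nelson's hypercontractivity for the Ornstein--Uhlenbeck semigroup, and the plan is to derive it by combining two standard facts: the $L^2$ normalization of Hermite polynomials, and the fact that they are eigenfunctions of the OU semigroup.

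The plan is as follows. First, I would recall (or briefly verify by integration by parts against the Gaussian weight) the orthogonality relation
\begin{equation*}
\mathbb{E}\bigl[H_j(Z)\,H_k(Z)\bigr]=k!\,\mathbb{I}(j=k),
\end{equation*}
which in particular gives $\|H_k(Z)\|_2=\sqrt{k!}$. Next, let $(P_t)_{t\ge 0}$ denote the Ornstein--Uhlenbeck semigroup, defined by
\begin{equation*}
(P_t f)(x)=\mathbb{E}\bigl[f(e^{-t}x+\sqrt{1-e^{-2t}}\,Z)\bigr].
\end{equation*}
A standard computation (expand the Gaussian integrand and identify Hermite polynomials, or use Mehler's formula for the OU kernel) shows that $H_k$ is an eigenfunction of $P_t$ with eigenvalue $e^{-kt}$, that is $P_t H_k=e^{-kt}H_k$.

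With these two ingredients in hand, the key step is Nelson's hypercontractivity inequality: for any $1<q\le p<\infty$ and any $t\ge 0$ with $e^{2t}\ge (p-1)/(q-1)$, one has $\|P_t f\|_p\le \|f\|_q$ for all $f\in L^q(\gamma)$ where $\gamma$ is the standard Gaussian measure. Specializing to $q=2$ and $f=H_k$, the threshold condition becomes $e^{2t}\ge p-1$, so I would pick $t=\tfrac12\log(p-1)$ (which is $\ge 0$ since $p\ge 2$). Then
\begin{equation*}
e^{-kt}\,\|H_k(Z)\|_p=\|P_t H_k(Z)\|_p\le \|H_k(Z)\|_2=\sqrt{k!},
\end{equation*}
and rearranging yields
\begin{equation*}
\|H_k(Z)\|_p\le e^{kt}\sqrt{k!}=(p-1)^{k/2}\sqrt{k!},
\end{equation*}
which is exactly the claimed bound (noting $\sqrt{p-1}^{\,k}=(p-1)^{k/2}$).

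There is essentially no hard step here: the whole argument rests on quoting Nelson's hypercontractivity theorem, so the only genuine obstacle is whether one wishes to give a self-contained proof of that inequality (which is standard and can be deferred to a reference such as Nelson's original paper or any textbook treatment of the Gaussian OU semigroup). A minor bookkeeping check is that the paper's sign convention $H_k(x)=e^{x^2/2}\partial^k_x e^{-x^2/2}$ differs from the probabilists' convention only by $(-1)^k$, so the $L^p$ norm and the OU eigenvalue identity are unchanged. The boundary case $p=2$ is also trivially handled by the equality $\|H_k\|_2=\sqrt{k!}$ with $(p-1)^{k/2}=1$.
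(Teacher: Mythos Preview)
Your proof is correct. The paper does not actually supply a proof of this lemma: it is stated without proof in the ``Additional lemmas'' appendix, immediately after quoting Lemma~6 of Bonis (2020), of which it is the one-dimensional single-term special case (take $d=1$ and $M_k=1$, all other $M_\alpha=0$). Bonis's Lemma~6 is itself proved via the Ornstein--Uhlenbeck hypercontractivity argument you outline, so your write-up is precisely the standard derivation underlying the cited result rather than a different route.
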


\begin{lemma}[Theorem 1 of \cite{osekowski2012note}]\label{lemma:ose_ineq}
     For $p\ge 4$, the following inequality holds:
 \begin{align}
     \Big\|\sum_{\substack{i\le k\\j>k}}(Y_{i}-Y_{j})^\ell\Big\|_p&\le C_p\Big\{\mathbb{E}\Big[\Big(\sum_{i\le n}\mathbb{E}[D_i^2|\mathcal{F}_{i-1}]\Big)^{p/2}\Big]^{1/p}+\mathbb{E}\Big[\sum_{i\le n}D_i^p\Big]^{1/p}\Big\}\\&\le C_p\Big\{\sqrt{\sum_{i\le n}\Big\|\mathbb{E}[D_i^2|\mathcal{F}_{i-1}]\Big\|_{p/2}}+\Big(\sum_{i\le n}\|D_i\|_p^p\Big)^{1/p}\Big\},
 \end{align} where $C_p:=2\sqrt{2}\Big(\frac{p}{4}+1\Big)^{1/p}(1+\frac{p}{\log(p/2)})$.
\end{lemma}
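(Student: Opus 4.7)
The inequality is a martingale Rosenthal--Burkholder $L^p$ bound applied to the martingale $M_n := \sum_{i \le n} D_i$, whose differences $D_i$ come from the Doob decomposition of the centered target random variable along the natural filtration $\mathcal{F}_m = \sigma(Y_1,\dots,Y_m)$. The second inequality in the lemma follows from the first by the triangle inequality in $L^{p/2}$, namely $\|\sum_i \mathbb{E}[D_i^2\mid\mathcal{F}_{i-1}]\|_{p/2} \le \sum_i \|\mathbb{E}[D_i^2\mid\mathcal{F}_{i-1}]\|_{p/2}$, combined with the identity $\mathbb{E}[\sum_i |D_i|^p]^{1/p} = (\sum_i \|D_i\|_p^p)^{1/p}$; I would dispose of this step immediately and concentrate on the first inequality.

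For the first inequality, the plan is the classical square-function strategy combined with a splitting of the quadratic variation into its predictable compensator and a remainder martingale. First, apply the Burkholder--Davis--Gundy inequality at exponent $p$ to obtain $\|M_n\|_p \le B_p \|\sqrt{[M]_n}\|_p$, where $[M]_n = \sum_i D_i^2$ and $B_p = O(\sqrt{p})$. Next, decompose $[M]_n = \langle M\rangle_n + N_n$ with $\langle M\rangle_n := \sum_i \mathbb{E}[D_i^2\mid\mathcal{F}_{i-1}]$ and $N_n := [M]_n - \langle M\rangle_n$, where $N_n$ is itself a martingale whose differences are $|D_i|^2 - \mathbb{E}[D_i^2\mid\mathcal{F}_{i-1}]$. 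Using $\sqrt{a+|b|} \le \sqrt{a} + \sqrt{|b|}$ and $\|\sqrt{|N_n|}\|_p = \|N_n\|_{p/2}^{1/2}$, apply BDG once more to $N_n$ at exponent $p/2 \ge 2$; its quadratic variation is bounded by $(\max_i |D_i|^2)\cdot [M]_n$ up to an absolute constant, which yields a self-bounding inequality of the form $\|\sqrt{[M]_n}\|_p^2 \le \|\sqrt{\langle M\rangle_n}\|_p^2 + C' \|\max_i |D_i|\|_p \cdot \|\sqrt{[M]_n}\|_p$. Solving this quadratic in $\|\sqrt{[M]_n}\|_p$ and bounding $\|\max_i |D_i|\|_p \le \mathbb{E}[\sum_i |D_i|^p]^{1/p}$ delivers a Rosenthal-type bound of the claimed shape.

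The main obstacle is pinning down the \emph{explicit} constant $C_p = 2\sqrt{2}(p/4+1)^{1/p}(1 + p/\log(p/2))$. A naive iteration of BDG with Cauchy--Schwarz loses too much and produces a worse $p$-dependence than the claimed $p/\log(p/2)$ factor. To reach the sharp constant I would instead follow Burkholder's special-function (``Bellman-function'') technique: construct a function $U(x,y,z)$ that is concave in $(x,y)$, non-decreasing in $z$, satisfies the pointwise majorization $U(x,y,z) \ge |x|^p - C_p^p(y^{p/2} + z)$, and is non-positive at $(0,0,0)$; then verify, via a one-step concavity inequality, that $U(M_n,\langle M\rangle_n,\sum_i |D_i|^p)$ is a supermartingale, and take expectations to conclude. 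Building such a $U$ with precisely the claimed constant, and checking the concavity and majorization on the relevant domain, is the technically delicate step and is what I expect to be the principal obstacle.
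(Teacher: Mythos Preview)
The paper does not prove this lemma: it is stated as a direct citation of Theorem~1 in \cite{osekowski2012note}, with no argument given. The first displayed inequality is exactly that theorem, and the second follows from it by the triangle inequality in $L^{p/2}$ (Minkowski) applied to the predictable-bracket term, together with the identity $\mathbb{E}[\sum_i |D_i|^p]^{1/p}=(\sum_i\|D_i\|_p^p)^{1/p}$---precisely the reduction you describe in your first paragraph. So for the paper's purposes the ``proof'' is: invoke the reference, then apply Minkowski.

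Your sketch of how to establish the Os\k{e}kowski bound itself is in the right spirit: the sharp constant $C_p=2\sqrt{2}(p/4+1)^{1/p}(1+p/\log(p/2))$ in \cite{osekowski2012note} is indeed obtained via a Burkholder-type special-function argument, not by naive iteration of BDG, and you correctly flag that a crude BDG-plus-Cauchy--Schwarz bootstrap would lose the $p/\log(p/2)$ dependence. That said, reconstructing the special function $U$ from scratch is substantial work and well beyond what the paper requires; since the lemma is explicitly attributed to an external source, the appropriate move here is simply to cite it and record the trivial passage to the second line, as you already did.
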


\begin{lemma}
\label{lemma:moment-inequality}
[Lemma 13 of \cite{austern2023efficient}] Let $\left(\tilde{X}_i\right)_{i \geq 1}$ be a sequence of i.i.d. random variables that are almost surely nonnegative. If, for some $p \geq 2$, $\mathbb{E}\left(\tilde{X}_1\right) \leq a$ and $\mathbb{E}\left(\tilde{X}_i^p\right) \leq b$ for $a, b>0$, then
$$
\left\|\sum_{i=1}^n \tilde{X}_i\right\|_p \leq\left(\frac{b}{a}\right)^{1 /(p-1)}\left\|\operatorname{Binomial}\left(n,\left(\frac{a^p}{b}\right)^{\frac{1}{p-1}}\right)\right\|_p.
$$
\end{lemma}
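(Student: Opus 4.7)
The plan is to identify $Y_i := c B_i$ with $B_i \stackrel{\mathrm{iid}}{\sim} \mathrm{Bernoulli}(\pi)$, where $c := (b/a)^{1/(p-1)}$ and $\pi := (a^p/b)^{1/(p-1)}$, as the extremal law among nonnegative i.i.d.\ sequences satisfying both moment constraints. A direct computation gives $\mathbb{E}[Y_i] = c\pi = a$ and $\mathbb{E}[Y_i^p] = c^p \pi = b$, so both constraints are saturated, and $\sum_{i=1}^n Y_i = c \cdot \mathrm{Binomial}(n,\pi)$. Hence the desired inequality is equivalent to $\|\sum_i \tilde X_i\|_p \le \|\sum_i Y_i\|_p$. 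We may assume $\pi \le 1$, i.e.\ $a^p \le b$: otherwise we tighten $a$ to $b^{1/p}$ using Jensen's inequality, which is allowed since $\mathbb{E}[\tilde X_1] \le \mathbb{E}[\tilde X_1^p]^{1/p} \le b^{1/p}$.

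The first key step is a per-coordinate moment domination: for every real $k \in [1,p]$,
\[
\mathbb{E}[\tilde X_1^k] \;\le\; \mathbb{E}[\tilde X_1]^{(p-k)/(p-1)} \, \mathbb{E}[\tilde X_1^p]^{(k-1)/(p-1)} \;\le\; a^{(p-k)/(p-1)} \, b^{(k-1)/(p-1)} \;=\; \pi c^k \;=\; \mathbb{E}[Y_1^k],
\]
by log-convexity of $k \mapsto \log \mathbb{E}[\tilde X_1^k]$ (Lyapunov's inequality) and the explicit values of $c$ and $\pi$; the zeroth moments are $1$ on both sides, so the domination extends to $k \in \{0\} \cup [1,p]$.

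When $p$ is an integer, the result follows cleanly from the multinomial expansion and independence: for each multi-index $\vec k = (k_1, \ldots, k_n)$ with $\sum_i k_i = p$, the previous step applied coordinate-wise gives $\prod_i \mathbb{E}[\tilde X_i^{k_i}] \le \prod_i \mathbb{E}[Y_i^{k_i}]$, and summing against the non-negative multinomial weights $\binom{p}{k_1, \ldots, k_n}$ yields $\mathbb{E}[(\sum_i \tilde X_i)^p] \le \mathbb{E}[(\sum_i Y_i)^p] = c^p \, \mathbb{E}[\mathrm{Binomial}(n,\pi)^p]$. Extracting $p$-th roots gives the stated bound.

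The main obstacle is the non-integer $p$ case, where Newton's series for $(\sum_i \tilde X_i)^p$ is infinite with signed coefficients and the term-by-term argument breaks. The plan is to invoke the integral identity $x^p = p(p-1) \int_0^\infty (x - t)_+ \, t^{p-2} \, dt$ (valid for $x \ge 0$, $p > 1$), which rewrites $\mathbb{E}[(\sum_i \tilde X_i)^p]$ as a non-negative weighted integral of $\mathbb{E}[(\sum_i \tilde X_i - t)_+]$ over $t \ge 0$, and then to establish the pointwise inequality $\mathbb{E}[(\sum_i \tilde X_i - t)_+] \le \mathbb{E}[(\sum_i Y_i - t)_+]$ for every $t \ge 0$. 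Because the increasing-convex order tensorizes under independence, this would ordinarily reduce to the single-coordinate statement $\tilde X_1 \le_{\mathrm{icx}} Y_1$; a case analysis using the two moment hypotheses shows, however, that the marginal $\le_{\mathrm{icx}}$ domination need not hold, so one cannot reduce directly to $n = 1$. The delicate step is to recover the tail comparison at the level of sums despite its failure at $n=1$: the intended route is to apply the already-proven integer case at $p_{\star} := \lceil p \rceil$ to the shifted nonnegative random variables $(\tilde X_i - t/n)_+$ (whose first and $p_{\star}$-th moments can be read off from the hypotheses on $\tilde X_1$ via Step~1), to obtain an $\mathrm{Binomial}$ bound on the corresponding tail expectation, and then to interpolate back to non-integer $p$ via H\"older between $\|\cdot\|_{\lfloor p \rfloor}$ and $\|\cdot\|_{\lceil p \rceil}$, checking that the Binomial bound on the right-hand side interpolates to exactly the stated expression.
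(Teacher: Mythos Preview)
The paper does not prove this lemma; it is merely quoted with a citation to \cite{austern2023efficient}, so there is no in-paper argument to compare against. Your integer-$p$ proof is correct and is the standard one: the Lyapunov interpolation gives $\mathbb{E}[\tilde X_1^k]\le a^{(p-k)/(p-1)}b^{(k-1)/(p-1)}=\pi c^k=\mathbb{E}[Y_1^k]$ for every $k\in\{0,1,\dots,p\}$, and the multinomial expansion of $(\sum_i\tilde X_i)^p$ together with independence finishes the job.

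Your non-integer plan, however, has a real gap. The proposed route applies the integer case at $p_\star=\lceil p\rceil$, but that requires a bound on $\mathbb{E}[\tilde X_1^{p_\star}]$, and your Step~1 only controls moments of order $k\le p$: Lyapunov interpolates between known moments, it does not extrapolate beyond the $p$-th, and indeed $\mathbb{E}[\tilde X_1^{\lceil p\rceil}]$ may be $+\infty$ under the stated hypotheses. Even if such a bound were available, the H\"older interpolation between $\lfloor p\rfloor$ and $\lceil p\rceil$ would produce two Binomials with \emph{different} parameters $(c',\pi')$ on the right-hand side, and there is no mechanism in your sketch by which these recombine into the single $\mathrm{Binomial}\big(n,(a^p/b)^{1/(p-1)}\big)$; the phrase ``checking that the right-hand side interpolates to exactly the stated expression'' is therefore not a verification but the entire missing argument. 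You are also right that the marginal increasing-convex domination $\tilde X_1\le_{\mathrm{icx}}Y_1$ can fail (take $\tilde X_1$ equal to $0$ or $M$ with $M>c$ and a small enough mean), so the integral representation $x^p=p(p-1)\int_0^\infty(x-t)_+\,t^{p-2}\,dt$ does not reduce the problem by tensorization. As it stands, the proof is complete only for integer $p$; for the uses in the present paper this is in fact enough, since all the optimizations over $p$ can be restricted to integers without loss, but you should either say so explicitly or supply a genuinely different argument for the fractional case.
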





\begin{lemma}[Improved Marcinkiewicz-Zygmund inequality for symmetric random variables]\label{dino_veg_symm} 
Suppose $(\tilde X_i)_{i\ge1}$ are symmetric centered \iid observations admitting a finite $p$-th absolute moment for some $p\ge 2$.  Then
 \begin{align}\|\frac{1}{\sqrt{n}}\sum_{i\le n}\tilde X_i\|_p\le\sqrt{2}\Big(\frac{\Gamma(\frac{p+1}{2})}{\sqrt{\pi}}\Big)^{1/p}\sqrt{n}\|\tilde X_1\|_p.\end{align}
If $p\ge 4$, $\mathbb{E}(\tilde X_i^2)\le \tilde\sigma^2$, and  $\mathbb{E}(|\tilde X_i|^p)\le b_p$ , we also have 
       \begin{align}
    \|\sum_{i\le n} \tilde X_i\|_p\le     \sqrt{2}\Big(\frac{\Gamma(\frac{p+1}{2})}{\sqrt{\pi}}\Big)^{1/p}\Big[\frac{b_p}{\tilde\sigma^2}\Big]^{1/(p-2)}\sqrt{\|\Bin(n,(\frac{\tilde\sigma^p}{b_p})^{2/(p-2)})\|_{p/2}}.
    \end{align}
\end{lemma}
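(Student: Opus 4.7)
The plan is to combine a symmetrization argument with the sharp Khintchine inequality, and then, for the second bound, to feed the resulting $L_{p/2}$ norm into Lemma~\ref{lemma:moment-inequality}. Since $(\tilde X_i)$ are symmetric, I would represent $(\tilde X_i)_{i\le n}\eqd (\epsilon_i|\tilde X_i|)_{i\le n}$ for independent Rademacher signs $(\epsilon_i)$ independent of $(|\tilde X_i|)$. Conditioning on $(|\tilde X_i|)$ and invoking Haagerup's sharp form of Khintchine's inequality, which for $p\ge 2$ gives the constant $B_p:=\sqrt{2}(\Gamma((p+1)/2)/\sqrt\pi)^{1/p}$, yields
\[
\mathbb{E}\!\left[\Big|\sum_{i\le n}\tilde X_i\Big|^{p}\,\Big|\,|\tilde X_1|,\dots,|\tilde X_n|\right]\le B_p^{\,p}\Big(\sum_{i\le n}\tilde X_i^{2}\Big)^{p/2}.
\]
Taking the outer expectation and the $p$-th root produces the master inequality
\[
\Big\|\sum_{i\le n}\tilde X_i\Big\|_{p}\le B_p\,\Big\|\sum_{i\le n}\tilde X_i^{2}\Big\|_{p/2}^{1/2}.
\]

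For the first claim, I would simply apply the triangle inequality in $L_{p/2}$ to bound $\|\sum_{i\le n}\tilde X_i^{2}\|_{p/2}\le \sum_{i\le n}\|\tilde X_i^{2}\|_{p/2}=n\|\tilde X_1\|_{p}^{2}$. Substituting into the master inequality gives $\|\sum_{i\le n}\tilde X_i\|_p\le B_p\sqrt n\,\|\tilde X_1\|_p$, which after dividing by $\sqrt n$ implies the stated bound (in fact without the extra $\sqrt n$ on the right).

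For the second claim, note that since $p\ge 4$ we have $p/2\ge 2$, and the variables $(\tilde X_i^{2})_{i\ge 1}$ form an i.i.d. nonnegative sequence with $\mathbb{E}[\tilde X_i^{2}]\le \tilde\sigma^{2}$ and $\mathbb{E}[(\tilde X_i^{2})^{p/2}]=\mathbb{E}|\tilde X_i|^{p}\le b_p$. Applying Lemma~\ref{lemma:moment-inequality} with exponent $p/2$, $a=\tilde\sigma^{2}$, $b=b_p$ gives
\[
\Big\|\sum_{i\le n}\tilde X_i^{2}\Big\|_{p/2}\le \Big(\frac{b_p}{\tilde\sigma^{2}}\Big)^{\!2/(p-2)}\Big\|\Bin\!\Big(n,\Big(\frac{\tilde\sigma^{p}}{b_p}\Big)^{\!2/(p-2)}\Big)\Big\|_{p/2}.
\]
Taking the square root and multiplying by $B_p$ via the master inequality yields the second claim exactly.

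There is no serious obstacle: the only nonroutine step is invoking the sharp Haagerup constant $B_p$ in Khintchine's inequality for $p\ge 2$, and the arithmetic $p/(p/2-1)=2p/(p-2)$ that turns Lemma~\ref{lemma:moment-inequality} applied to $(\tilde X_i^{2})$ with exponent $p/2$ into the claimed exponents $2/(p-2)$ inside the Binomial parameter and $1/(p-2)$ outside (after the square root).
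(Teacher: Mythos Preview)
Your proof is correct. The paper states this lemma without proof, so there is no argument to compare against; your route via Haagerup's sharp Khintchine constant $B_p=\sqrt{2}(\Gamma((p+1)/2)/\sqrt\pi)^{1/p}$ followed by the triangle inequality (first claim) and Lemma~\ref{lemma:moment-inequality} applied to $(\tilde X_i^2)$ with exponent $p/2$ (second claim) is exactly the intended derivation, and your observation that the first display in fact holds without the extra $\sqrt n$ on the right is also correct.
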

\begin{lemma}\label{nelson3}
    For all $n\in \mathbb{N}$ and all $p\ge 2$ the following holds
    \begin{align*}
        \mathcal{W}_p(S_n, \mathcal{N}(0,n\sigma^2))&\le 
 s^R_p(n, \sigma):=\sqrt{p-1}\sqrt{n}(R+\sigma)\wedge\tilde \omega_p^R(\sigma,n),
   \end{align*} where\begin{align*}&\sigma^{-1}\tilde \omega_p^R(\sigma,n)\\&:=\inf_{\substack{\kappa\le \tilde R^2/n\\K_p\ge 1}} \frac{\sqrt{n}}{\mnkappa\mathbb{I}(p\ne 2)+\mathbb{I}(p=2)}\Big\{\|Z\|_p\Big(\frac{\pi}{2}-\arcsin(\sqrt{1-\frac{\tilde R^2}{n\kappa}})\Big)+\|Z\|_p D_{n,p} M_{n,\kappa}^2\\&+  \frac{B_{p,n}}{2} \Big\{\sum_{\substack{K_p>j\ge 1}}\frac{R^{2k}}{\sqrt{n}}\Big(\frac{\|H_{2j+1}\|_p}{(2j+2)!}-
   \frac{2^{-j}e^{19/300}\pi^{1/4}K_p^{1/4}\sqrt{p-1}^{2k+1}}{2(K_p+1)\sqrt{2K_p+1}j!}\Big)\int_{{\frac{\tilde R^2}{\kappa}}}^n\frac{1}{\sqrt{y}}\Big(\frac{1}{y}-\frac{1}{n}\Big)^jdy
   \\&\quad+\frac{1}{4}e^{19/300}\pi^{1/4} \frac{K_p^{1/4}\sqrt{p-1}}{(K_p+1)\sqrt{2K_p+1}}\frac{1}{\sqrt{n}}\int_{{\frac{\tilde R^2}{\kappa}}}^n\frac{1}{\sqrt{y}}\big(e^{\frac{1}{2}(p-1)\tilde R^2(\frac{1}{y}-\frac{1}{n})}-1\big)dy\Big\}
\\&+\frac{C_{n,p}}{2}\Big\{\sum_{\substack{1\le j\le K_p-1}}
 \frac{\tilde R^{2j}}{n}\Big(\frac{\|H_{2j}\|_p}{(2j+1)!}-\frac{K_p^{1/4}2^{-j}(p-1)^je^{19/300}\pi^{1/4}}{(2K_p+1)j!}\Big)\int_{\frac{\tilde R^2}{\kappa}}^n\frac{1}{\sqrt{y}}\big(\frac{1}{y}-\frac{1}{n}\big)^{j-\frac{1}{2}}dy
 \\&\quad +e^{19/300}\pi^{1/4}\frac{K_p^{1/4}}{(2K_p+1)n}
\int_{\frac{1}{n}}^{\frac{\kappa}{\tilde R^2}}\frac{1}{y^{3/2}\sqrt{y-\frac{1}{n}}}\big[e^{\frac{1}{2}(p-1)\tilde R^2\big(y-\frac{1}{n}\big)}-1\big]dy\Big\}\Big\}.
\end{align*}
\end{lemma}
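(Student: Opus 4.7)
The plan is to mirror the second half of the proof of \cref{nelson2} (the branch for $k=n/2$), where a Stein exchangeable-pair argument via Theorem 3 of \cite{bonis2020stein} produced the quantity $\tilde{\omega}_p^R(\sigma,n)$. The same machinery applies directly to the unconditional sum $S_n$, with only cosmetic bookkeeping changes.

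First, I would dispose of the trivial $\sqrt{p-1}\sqrt{n}(R+\sigma)$ branch by triangle inequality: couple $S_n$ and an independent $\mathcal{N}(0,n\sigma^2)$ variable $\sigma\sqrt{n}\,Z$, and bound $\|S_n-\sigma\sqrt{n}Z\|_p \le \|S_n\|_p + \sigma\sqrt{n}\|Z\|_p \le \sqrt{p-1}\sqrt{n}R + \sqrt{p-1}\sqrt{n}\sigma$ using \cref{lemma:MZineq_rio}. The minimum with this quantity enters $s_p^R(n,\sigma)$ automatically.

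For the $\tilde{\omega}_p^R(\sigma,n)$ branch, I would work with the normalization $\tilde{S}_n := S_n/(\sigma\sqrt{n})$ and build the exchangeable pair $\tilde{S}_n' := \tilde{S}_n + (X_I' - X_I)/(\sigma\sqrt{n})$ where $I\sim\Unif\{1,\dots,n\}$ and $(X_i')$ is an independent copy of $(X_i)$. Writing $Y := \tilde{S}_n - \tilde{S}_n'$, a direct computation gives the exact Stein identity $\mathbb{E}[Y\mid \tilde{S}_n] = \tilde{S}_n/n$, so the linear-regression term $(a_1)$ vanishes, exactly as in the $k=n/2$ case of \cref{nelson2}. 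Theorem 3 of \cite{bonis2020stein} then yields, for any $\kappa\ge \tilde{R}^2/n$,
\begin{align*}
\mathcal{W}_p(\tilde{S}_n,Z) \le&\; \int_0^{-\tfrac{1}{2}\log(1-\tfrac{\tilde{R}^2}{n\kappa})}\Big\|e^{-t}\tilde{S}_n - \tfrac{e^{-2t}}{\sqrt{1-e^{-2t}}}Z\Big\|_p dt \\
&+ \int_{-\tfrac{1}{2}\log(1-\tfrac{\tilde{R}^2}{n\kappa})}^\infty \tfrac{e^{-2t}\|H_1\|_p}{\sqrt{1-e^{-2t}}}\Big\|\tfrac{n}{2}\mathbb{E}[Y^2\mid \tilde{S}_n]-1\Big\|_p dt \\
&+ \sum_{j\ge 3}\int_{-\tfrac{1}{2}\log(1-\tfrac{\tilde{R}^2}{n\kappa})}^\infty \tfrac{e^{-jt}\|H_{j-1}\|_p}{j!(\sqrt{1-e^{-2t}})^{j-1}} n\|\mathbb{E}[Y^j\mid \tilde{S}_n]\|_p dt,
\end{align*}
which decomposes as $(a_0)+(a_2)+(a_3)$ in the notation of \cref{nelson2}.

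The three terms are controlled in exactly the same way as in the $k=n/2$ branch of \cref{nelson2}: $(a_0)$ yields $(1-M_{n,\kappa})\mathcal{W}_p(\tilde{S}_n,Z) + \|Z\|_p(\tfrac{\pi}{2}-\arcsin M_{n,\kappa})$ (or, if $p=2$, just the second term) after splitting with a near-optimal Gaussian coupling and using independence; rearranging produces the factor $1/M_{n,\kappa}$ (resp.\ $1$) in front of the whole bound. For $(a_2)$, I would compute $\frac{n}{2}\mathbb{E}[Y^2\mid X_{1:n}] = \frac{1}{2}\sigma^{-2}\sum_{i\le n}(X_i^2 + \mathbb{E}[X_1^2])$, so $\|\frac{n}{2}\mathbb{E}[Y^2\mid \tilde{S}_n]-1\|_p \le \frac{1}{2n}\|\sigma^{-2}\sum_i X_i^2 - 1\|_p$; bounding the latter by the four-way minimum (Marcinkiewicz--Zygmund, Rosenthal \cref{lemma:rosenthal_indep_nonidentical}, and the symmetrization estimates from \cite{esseen1975bounds,cox1983sharp}) reproduces the constant $D_{n,p}$. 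For the odd and even halves of $(a_3)$, I would bound $\|(X_i-X_i')^j\|_p$ and $\|(X_i-X_i')^j\|_2$ using boundedness and $\mathbb{E}[(X_i-X_i')^2]=2\sigma^2$, then apply \cref{lemma:MZineq_rio}, \cref{dino_veg_symm}, and \cref{lemma:moment-inequality} to obtain the constants $C_{n,p}$ and $B_{p,n}$; bounding the Hermite norms by \cref{lemma:Hermite_bound}, splitting the series at $K_p$, and invoking Stirling \cite{robbins1955remark} to handle the $j\ge K_p$ tail as a geometric series identified with $e^{\frac{1}{2}(p-1)\tilde{R}^2(\tfrac{1}{y}-\tfrac{1}{n})}-1$, followed by the change of variables $y=n(1-x^2)$, reproduces the exact expressions in the statement.

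The main obstacle is purely bookkeeping: there are no new ideas beyond the $k=n/2$ case of \cref{nelson2}, but the constants $C_{n,p}$, $D_{n,p}$, $B_{p,n}$ each involve a four-way minimum whose worst branch (the symmetrization step requiring $p\ge 4$) must be tracked separately, and the truncation index $K_p$ must be left free so that the infimum in the statement is valid. Otherwise the plan is a direct transcription of the second half of \cref{nelson2}.
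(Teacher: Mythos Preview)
Your proposal is correct and reconstructs the argument that underlies the bound, but it is worth noting that the paper's own proof of \cref{nelson3} is much shorter: it simply cites Lemma~8 of \cite{austern2023efficient} for the inequality $\mathcal{W}_p(S_n,\mathcal{N}(0,n\sigma^2))\le \tilde\omega_p^R(\sigma,n)$ and then handles the trivial $\sqrt{p-1}\sqrt{n}(R+\sigma)$ branch by the same triangle-inequality argument you describe. Your plan is essentially a self-contained re-derivation of that cited lemma via the exchangeable-pair machinery of \cite{bonis2020stein}, which is indeed the same computation carried out in the $k=n/2$ branch of the proof of \cref{nelson2}; so while your route is longer, it is mathematically the same and has the advantage of not relying on an external black box.
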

\begin{proof}
  Lemma 8 of \cite{austern2023efficient} establishes that $$\mathcal{W}_p(S_n,\mathcal{N}(0,n\sigma^2))\le \tilde \omega_p^R(\sigma,n).$$ Now by the triangular inequality we obtain that 
  \begin{align*}
      \mathcal{W}_p(S_n,\mathcal{N}(0,n\sigma^2))&\le \|S_n\|_p+\|\mathcal{N}(0,n\sigma^2)\|_p
      \\&\overset{(a)}{\le }\sqrt{p-1}\sqrt{n} (R+\sigma),
  \end{align*} where (a) is a consequence of \Cref{lemma:MZineq_rio} and \Cref{lemma:Hermite_bound}. \end{proof}
  \begin{lemma}
    \label{nelson3s} For all $ p\ge 2$ and for all $n\in \mathbb{N}$ we have  
    $$s^R_p(n, \sigma)\le {K^*_{R,\sigma} p}$$
    for
    \begin{align*}
K^*_{R,\sigma}\defeq&(\sigma+R))\vee \Big\{\textstyle\max(\frac{R}{\sig}, \frac{3\sigma}{4}\Big(\sqrt{\tilde R_{\sigma}^2-1}\frac{\sqrt{e}(2e)^{1/2}}{\sqrt{2}}+\frac{\pi^{1/4}e^{19/300}}{4\sqrt{3}}\big[e^{\frac{\tilde R^2}{2} }-1\big]\Big)\\ \tag{$K_{R,\sig}$}&\textstyle+\sigma\sqrt{2}\Big(1+\log(4)\Big)\Big\{\frac{\tilde R^{1-2/p}\sqrt{8\pi}^{1/2}}{3\sqrt{e}}e^{19/300}\pi^{1/4}[e^{\frac{\tilde R^2}{2}}-1]\Big\}
\\ \notag&\textstyle+\sqrt{4}{\sigma}\frac{(\max(\widetilde R_{\sigma}^2-1,1))^{1/2}}{\sqrt{2}}
+{\sigma}\tilde R {\sqrt{e}(2e)^{1/2}}\frac{\pi^{1/4}e^{19/300}}{4\sqrt{3}}\big[e^{\frac{\tilde R^2}{2}}-1\big]
 \\ \notag&\textstyle+\frac{4\sigma\tilde R^{2-2/p}}{\sqrt{2n}}\frac{\pi^{1/4}e^{19/300}}{4\sqrt{3}}\big[e^{\frac{\tilde R^2}{2}}-1\big]+1)\Big\}.
 \end{align*} 
 Moreover, for any $0<\sigma_1<\sigma_2\le \frac{R}{2}$, we have $$\max_{\sigma\in [\sigma_1,\sigma_2]}|K^\star_{R,\sigma}|<\infty.$$
\end{lemma}
\begin{proof}
The proof is a direct consequence of Lemma 9 \cite{austern2023efficient}.
\end{proof}
\begin{lemma}
    \label{increasing_in_R}For all $k\le n$, $\alpha>0$, $R>0$ and $\sigma>0$, let   $(\Delta_k(\alpha, R,\sigma))_k$ and $(\mathcal{D}_k(\alpha, R,\sigma))_k$ denote the outputs of \Cref{ts122} and \Cref{ts12}, respectively. Then, for all $k\le n$, the following functions are non-decreasing on $[2\sigma,\infty)$: $$R\mapsto\mathcal{D}_k(\alpha, R,\sigma) \textrm{ and } R\mapsto\Delta_k(\alpha, R,\sigma).$$
\end{lemma}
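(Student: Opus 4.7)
The plan is to work through the three layers of Algorithms~\ref{ts122} and~\ref{ts12} in order, pushing monotonicity in $R$ from the innermost Wasserstein bounds out to the thresholds.

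I would first verify the building blocks: for fixed $n,k,\sigma,p$, both $R\mapsto\omega_p^R(n,k,\sigma)$ (Theorem~\ref{thm:main1}, via the explicit formula in Theorem~\ref{theorem:pfWp}) and $R\mapsto s_p^R(n,\sigma)$ (Lemma~\ref{nelson3}) are non-decreasing on $[2\sigma,\infty)$. Every summand appearing in $\Omega_p^R$ depends on $R$ only through $R$ itself, $\tilde R=R/\sigma$, $R_s=\tfrac12(R+\sqrt{R^2-4\sigma^2})$, and $\tilde R_s=R_s/\sigma$, all of which are non-decreasing in $R$ on $[2\sigma,\infty)$; the coefficients are positive and the auxiliary parameters $\kappa,K_p$ enter via infima. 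Monotonicity therefore propagates through every term by inspection. If a particular summand proves delicate, I would replace $\omega_p^R$ with its monotone envelope $\sup_{R'\in[2\sigma,R]}\omega_p^{R'}$, which remains a valid upper bound for the conditional Wasserstein distance (any distribution with range in $[0,R']$ for $R'\le R$ trivially satisfies the $(R,\sigma)$ assumption) and is non-decreasing by construction. The same argument applies verbatim to $s_p^R$.

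The second step is to establish monotonicity of $\Delta_k(\alpha,R,\sigma)$ by induction on the depth $M$ in Algorithm~\ref{ts122}. The parameter $\nu_0^\star$ is determined entirely by $\alpha$ and $L=\lceil\log_2 n\rceil$ through the recursion $\beta_k(\nu_0)=2\beta_{k-1}(\nu_0)-\beta_{k-1}^2(\nu_0)+\nu_0$, and hence does not depend on $R$. The base case $\delta_1^0=0$ is trivial. At each level $M$, the anchor value $\delta_{2^{M-1}}^M=\inf_{p\ge 2}\omega_p^R(2^M,\sigma)/(\nu_0^\star)^{1/p}$ is non-decreasing in $R$ by the first step (an infimum of non-decreasing functions is non-decreasing), and the remaining $\delta_k^M$ are positive linear combinations of values from the previous layer and of $\delta_{2^{M-1}}^M$. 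Monotonicity is therefore preserved at every step of the recursion, and $\Delta_k(\alpha,R,\sigma)=\delta_k^L$ is non-decreasing in $R$.

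For $\mathcal{D}_k$, I would combine the two previous steps. For every fixed split $\alpha_0\in(0,\alpha)$, the quantity
\[
g_k(R,\alpha_0):=\Delta_k(\alpha_0,R,\sigma)+\tfrac{k}{n}\,\delta^\star(\alpha-\alpha_0,R,\sigma),
\]
with $\delta^\star(\beta,R,\sigma)=\inf_{p\ge 2}s_p^R(n,\sigma)/\beta^{1/p}$, is non-decreasing in $R$ by the first two steps. The main obstacle is that the split $\alpha_0^\star(R)$ selected by Algorithm~\ref{ts12} to minimize $\max_k g_k(\cdot,\alpha_0)$ depends on $R$, so the naive composition gives only $\max_k g_k(R_1,\alpha_0^\star(R_1))\le\max_k g_k(R_2,\alpha_0^\star(R_1))$, which is insufficient for pointwise monotonicity in $k$ because $\alpha_0^\star$ minimizes the max rather than each individual $g_k$. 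I would resolve this either by invoking the envelope $\bar{\mathcal{D}}_k(R):=\sup_{R'\in[2\sigma,R]}\mathcal{D}_k(R')$, which preserves the coupling probability bound of Theorem~\ref{thm:main2} (any upper bound remains an upper bound after $R$-monotonization) and is non-decreasing by construction, or by observing that any $R$-independent choice of $\alpha_0$ (for example $\alpha_0=\alpha/2$) yields a valid variant of the algorithm for which pointwise monotonicity is automatic. Either variant closes the argument without affecting the probabilistic guarantees used in Proposition~\ref{empirical}.
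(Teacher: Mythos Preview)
Your approach is the same as the paper's: establish that $R\mapsto\omega_p^R(n,\sigma)$ and $R\mapsto s_p^R(n,\sigma)$ are non-decreasing on $[2\sigma,\infty)$, then propagate this monotonicity through the recursions in Algorithms~\ref{ts122} and~\ref{ts12}. The paper compresses all of this into the single remark that $\Delta_k$ and $\mathcal{D}_k$ are non-decreasing functions of these building blocks; you supply the induction for $\Delta_k$ explicitly and note (correctly) that $\nu_0^\star$ is determined solely by $\alpha$ and $L$ and is therefore $R$-independent.

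Where you go beyond the paper is Step~3. You correctly observe that the optimal split $\alpha_0^\star$ selected in Algorithm~\ref{ts12} depends on $R$, so the composition $\mathcal{D}_k(R)=g_k(R,\alpha_0^\star(R))$ does not automatically inherit \emph{pointwise} monotonicity in $k$ from the monotonicity of each $g_k(\cdot,\alpha_0)$; only $\max_k\mathcal{D}_k(R)=\min_{\alpha_0}\max_k g_k(R,\alpha_0)$ is obviously non-decreasing. The paper's one-line proof does not engage with this issue. Your two repairs---replacing $\mathcal{D}_k$ by its monotone envelope $\sup_{R'\in[2\sigma,R]}\mathcal{D}_k(R')$, or fixing an $R$-independent split such as $\alpha_0=\alpha/2$---are both valid and preserve the probability guarantee of \cref{thm:main2}, which is all that is required where the lemma is actually invoked (\cref{empirical} and \cref{partir}).
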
\begin{proof}
    We remark that $\mathcal{D}_k(\alpha, R,\sigma)$ and $\Delta_k(\alpha, R,\sigma)$ are non-decreasing functions of $(\omega_p^R(n,\sigma))$ and $(s_p^R(n,\sigma))$, respectively. Moreover, by \Cref{theorem:pfWp,nelson3}, the functions $$R\mapsto\omega_p^R(n,\sigma) \textrm{ and }R\mapsto s_p^R(n,\sigma)$$ are increasing on $[2\sigma,\infty).$ The desired result directly follows. 
\end{proof}
\begin{lemma}\label{partir}
    Assume that the conditions of \Cref{empirical} hold and define $\tilde R_k:=R/\hat\sigma_k^L$ then        \begin{align*}&
              \mathbb{P}\Big(\exists k\le n~\textrm{s.t.}~|S_k-Z_k|\ge \hat\sigma_k^U~\mathcal{D}_k(\alpha,\tilde R_k,1)\textrm{ and } \sigma\in \bigcap_{k=1}^\infty[\hat\sigma_k^L,\hat\sigma_k^U]\Big)
              \le\alpha.
         \end{align*}Similarly we also have 
        \begin{align*}&
              \mathbb{P}\Big(\exists k\le n~\textrm{s.t.}~|W_k-Z_k|\ge \hat\sigma_k^U~\Delta_k(\alpha,\tilde R_k,1)\textrm{ and } \sigma\in \bigcap_{k=1}^\infty[\hat\sigma_k^L,\hat\sigma_k^U]\Big)
              \le\alpha.
         \end{align*}
\end{lemma}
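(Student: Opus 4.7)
The plan is to mimic the proof of \cref{empirical} but track the inequalities more carefully: since we are now intersecting with the event $\mathcal{E}:=\{\sigma\in\bigcap_{k}[\hat\sigma_k^L,\hat\sigma_k^U]\}$ instead of using a union bound against its complement, we no longer need to split the budget with a factor $1-\rho$.

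First I would rescale: define $\tilde S_k:=\sigma^{-1}S_k$ and $\tilde W_k:=\sigma^{-1}W_k$. The variables $\sigma^{-1}X_i$ satisfy \assumptionRS~with range $R/\sigma$ and unit variance, so by \cref{thm:main2} and \cref{thm:main3} there exist centered Gaussian vectors $(Z_k^\star)$ and $(\tilde Z_k^\star)$ with the covariance structures of $(\tilde S_k)$ and $(\tilde W_k)$, respectively, such that
\begin{align*}
\mathbb{P}\Big(\exists k\le n:\ |\tilde S_k-Z_k^\star|\ge \mathcal{D}_k(\alpha,R/\sigma,1)\Big)&\le \alpha,\\
\mathbb{P}\Big(\exists k\le n:\ |\tilde W_k-\tilde Z_k^\star|\ge \Delta_k(\alpha,R/\sigma,1)\Big)&\le \alpha.
\end{align*}
Setting $Z_k:=\sigma Z_k^\star$ and $\tilde Z_k:=\sigma\tilde Z_k^\star$ yields Gaussian vectors with the same covariance as $(S_k)$ and $(W_k)$, so multiplying the inequalities above through by $\sigma$ gives the analogous statements for $|S_k-Z_k|$ against the threshold $\sigma\,\mathcal{D}_k(\alpha,R/\sigma,1)$ and for $|W_k-\tilde Z_k|$ against $\sigma\,\Delta_k(\alpha,R/\sigma,1)$.

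Now I would use the event $\mathcal{E}$ to replace these non-computable thresholds by the empirical ones. On $\mathcal{E}$ we have $\hat\sigma_k^L\le \sigma\le \hat\sigma_k^U$, hence $\tilde R_k=R/\hat\sigma_k^L\ge R/\sigma$. By the monotonicity result \cref{increasing_in_R}, the maps $R\mapsto \mathcal{D}_k(\alpha,R,1)$ and $R\mapsto \Delta_k(\alpha,R,1)$ are non-decreasing on $[2,\infty)$; since we are in the regime where the assumption $\hat\sigma_k^U\le R/2$ from \cref{avril} gives $R/\hat\sigma_k^L\ge 2$, we deduce
\begin{equation*}
\hat\sigma_k^U\,\mathcal{D}_k(\alpha,\tilde R_k,1)\ \ge\ \sigma\,\mathcal{D}_k(\alpha,R/\sigma,1),\qquad \hat\sigma_k^U\,\Delta_k(\alpha,\tilde R_k,1)\ \ge\ \sigma\,\Delta_k(\alpha,R/\sigma,1).
\end{equation*}
Consequently, on $\mathcal{E}$ the empirical threshold dominates the oracle one pointwise in $k$, so the event $\{\exists k: |S_k-Z_k|\ge \hat\sigma_k^U\,\mathcal{D}_k(\alpha,\tilde R_k,1)\}\cap\mathcal{E}$ is contained in $\{\exists k: |S_k-Z_k|\ge \sigma\,\mathcal{D}_k(\alpha,R/\sigma,1)\}$, whose probability is at most $\alpha$ by the first display. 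The same inclusion, with $\Delta$ in place of $\mathcal{D}$, yields the bound for $(W_k,\tilde Z_k)$.

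The only delicate point is the monotonicity step: one must make sure that \cref{increasing_in_R} applies at $\tilde R_k$, which requires $\hat\sigma_k^L>0$ and $R/\hat\sigma_k^L\ge 2$; both follow from \cref{avril}, which imposes $\hat\sigma_k^U\le R/2$ (hence $\hat\sigma_k^L\le R/2$) and implicitly assumes $\hat\sigma_k^L$ is used only when strictly positive. Once this is handled the rest is a clean domination argument, and no additional confidence budget needs to be spent because the complement $\mathcal{E}^c$ has been excised from the event being bounded rather than absorbed via a union bound.
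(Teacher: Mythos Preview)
Your proof is correct and follows essentially the same approach as the paper: rescale by $\sigma^{-1}$, apply \cref{thm:main2} and \cref{thm:main3} with range $R/\sigma$ and unit variance, scale back, and then use the monotonicity of \cref{increasing_in_R} on the event $\mathcal{E}$ to dominate the oracle threshold by the empirical one. Your treatment of the monotonicity step (checking $\tilde R_k\ge 2$ via \cref{avril}) is slightly more explicit than the paper's, but the argument is otherwise identical.
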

\begin{proof}Define $\tilde W_k:=\sigma^{-1}W_k$ and $\tilde S_k:=\sigma^{-1}S_k.$ Using \Cref{thm:main2,thm:main3} we know that there exists centered Gaussian vectors $(\tilde Z^*_k),(Z^*_k)$ that have the same variance as respectively $(\tilde W_k)$ and $(\tilde S_k)$ such that 
    $$ \mathbb{P}\Big(\exists k\le n~\textrm{s.t.}~|\tilde S_k-Z^*_k|\ge\mathcal{D}_k(\alpha,\frac{R}{\sigma},1)\Big)\le \alpha$$ and  $$ \mathbb{P}\Big(\exists k\le n~\textrm{s.t.}~|\tilde W_k-\tilde Z^*_k|\ge\Delta_k(\alpha,\frac{R}{\sigma},1)\Big)\le \alpha.$$
    Define $Z_k:=\sigma Z_k^*$ and $\tilde Z_k:=\sigma \tilde Z_k^*$. We remark that $(Z_k)$ and $(\tilde Z_k)$ are centered Gaussian vectors that have the same variance that $(S_k)$ and $(W_k).$ Moreover we note that 
    \begin{align*}
        & \mathbb{P}\Big(\exists k\le n~\textrm{s.t.}~| S_k-Z_k|\ge\sigma \mathcal{D}_k(\alpha,\frac{R}{\sigma},1)\Big)
         \\=&\mathbb{P}\Big(\exists k\le n~\textrm{s.t.}~| \tilde S_k-Z^*_k|\ge \mathcal{D}_k(\alpha,\frac{R}{\sigma},1)\Big)\\
         \le &\alpha.\end{align*}Similarly we have that  $$ \mathbb{P}\Big(\exists k\le n~\textrm{s.t.}~|\tilde W_k-\tilde Z^*_k|\ge\Delta_k(\alpha,\frac{R}{\sigma},1)\Big)\le \alpha.$$According to \Cref{increasing_in_R} we know that $R\rightarrow \Delta_k(\alpha,R,1)$ is an increasing function of $R$ on $[2,\infty)$. Hence, the following holds
         \begin{align*}&
              \mathbb{P}\Big(\exists k\le n~\textrm{s.t.}~|S_k-Z_k|\ge \hat\sigma_k^U~\mathcal{D}_k(\alpha,\tilde R_k,1)\textrm{ and } \sigma\in \bigcap_{k=1}^\infty[\hat\sigma_k^L,\hat\sigma_k^U]\Big)
              \\&\le  \mathbb{P}\Big(\exists k\le n~\textrm{s.t.}~|S_k-Z_k|\ge \sigma~\mathcal{D}_k(\alpha,R/\sigma,1)\Big)
              \\&\le\alpha.
         \end{align*}Similarly we can prove that 
        \begin{align*}&
              \mathbb{P}\Big(\exists k\le n~\textrm{s.t.}~|W_k-Z_k|\ge \hat\sigma_k^U~\Delta_k(\alpha,\tilde R_k,1)\textrm{ and } \sigma\in \bigcap_{k=1}^\infty[\hat\sigma_k^L,\hat\sigma_k^U]\Big)
              \\&\le\alpha.
         \end{align*}
\end{proof}

\end{document}